\documentclass[11pt]{amsart}

\usepackage{epigamath}


\usepackage[english]{babel}


\numberwithin{equation}{section}


\usepackage{enumerate}
\usepackage{enumitem}
\usepackage{tikz}
\usetikzlibrary{shapes}


\newtheorem{teo}{Theorem}[section]
\newtheorem{cor}[teo]{Corollary}
\newtheorem{lem}[teo]{Lemma}
\newtheorem{prop}[teo]{Proposition}

\theoremstyle{definition}

\newtheorem{defi}[teo]{Definition}
\newtheorem{ex}[teo]{Example}

\newtheorem{rem}[teo]{Remark}


\newcommand{\Aut}{\operatorname{Aut}}

\newcommand{\SL}{\operatorname{SL}}

\renewcommand{\dim}{\operatorname{dim}}

\newcommand{\Cbb}{{\mathbb C}}
\newcommand{\Qbb}{{\mathbb Q}}
\newcommand{\Zbb}{{\mathbb Z}}

\newcommand{\Pbb}{{\mathbb P}}
\newcommand{\Nbb}{{\mathbb N}}

\def\chadom{
\draw[color=gray!60] (1,0) -- (1,4);
\draw[color=gray!60] (2,0) -- (2,4);
\draw[color=gray!60] (3,0) -- (3,4);

\draw[color=gray!60] (0,1) -- (4,1);
\draw[color=gray!60] (0,2) -- (4,2);
\draw[color=gray!60] (0,3) -- (4,3);
\node at (1.7,1.55) [color=gray] {0};
}
\def\chadombis{
\draw[color=gray!60] (1,0) -- (1,4);
\draw[color=gray!60] (2,0) -- (2,4);
\draw[color=gray!60] (3,0) -- (3,4);
\draw[color=gray!60] (0,1) -- (4,1);
\draw[color=gray!60] (0,2) -- (4,2);
\draw[color=gray!60] (0,3) -- (4,3);
\node at (0.7,0.5) [color=gray] {0};
}


\EpigaVolumeYear{4}{2020} \EpigaArticleNr{4} \ReceivedOn{January 17,
2019}
\InFinalFormOn{February 27, 2020}
\AcceptedOn{March 10, 2020}

\title{Smooth projective horospherical varieties of Picard group $\Zbb^2$}
\titlemark{Smooth horospherical varieties}

\author{Boris Pasquier}
\address{Laboratoire de Math\'ematiques Appliqu\'ees de Poitiers, CNRS, Univ. Poitiers.}
\email{boris.pasquier@univ-poitiers.fr }

\authormark{Boris Pasquier}

\AbstractInEnglish{We classify all smooth projective horospherical varieties of Picard group $\Zbb^2$ and we give a first description of their geometry via the Log Minimal Model Program.}

\MSCclass{14E30; 14J45; 14M17; 52B20}

\KeyWords{Projective varieties with small Picard group; Horospherical varieties; Log Minimal Model Program}

\TitleInFrench{Vari\'et\'es horosph\'eriques projectives lisses de groupe de Picard $\Zbb^2$}

\AbstractInFrench{Nous classifions toutes les vari\'et\'es horosph\'eriques projectives lisses de groupe de Picard $\Zbb^2$ et nous donnons une premi\`ere description de leur g\'eom\'etrie \emph{via} le programme des mod\`eles minimaux logarithmiques.}


\acknowledgement{The author is supported by the ANR Project FIBALGA ANR-18-CE40-0003-01.}




\begin{document}



\maketitle

\begin{prelims}

\DisplayAbstractInEnglish

\bigskip

\DisplayKeyWords

\medskip

\DisplayMSCclass

\bigskip

\languagesection{Fran\c{c}ais}

\bigskip

\DisplayTitleInFrench

\medskip

\DisplayAbstractInFrench

\end{prelims}


\newpage

\setcounter{tocdepth}{2}

\tableofcontents


\section{Introduction}
	
In this paper, varieties are irreducible algebraic varieties over $\Cbb$ and groups are linear algebraic groups over $\Cbb$. And we study varieties that belong to the family of horospherical varieties. Let us first introduce this family.
	
\subsection{About horospherical varieties}
	
Horospherical varieties are ones of the most studied normal $G$-varieties (i.e. varieties endowed with an algebraic action of a group $G$)  including flag varieties (i.e. rational projective homogeneous spaces) and toric varieties.
	
	Recall that toric varieties are normal $T$-varieties where $T$ is a torus and such that, in particular:
	\begin{itemize}
	\item[$\ast$] they have an open $T$-orbit;
	\item[$\ast$] the ring of regular functions of any $T$-stable affine open subset is a multiplicity free $T$-module;
	\item[$\ast$] they are classified in terms of fans.
	\end{itemize}
	There is a natural way to generalized toric varieties to normal $G$-varieties, for any connected reductive algebraic group $G$, with similar properties. And it gives the family of spherical varieties such that in particular:
	\begin{itemize}
	\item[$\ast$] they have an open $G$-orbit;
	\item[$\ast$] the $G$-modules associated to the varieties, for example global sections of $G$-linearized line bundles, are multiplicity free $G$-modules;
	\item[$\ast$] they are classified in terms of colored fans.
	\end{itemize}
	
	The colored fans of spherical varieties depend on data, called spherical data, defined from the open $G$-orbit. The spherical data can differ a lot from a spherical homogeneous space to another. This makes it difficult to study the geometry of all spherical varieties. That is why we often focus on remarkable subfamilies, as the family of horospherical varieties where the open $G$-orbit is a torus fibration over a flag variety. For horospherical varieties, the spherical data is quite simple, so that the combinatorial objects (colored fans,...) are a nice mix of combinatorial objects coming from toric varieties (fans, polytopes,...) and from flag varieties (root systems).

\vskip \baselineskip
We give a (non-exhaustive) list of recent results about horospherical varieties, related to the results and proofs of the paper.
	\begin{itemize}
	\item[$\ast$] There exists a smoothness criterion (really easier to apply than the general one existing for spherical varieties) \cite{these}.
	\item[$\ast$] Fano horospherical varieties are classified in terms of some types of polytopes \cite{Fanohoro}; this result was generalized to spherical varieties \cite{Gagliardi-Hofscheier1}.
	\item[$\ast$] Smooth projective horospherical varieties of Picard group $\Zbb$ are classified \cite{2orbits} and studied in several works: \cite{locallyrigid}, \cite{LiQifeng}, \cite{Kim}, \cite{GPPS},... Note that the only smooth projective toric varieties of Picard group $\Zbb$ are the projective spaces, and that for horospherical varieties we obtained non-homogeneous varieties: 5 families of two-orbit varieties (including two infinite families). 
	\item[$\ast$] The  Minimal Model Program (MMP) \cite{MMPhoro} and the Log MMP \cite{LogMMPhoro} for horospherical varieties can be constructively described in terms of one-parameter families of polytopes.
	\end{itemize}

\subsection{Results of the paper}	

We classify and give a first study of the geometry of smooth projective horospherical varieties of Picard group $\Zbb^2$. For toric varieties, these are only decomposable projective bundles over projective spaces \cite{Kleinschmidt}. But for horospherical varieties, there are many other cases. 

Indeed, in addition to homogeneous spaces, products of two varieties and decomposable projective bundles over projective spaces, we distinguish several other types of such horospherical varieties.  We classify them in this paper, in particular  by studying their Log MMP.

\vskip\baselineskip	
To state as nicely as possible the classification of smooth projective horospherical varieties of Picard group $\Zbb^2$, we extend the notion of simple roots to the groups $\Cbb^*$ and $\{1\}$. We first briefly recall the case of simple groups (in this paper, a simple group has positive semi-simple rank).

If $G$ is a simply connected simple group, we fix a maximal torus contained in a Borel subgroup $B$ of $G$, then it defines a root system and in particular a set of simple roots. To each simple root $\alpha$ are associated a fundamental weight denoted by $\varpi_\alpha$ and a fundamental $G$-module denoted by $V(\varpi_\alpha)$. More generally, if $\chi$ is a dominant weight (a non-negative sum of fundamental weights) we denote by $V(\chi)$ the $G$-module associated to $\chi$: it is the unique irreducible $G$-module that contains a unique $B$-stable line where $B$ acts with weight $\chi$. A non-zero element of the $B$-stable line of $V(\chi)$ is called a highest weight vector (of weight $\chi$) and the stabilizer of the $B$-stable line of $V(\chi)$ is denoted by $P(\chi)$ (it is a parabolic subgroup of $G$ containing $B$).

In this paper, if $G=\Cbb^*$,  we call the identity automorphism of $\Cbb^*$ the {\it simple root} of $G$; we denote it by $\alpha$, and  we set $\varpi_{\alpha}=\alpha$. Then the natural $\Cbb^*$-module $\Cbb$ is denoted by $V(\varpi_\alpha)$ where $\alpha$ is the simple root of $\Cbb^*$. And for any $n\in\Zbb$, $V(n\varpi_\alpha)$ is the $\Cbb^*$-module $\Cbb$ where $\Cbb^*$ acts with weight $n\varpi_\alpha$; in particular, any character of $\Cbb^*$ is dominant.
Moreover, if $G=\{1\}$, we call the trivial morphism from $G$ to $\Cbb^*$ the {\it simple root} of $G$; we denote it by $\alpha$, and we set $\varpi_{\alpha}=0$. In these two cases a highest weight vector is any non-zero vector.

Suppose now that $G$ is a product $G_0\times\cdots\times G_t$ of simply connected simple groups,  $\Cbb^*$ and $\{1\}$. A simple root of $G$ is a simple root of some $G_i$ and it is said to be {\it trivial} if $G_i$ is equal to $\Cbb^*$ or $\{1\}$.
Moreover if $\chi_0,\dots,\chi_t$ are respectively dominant weights of $G_0,\dots,G_t$,  the $G$-module associated to $\chi=\chi_0+\cdots+\chi_t$ is the tensor product $V(\chi_0)\otimes\cdots\otimes V(\chi_t)$ and a highest weight vector of this $G$-module is a decomposable tensor product of highest weight vectors.

\vskip\baselineskip
In Definition~\ref{def:varieties}, we define two types of projective horospherical varieties $\mathbb{X}^1$ and $\mathbb{X}^2$ with Picard group $\Zbb^2$. We describe them explicitly as the closure of some $G$-orbit of a sum of highest weight vectors in the projectivization of a $G$-module, with the convention above. These varieties depend on the group $G$, on a simple root $\beta$,  on a tuple $\underline{\alpha}$ of, possibly trivial, simple roots of $G$ and on a tuple $\underline{a}$ of positive integers.

We can now state the two main results of this paper.
	
\begin{teo}\label{th:main}
	
Let $X$ be a smooth projective horospherical variety with Picard group $\Zbb^2$. Suppose that $X$ is not the product of two varieties. Then $X$ is isomorphic to one of the following horospherical varieties (which we still denote by $X$).

In all cases, $G$ is a product of simply connected simple groups, $\Cbb^*$ and $\{1\}$.

\begin{enumerate}[label=Case (\arabic*): ,start=0]
	\item $G$ is simple and $X$ is a homogeneous variety $G/P$ where $P$ is the intersection of two maximal (proper) parabolic subgroups of $G$ containing the same Borel subgroup.
	
	\item $X$ is one of the variety $\mathbb{X}^1(G,\beta,\underline{\alpha},\underline{a})$ as in Definition~\ref{def:varieties} with one of the restricted conditions (a), (b) or (c) described in Definition~\ref{def:RC1}.
	
	\item $X$ is a variety $\mathbb{X}^2(G,\underline{\alpha},\underline{a})$ as in Definition~\ref{def:varieties} with one of the restricted conditions (a), (b) or (c) described in Definition~\ref{def:RC2}.
\end{enumerate}
\end{teo}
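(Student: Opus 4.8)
The plan is to convert the classification into a problem about colored fans and then to exploit that Picard rank $2$ leaves almost no combinatorial freedom. Recall that a smooth projective horospherical $G$-variety $X$ with open orbit $G/H$ is encoded by the parabolic $P=N_G(H)$ (equivalently a set $I$ of simple roots, $P=P_I$), by the lattice $M$ of characters of $P$ trivial on $H$, and by a complete smooth colored fan in $N_\Rbb=\Hom(M,\Rbb)$. First I would read off the Picard rank from this data: since $X$ is smooth it is locally factorial, so $\Pic(X)=\operatorname{Cl}(X)$, and the standard exact sequence $0\to M\to\Zbb^{\{B\text{-stable prime divisors}\}}\to\operatorname{Cl}(X)\to 0$ (with $M$ injecting because completeness forces the rays to span $N_\Rbb$) gives $\operatorname{rk}\Pic(X)=r+|S\setminus I|-\operatorname{rk}(M)$, where $r$ is the number of $G$-stable prime divisors and the colors are indexed by $S\setminus I$. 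Forcing this quantity to equal $2$ is the engine of the whole argument.

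Next I would split according to $n:=\operatorname{rk}(M)$. If $n=0$ then $X=G/P$ is a flag variety and the formula reads $|S\setminus I|=2$, so $P$ omits exactly two simple roots and is the intersection of two maximal parabolics; the non-product hypothesis forces the semisimple part of $G$ to be a single simple factor (two factors each omitting one root would split $G/P$ as a product, and trivial factors lie in $H$), giving precisely Case~(0). If $n\ge 1$, completeness forces $r\ge n+1$, so $r+|S\setminus I|-n=2$ leaves only $|S\setminus I|\in\{0,1\}$. The subcase $|S\setminus I|=0$ is toric (the flag part is a point) and is governed by Kleinschmidt's theorem \cite{Kleinschmidt}: the non-product ones are decomposable projective bundles over projective spaces, which reappear among the families below as the members whose simple roots are all trivial. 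The remaining, genuinely horospherical subcase $|S\setminus I|=1$, $r=n+1$, is where the interesting geometry sits.

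In that subcase the underlying fan is the unique smooth complete fan of $\Rbb^{n}$ with $n+1$ rays (the fan of $\Pbb^{n}$), decorated by a single color $\alpha_M\in N$; the remaining freedom is which maximal cones carry the color and where $\alpha_M$ sits relative to the rays. Geometrically this says $X$ is a decomposable $\Pbb^{n}$-bundle over the Picard-rank-$1$ flag variety $G/P$ attached to the one omitted root, and the defining tuple $(\beta,\underline{\alpha},\underline{a})$ records the equivariant line bundles (characters of $P$, with the $\underline{a}$ their degrees) and the summands of the ambient module, rather than extra colors. I would enumerate the finitely many colored-fan shapes, using the two extremal rays of the two-dimensional cone $\overline{NE}(X)$ — equivalently the two elementary contractions supplied by the Log~MMP for horospherical varieties \cite{LogMMPhoro} — to organize the bookkeeping: each contraction lands on a Picard-rank-$1$ horospherical variety from the known list \cite{2orbits}, and the way the two fit together rebuilds $X$. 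Matching the shapes against the orbit-closure models of Definition~\ref{def:varieties} produces the varieties $\mathbb{X}^1(G,\beta,\underline{\alpha},\underline{a})$ and $\mathbb{X}^2(G,\underline{\alpha},\underline{a})$, while feeding the fan into the horospherical smoothness criterion \cite{these} converts ``smooth'' into the numerical restricted conditions (a), (b), (c) of Definitions~\ref{def:RC1} and~\ref{def:RC2}.

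The main obstacle is the exhaustiveness and non-redundancy of this enumeration together with the passage between the abstract and the concrete descriptions. The rank-$2$ and smoothness constraints cut the combinatorics down sharply, but one must still (i) check that no colored-fan shape is overlooked and that isomorphic shapes are not double-counted, (ii) cleanly separate the varieties that split as products, which are to be excluded, from those that do not, and (iii) verify that each abstract combinatorial variety really is the claimed orbit closure $\overline{G\cdot[\,\sum v_{\chi_j}]}\subseteq\Pbb(V)$, with the smoothness criterion translating exactly into (a)--(c). Step~(iii) — computing the intrinsic colored-fan data of these orbit closures and confronting it with the criterion of \cite{these} — is where the real work lies; the reductions preceding it are comparatively formal.
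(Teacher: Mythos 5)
Your reduction collapses at the inequality ``completeness forces $r\geq n+1$'', and the error is fatal to the whole enumeration. Completeness of the colored fan forces it to have at least $n+1$ \emph{rays}, but a ray generated by the image $\sigma(\alpha)$ of a color $\alpha\in\mathcal{F}_X$ carries no $G$-stable prime divisor: the unique $B$-stable divisor with that valuation is the color closure $D_\alpha$, which you have already counted among the $|S\setminus I|$ colors. For a locally factorial horospherical variety one has $r=|\mathbb{F}_X(1)|-|\mathcal{F}_X|$ (this is exactly how Corollary~\ref{cor:locfactcrit} rewrites $\rho_X=m+|\mathcal{R}|-n$), so $\rho_X=2$ with $n\geq 1$ yields the dichotomy $(|\mathbb{F}_X(1)|-n,\,|\mathcal{R}\setminus\mathcal{F}_X|)=(1,1)$ or $(2,0)$ --- the paper's Cases (1) and (2) in Lemma~\ref{lem:FirstReduction} --- which bounds the number of colors \emph{not used by the fan}, but puts no bound whatsoever on the total number of colors $|\mathcal{R}|$. (If instead your $r$ were meant to count rays, the inequality would hold but your Picard-rank formula would be wrong, as colored rays would be counted twice.) A concrete counterexample to your conclusion: $\mathbb{X}^2(\operatorname{SL}_4\times\operatorname{Sp}_6,(\alpha_1(\operatorname{SL}_4),\alpha_2(\operatorname{SL}_4),\alpha_2(\operatorname{Sp}_6),\alpha_3(\operatorname{Sp}_6)),(0,1))$ is smooth, projective, not a product, has Picard group $\Zbb^2$ and rank $n=2$, and its colored fan has four rays, all colored, so it has $r=0$ $G$-stable prime divisors although it is complete. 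Your case analysis, which allows at most one color when $n\geq 1$, cannot produce it.

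In fact your enumeration misses most of the theorem: every $\mathbb{X}^1(G,\beta,\underline{\alpha},\underline{a})$ with at least one non-trivial $\alpha_i$ has $|\mathcal{R}|\geq 2$, and every $\mathbb{X}^2(G,\underline{\alpha},\underline{a})$ satisfying the restricted conditions of Definition~\ref{def:RC2} has at least the two colors $\alpha_n,\alpha_{n+1}$, which are forced to be non-trivial because $(G_t,\alpha_n,\alpha_{n+1})$ must be smooth of two-orbit type. What survives in your list is only Case (0) together with the varieties having at most one color --- essentially the toric/Kleinschmidt varieties and the decomposable projective bundles over a Picard-rank-one $G/P(\varpi_\beta)$. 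A secondary error propagates from the same misconception: in your one-omitted-root case you assert that $X$ is a decomposable $\Pbb^n$-bundle over $G/P(\varpi_\beta)$; this holds only when $\mathcal{F}_X=\emptyset$, whereas when colors lie in the fan the fibration $\psi:X\to G/P(\varpi_\beta)$ has fibers of varying dimension (cf.\ Proposition~\ref{prop:dimfibres1}), so there is no bundle structure, and identifying the resulting varieties as the orbit closures of Definition~\ref{def:varieties} requires the module-theoretic reductions of Section 4 (Lemmas~\ref{lem:gather} and~\ref{lem:gatherbis}, which enlarge $G$ to a group $\mathbb{G}$ and eliminate redundant presentations) that your sketch does not address.
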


\begin{rem}
\leavevmode
\begin{enumerate}
\item In Theorem~\ref{th:main}, the decomposable projective bundles over projective spaces are some very particular varieties $\mathbb{X}^1(G,\beta,\underline{\alpha},\underline{a})$ with restricted conditions (b) or (c). (See Remark~\ref{rem:decprojbund} for the complete description.)
\item The restricted conditions are useful for two reasons: to get $X$ smooth (and not only locally factorial) and to delete isomorphic cases.
\end{enumerate}
\end{rem}

In Theorem~\ref{th:main}, isomorphisms are not $G$-equivariant isomorphisms. Indeed the acting group is not necessarily the same for both varieties, so we cannot even consider $G$-equivariant isomorphisms. Note that in the paper, if not precised, isomorphisms are not supposed to be $G$-equivariant. Nevertheless, all contractions appearing in the (Log) MMP from a given horospherical $G$-varieties are automatically $G$-equivariant.

	The horospherical varieties given in Theorem~\ref{th:main} are all distinct, i.e., pairwise not isomorphic. This is a consequence of  the following result.
	
	\begin{teo}\label{th:main2}
	Let $X$ be  one of the varieties described in Theorem~\ref{th:main}.  Then  ``the'' Log MMP from $X$ gives the following in each case, respectively with the restricted conditions (a), (b) or (c). 
	
	\begin{enumerate}[label=Case (\arabic*): ,start=0]
	\item There are two Mori fibrations from $X$, respectively onto $Y$ and $Z$, with (general) fibers respectively not isomorphic to $Z$ and $Y$.

	\item 
	\begin{enumerate}
	\item A ``first'' Log MMP consists of a Mori fibration from $X$ to $G/P(\varpi_\beta)$ with general fibers not isomorphic to a projective space (but isomorphic to another homogeneous variety or to a two-orbit variety)  and a ``second'' one consists of a flip from $X$ followed by a fibration.
	\item  A ``first'' Log MMP consists of a Mori fibration from $X$ to $G/P(\varpi_\beta)$ with general fibers isomorphic to a projective space and  a ``second'' one consists of a finite sequence (possibly empty) of flips from $X$ followed by a fibration. Moreover, the fibers of this latter fibration are not all isomorphic.
	\item A ``first'' Log MMP consists of a Mori fibration from $X$ to $G/P(\varpi_\beta)$ with general fibers isomorphic to a projective space and a ``second'' one consists of a finite sequence (possibly empty) of flips from $X$ followed by a divisorial contraction.
	\end{enumerate}
	\item A ``first'' Log MMP consists of a fibration $\psi$ to a two-orbit variety, the general fiber $F_\psi$ of $\psi$ and a ``second'' Log MMP are described as follows.
	\begin{enumerate}
	\item  $F_\psi$ is not isomorphic to a projective space (but isomorphic to another homogeneous variety or to a two-orbit variety) and a ``second'' Log MMP consists of a flip from $X$ followed by a fibration.
	\item  $F_\psi$ is isomorphic to a projective space and a ``second'' Log MMP consists of a finite sequence (not empty) of flips from $X$ followed by a fibration.
	\item  $F_\psi$ is isomorphic to a projective space and a ``second'' Log MMP consists of a finite sequence (may be empty) of flips from $X$ followed by a divisorial contraction.
	\end{enumerate}
	\end{enumerate}
Moreover, in all cases, up to reordering and up to symmetries of Dynkin diagrams, the data $G$ (as a product of simply connected simple groups, $\Cbb^*$ and $\{1\}$), $\beta$, $\underline{\alpha}$ and $\underline{a}$ are invariants of the ``two canonical ways'' to realize the Log MMP from $X$ (and then invariants of $X$).
	\end{teo}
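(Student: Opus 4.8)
The plan is to run the Log MMP combinatorially, using the constructive description of \cite{LogMMPhoro} in terms of one-parameter families of polytopes. Since $\Pic(X)=\Zbb^2$, the two-dimensional space $N^1(X)_{\Rbb}=\Pic(X)\otimes\Rbb$ has a pseudo-effective cone $\overline{\mathrm{Eff}}(X)$ with exactly two extremal rays; each ray singles out one direction in which to run the MMP, and together they account for the ``two canonical ways'' of the statement. For each of the varieties $\mathbb{X}^1$ and $\mathbb{X}^2$ of Definition~\ref{def:varieties} I would first write down the associated moment polytope $Q$ together with the chamber decomposition of $N^1(X)_{\Rbb}$. Deforming the relevant divisor class from the interior toward one of the two boundary rays, the polytope $Q_t$ changes combinatorial type at the walls; reading off each wall-crossing --- a facet dropping dimension, a pair of faces being exchanged, or a facet being contracted --- yields respectively the Mori fibrations, flips, and divisorial contractions claimed in each case.

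Next, for each of Case~(0), Case~(1) and Case~(2) I would identify the targets and general fibers of the extremal contractions explicitly. The target of the ``first'' contraction is the flag variety $G/P(\varpi_\beta)$ (or a two-orbit variety in Case~(2)), recovered directly from the colored data of $X$, and its general fiber is again a smooth projective horospherical variety of Picard number one, hence one of the varieties classified in \cite{2orbits}. The trichotomy (a)/(b)/(c) enters here: by inspecting the restricted conditions of Definitions~\ref{def:RC1} and~\ref{def:RC2} one checks that the fiber is a projective space exactly in cases (b) and (c), and is a nontrivial homogeneous or two-orbit variety in case (a), while the ``second'' Log MMP terminates in a fibration in cases (a) and (b) and in a divisorial contraction in case (c). The smoothness criterion of \cite{these} is used throughout to guarantee that every intermediate variety and every fiber is smooth, and hence that each step of the program is of the expected type.

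Finally, to prove the invariance assertion I would argue that the two sequences of contractions determine the data $(G,\beta,\underline{\alpha},\underline{a})$ up to reordering of the tuples and up to symmetries of the Dynkin diagram. The flag variety $G/P(\varpi_\beta)$ occurring as a target recovers both $G$ and the distinguished root $\beta$; the isomorphism type of the general fiber, via its own Picard-number-one classification, pins down the tuple $\underline{\alpha}$; and the integers $\underline{a}$ are read off as the weights of the projective bundle structure, equivalently as the lattice lengths of the relevant edges of $Q$ at the wall-crossings. The main obstacle is precisely this last point: because the isomorphisms in Theorem~\ref{th:main} are not assumed $G$-equivariant, one must show that the abstract geometry of the two Log MMPs, stripped of the group action, still rigidly encodes all of the group-theoretic and numerical data. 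This requires combining the rigidity of the Picard-number-one two-orbit varieties of \cite{2orbits} with a careful bookkeeping of the edge lengths of the polytope family along the two deformation directions, so that distinct data tuples are shown to produce non-isomorphic Log MMP pictures.
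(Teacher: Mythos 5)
Your first two paragraphs follow the paper's own route: the ``two canonical ways'' come from the two generators $D_0,D_{n+1}$ of the nef cone (Proposition~\ref{prop:NefCone}), and the ``second'' Log MMP is read off from the one-parameter family of polytopes $\tilde{Q}^\epsilon$ of Theorem~\ref{th:recallMMP}, whose wall-crossings at the values $\epsilon=1+a_i$ produce exactly the flips, the possible divisorial contraction, and the final fibration; this is Sections~\ref{LogMMP1} and~\ref{LogMMP2} of the paper (Propositions~\ref{prop:polyfaces1} and~\ref{prop:polyfaces2}, Corollary~\ref{cor:LogMMP1} and its Case~(2) analogue), and your (a)/(b)/(c) trichotomy matches the paper's.

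The genuine gap is in the invariance assertion, which is where most of the paper's work lies. Your key claim that ``the flag variety $G/P(\varpi_\beta)$ occurring as a target recovers both $G$ and the distinguished root $\beta$'' is false: $\operatorname{Sp}_{2m}/P(\varpi_1)\cong\operatorname{SL}_{2m}/P(\varpi_1)\cong\Pbb^{2m-1}$, $G_2/P(\varpi_1)\cong\operatorname{Spin}_7/P(\varpi_1)$ is a quadric, and $\operatorname{Spin}_{2m+1}/P(\varpi_m)\cong\operatorname{Spin}_{2m+2}/P(\varpi_{m+1})$. Likewise the abstract isomorphism type of the general fiber of $\psi$ cannot ``pin down'' $\underline{\alpha}$: what is needed is the position of each $\alpha_i$ relative to $\beta$ in the Dynkin diagram, which the fiber alone does not see. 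The paper overcomes this with strictly finer invariants that your outline never constructs: it proves that the exceptional loci $E_l$ of the flips of the second Log MMP are closures of explicit $G$-orbits (Proposition~\ref{prop:exceptloci}), that the dimensions of the locally maximal and locally almost maximal fibers of the induced maps $\tilde{\phi_l}:E_l\backslash E_{l-1}\longrightarrow E'_l$ are intrinsic (Proposition~\ref{prop:dimfibres1}), and that these data yield the integers $i_1,\dots,i_k$ together with the sets of pairs $\bigl(\dim P(\varpi_{\alpha_j})/(P(\varpi_\beta)\cap P(\varpi_{\alpha_j})),\,\dim P(\varpi_\beta)/(P(\varpi_\beta)\cap P(\varpi_{\alpha_j}))\bigr)$; a closing case-by-case lemma (using Akhiezer's list of coincidences of flag varieties and a table over the Dynkin types, plus a separate treatment of the case $n=1$ with $\alpha_0,\alpha_1$ in the same simple factor) then shows these numbers determine $(G,\beta,\underline{\alpha})$ up to Dynkin symmetry. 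Your final paragraph correctly identifies non-equivariance as ``the main obstacle'' but proposes no mechanism to resolve it: ``rigidity of two-orbit varieties plus bookkeeping of edge lengths'' recovers $\underline{a}$ (the flip times $\epsilon=1+a_i$) but not the group-theoretic data, so the last statement of the theorem remains unproved in your proposal.
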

	
\begin{rem}
In the paper (Proposition~\ref{prop:NefCone}), we prove that for any smooth projective horospherical variety $X$ with Picard group $\Zbb^2$, the nef cone of $X$ is generated by the two elements of a basis of $\operatorname{Pic}(X)$, then this gives us two canonical ways to choose the log pair to compute Log MMP from $X$ (see Section~\ref{sec:MMP} for more details). Also, in Cases (1) and (2), one of the ``two canonical'' Log MMP is ``naturally'' defined (see Remark~\ref{rem:debutMMP}) and only consists of a fibration. 
\end{rem}
\begin{rem}
In Case (1b), if the sequence of flips is empty, we get two fibrations from $X$. They could be both onto homogeneous varieties. But one and only one of these fibrations has all its fibers isomorphic to each other (by Proposition~\ref{prop:dimfibres1}, items 3 and 4 with $l=k$). On the contrary, in Case (0), each  fibration has all their fibers isomorphic to each other.
\end{rem}
	
The paper is organized as follows. We first recall in Section~2 the results on horospherical varieties that we use in the paper. Then, in Section~3, we easily describe a first (but not optimal) combinatorial classification, containing many repetitions, and we give a first geometric description of all these latter cases that permits to define the two types of varieties $\mathbb{X}^1$ and $\mathbb{X}^2$. In Section~4,  we first define the restricted conditions used in the statement of Theorem~\ref{th:main}, and we prove the theorem. Then, in Section~5, we prove Theorem~\ref{th:main2}, by studying the Log MMP of all varieties of Theorem~\ref{th:main}.

	\section{Some known results on horospherical varieties}\label{sec:notation}
	
	\subsection{First definitions, first properties of divisors, and smoothness criterion}
	
	In this section, we present the classification of horospherical varieties in terms of colored fans  Then we give the criteria for divisors to be Cartier, globally generated, and ample. And we state the smoothness criterion. All are generalizations of the theory of toric varieties (without colors).
	
\vskip\baselineskip
Let $G$ be a connected reductive group. Fix a maximal torus $T$ and a Borel subgroup $B$ containing $T$. Denote by $U$ the unipotent radical of $B$, by $\mathcal{S}$ the set of simple roots of $(G,B,T)$, by $\mathfrak{X}(T)$ the lattice of characters of $T$ (or $B$) and by $\mathfrak{X}(T)^+\subset \mathfrak{X}(T)$ the monoid of dominant characters.
	
	For any lattice $L$ we denote by $L_\Qbb$ the $\Qbb$-vector space $L\otimes_\Zbb\Qbb$.
	
	\begin{defi}
	A {\it horospherical variety} $X$ is a normal $G$-variety with an open orbit isomorphic to $G/H$ where $H$ is a subgroup of $G$ containing $U$.
	
	Then $G/H$ is a torus fibration over the flag variety $G/P$ where $P$ is the parabolic subgroup of $G$ containing $B$ defined as the normalizer of $H$ in $G$. The dimension of the torus is called the rank of $G/H$ or the {\it rank} of $X$ and is denoted by $n$.
	
	We denote by $M$ the sublattice of $\mathfrak{X}(T)$ consisting of characters of $P$ whose restrictions to $H$ are trivial. Its dual lattice is denoted by $N$. (The lattices $M$ and $N$ are of rank~$n$.)
	
Let $\mathcal{R}$ be the subset of $\mathcal{S}$ consisting of simple roots  that are not simple roots of $P$ (i.e., simple roots associated to fundamental weights some multiples of which are characters of $P$).

For any simple root $\alpha\in \mathcal{R}$, the restriction of the coroot $\alpha^\vee$ to $M$ is a point of $N$, which we denote by $\alpha^\vee_M$. We denote by $\sigma$ the map $\alpha\longmapsto\alpha^\vee_M$ from $\mathcal{R}$ to $N$. 
	\end{defi}

\begin{defi}\label{defi:fan}
\leavevmode
\begin{enumerate}
\item A {\it colored cone} of $N_\Qbb$ is a pair $(\mathcal{C}, \mathcal{F})$
 where $\mathcal{C}$ is a convex cone of $N_\Qbb$ and $\mathcal{F}$ is a subset of $\mathcal{R}$ (called the set of colors of the colored cone), such that
\begin{enumerate}[label= (\roman*)]
\item $\mathcal{C}$ is generated by finitely many elements of $N$ and contains $\{\alpha^\vee_M\,\mid\,\alpha\in\mathcal{F}\}$,
\item $\mathcal{C}$ does not contain any line and
$\mathcal{F}$ does not contain any $\alpha$ such that $\alpha^\vee_M$ is zero.
\end{enumerate}
\item A {\it colored face} of a colored cone $(\mathcal{C}, \mathcal{F})$ is a pair $(\mathcal{C}', \mathcal{F}')$ such that $\mathcal{C}'$ is a face of $\mathcal{C}$ and $\mathcal{F}'$ is the set of $\alpha\in\mathcal{F}$ satisfying $\alpha^\vee_M\in\mathcal{C}'$.

\item A {\it colored fan} is a finite set $\mathbb{F}$ of colored cones such that
\begin{enumerate}[label= (\roman*)]
\item any colored face of a colored cone of $\mathbb{F}$ is in $\mathbb{F}$, and
\item any element of $N_\Qbb$ is in the relative interior of at most one colored cone of $\mathbb{F}$.

\end{enumerate}
\end{enumerate}
\end{defi}

The main result of Luna-Vust Theory of spherical embeddings is the following classification result (see for example \cite{Knop89}).

\begin{teo}[D.~Luna-T.~Vust]\label{th:Luna-Vust}
There is an explicit one-to-one correspondence between $G$-isomorphism classes of horospherical $G$-varieties with open orbit $G/H$ and  colored fans.

Complete $G/H$-embeddings correspond to complete fans, i.e., to fans such that $N_\Qbb$ is the union of the first components of their colored cones.
\end{teo}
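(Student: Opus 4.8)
The plan is to follow the Luna--Vust strategy for spherical embeddings, specialized to the horospherical case where it simplifies considerably. The starting point is that any $G/H$-embedding $X$ is covered by $G$-stable open subsets each of which is \emph{simple}, i.e.\ contains a unique closed $G$-orbit; so I would build the correspondence in two stages: first classify simple embeddings by single colored cones, then describe how simple embeddings glue, which produces colored fans. The feature of the horospherical case that makes this work cleanly is that the cone of $G$-invariant valuations of $\Cbb(G/H)$ is all of $N_\Qbb$ (equivalently, every element of $N_\Qbb$ is the restriction to $M$ of a $G$-invariant valuation); this is exactly what allows the first components of the colored cones to be \emph{arbitrary} strictly convex cones rather than being confined to a proper subcone.

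First I would attach to each $G$-stable prime divisor $D$ of a simple embedding $X$ the associated $G$-invariant valuation $\nu_D$ (order of vanishing along $D$) of the function field $\Cbb(G/H)$, and restrict it to the lattice $M$ of $B$-eigenvalues to obtain a point $\varrho(D)\in N$. Together with the images $\sigma(\alpha)=\alpha^\vee_M$ of those colors $\alpha\in\mathcal{R}$ whose associated $B$-stable divisor contains the closed orbit, these points generate a strictly convex cone $\mathcal{C}$, and the set of such colors is $\mathcal{F}$; this is the colored cone attached to $X$. The converse --- recovering $X$ from $(\mathcal{C},\mathcal{F})$ --- uses the $B$-chart given by the complement of the $B$-stable prime divisors that do not contain the closed orbit: this chart is affine, $B$-stable and meets every $G$-orbit, and $X$ is reconstructed as the spectrum of the algebra of functions whose $B$-weights lie in $M\cap\mathcal{C}^\vee$, reassembled with its $G$-translates. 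Normality of $X$ matches condition~(i), while strict convexity of $\mathcal{C}$ together with the requirement that no color map to $0$ matches condition~(ii) and is equivalent to the existence and uniqueness of the closed orbit. This step is the heart of the argument and the main obstacle, since it rests on Knop's classification of invariant valuations by their restriction to $M$ and on the local structure theory of spherical varieties; I would import these rather than reprove them.

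Next I would analyze gluing. Two simple embeddings glue along a common $G/H$-embedding exactly when their colored cones share a common colored face, and the open immersion of the simple embedding attached to a colored face $(\mathcal{C}',\mathcal{F}')$ into the one attached to $(\mathcal{C},\mathcal{F})$ is governed precisely by the face relation of Definition~\ref{defi:fan}(2). The two fan axioms then fall out: closure under taking colored faces records that every $G$-orbit of $X$ is the closed orbit of some simple open piece, and the condition that each point of $N_\Qbb$ lie in the relative interior of at most one cone is the combinatorial translation of separatedness of the glued variety. Conversely, any colored fan yields a separated normal $G/H$-embedding by gluing the simple pieces along the open immersions coming from shared faces, and the two constructions are mutually inverse, giving the stated bijection.

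Finally, completeness is handled by the valuative criterion of properness. An embedding $X$ is complete iff every $G$-invariant valuation of $\Cbb(G/H)$ has a center on $X$; since in the horospherical case invariant valuations are in bijection with the points of $N_\Qbb$, and a valuation whose restriction to $M$ is $v\in N_\Qbb$ has a center on $X$ iff $v$ lies in the support of the fan, it follows that $X$ is complete precisely when the union of the first components of its colored cones is all of $N_\Qbb$. This yields the second assertion.
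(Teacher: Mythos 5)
The paper does not actually prove this theorem: it is recalled as a classical result of Luna--Vust theory, with a pointer to Knop's survey \cite{Knop89}, and the rest of the paper is built on top of it. Your proposal is therefore not competing with an argument in the paper but with the literature the paper cites, and judged on those terms it is a faithful condensation of the standard proof: reduce to simple embeddings, classify these by colored cones using invariant valuations of $\Cbb(G/H)$ and the local structure theory, glue along colored faces with separatedness encoded by the condition on relative interiors, and characterize completeness by the existence of centers for invariant valuations, which in the horospherical case (where the valuation cone is all of $N_\Qbb$) gives exactly the support condition $\bigcup\mathcal{C}=N_\Qbb$ in the statement. Two remarks. First, the simplification you isolate --- that horospherical means the valuation cone is everything, so the usual requirement that the relative interior of each cone meet the valuation cone becomes vacuous and completeness becomes a plain covering condition --- is indeed the correct reason the statement takes this clean form here. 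Second, your appeal to ``the valuative criterion of properness'' hides a genuine step: the reduction from arbitrary valuations of $\Cbb(G/H)$ to $G$-invariant ones is itself a theorem of Luna--Vust theory rather than a formal consequence of the valuative criterion, so it belongs on the list of imported black boxes alongside Knop's classification of invariant valuations by their restrictions to $M$; with that acknowledged, your outline matches the proof in the cited reference.
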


If $G=(\Cbb^*)^n$ and $H=\{1\}$, we recover the well-known classification of toric varieties in terms of fans.

If $X$ is a $G/H$-embedding, we denote by $\mathbb{F}_X$ the colored fan of $X$ in $N_\Qbb$ and we denote by $\mathcal{F}_X$ the subset $\cup_{(\mathcal{C},\mathcal{F})\in\mathbb{F}_X}\mathcal{F}$ of $\mathcal{R}$, called the {\it set of colors} of $X$.

\vskip\baselineskip
From now on, $X$ is a complete horospherical variety as above. Let us recall now the characterization of Cartier, $\Qbb$-Cartier, globally generated and ample divisors of horospherical varieties, due to M.~Brion in the more general case of spherical varieties (\cite{briondiv}).

	First, we describe the $B$-stable prime divisors of $X$. We denote by $X_1,\dots,X_m$ the $G$-stable prime divisors of $X$. The valuations of $\Cbb(X)$ defined by the order of zeros and poles along these divisors define primitive elements of $N$, denoted by $x_1,\dots,x_m$ respectively.

	And the $B$-stable but not $G$-stable prime divisors of $X$ are the closures in $X$ of $B$-stable prime divisors of $G/H$, which are the inverse images by the torus fibration $G/H\longrightarrow G/P$ of the 
Schubert divisors of the flag variety $G/P$. The Schubert divisors of $G/P$ can be naturally indexed by the subset of simple roots $\mathcal{R}$. Hence, we denote the $B$-stable but not $G$-stable prime divisors of $X$ by $D_\alpha$ with $\alpha\in \mathcal{R}$ (note that $\sigma(\alpha)$ is the element of $N$ defined by the valuation of $\Cbb(X)$ defined by the zeros and poles along the divisor $D_\alpha$).

\begin{teo}[cf. Section 3.3 in \cite{briondiv}] \label{th:divcrit}  
Any divisor of $X$ is linearly equivalent to a linear combination of $X_1,\dots,X_m$ and $D_\alpha$ with $\alpha\in \mathcal{R}$.
Now, let $D=\sum_{i=1}^m a_i X_i +\sum_{\alpha\in \mathcal{R}} a_\alpha D_\alpha$ be a $\Qbb$-divisor of $X$.
\begin{enumerate}
\item $D$ is $\Qbb$-Cartier if and only if there exists a piecewise linear function $h_D:\,N_\Qbb\longrightarrow\Qbb$, linear on each colored cone of $\mathbb{F}_X$, such that for any $i\in\{1,\dots,m\}$, $h_D(x_i)=a_i$ and for any $\alpha\in\mathcal{F}_X$, $h_D(\alpha^\vee_M)=a_\alpha$.

And $D$ is linearly equivalent to $0$ if and only if $h_D$ is linear on $N_\Qbb$.

Moreover, if $D$ is a divisor, $D$ is Cartier if and only if it is $\Qbb$-Cartier and the linear functions defined as above can be identified with elements of $M$.

\item Suppose that $D$ is $\Qbb$-Cartier. Then $D$ is globally generated (resp. ample) if and only if the piecewise linear function $h_D$ is convex (resp. strictly convex) and for any $\alpha\in \mathcal{R}\backslash\mathcal{F}_X$, we have $h_D(\alpha^\vee_M)\leq a_\alpha$ (resp. $h_D(\alpha^\vee_M)< a_\alpha$).

 Suppose that $D$ is a $\Qbb$-Cartier $\Qbb$-divisor. We define the {\bf pseudo-moment polytope} of $(X,D)$ to be the polytope $\tilde{Q}_D$ in $M_\Qbb$ given by the following inequalities, where $\chi\in M_\Qbb$:  $(h_D)+\chi\geq 0$ and for any $\alpha\in\mathcal{R}\backslash\mathcal{F}_X$, $a_\alpha+\chi(\alpha^\vee_M)\geq 0$.

Let $v^0:=\sum_{\alpha\in \mathcal{R}}a_\alpha\varpi_\alpha$, we  define the {\bf moment polytope} of $(X,D)$ to be the polytope $Q_D:=v^0+\tilde{Q}_D$.

\item Suppose that $D$ is a Cartier divisor. 
Note that the weight of the canonical section of $D$ is $v^0$. Then the $G$-module $H^0(X,D)$ is the direct sum (with multiplicities one) of the irreducible $G$-modules of highest weights $\chi+v^0$ with $\chi$ in $\tilde{Q}_D\cap M$.    

\end{enumerate}
\end{teo}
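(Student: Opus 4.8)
The plan is to reduce everything to the local, combinatorial picture furnished by the Luna--Vust classification (Theorem~\ref{th:Luna-Vust}) and then to run, with the appropriate colour bookkeeping, the arguments familiar from toric geometry. The input I would set up first is the description of $B$-semiinvariant rational functions on $G/H$: these are exactly the functions $f_\chi$ with $\chi\in M$, normalised so that $f_\chi f_{\chi'}=f_{\chi+\chi'}$, and their divisors on $X$ are
\[
\operatorname{div}(f_\chi)=\sum_{i=1}^m \chi(x_i)\,X_i+\sum_{\alpha\in\mathcal R}\chi(\alpha^\vee_M)\,D_\alpha ,
\]
since $x_i$ is the primitive lattice vector attached to $X_i$ and $\sigma(\alpha)=\alpha^\vee_M$ is the valuation of $D_\alpha$. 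This already gives the principal divisors, hence the linear equivalences used throughout.

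For part~(1) I would work on the $G$-stable affine open $X_{\mathcal C}$ attached to a maximal colored cone $(\mathcal C,\mathcal F)$ of $\mathbb{F}_X$. On such a simple embedding every Cartier (resp.\ $\Qbb$-Cartier) divisor supported on $B$-stable prime divisors is principal, so it equals $\operatorname{div}(f_{\chi_{\mathcal C}})$ for some $\chi_{\mathcal C}\in M$ (resp.\ $M_\Qbb$). Comparing coefficients with $D$ forces $\chi_{\mathcal C}(x_i)=a_i$ for the rays $x_i$ of $\mathcal C$ and $\chi_{\mathcal C}(\alpha^\vee_M)=a_\alpha$ for $\alpha\in\mathcal F$; assembling the $\chi_{\mathcal C}$ over all maximal cones yields the piecewise linear function $h_D$, continuity along common faces guaranteeing it is well defined. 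The colour condition appears as an \emph{equality} only for $\alpha\in\mathcal{F}_X$ precisely because these $\alpha^\vee_M$ lie on cones of the fan; for the others the coefficient $a_\alpha$ is free and reappears in part~(2). Linear equivalence to $0$ is then the statement that all $\chi_{\mathcal C}$ coincide, i.e.\ that $h_D$ is globally linear, and integrality of Cartier divisors is the requirement that each $\chi_{\mathcal C}$ lie in $M$ rather than $M_\Qbb$.

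For part~(2), global generation of $\Ocal(D)$ can be tested $G$-orbit by $G$-orbit, and on each $X_{\mathcal C}$ it becomes the condition that the canonical section is not forced to vanish; exactly as in the toric case this translates into convexity of $h_D$ (with $h_D\le \chi_{\mathcal C}$ on each cone), while the divisors $D_\alpha$ with $\alpha\in\mathcal R\setminus\mathcal{F}_X$ do not sit on the fan and must be checked separately, producing the inequalities $h_D(\alpha^\vee_M)\le a_\alpha$. Ampleness is the strict version together with the local linear forms being pairwise distinct on adjacent cones. The polytope $\tilde Q_D$ is built to record exactly those $\chi$ with $\operatorname{div}(f_\chi)+D\ge 0$, and the shift by $v^0=\sum_\alpha a_\alpha\varpi_\alpha$ to $Q_D$ matches the weight of the canonical section. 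For part~(3) I would use that $X$ spherical makes $H^0(X,D)$ multiplicity free: writing a section as $f_\chi$ times the canonical section $s_0$ (of weight $v^0$), regularity is equivalent to $\operatorname{div}(f_\chi)+D\ge 0$, i.e.\ to $\chi\in\tilde Q_D\cap M$, and each such $\chi$ contributes the irreducible module $V(\chi+v^0)$ with multiplicity one.

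The step I expect to be most delicate is the colour bookkeeping: justifying cleanly why the colours in $\mathcal{F}_X$ enter as equalities while those in $\mathcal R\setminus\mathcal{F}_X$ enter only as inequalities, and tracking the divisors $D_\alpha$ that are not recorded by any cone of $\mathbb{F}_X$. The passage from $\Qbb$-Cartier to honestly Cartier (the integrality of the local linear forms) and the multiplicity-one statement in part~(3), which genuinely uses sphericity, are the other two points requiring care rather than routine computation.
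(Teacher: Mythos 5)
You are reconstructing Brion's proof --- the paper itself does not prove Theorem~\ref{th:divcrit}, it quotes it from \cite{briondiv} --- and your treatment of the principal-divisor formula $\operatorname{div}(f_\chi)=\sum_i\chi(x_i)X_i+\sum_\alpha\chi(\alpha^\vee_M)D_\alpha$ and of parts (2) and (3) follows that route in the right way. But the lemma on which you base part (1) is false. You assert that on the simple embedding $X_{\mathcal C}$ every $B$-stable Cartier divisor is principal, equal to $\operatorname{div}(f_{\chi_{\mathcal C}})$. Take $X=G/P$ (rank $n=0$, which is Case (0) of this very paper): it is a simple embedding of itself with $M=\{0\}$, so the only principal $B$-stable divisor is $0$, yet every color $D_\alpha$ is Cartier because $G/P$ is smooth. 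The claim fails whenever there are colors outside $\mathcal{F}$. Worse, if it were true your ``comparing coefficients'' step would be inconsistent: \emph{all} the colors $D_\alpha$, $\alpha\in\mathcal R$, are prime divisors of $X_{\mathcal C}$ (they meet the open orbit, which lies in $X_{\mathcal C}$), so an actual equality $D|_{X_{\mathcal C}}=\operatorname{div}(f_{\chi_{\mathcal C}})$ of divisors would force $\chi_{\mathcal C}(\alpha^\vee_M)=a_\alpha$ for every $\alpha\in\mathcal R$, not only for $\alpha\in\mathcal F$ --- a criterion strictly stronger than the theorem's, and wrong. Your write-up silently compares coefficients only along the rays of $\mathcal C$ and the colors in $\mathcal F$; that restriction is precisely the thing that has to be proved, not a consequence of the lemma you invoke.

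The statement you actually need, and which is the real content of Brion's argument, is local: $D$ is ($\Qbb$-)Cartier if and only if, for each colored cone $(\mathcal C,\mathcal F)$ of $\mathbb{F}_X$ with associated $G$-orbit $Y$, there exists $\chi_{\mathcal C}$ in $M$ (resp.\ $M_\Qbb$) such that $D-\operatorname{div}(f_{\chi_{\mathcal C}})$ has support not meeting $Y$; and the $B$-stable prime divisors whose closure contains $Y$ are exactly the $X_i$ with $x_i$ generating a ray of $\mathcal C$ together with the $D_\alpha$ with $\alpha\in\mathcal F$. This is where the equalities for $\alpha\in\mathcal{F}_X$ versus the mere inequalities for $\alpha\in\mathcal R\setminus\mathcal{F}_X$ really come from; it is not bookkeeping to be ``tracked'' after the fact but the core statement. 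Two further steps are then needed and are absent from your sketch: (i) the local equation of $D$ near $Y$ can be chosen $B$-semiinvariant (a linearization/local-structure argument), and (ii) ``Cartier near $Y$'' implies ``Cartier on $X_{\mathcal C}$'', because the non-Cartier locus is closed and $G$-stable --- using that $g\cdot D\sim D$ for $g$ in the connected group $G$, since $\operatorname{Cl}(X)$ is discrete --- hence would have to contain the closed orbit $Y$ if it were nonempty.
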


From now on, a divisor of a horospherical variety is always supposed to be $B$-stable, i.e., of the form $\sum_{i=1}^m a_i X_i +\sum_{\alpha\in \mathcal{R}} a_\alpha D_\alpha$.

	\begin{teo}[cf. Theorem~0.3 \cite{these}]\label{th:veryample}
	Let $X$ be a projective horospherical variety and let $D$ be an ample Cartier divisor of $X$. Suppose that $X$ is smooth. Then $D$ is very ample.
	\end{teo}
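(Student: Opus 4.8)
The plan is to prove that an ample Cartier divisor $D$ on a smooth projective horospherical variety is automatically very ample by reducing the statement to a combinatorial/representation-theoretic fact about the moment polytope, using the description of global sections in Theorem~\ref{th:divcrit}(3). Since $D$ is ample, a suitable multiple is very ample, so $X$ embeds in $\Pbb(H^0(X,mD)^*)$ for some $m$; the content of the theorem is that we may already take $m=1$. First I would reduce to showing that the global sections of $D$ separate points and tangent vectors, or equivalently that the natural rational map $X\dashrightarrow\Pbb(H^0(X,D)^*)$ is a closed embedding. Because everything here is $G$-equivariant and $X$ is spherical (hence has finitely many $G$-orbits, and finitely many $B$-orbits), it suffices to check very ampleness locally on the finitely many $B$-stable affine charts, or better, to verify the separation conditions orbit-by-orbit.

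The heart of the argument should be combinatorial. By Theorem~\ref{th:divcrit}(3), $H^0(X,D)$ is the multiplicity-free sum of irreducible modules $V(\chi+v^0)$ for $\chi$ in the (pseudo-)moment polytope lattice points $\tilde Q_D\cap M$, so the projective embedding is governed by the moment polytope $Q_D$ together with the root-system data encoded in $\sigma$ and the colors. My plan is to show that for a smooth $X$ the polytope $Q_D$ (equivalently, the lattice polytope $\tilde Q_D$) is \emph{normal}, i.e. its integral points generate the semigroup of lattice points of all dilates, and that the associated parabolic/flag directions are handled by the classical fact that the Plücker-type embeddings of flag varieties $G/P(\chi)$ by a single fundamental (or dominant) weight are already projectively normal and very ample. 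Concretely, smoothness of $X$ via the smoothness criterion (Theorem~\ref{th:Luna-Vust}'s circle of ideas, and the criterion of \cite{these}) forces each maximal colored cone to be generated by part of a lattice basis of $N$; dually this makes the vertices of $\tilde Q_D$ and the local cone structure unimodular, which is exactly the hypothesis under which toric very ampleness holds. The horospherical case then follows by combining this toric unimodularity in the $N$-directions with the very ampleness of line bundles on the flag variety $G/P$ in the ``color'' directions.

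The main steps, in order, are: (i) set up the rational map to $\Pbb(H^0(X,D)^*)$ and record that it is $G$-equivariant and defined on the open orbit via a sum of highest weight vectors; (ii) reduce very ampleness to a local separation statement on the finitely many $G$-stable (equivalently $B$-stable affine) charts, indexed by the maximal colored cones of $\mathbb{F}_X$; (iii) use the smoothness criterion to see each such chart is, combinatorially, a product of a unimodular affine toric chart with a Schubert-cell contribution from $G/P$; (iv) invoke projective normality/very ampleness of fundamental-weight embeddings of flag varieties together with the toric unimodular case to conclude that $D$ separates points and tangent directions on each chart and between charts; (v) patch to conclude very ampleness globally.

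The hard part will be step (iii)--(iv): translating the smoothness criterion of \cite{these} into a precise unimodularity statement at each vertex of the moment polytope that \emph{simultaneously} controls the toric directions (the lattice $N$ and the primitive ray generators $x_i$) and the color directions (the $\alpha^\vee_M$ and the fundamental weights $\varpi_\alpha$), and then feeding this into a separation-of-points-and-tangents argument that respects the $G$-action. In particular I expect the delicate bookkeeping to be showing that the inequalities $h_D(\alpha^\vee_M)<a_\alpha$ for $\alpha\in\mathcal{R}\setminus\mathcal{F}_X$ from the ampleness criterion in Theorem~\ref{th:divcrit}(2), combined with unimodularity of the colored cones, give \emph{enough} lattice points in $\tilde Q_D$ to generate all local semigroups — i.e. that no ``gap'' of the sort that already obstructs very ampleness of ample line bundles on singular (merely locally factorial) spherical varieties can occur once $X$ is smooth.
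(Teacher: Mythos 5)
You should know at the outset that the paper never proves Theorem~\ref{th:veryample}: it is quoted from the author's thesis (\cite{these}, Theorem~0.3), so your proposal can only be judged against a correct argument, not against text printed here. Your skeleton (reduce to the moment polytope via Theorem~\ref{th:divcrit}, exploit smoothness through unimodularity of colored cones, treat the flag directions via projective normality of $G/P(\lambda)\hookrightarrow \Pbb(V(\lambda))$) is the right kind of plan, but two of its steps are genuinely broken. The first is your central combinatorial target: you propose to prove that $\tilde{Q}_D$ is \emph{normal} (IDP), i.e. that its lattice points generate the semigroup of lattice points of all dilates. That statement is strictly stronger than very ampleness (it is projective normality), and it is not attainable by your stated means: already in the special case where $G$ is a torus, whether every ample divisor on a smooth toric variety has a normal polytope is a well-known open problem (Oda's question), and unimodularity of the vertex cones is not known to imply it. What very ampleness actually requires is only the vertex-localized statement: for each maximal colored cone $(\mathcal{C},\mathcal{F})$ of $\mathbb{F}_X$ with associated vertex $v_{\mathcal{C}}$ of $\tilde{Q}_D$, every $\chi\in k\tilde{Q}_D\cap M$ satisfies that $\chi+jv_{\mathcal{C}}$ is a sum of $k+j$ points of $\tilde{Q}_D\cap M$ for some $j\geq 0$. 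This weaker statement does follow from your inputs: by Corollary~\ref{cor:locfactcrit} the cone $\mathcal{C}$ is spanned by a basis $(e_1,\dots,e_n)$ of $N$; ampleness makes the inequalities $a_\alpha+\chi(\alpha^\vee_M)\geq 0$, $\alpha\in\mathcal{R}\backslash\mathcal{F}_X$, strict at $v_{\mathcal{C}}$, so the vertex cone of $\tilde{Q}_D$ there is exactly $\mathcal{C}^\vee$, spanned by the dual basis $(e_i^*)$; Cartier-ness puts all vertices of $\tilde{Q}_D$ in $M$, so each first edge point $v_{\mathcal{C}}+e_i^*$ lies in $\tilde{Q}_D\cap M$; writing $\chi-kv_{\mathcal{C}}=\sum_i c_ie_i^*$ with $c_i\in\Zbb_{\geq 0}$ and taking $j=\max(0,\sum_i c_i-k)$ finishes.

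The second gap is that nothing in your plan explains why polytope combinatorics controls very ampleness of a $G$-variety at all, and the geometric mechanism you offer in its place is false. The indispensable fact, absent from your write-up, is that for $H\supset U$ the section ring $R=\bigoplus_k H^0(X,kD)$ is $(M\times\Zbb)$-graded with every homogeneous piece zero or an irreducible module $V(\chi+kv^0)$ (Theorem~\ref{th:divcrit}(3)), so that, $R$ being a domain, the product of two nonzero pieces is a nonzero submodule of an irreducible piece, hence the \emph{full} Cartan component. This is what converts the vertex computation above into the statement that $\Cbb[X_s]=\bigcup_k H^0(X,kD)\,s^{-k}$ is generated by the ratios $t/s$, $t\in H^0(X,D)$, where $s$ is the highest weight section of weight $v^0+v_{\mathcal{C}}$; since the $G$-translates of the affine open sets $X_s$ cover $X$, this is exactly the chart criterion for the morphism to $\Pbb(H^0(X,D)^*)$ to be a closed immersion. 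It is also precisely what separates horospherical from general spherical varieties, where products of sections acquire lower-order components and the combinatorial dictionary breaks down. By contrast, your step (iii) is wrong as stated: when $\mathcal{F}\neq\emptyset$, the chart around the corresponding closed orbit is \emph{not} a product of a unimodular toric chart with a Schubert cell; the local structure theorem gives $R_u(P)\times Z$ with $Z$ a smooth affine horospherical module for a Levi subgroup, for instance $\Cbb^{2m}$ acted on through $\operatorname{Sp}_{2m}$, in which toric and group directions are inseparably mixed --- this is the very reason Theorem~\ref{th:smooth} carries a Dynkin-diagram condition. A final irony: once repaired as above, the proof uses only local factoriality (Corollary~\ref{cor:locfactcrit}) and never the Dynkin condition, so your closing intuition that it is smoothness beyond local factoriality which excludes the obstruction is also misplaced.
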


	Since $H\supset U$ and the unique $U$-stable lines of irreducible $G$-modules are the lines generated by highest weight vectors, we deduce from Theorems~\ref{th:divcrit} and \ref{th:veryample} the following result. 
	
	\begin{cor}\label{cor:smoothembedd}
	Let $X$ be a smooth projective horospherical variety and let $D$ be an ample Cartier divisor of $X$. Then $X$ is isomorphic to the closure of the $G$-orbit of a sum of highest weight vectors in $\Pbb(\oplus_{\chi\in\tilde{Q}_D\cap M}V(\chi+v^0)).$
	\end{cor}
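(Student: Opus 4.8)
The plan is to assemble \Cref{cor:smoothembedd} directly from the two ingredients already established, namely \Cref{th:divcrit} (part 3) and \Cref{th:veryample}, together with the structural hypothesis $H\supset U$. First I would fix a smooth projective horospherical $X$ and an ample Cartier divisor $D$. By \Cref{th:veryample}, since $X$ is smooth and $D$ is ample Cartier, $D$ is very ample; hence the complete linear system $|D|$ defines a closed embedding
\[
\Phi_{|D|}\colon X\hookrightarrow \Pbb(H^0(X,D)^*).
\]
So the whole task reduces to identifying the $G$-module $H^0(X,D)$ and pinning down the image of the open orbit (and thus of its closure $X$) inside the projectivization.

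Next I would invoke \Cref{th:divcrit}(3), which tells us that as a $G$-module
\[
H^0(X,D)\;=\;\bigoplus_{\chi\in\tilde{Q}_D\cap M} V(\chi+v^0),
\]
each irreducible summand occurring with multiplicity one, and with $v^0=\sum_{\alpha\in\mathcal{R}}a_\alpha\varpi_\alpha$ the weight of the canonical section. This rewrites the ambient projective space as $\Pbb\!\left(\oplus_{\chi\in\tilde{Q}_D\cap M}V(\chi+v^0)\right)$, matching the target in the statement (up to the standard identification of $\Pbb(W)$ with $\Pbb(W^*)$ via the very ample embedding, which I would just note is the usual convention). The remaining point is to check that $X$ is the \emph{closure of the $G$-orbit of a sum of highest weight vectors}, i.e.\ to locate the image of the base point of the open orbit $G/H$.

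Here is where the hypothesis $H\supset U$ does the essential work. I would argue that the image $\Phi_{|D|}(eH)$ of the base point spans a line that is $U$-stable: since the canonical section and, more precisely, the evaluation of global sections at the base point is $H$-equivariant and $U\subset H$, the point $\Phi_{|D|}(eH)\in\Pbb(H^0(X,D)^*)$ is fixed by $U$. The explicit parenthetical reminder in the corollary's preamble is precisely the key structural fact to exploit: the only $U$-stable lines in an irreducible $G$-module are those generated by highest weight vectors. Applying this summand by summand to the multiplicity-free decomposition above, the $U$-fixed point corresponds to a vector whose component in each $V(\chi+v^0)$ is (a scalar multiple of) a highest weight vector; since every summand must contribute nontrivially for the embedding to be nondegenerate on the orbit, this vector is a \emph{sum} of highest weight vectors, one from each $V(\chi+v^0)$. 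Taking the $G$-orbit of this point and then its closure recovers $X$, because $X$ is the closure of $G/H=G\cdot(eH)$ and $\Phi_{|D|}$ is a closed embedding.

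I expect the main obstacle to be the careful bookkeeping in the last step: verifying that the $U$-fixed point really has a nonzero highest weight component in \emph{every} summand $V(\chi+v^0)$ (so that it is a genuine sum of highest weight vectors, not supported on a proper subfamily), and correctly handling the dualization $\Pbb(H^0(X,D))$ versus $\Pbb(H^0(X,D)^*)$ and the extended conventions for the trivial factors $\Cbb^*$ and $\{1\}$ introduced earlier. All of these are structural rather than computational, and each follows from nondegeneracy of the very ample embedding combined with the $H$-equivariance of evaluation; once they are in place, the statement of \Cref{cor:smoothembedd} is immediate.
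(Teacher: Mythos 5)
Your proposal is correct and follows exactly the paper's own route: the paper deduces the corollary in one sentence from Theorem~\ref{th:veryample} (very ampleness), Theorem~\ref{th:divcrit}(3) (the multiplicity-free decomposition of $H^0(X,D)$), and the fact that $H\supset U$ together with the uniqueness of $U$-stable lines in irreducible $G$-modules, which is precisely your argument spelled out in more detail. The only point the paper handles differently is the dualization $V(\chi+v^0)$ versus $V(\chi+v^0)^*$, which you flag as a convention but the paper resolves by citing \cite[Remark~2.13]{MMPhoro}.
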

	
	We should have $V(\chi+v^0)^*$ instead of $V(\chi+v^0)$, but the corollary is still true as stated above, see \cite[Remark~2.13]{MMPhoro}.
	
\vskip\baselineskip
From Theorem~\ref{th:divcrit}, we can also deduce a locally factoriality criterion.
	
	\begin{cor}\label{cor:locfactcrit}
	A horospherical variety $X$ is locally factorial if and only if for any colored cone $(\mathcal{C},\mathcal{F})$ of $\mathbb{F}_X$,  $\mathcal{C}$ is generated by part of a basis of $N$ and the map $\sigma:\,\alpha\longmapsto \alpha^\vee_M$  induces an injective map from $\mathcal{F}$ to this basis.
	
	In particular if $X$ is locally factorial, the Picard number of $X$ is given by the following formula $$\rho_X=m+|\mathcal{R}|-n=(|\mathbb{F}_X(1)|-n) + |\mathcal{R}\backslash\mathcal{F}_X|,$$ where $\mathbb{F}_X(1)$ is the set of edges (one-dimensional colored cones) of $\mathbb{F}_X$.
	\end{cor}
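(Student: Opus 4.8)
The plan is to read off local factoriality directly from the Cartier criterion of Theorem~\ref{th:divcrit}(1). Recall that a normal variety is locally factorial exactly when every Weil divisor is Cartier, and that by the first assertion of Theorem~\ref{th:divcrit} every Weil divisor is linearly equivalent to a $B$-stable one $D=\sum_{i=1}^m a_iX_i+\sum_{\alpha\in\mathcal{R}}a_\alpha D_\alpha$ with $a_i,a_\alpha\in\Zbb$. Hence $X$ is locally factorial iff such a $D$ is Cartier for \emph{all} integer choices of the coefficients. By Theorem~\ref{th:divcrit}(1) this amounts to asking for a piecewise linear $h_D$ on $N_\Qbb$, linear on each colored cone and there given by an element of $M=\Hom(N,\Zbb)$, with $h_D(x_i)=a_i$ and $h_D(\alpha^\vee_M)=a_\alpha$ for $\alpha\in\mathcal{F}_X$. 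Since this is a condition cone by cone, I would fix a maximal colored cone $(\mathcal{C},\mathcal{F})$ and study when the prescribed integer values on its generating lattice points — the ray generators $x_i$ lying in $\mathcal{C}$ together with the $\alpha^\vee_M$, $\alpha\in\mathcal{F}$ — extend to an element of $M$.

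For the implication ($\Leftarrow$), assume each $\mathcal{C}$ is generated by part of a basis of $N$ and that $\sigma$ is injective from $\mathcal{F}$ into that basis. Then the generating lattice points of $\mathcal{C}$ are distinct members of a $\Zbb$-basis $e_1,\dots,e_n$ of $N$, so I can define $m_\mathcal{C}\in M$ by prescribing the required values $a_i$, $a_\alpha$ on those basis vectors and, say, $0$ on the remaining $e_j$. On a common face of two maximal cones the generators form a common subset of a basis, so the corresponding local functions necessarily agree there; thus the $m_\mathcal{C}$ glue to a well-defined $h_D$ meeting Theorem~\ref{th:divcrit}(1), and $D$ is Cartier. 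As $D$ was arbitrary, $X$ is locally factorial.

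For ($\Rightarrow$) I would argue contrapositively at a maximal cone where the condition fails. If the generating points of $\mathcal{C}$ are not a set of distinct vectors, i.e.\ two colors satisfy $\alpha^\vee_M=\alpha'^\vee_M$ (so $\sigma|_\mathcal{F}$ is not injective; the analogous coincidence of a color with an $x_i$ does not occur since colors and $G$-stable rays give distinct edges of $\mathbb{F}_X$), then choosing $a_\alpha\neq a_{\alpha'}$ already obstructs the existence of $h_D$, so $D$ is not even $\Qbb$-Cartier. If instead the generators are distinct but not part of a basis, then either they are linearly dependent, so that no linear $h_D$ exists for generic integer data (failure of $\Qbb$-Cartierness), or they are independent but generate a finite-index proper sublattice, in which case a standard lattice argument exhibits integer values $a_i,a_\alpha$ whose induced rational functional is non-integral on some point of $N\cap\mathcal{C}$, so no $m_\mathcal{C}\in M$ realizes them. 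In every case some $B$-stable Weil divisor is not Cartier, so $X$ is not locally factorial. This converse — converting the failure of the basis condition into an explicit non-Cartier divisor via the index/simpliciality dichotomy — is the step I expect to require the most care.

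For the Picard number, once local factoriality gives $\operatorname{Pic}(X)=\operatorname{Cl}(X)$, I would compute the rank of $\operatorname{Cl}(X)$, which is presented by the generators $[X_1],\dots,[X_m],[D_\alpha]\ (\alpha\in\mathcal{R})$ modulo the relations $\operatorname{div}(\chi)$, $\chi\in M$. Since $X$ is complete its rays span $N_\Qbb$, so $M\hookrightarrow\Zbb^{m+|\mathcal{R}|}$ has rank $n$ and $\rho_X=m+|\mathcal{R}|-n$. For the second expression I would note that local factoriality forces each color of $\mathcal{F}_X$ to contribute its own colored edge, distinct from the $m$ $G$-stable edges, so that $|\mathbb{F}_X(1)|=m+|\mathcal{F}_X|$; substituting yields $(|\mathbb{F}_X(1)|-n)+|\mathcal{R}\setminus\mathcal{F}_X|=(m+|\mathcal{F}_X|-n)+(|\mathcal{R}|-|\mathcal{F}_X|)=m+|\mathcal{R}|-n$, as claimed.
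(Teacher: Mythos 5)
Your proposal is correct and follows exactly the route the paper intends: the paper gives no written proof of Corollary~\ref{cor:locfactcrit}, merely asserting it can be deduced from Theorem~\ref{th:divcrit}, and your argument carries out precisely that deduction (cone-by-cone lattice interpolation for the criterion, with the index-obstruction argument for the converse, and the class-group presentation plus the count $|\mathbb{F}_X(1)|=m+|\mathcal{F}_X|$ for the Picard number). The only steps you leave compressed — that each colored cone of $\mathbb{F}_X$ is generated by the $x_i$'s it contains together with $\sigma(\mathcal{F})$, and the finite-index congruence argument — are standard Luna--Vust/toric facts, so this matches the level of detail the paper itself assumes.
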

	 
	Note that the characterizations of Cartier, $\Qbb$-Cartier, globally generated and ample divisors can be also applied without the completeness assumption. In particular, Corollary~\ref{cor:locfactcrit} also does not need the completeness assumption.
	
To formulate the smoothness criterion we need to give the following definition.
	
	\begin{defi}(\cite[Def. 2.4]{these})
	Let $\mathcal{R}_1$ and $\mathcal{R}_2$ be two disjoint subsets of $\mathcal{S}$. Let $\Gamma_{\mathcal{R}_1\cup\mathcal{R}_2}$ be the maximal subgraph of the Dynkin diagram of $G$ whose vertices are in $ \mathcal{R}_1\cup\mathcal{R}_2$.
	
	The  {\it pair} $(\mathcal{R}_1,\mathcal{R}_2)$ is said to be {\it smooth} if, for any connected component $\Gamma$ of $\Gamma_{\mathcal{R}_1\cup\mathcal{R}_2}$,
	\begin{enumerate}
	\item there is at most one vertex of $\Gamma$ in $\mathcal{R}_2$  and,
	\item if $\alpha\in\mathcal{R}_2$ is a vertex of $\Gamma$, then $\Gamma$ is of type $A$ or $C$ and $\alpha$ is a short extremal simple root of $\Gamma$.
\end{enumerate}	  
	\end{defi}

\begin{teo}[cf. Theorem~2.6 of \cite{these}]\label{th:smooth}
Let $X$ be a locally factorial horospherical variety. Then $X$ is smooth if and only if for any colored cone $(\mathcal{C},\mathcal{F})$ of $\mathbb{F}_X$, the pair $(\mathcal{S}\backslash\mathcal{R},\mathcal{F})$ is smooth.
\end{teo}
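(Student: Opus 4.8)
The plan is to reduce the statement to a purely local computation at finitely many distinguished points, and then to recognize the local model near each closed orbit as a multicone over a partial flag variety, whose smoothness is governed exactly by the Dynkin-diagram conditions entering the definition of a smooth pair.

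First I would reduce to the distinguished points. Smoothness is open and its failure locus is closed and $G$-stable; since every orbit closure contains a closed $G$-orbit, $X$ is smooth if and only if it is smooth along each closed orbit, and by $G$-equivariance this holds if and only if it is smooth at a single distinguished point $z$ of the closed orbit of each simple sub-embedding $X_{(\mathcal{C},\mathcal{F})}$ attached to a maximal colored cone of $\mathbb{F}_X$ (using the Luna--Vust correspondence, Theorem~\ref{th:Luna-Vust}). So I fix a maximal colored cone $(\mathcal{C},\mathcal{F})$ and work in the affine chart $X_{(\mathcal{C},\mathcal{F})}$.

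Next I would invoke the local structure theorem for spherical varieties (M.~Brion). Letting $Q$ be the parabolic generated by $P$ together with the root subgroups attached to the colors, the Levi $L_Q$ has simple roots $(\mathcal{S}\setminus\mathcal{R})\cup\mathcal{F}$ and the closed orbit is $G/Q$; there is an $L_Q$-stable affine slice $S$ through $z$ with $R_u(Q)\times S$ open in $X_{(\mathcal{C},\mathcal{F})}$. As $R_u(Q)$ is an affine space, $X$ is smooth at $z$ if and only if $S$ is smooth at $z$. Moreover $S$ is an affine horospherical $L_Q$-variety carrying a contracting action of the connected center of $L_Q$ (a product of copies of $\Cbb^*$) with $z$ as attractive fixed point; thus $S$ is a multicone with vertex $z$, and such a multicone is smooth at its vertex if and only if it is a linear space. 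Using Corollary~\ref{cor:smoothembedd} and Theorem~\ref{th:divcrit} I would describe $S$ concretely as the closure of the orbit of a sum of highest weight vectors and factor it as: a toric factor coming from the rays of $\mathcal{C}$ carrying no color together with the torus directions, which by local factoriality (Corollary~\ref{cor:locfactcrit}: $\mathcal{C}$ is generated by part of a basis of $N$ and $\sigma$ is injective on $\mathcal{F}$) is a smooth affine space; and, for each connected component $\Gamma$ of the subdiagram on $(\mathcal{S}\setminus\mathcal{R})\cup\mathcal{F}$, a multicone over the partial flag variety $G_\Gamma/Q_\Gamma$ sitting inside $\prod_{\alpha\in\Gamma\cap\mathcal{F}}\Pbb(V(\varpi_\alpha))$. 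Since a product is smooth at the relevant point exactly when each factor is, the analysis comes down to the individual $\Gamma$-factors.

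Finally I would carry out the flag-variety computation. If $\Gamma\cap\mathcal{F}=\emptyset$ the factor is trivial. If $\Gamma$ contains two colors, $G_\Gamma/Q_\Gamma$ is a genuine two-step flag variety of dimension strictly smaller than that of the ambient product of projective spaces, so its multicone is singular at the vertex; this forces condition~(1) of a smooth pair. If $\Gamma$ contains a single color $\alpha$, the factor is the affine cone over $G_\Gamma/P(\varpi_\alpha)\hookrightarrow\Pbb(V(\varpi_\alpha))$, which is linear (hence smooth) if and only if this embedding is surjective, i.e. $\dim G_\Gamma/P(\varpi_\alpha)=\dim V(\varpi_\alpha)-1$; running through the simple types shows this happens precisely when $\Gamma$ is of type $A$ with $\alpha$ extremal or of type $C$ with $\alpha$ the short extremal root, which is exactly condition~(2). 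Combining the factors, $S$, and hence $X$, is smooth at $z$ if and only if $(\mathcal{S}\setminus\mathcal{R},\mathcal{F})$ is a smooth pair, and requiring this for every colored cone yields the theorem. I expect the main obstacle to be the third step: establishing the slice decomposition rigorously and checking that local factoriality genuinely decouples the uncolored (toric) directions from the colored ones, so that the multicone really splits as the product, indexed by the components $\Gamma$, of the flag-cone factors with a smooth affine-space factor.
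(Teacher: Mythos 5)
The paper itself contains no proof of Theorem~\ref{th:smooth}: it is quoted from the author's thesis \cite{these} (Theorem~2.6 there), so there is no in-paper argument to compare you against. Measured against the original proof in \cite{these}, your outline reconstructs it in all essentials and in the same order: reduction to the closed orbits of the simple sub-embeddings; the local structure theorem to write an open set as $R_u(Q)\times S$ with $S$ an affine horospherical variety for the Levi $L_Q$ whose simple roots are $(\mathcal{S}\setminus\mathcal{R})\cup\mathcal{F}$; the observation that a (multi)cone is smooth at its attractive fixed point only if it is a linear subspace, hence an $L_Q$-module; and the classification showing that the cone over $G_\Gamma/P(\varpi_\alpha)\subset\Pbb(V(\varpi_\alpha))$ fills the whole module exactly for $(A,\ \alpha\text{ extremal})$ and $(C,\ \alpha\text{ the short extremal root})$, while two colors on one connected component force $\dim G_\Gamma/Q_\Gamma+2$ to be strictly less than $\dim V(\varpi_\alpha)+\dim V(\varpi_\beta)$ and hence a singular vertex. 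This is precisely how conditions (1) and (2) of the definition of a smooth pair arise.

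Two remarks on the points you left open. First, the splitting you flag as the main obstacle does go through, and the clean way to see it is algebraic rather than geometric: since $S$ is horospherical, $\Cbb[S]$ is multiplicity free, $\Cbb[S]=\bigoplus_{\chi}V_{L_Q}(\chi)$ with $\chi$ running over the intersection of $M$ with the dual cone of $\mathcal{C}$, and multiplication is the Cartan product. Local factoriality (Corollary~\ref{cor:locfactcrit}) gives a basis $(e_1,\dots,e_n)$ of $N$ with $\mathcal{C}$ generated by $e_1,\dots,e_k$ and $\sigma$ injecting $\mathcal{F}$ into $\{e_1,\dots,e_k\}$; in the dual basis, each $e_i^*$ restricts on $[L_Q,L_Q]$ to $\varpi_{\alpha_i}$ when $e_i=\sigma(\alpha_i)$ and to $0$ otherwise (characters in $M$ kill the coroots of $\mathcal{S}\setminus\mathcal{R}$, and pair by $\delta_{ij}$ with the $\sigma(\alpha_j)$). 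Hence the graded algebra factors as a tensor product, over the components $\Gamma$, of the multicone algebras $\bigoplus_{m}V_{G_\Gamma}(\sum_{\alpha_i\in\Gamma}m_i\varpi_{\alpha_i})$, times a polynomial ring for the color-free rays and a Laurent ring for the directions outside $\mathcal{C}$; this is exactly the product you want. Second, a small caveat: for a cone $\mathcal{C}$ that is not of maximal dimension the distinguished point is not an attractive fixed point of the center of $L_Q$; one first splits off a torus factor $(\Cbb^*)^{n-\dim\mathcal{C}}$ (it appears as the Laurent part above) and applies the cone argument to the remaining factor. With these two points supplied, your argument is complete and agrees with the one in \cite{these}.
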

	
	\begin{cor}[cf. Proposition~2.17 of \cite{these}]\label{cor:smooth}
	Let $X$ be a smooth horospherical variety. Any $G$-stable subvariety of $X$ is a smooth horospherical variety.
	\end{cor}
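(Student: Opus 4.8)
The plan is to reduce the statement to the two combinatorial criteria already recorded, namely the local factoriality criterion (Corollary~\ref{cor:locfactcrit}) and the smoothness criterion (Theorem~\ref{th:smooth}), applied to the colored fan of the orbit closure. First I would recall, as part of the Luna--Vust theory underlying Theorem~\ref{th:Luna-Vust}, that a $G$-stable subvariety of $X$ (being irreducible and $G$-stable, hence the closure of a $G$-orbit) corresponds to a colored cone $(\mathcal{C}_0,\mathcal{F}_0)$ of $\mathbb{F}_X$. Denote by $Y$ this orbit closure. I would then write down the horospherical data of $Y$: its lattice is the quotient $N_Y=N/(N\cap\operatorname{span}\mathcal{C}_0)$, with projection $\pi\colon N\to N_Y$; the open orbit of $Y$ is a horospherical homogeneous space whose associated parabolic has simple roots $(\mathcal{S}\setminus\mathcal{R})\cup\mathcal{F}_0$, so that its set of relevant simple roots is $\mathcal{R}_Y=\mathcal{R}\setminus\mathcal{F}_0$, with $\mathcal{S}\setminus\mathcal{R}_Y=(\mathcal{S}\setminus\mathcal{R})\cup\mathcal{F}_0$ and $\sigma_Y=\pi\circ\sigma$; and the colored fan $\mathbb{F}_Y$ consists of the images in $N_Y$ of the colored cones $(\mathcal{C},\mathcal{F})$ of $\mathbb{F}_X$ admitting $(\mathcal{C}_0,\mathcal{F}_0)$ as a colored face, each such image being $(\pi(\mathcal{C}),\mathcal{F}\setminus\mathcal{F}_0)$. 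The one delicate point here is the assertion that the colors $\mathcal{F}_0$ are absorbed into the parabolic of $Y$; this is the main structural input and the place where I would be most careful, but it is exactly the standard description of orbit closures in the theory of spherical embeddings.

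Next I would check that $Y$ is locally factorial using Corollary~\ref{cor:locfactcrit}. Fix a cone $(\bar{\mathcal{C}},\bar{\mathcal{F}})=(\pi(\mathcal{C}),\mathcal{F}\setminus\mathcal{F}_0)$ of $\mathbb{F}_Y$, coming from $(\mathcal{C},\mathcal{F})\in\mathbb{F}_X$ with face $(\mathcal{C}_0,\mathcal{F}_0)$. Since $X$ is smooth, hence locally factorial, $\mathcal{C}$ is generated by part of a basis $e_1,\dots,e_n$ of $N$ and $\sigma$ maps $\mathcal{F}$ injectively into this basis. A face of a cone generated by part of a basis is generated by a subset of the same generators, so $\mathcal{C}_0$ is generated by some $\{e_i : i\in I_0\}$; consequently $N\cap\operatorname{span}\mathcal{C}_0=\langle e_i : i\in I_0\rangle$, the classes $\{\bar e_j : j\notin I_0\}$ form a basis of $N_Y$, and $\bar{\mathcal{C}}=\pi(\mathcal{C})$ is generated by part of it. Because $\mathcal{F}_0=\{\alpha\in\mathcal{F} : \alpha^\vee_M\in\mathcal{C}_0\}$ is exactly the set of colors sent by $\sigma$ to the generators $e_i$ with $i\in I_0$, the remaining colors $\bar{\mathcal{F}}=\mathcal{F}\setminus\mathcal{F}_0$ are mapped injectively by $\sigma_Y$ into $\{\bar e_j : j\notin I_0\}$. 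Corollary~\ref{cor:locfactcrit} then gives local factoriality of $Y$.

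Finally I would deduce smoothness from Theorem~\ref{th:smooth}. For the cone $(\bar{\mathcal{C}},\bar{\mathcal{F}})$ above I must show that the pair $((\mathcal{S}\setminus\mathcal{R})\cup\mathcal{F}_0,\ \mathcal{F}\setminus\mathcal{F}_0)$ is smooth, knowing that $(\mathcal{S}\setminus\mathcal{R},\mathcal{F})$ is smooth (smoothness of $X$). The key observation is that the two underlying vertex sets coincide, $((\mathcal{S}\setminus\mathcal{R})\cup\mathcal{F}_0)\cup(\mathcal{F}\setminus\mathcal{F}_0)=(\mathcal{S}\setminus\mathcal{R})\cup\mathcal{F}$, because $\mathcal{F}_0\subseteq\mathcal{F}$; hence the relevant subgraph of the Dynkin diagram and its connected components are literally unchanged. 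Passing from $\mathcal{F}$ to $\mathcal{F}\setminus\mathcal{F}_0$ only removes vertices from the second part of the pair, so condition~(1) (at most one such vertex per component) can only become easier to satisfy, and condition~(2), being a condition on each remaining vertex inside an unchanged component, continues to hold. Thus the pair is smooth and, by Theorem~\ref{th:smooth}, $Y$ is smooth. As the argument applies to every colored cone of $\mathbb{F}_Y$, this finishes the proof.
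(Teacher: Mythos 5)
Your proof is correct. The paper itself gives no argument for this corollary---it is quoted directly from Proposition~2.17 of \cite{these}---and your route (a $G$-stable subvariety is the orbit closure attached to a colored cone $(\mathcal{C}_0,\mathcal{F}_0)$, its colored fan is the star of that cone in the quotient lattice with the colors $\mathcal{F}_0$ absorbed into the parabolic, and then both the locally-factorial criterion of Corollary~\ref{cor:locfactcrit} and the smooth-pair condition of Theorem~\ref{th:smooth} are inherited verbatim because the underlying Dynkin subgraph $(\mathcal{S}\backslash\mathcal{R})\cup\mathcal{F}=((\mathcal{S}\backslash\mathcal{R})\cup\mathcal{F}_0)\cup(\mathcal{F}\backslash\mathcal{F}_0)$ is unchanged) is exactly the standard argument that the cited reference carries out.
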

	
	\begin{rem}
	If $X$ is a toric variety, Theorem~\ref{th:smooth} is trivial because locally factorial toric varieties are smooth, or because for any colored cone $(\mathcal{C},\mathcal{F})$ of $\mathbb{F}_X$, the pair $(\mathcal{S}\backslash\mathcal{R},\mathcal{F})$ is necessarily $(\emptyset,\emptyset)$ (indeed the root system or the Dynkin diagram of a torus is empty).
	\end{rem}
	\subsection{Log MMP via moment polytopes}\label{sec:recallMMP}
	
	The MMP \cite{MMPhoro} and Log MMP \cite{LogMMPhoro} of horospherical varieties can be completely computed and described by studying one-parameter families of polytopes. In this subsection, we recall the main results of this theory, as briefly as we can, in order to use them in Section~\ref{sec:MMP}.
	
	From the previous section, to any horospherical variety $X$, are associated a parabolic subgroup $P$ and a sublattice $M$ of $\mathfrak{X}(P)$; and moreover, any ample $B$-stable $\Qbb$-Cartier $\Qbb$-divisor $D$ defines a pseudo-moment polytope $\tilde{Q}$ and a moment polytope $Q$. In fact, the map $(X,D)\longmapsto (P,M,Q,\tilde{Q})$ classifies polarized projective horospherical varieties in terms of quadruples $(P,M,Q,\tilde{Q})$. 
	
	\begin{defi} A {\it quadruple} $(P,M,Q,\tilde{Q})$ is called  {\it admissible}  if it satisfies the following:
\begin{itemize} 
\item $P$ is a parabolic subgroup of $G$ containing $B$, $M$ is a sublattice of $\mathfrak{X}(P)$, $Q$ is a polytope of $\mathfrak{X}(P)_\Qbb$ included in $\mathfrak{X}(P)^+_\Qbb$ and $\tilde{Q}$ is a polytope of $M_\Qbb$;
\item there exists (a unique) $v^0\in\mathfrak{X}(P)_\Qbb$ such that $Q=v^0+\tilde{Q}$;
\item the polytope $\tilde{Q}$ is of maximal dimension in $M_\Qbb$ (i.e., its interior in $M_\Qbb$ is not empty);
\item the polytope $Q$ intersects the interior of $\mathfrak{X}(P)^+_\Qbb$.
\end{itemize}	
\end{defi}

\begin{ex}\label{ex:quadruple1}
Suppose that $\mathfrak{X}(P)=\Zbb\varpi_1\oplus\Zbb\varpi_2$ and $M=\Zbb\varpi_2$, then $Q$ and $\tilde{Q}$ are vertical segments of the same length, $\tilde{Q}$ is in $\Qbb\varpi_2$ and $Q$ is in $\Qbb_{\geq 0}\varpi_1\oplus \Qbb_{\geq 0}\varpi_2$ (but not in $\Qbb\varpi_2$). In Figure~\ref{fig:quadruple1}, we draw three possible pairs $(Q,\tilde{Q})$ to get three admissible quadruples $(P,M,Q,\tilde{Q})$ respectively corresponding to polarized varieties $(X,D_1)$, $(X,D_2)$ and $(X',D')$, with $D_1\neq D_2$ and $X\not\simeq X'$.

\begin{figure}

\begin{center}
\begin{tikzpicture}[scale=0.5]

\draw[thick,color=gray ] (0,0) -- (4,0);
\draw[thick,color=gray ] (0,0) -- (0,4);
\draw[densely dotted,color=gray] (0,0) -- (-4,0);
\draw[densely dotted,color=gray] (0,0) -- (0,-4);
\node[color=gray ] at (-0.5,-0.5) {0};
\node[color=gray ] at (1,-0.5) {$\varpi_1$};
\node[color=gray ] at (-0.7,1) {$\varpi_2$};
\node[color=gray ] at (1,0) {$\bullet$};
\node[color=gray ] at (0,1) {$\bullet$};
\draw[very thick] (2,2) -- (2,4);
\node at (2,2) {$\bullet$};
\node at (2,4) {$\bullet$};
\node at (-0.5,-1.5) {$\tilde{Q}$};
\node at (2.5,3) {$Q$};
\draw[very thick] (0,-2.5) -- (0,-0.5);
\node at (0,-2.5) {$\bullet$};
\node at (0,-0.5) {$\bullet$};
\end{tikzpicture}
\begin{tikzpicture}[scale=0.5]

\draw[thick,color=gray ] (0,0) -- (4,0);
\draw[thick,color=gray ] (0,0) -- (0,4);
\draw[densely dotted,color=gray] (0,0) -- (-4,0);
\draw[densely dotted,color=gray] (0,0) -- (0,-4);
\node[color=gray ] at (-0.5,-0.5) {0};
\node[color=gray ] at (1,-0.5) {$\varpi_1$};
\node[color=gray ] at (-0.7,1) {$\varpi_2$};
\node[color=gray ] at (1,0) {$\bullet$};
\node[color=gray ] at (0,1) {$\bullet$};
\draw[very thick] (1,1) -- (1,2);
\node at (1,1) {$\bullet$};
\node at (1,2) {$\bullet$};
\node at (-0.5,2.5) {$\tilde{Q}$};
\node at (1.5,1.5) {$Q$};
\draw[very thick] (0,2) -- (0,3);
\node at (0,2) {$\bullet$};
\node at (0,3) {$\bullet$};
\end{tikzpicture}
\begin{tikzpicture}[scale=0.5]

\draw[thick,color=gray ] (0,0) -- (4,0);
\draw[thick,color=gray ] (0,0) -- (0,4);
\draw[densely dotted,color=gray] (0,0) -- (-4,0);
\draw[densely dotted,color=gray] (0,0) -- (0,-4);
\node[color=gray ] at (-0.5,-0.5) {0};
\node[color=gray ] at (1,-0.5) {$\varpi_1$};
\node[color=gray ] at (-0.7,1) {$\varpi_2$};
\node[color=gray ] at (1,0) {$\bullet$};
\node[color=gray ] at (0,1) {$\bullet$};
\draw[very thick] (2,0) -- (2,2);
\node at (2,0) {$\bullet$};
\node at (2,2) {$\bullet$};
\node at (-0.5,-1.5) {$\tilde{Q'}$};
\node at (2.5,1) {$Q'$};
\draw[very thick] (0,-2.5) -- (0,-0.5);
\node at (0,-2.5) {$\bullet$};
\node at (0,-0.5) {$\bullet$};
\end{tikzpicture}
\caption{Some (pseudo-)moment polytopes}
\label{fig:quadruple1}
\end{center}
\end{figure}
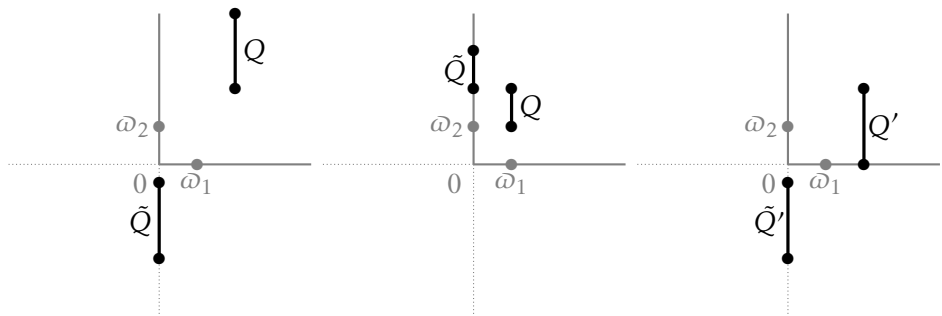
\end{ex}

\begin{prop}[Corollary 2.10 of \cite{LogMMPhoro} together with Propositions 2.10 and 2.11 \cite{MMPhoro}]\label{prop:classifpoly}
\leavevmode
\begin{enumerate}
\item The map $(X,D)\longmapsto (P,M,Q,\tilde{Q})$ is a bijection from the set of isomorphism classes of polarized projective horospherical varieties to the set of  admissible quadruples. 
 \item It induces a bijection between the set of $G$-orbits in $X$ and the set of non-empty faces of $Q$ (or $\tilde{Q}$), preserving the natural orders of both sets. Also, the $G$-orbit in $X$ associated to a non-empty face $F=v^0+\tilde{F}$ of $Q$ is isomorphic to a horospherical homogeneous space corresponding to $(P_F,M_F)$ where $P_F$ is the minimal parabolic subgroup of $G$ containing $P$ and $M_F$ is the  maximal sublattice of $M$ such that $(P_F,M_F,F,\tilde{F})$ is an admissible quadruple. Moreover $(P_F,M_F,F,\tilde{F})$ is the quadruple associated to the (horospherical) closure in $X$ of the $G$-orbit associated to $F$ (polarized by some $D_F$ we do not need to explicit here).
\end{enumerate}
\end{prop}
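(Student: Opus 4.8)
The plan is to recognize this statement as a polytopal reformulation of the Luna--Vust classification (Theorem~\ref{th:Luna-Vust}), obtained by dualizing the colored fan of $X$ against the support function $h_D$ of an ample divisor; this is essentially how it is established in \cite{MMPhoro, LogMMPhoro}. First I would check that the assignment $(X,D)\mapsto(P,M,Q,\tilde{Q})$ is well defined and lands among admissible quadruples. The pair $(P,M)$ is read off the open orbit $G/H$ (with $P$ the normalizer of $H$ and $M$ the lattice of characters of $P$ trivial on $H$), while by Theorem~\ref{th:divcrit}(2) an ample $\Qbb$-Cartier divisor $D$ produces a strictly convex piecewise linear function $h_D$ and the polytopes $\tilde{Q}=\tilde{Q}_D$ and $Q=Q_D=v^0+\tilde{Q}$. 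Strict convexity of $h_D$ on the (complete) colored fan forces $\tilde{Q}$ to be full-dimensional in $M_\Qbb$, and the choice $v^0=\sum_{\alpha\in\mathcal{R}}a_\alpha\varpi_\alpha$ together with the ampleness inequalities $h_D(\alpha^\vee_M)<a_\alpha$ for $\alpha\in\mathcal{R}\setminus\mathcal{F}_X$ forces $Q$ to meet the interior of $\mathfrak{X}(P)^+_\Qbb$. This yields admissibility.

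The heart of part~(1) is to recover $(X,D)$ from $(P,M,Q,\tilde{Q})$. The key observation is that $h_D$ is the support function of $\tilde{Q}$, so the inner normal fan of $\tilde{Q}$ reconstructs the underlying fan of $\mathbb{F}_X$: each facet of $\tilde{Q}$ has a primitive inner normal that is either one of the $x_i$ (a $G$-stable divisor) or one of the $\alpha^\vee_M$ (a color). Decorating the normal fan with the colors so obtained recovers the colored fan $\mathbb{F}_X$ exactly, whence $X$ up to $G$-isomorphism by Theorem~\ref{th:Luna-Vust}; the support function then recovers $D$, giving injectivity. For surjectivity I would run the construction in reverse: given an admissible quadruple, take the inner normal fan of $\tilde{Q}$, attach colors via those inner normals lying in $\sigma(\mathcal{R})$, apply Theorem~\ref{th:Luna-Vust} to produce $X$, and let $D$ be the divisor whose support function is that of $\tilde{Q}$; strict convexity gives ampleness by Theorem~\ref{th:divcrit}(2).

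For part~(2) I would compose two order-reversing bijections. The Luna--Vust orbit--cone correspondence matches $G$-orbits of $X$ with colored cones of $\mathbb{F}_X$, reversing inclusion (a smaller cone gives a larger orbit), and the normal-fan duality matches colored cones with faces of $\tilde{Q}$ (equivalently $Q$), also reversing inclusion. The composite therefore matches $G$-orbits with faces of $Q$ while preserving the natural orders. For the explicit description, a face $F=v^0+\tilde{F}$ corresponds to a cone $\sigma_F$; the parabolic $P_F$ of the associated orbit is generated by $P$ together with the simple roots indexing the colors of $\sigma_F$, hence is the minimal parabolic containing $P$ compatible with $F$, and $M_F$ is the sublattice of $M$ cut out by the linear span of $\tilde{F}$, i.e.\ the largest sublattice making $(P_F,M_F,F,\tilde{F})$ admissible. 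That the closure of this orbit is the horospherical variety with quadruple $(P_F,M_F,F,\tilde{F})$ then follows by restricting the entire dictionary to the star of $\sigma_F$.

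The main obstacle is the bookkeeping of the colors: unlike the toric case, several distinct simple roots may share the same image $\alpha^\vee_M$ in $N$, and a facet of $\tilde{Q}$ may arise from a $G$-stable divisor or from a color (or the two normal directions may coincide), so the passage from the polytope to the colored fan must carefully separate the color data $\mathcal{F}_X$ from the purely toric normal-fan data. Verifying that the admissibility conditions capture exactly this information, and that $P_F$ and $M_F$ are the correct minimal, respectively maximal, choices, is where the real work lies; everything else is the standard toric dictionary transported through Theorem~\ref{th:Luna-Vust}.
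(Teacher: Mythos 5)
There is no proof in the paper to compare against: Proposition~\ref{prop:classifpoly} is imported wholesale from \cite[Propositions 2.10 and 2.11]{MMPhoro} and \cite[Corollary 2.10]{LogMMPhoro}. Judged on its own mathematical merits, your proposal has a genuine gap, and it sits exactly at the point you flag as ``where the real work lies'' and then defer. Your reconstruction of $X$ from the quadruple reads the colored fan off $\tilde{Q}$ alone: take the inner normal fan of $\tilde{Q}$ and declare a ray colored whenever its primitive generator lies in $\sigma(\mathcal{R})$. That rule is false. A colored cone is only required to satisfy $\mathcal{F}\subseteq\{\alpha\in\mathcal{R}\mid\alpha^\vee_M\in\mathcal{C}\}$, not equality, so a fan may have a ray generated by $\alpha^\vee_M$ carrying \emph{no} color; both that fan and the fan with the color attached admit ample divisors, and their pseudo-moment polytopes can literally coincide. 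The paper's own Example~\ref{ex:quadruple1} (Figure~\ref{fig:quadruple1} and the example following it) is a counterexample to your recipe: there $\tilde{Q}$ and $\tilde{Q'}$ are the same segment in $\Qbb\varpi_2$, hence have the same inner normal fan (whose positive ray is generated by $\alpha^\vee_{2,M}$), yet $X\not\simeq X'$ --- the closed orbits of $X$ are both isomorphic to $G/(P(\varpi_1)\cap P(\varpi_2))$, while $X'$ has a closed orbit $G/P(\varpi_1)$. Your decoration rule attaches the color $\alpha_2$ in both cases and so conflates $X$ with $X'$; consequently neither injectivity nor surjectivity is established by your argument.

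What is missing is any use of the second polytope $Q=v^0+\tilde{Q}$, i.e.\ of $v^0$, and this is precisely the data that separates colors from $G$-stable rays. From Theorem~\ref{th:divcrit} one gets, for every $\alpha\in\mathcal{R}$,
\begin{equation*}
\min_{q\in Q}\,\langle q,\alpha^\vee\rangle \;=\; a_\alpha-h_D(\alpha^\vee_M),
\end{equation*}
which vanishes exactly when $\alpha\in\mathcal{F}_X$ (where $h_D(\alpha^\vee_M)=a_\alpha$ by the Cartier condition) and is strictly positive when $\alpha\in\mathcal{R}\setminus\mathcal{F}_X$ (by the ampleness inequality $h_D(\alpha^\vee_M)<a_\alpha$). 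So the correct dictionary is: $\alpha\in\mathcal{F}_X$ if and only if $Q$ meets the wall $\{\langle\cdot,\alpha^\vee\rangle=0\}$ of $\mathfrak{X}(P)^+_\Qbb$, and then $\alpha$ is attached to the cones of the normal fan of $\tilde{Q}$ dual to the faces of $Q$ lying in that wall (one checks $\alpha^\vee_M$ does lie in those cones). With this criterion the reconstruction, and hence both directions of part (1), goes through, and in part (2) it also yields the stated description of $P_F$ (generated by $P$ and the $\alpha$ whose walls contain $F$) and $M_F$ (the direction lattice of $\tilde{F}$). Your part (2) is otherwise sound in outline --- composing the two inclusion-reversing correspondences --- but it inherits the same defect, since the colors of $\sigma_F$ cannot be identified without the wall-contact criterion. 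As it stands, the proposal is an outline of the toric dictionary that omits the one genuinely horospherical ingredient.
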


\begin{ex}
Consider the moment polytopes of Example~\ref{ex:quadruple1}. And suppose that $D_1$, $D_2$ and $D'$ are very ample (otherwise it would be enough to consider multiples of the divisors and of the polytopes).

Then $X$ is the closure of  $G\cdot[v_{2\varpi_1+2\varpi_2}+v_{2\varpi_1+3\varpi_2}+v_{2\varpi_1+4\varpi_2}]$ in
$$\Pbb(V(2\varpi_1+2\varpi_2)\oplus V(2\varpi_1+3\varpi_2)\oplus V(2\varpi_1+4\varpi_2))$$
but also the closure of $G\cdot[v_{\varpi_1+\varpi_2}+v_{\varpi_1+1\varpi_2}]$ in $\Pbb(V(\varpi_1+\varpi_2)\oplus V(\varpi_1+2\varpi_2))$. In the first case for example, one can easily check that there are exactly two (closed) $G$-orbits in addition to the open one in $X$; moreover, they are $G\cdot[v_{2\varpi_1+2\varpi_2}]\simeq G/(P(\varpi_1)\cap P(\varpi_2))$ and $G\cdot[v_{2\varpi_1+4\varpi_2}]\simeq G/(P(\varpi_1)\cap P(\varpi_2))$, and they correspond to the two vertices of the segment $Q$. Here, for both closed $G$-orbits, $P_F=P$ and $M_F=\{0\}$.

Similarly, $X'$ is the closure of $G\cdot[v_{2\varpi_1}+v_{2\varpi_1+\varpi_2}+v_{2\varpi_1+2\varpi_2}]$ in $\Pbb(V(2\varpi_1)\oplus V(2\varpi_1+\varpi_2)\oplus V(2\varpi_1+2\varpi_2))$. There are exactly two (closed) $G$-orbits in addition to the open one in $X'$, that is $G\cdot[v_{2\varpi_1}]\simeq G/P(\varpi_1)$ and $G\cdot[v_{2\varpi_1+2\varpi_2}]\simeq G/(P(\varpi_1)\cap P(\varpi_2))$. Here,  we still have $M_F=\{0\}$ for both closed $G$-orbits and $P_F=P$ for the second closed $G$-orbit, but $P_F\neq P$ for the first one ($\mathfrak{X}(P_F)=\Zbb\varpi_1$).
\end{ex}

From Proposition~\ref{prop:classifpoly}, we easily get the following result.
	\begin{cor}\label{cor:classifpoly}
	Let $(X,D)$ be a polarized projective horospherical variety and $(P,M,Q,\tilde{Q})$  be the corresponding admissible quadruple. Let $F$ be a non-empty face of $Q$ (or $\tilde{Q}$) and $\Omega$ be the corresponding $G$-orbit in $X$.
	 Then $$\dim(\Omega)=\dim(G/P_F)+\operatorname{rank}(M_F)=\dim(G/P_F)+\dim(F).$$
	\end{cor}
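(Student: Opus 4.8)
The plan is to prove the two displayed equalities separately, each being a direct unwinding of a result already recalled. The first equality $\dim(\Omega)=\dim(G/P_F)+\operatorname{rank}(M_F)$ will come from the intrinsic structure of a horospherical homogeneous space, while the second equality $\operatorname{rank}(M_F)=\dim(F)$ will come from the admissibility of the quadruple attached to the face $F$.

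For the first equality, I would start from Proposition~\ref{prop:classifpoly}(2), which identifies $\Omega$ with the horospherical homogeneous space attached to the pair $(P_F,M_F)$, say $G/H_F$ with $P_F$ the normalizer of $H_F$ in $G$ and $M_F$ the lattice of characters of $P_F$ whose restriction to $H_F$ is trivial. By the very definition of a horospherical homogeneous space, $G/H_F$ is a torus fibration over the flag variety $G/P_F$, the rank of the fibre torus being exactly the rank of the associated character lattice, here $\operatorname{rank}(M_F)$. Since such a fibration has dimension equal to the dimension of the base plus that of the fibre, and since dimension is preserved by the isomorphism furnished by Proposition~\ref{prop:classifpoly}(2), I obtain $\dim(\Omega)=\dim(G/P_F)+\operatorname{rank}(M_F)$.

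For the second equality I first note that $F=v^0+\tilde{F}$ is a translate of the face $\tilde{F}$ of $\tilde{Q}$, so that $\dim(F)=\dim(\tilde{F})$. By Proposition~\ref{prop:classifpoly}(2) the quadruple $(P_F,M_F,F,\tilde{F})$ is admissible, and the third clause in the definition of an admissible quadruple requires the pseudo-moment polytope, playing here the role of $\tilde{F}$, to be of maximal dimension in $(M_F)_\Qbb$, i.e. to have nonempty interior there. This forces $\dim(\tilde{F})=\operatorname{rank}(M_F)$, whence $\operatorname{rank}(M_F)=\dim(F)$. Combining the two chains of equalities completes the argument.

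I expect no genuine obstacle, which is precisely why the statement is introduced by ``we easily get'': the whole proof is bookkeeping with the definitions. The only two points requiring a little care are, first, that the fibre-torus rank must be read off the definition of a horospherical homogeneous space (as $\operatorname{rank}(M_F)$) rather than from the polytope; and second, that it is the full-dimensionality clause of admissibility, and not the maximality of $M_F$, that yields the second equality, since \emph{every} admissible choice of the lattice has rank equal to $\dim(\tilde{F})$.
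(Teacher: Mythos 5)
Your proof is correct and is exactly the ``easy'' derivation the paper intends: the paper gives no separate argument for this corollary, stating only that it follows from Proposition~\ref{prop:classifpoly}, and your two steps (the torus-fibration description $\dim(G/H_F)=\dim(G/P_F)+\operatorname{rank}(M_F)$ from the definition of a horospherical homogeneous space and its rank, plus the full-dimensionality clause of admissibility giving $\operatorname{rank}(M_F)=\dim(\tilde{F})=\dim(F)$) are precisely that unwinding. Your closing remark is also accurate: any lattice making the quadruple admissible has rank $\dim(\tilde{F})$, the maximality of $M_F$ serving only to single it out among finite-index choices.
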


We can also describe $G$-equivariant morphisms between horospherical $G$-varieties, in terms of moment polytopes \cite[2.4]{MMPhoro}. We summarize, very briefly, this description here. 

Without loss of generality, we can reduce to dominant $G$-equivariant morphisms, i.e. $G$-equivariant morphisms from a $G/H$-embbedding to a $G/H'$-embbedding where $H\subset H'$, i.e., $G$-equivariant morphisms  that extend the projection $G/H\longrightarrow G/H'$. In that case, we have $P\subset P'$ and $M'\subset M$. We keep the same notations as above for the data associated to $G/H$ and we use the same notations with prime for the data associated to $G/H'$.

Let $X$ be a projective $G/H$-embedding corresponding to an admissible quadruple $(P,M,Q,\tilde{Q})$ and let $X'$ be a projective $G/H'$-embedding corresponding to an admissible quadruple $(P',M',Q',\tilde{Q'})$.  Then the projection $G/H\longrightarrow G/H'$ extends to a $G$-equivariant morphism from $X$ to  $X'$ if and only if for any non-empty face $F$ of $Q$, the set of facets (or the corresponding halfspaces in $M_\Qbb$) and the set of walls of $\mathfrak{X}(P)_\Qbb^+$ that contain $F$ define naturally a non-empty face $F'$ of $Q'$. Moreover in that case the $G$-orbit of $X$ corresponding to $F$ is sent to the $G$-orbit of $X'$ corresponding to $F'$.

\begin{ex}
Consider the varieties $X$ and $X'$ of Example~\ref{ex:quadruple1}. Each vertex of $Q$, which is a facet, naturally correspond to a vertex of $Q'$. But, the vertex $2\varpi_1$ of $Q'$ is contained in a wall of $\mathfrak{X}(P)_\Qbb^+$ and will correspond to the empty face of $Q$. Then, here, there exists a $G$-equivariant morphism $\phi$ from $X$ to $X'$ but there is no such morphism from $X'$ to $X$. Moreover, $\phi$ is an isomorphism outside one closed $G$-orbit where $\phi$ is the projection $G/(P(\varpi_1)\cap P(\varpi_2))\longrightarrow G/P(\varpi_2)$.

To complete this example, consider some $G/H$ of rank~2 such that $P$ has a unique fundamental weight $\varpi$. In Figure~\ref{fig:quadruple2} we draw  3 moments polytopes of $G/H$ and another moment polytope of a horospherical homogeneous space $G/H'$ of rank~1 with $P'=G$ (in fact $G/H'\simeq \Cbb^*$ and the segment corresponds to the variety $\Pbb^1$).  We also draw all $G$-equivariant morphisms between the corresponding varieties. Note that this picture is similar to Figure~\ref{figure6} with moment polytopes instead of pseudo-moment polytopes.

We also emphasis some vertices and some edges to illustrate images of $G$-orbits. More precisely, if we focus at the $G$-orbits distinguished by a $\bullet$, $\phi_0$ restricts to the projection $G/P(\varpi)\longrightarrow \mbox{pt}$. If we focus at the $G$-orbits distinguished by a non-dashed rectangle, $\phi_0^+$ restricts to the fibration $\Pbb^1\longrightarrow \mbox{pt}$ and $\phi_1$ restricts to the identity morphism $\Pbb^1\longrightarrow\Pbb^1$. If we focus at the $G$-orbits distinguished by a dashed rectangle, $\phi_0$ and $\phi_0^+$ restrict to identity morphisms and $\phi_1$ restricts to a fibration to a point.

 \begin{figure}
\begin{center}

\begin{tikzpicture}

\node [rectangle] (a) at (0,0) {
    \begin{tikzpicture}[scale=0.5]

\draw[very thick] (2,2) -- (2,3) -- (3,2) -- (2,2);
\node at (2,2) {$\bullet$};
\draw[dashed] (1.8,3.2) -- (2.2,3.2) -- (2.2,1.8) --(1.8,1.8) --  (1.8,3.2);
\draw[densely dotted] (0,3) -- (3,0) ;
\node[color=gray] at (3.8,0.5) {$\varpi=0$};
    \end{tikzpicture}
};

\node [rectangle] (b) at (2,-3.25) {
    \begin{tikzpicture}[scale=0.5]
     
\draw[very thick] (2,2) -- (2,3) -- (3,2) -- (2,2);
\node at (2,2) {$\bullet$};
\node at (2,2) {$\square$};
\draw[dashed] (1.8,3.2) -- (2.2,3.2) -- (2.2,1.8) --(1.8,1.8) --  (1.8,3.2);
\draw[densely dotted] (0.5,3.5) -- (3.5,0.5) ;
\node[color=gray] at (4.8,0.5) {$\varpi=0$};
    \end{tikzpicture}
};

\node [rectangle] (c) at (4,0) {
    \begin{tikzpicture}[scale=0.5]
     
\draw[very thick] (2,2.5) -- (2,3.5) -- (3.5,2) -- (2.5,2) -- (2,2.5);
\draw[ thin] (1.8,2.5) -- (2,2.7) -- (2.7,2) --(2.5,1.8) --  (1.8,2.5);
\draw[dashed] (1.8,3.7) -- (2.2,3.7) -- (2.2,2.3) --(1.8,2.3) --  (1.8,3.7);

\draw[densely dotted] (0.5,4) -- (4,0.5) ;
\node[color=gray] at (4.8,1) {$\varpi=0$};
    \end{tikzpicture}
};
\node [rectangle] (d) at (6,-3.25) {
    \begin{tikzpicture}[scale=0.5]
    
\draw[very thick]  (2,3) -- (3,2) ;
\draw[ thin] (1.8,3) -- (2,3.2) -- (3.2,2) --(3,1.8) --  (1.8,3);
\draw[dashed] (1.7,2.7) -- (2.3,2.7) -- (2.3,3.3) --(1.7,3.3) --  (1.7,2.7);
\draw[densely dotted] (1,4) -- (4,1) ;
\node[color=gray] at (5.2,1) {$\varpi=0$};
    \end{tikzpicture}
};

\draw (a) [->]  to node[above right] {$\phi_0$} (b);
\draw (c) [->]  to node[above left] {$\phi_0^+$} (b);
\draw (c) [->]  to node[above right] {$\phi_1$} (d);
\end{tikzpicture}

\caption{Moment polytopes and $G$-equivariant morphisms}

\label{fig:quadruple2}
\end{center}
\end{figure}
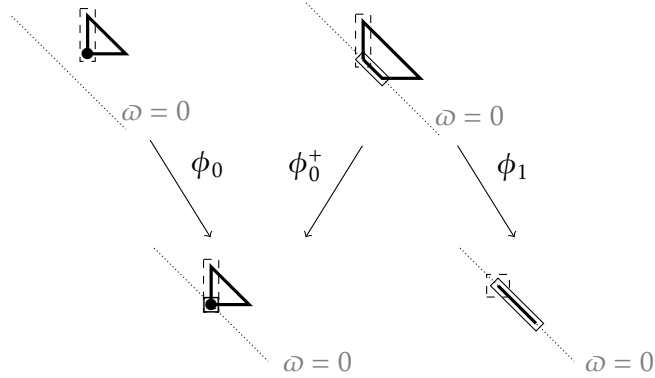
\end{ex}
	
	Now we can state the description of the Log MMP for horospherical varieties in terms of moment polytopes.

 First we fix a basis of $M$ (and consider the dual basis for $N$). Also we choose an order in the set $\{x_1,\dots,x_m\}\cup\{\alpha^\vee_M\,\mid\,\alpha\in\mathcal{R}\}$. 
Then we define a matrix $\mathcal{A}$ of size $(m+|\mathcal{R}|)\times n$ whose rows are the coordinates of the vectors of $\{x_1,\dots,x_m\}\cup\{\alpha^\vee_M\,\mid\,\alpha\in\mathcal{R}\}$ in the chosen basis. 

\begin{teo}[cf. Theorem~1.3 and Section~3 in \cite{LogMMPhoro}]\label{th:recallMMP}
	
	 Let $X$ be a $\Qbb$-factorial projective horospherical variety and let $\Delta$ be a $B$-stable $\Qbb$-divisor of $X$. 
	Then for any (general) choice of an ample $B$-stable $\Qbb$-Cartier $\Qbb$-divisor $D$ of $X$, a Log MMP from the pair $(X,\Delta)$ is described by the following one-parameter families of polytopes
	
	$$\tilde{Q}^\epsilon:=\{x\in M_\Qbb\,\mid\, \mathcal{A}x\geq \mathcal{B}+\epsilon \mathcal{C}\}\,\mbox{ and }\,Q^\epsilon:= v^\epsilon +\tilde{Q}^\epsilon$$ where 
	 $\mathcal{B}$, $\mathcal{C}$ and $v^\epsilon=v^0+\epsilon v^1$ are such that, for any $\epsilon\geq 0$ small enough, $\tilde{Q}^\epsilon$ and $Q^\epsilon$ are respectively the pseudo-moment and moment polytope of $(X,D+\epsilon(K_X+\Delta))$. 
	\end{teo}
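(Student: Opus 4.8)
The plan is to translate the Log MMP from $(X,\Delta)$ into the combinatorics of the one-parameter family of polytopes, via the dictionary of Theorem~\ref{th:divcrit} and Proposition~\ref{prop:classifpoly}. The key observation is the reparametrization: running the Log MMP from $(X,\Delta)$ with scaling of the ample divisor $D$ amounts to decreasing $t$ from $+\infty$ in the pencil $(K_X+\Delta)+tD$, equivalently to increasing $\epsilon=1/t$ from $0$ in $D+\epsilon(K_X+\Delta)$. So first I would record that the coefficients $(a_i,a_\alpha)$ of the $B$-stable $\Qbb$-divisor $D+\epsilon(K_X+\Delta)$ are affine in $\epsilon$: this uses the linearity of $D\longmapsto h_D$ in Theorem~\ref{th:divcrit}(1) together with the standard formula for the canonical class of a horospherical variety, $-K_X=\sum_i X_i+\sum_{\alpha\in\mathcal R}a_\alpha D_\alpha$ with explicit $a_\alpha$. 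By the polytope description of Theorem~\ref{th:divcrit}(2), the pseudo-moment polytope of $D+\epsilon(K_X+\Delta)$ is then exactly $\tilde Q^\epsilon=\{x:\mathcal A x\ge\mathcal B+\epsilon\mathcal C\}$ with $\mathcal B=\mathcal B_D$ and $\mathcal C=\mathcal B_{K_X+\Delta}$, and $Q^\epsilon=v^\epsilon+\tilde Q^\epsilon$ with $v^\epsilon$ affine in $\epsilon$; for $\epsilon>0$ small, openness of the ample cone guarantees these are genuine (pseudo-)moment polytopes of the polarized variety $(X,D+\epsilon(K_X+\Delta))$.

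Next I would analyze the critical values. As $\epsilon$ increases monotonically the half-spaces cutting out $\tilde Q^\epsilon$ translate at constant speeds, so the combinatorial type of $\tilde Q^\epsilon$ changes only at finitely many values $0<\epsilon_1<\epsilon_2<\cdots$; choosing $D$ general ensures a single wall is crossed at each $\epsilon_k$. The first value $\epsilon_1$ is the nef threshold of the pair: $D+\epsilon_1(K_X+\Delta)$ is nef but not ample, hence trivial on exactly one $(K_X+\Delta)$-negative extremal ray $R$ of $\overline{\mathrm{NE}}(X)$, and the associated contraction $\phi_R$ is automatically $G$-equivariant. The content is to match $R$, and the type of $\phi_R$, with the degeneration of $\tilde Q^{\epsilon_1}$. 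Using Proposition~\ref{prop:classifpoly} (faces $\leftrightarrow$ $G$-orbits) and the dimension formula of Corollary~\ref{cor:classifpoly}, I would read the type from how facet incidences change at $\epsilon_1$: whether $\tilde Q^{\epsilon_1}$ (or $Q^{\epsilon_1}$ against a wall of $\mathfrak X(P)^+_\Qbb$) drops dimension, giving a Mori fibration; or stays of maximal dimension while a facet becomes redundant, giving a divisorial contraction; or while the face lattice reorganizes with $\mathcal A$ and $M$ unchanged, giving a flip.

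Then I would iterate: for $\epsilon\in(\epsilon_1,\epsilon_2)$ the polytope $\tilde Q^\epsilon$ is again admissible, so by Proposition~\ref{prop:classifpoly} it defines a new polarized variety $(X_1,D_1)$, and using the polytope description of $G$-equivariant morphisms I would verify that $X_1$ is the output of the corresponding step: the target of the contraction, or the flip $X\dashrightarrow X_1$ (recognised because $X$ and $X_1$ share $\mathcal A$ and $M$, hence are isomorphic in codimension one). The negativity of $K_X+\Delta$ on $R$, needed to certify this is a genuine Log MMP step, is encoded in the sign of $\mathcal C$ in the relevant direction. Termination is immediate since there are finitely many $\epsilon_k$; the program ends when the combinatorial type of $\tilde Q^\epsilon$ stabilizes (the transported $K+\Delta$ is nef, a log minimal model) or when $\tilde Q^\epsilon$ degenerates in dimension (a Mori fibration). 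The $\Qbb$-factoriality of each $X_k$, read off from Corollary~\ref{cor:locfactcrit}, lets the argument rerun verbatim at $\epsilon_{k}$.

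The main obstacle is exactly the matching in the third paragraph: proving that each wall-crossing of the polytope family is precisely one Log MMP step of the predicted type. This splits into (i) identifying the walls of the nef cone with the critical values $\epsilon_k$, i.e. that crossing a wall contracts a single $(K_X+\Delta)$-negative extremal ray; (ii) reading the contraction type (fibration, divisorial, or flip) correctly from the change of face lattice via Corollary~\ref{cor:classifpoly}; and (iii) identifying the variety on the far side as the genuine flip or contraction target, where for flips the small-modification condition must be checked combinatorially, disentangling the unchanged data $(\mathcal A,M)$ from the changed colored fan. Establishing this correspondence for a single step is the crux; everything else is bookkeeping.
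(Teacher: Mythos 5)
The first thing to note is that this paper contains no proof of Theorem~\ref{th:recallMMP}: it is recalled (``cf.\ Theorem~1.3 and Section~3 in \cite{LogMMPhoro}'') from the author's earlier work, and Section~\ref{sec:recallMMP} only restates and unpacks its conclusions. So your proposal can only be measured against the cited papers \cite{MMPhoro} and \cite{LogMMPhoro}, whose strategy it does mirror at the top level: reparametrize the scaling pencil as $D+\epsilon(K_X+\Delta)$, observe via Theorem~\ref{th:divcrit} that the inequalities cutting out the pseudo-moment polytope are affine in $\epsilon$, locate finitely many critical values, and read the steps of the program off the combinatorial changes of $\tilde{Q}^\epsilon$ using Proposition~\ref{prop:classifpoly}.

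As a proof, however, your text has a genuine gap, and it is the one you yourself flag in the last paragraph: everything that makes the statement a theorem --- that at each critical value the induced $G$-equivariant morphism is the contraction of a single $(K_X+\Delta)$-negative extremal ray (for general $D$), that when no defining inequality becomes superfluous the variety on the far side actually is the flip (existence of the flip included), and that the terminal degeneration is a Mori fibration --- is deferred as ``the crux'' and never carried out. This is not bookkeeping; it is the entire content of the cited proof, which requires computing intersection numbers of the contracted curves with $K_X+\Delta$ from the piecewise linear functions and verifying relative (anti)ampleness on the two sides of each small contraction. Two further soft points. First, your closing claim that ``the $\Qbb$-factoriality of each $X_k$, read off from Corollary~\ref{cor:locfactcrit}, lets the argument rerun verbatim'' misuses that corollary (it is a \emph{local factoriality} criterion for colored fans, not a $\Qbb$-factoriality statement about the intermediate varieties), and in fact a feature of the cited construction is that it runs entirely on the polytope family without re-establishing $\Qbb$-factoriality at each step. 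Second, ``sharing $(\mathcal{A},M)$'' does not by itself certify that the two sides of a wall are isomorphic in codimension one; what does is precisely the criterion recalled after the theorem, namely that no inequality corresponding to some $x_j$ becomes redundant in defining $\tilde{Q}^\epsilon$ at the critical value --- this is how \cite{LogMMPhoro} separates flips from divisorial contractions, and your step (iii) would have to prove it rather than assume it.
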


Note that the matrices $\mathcal{A}$, $\mathcal{B}$ and $\mathcal{C}$ can be easily computed. Indeed,  $\mathcal{A}$ is given by the primitive elements of the rays of the colored fan of $X$ and the images of the colors of $G/H$; the coefficients of $\mathcal{B}$ are the opposites of the coefficients of $D$; and the coefficients of $\mathcal{C}$ are the opposites of the coefficients of $K_X+\Delta$. Also, the coefficients of $v^0$ and $v^1$ correspond to the coefficients of the $D_\alpha$'s for $D$ and $K_X+\Delta$  respectively.

\vskip\baselineskip
We can rewrite the conclusion of Theorem~\ref{th:recallMMP} more precisely as the existence of rational numbers \begin{multline*}0:=\epsilon_{0,0}<\cdots<\epsilon_{0,k_0}<\epsilon_{0,k_0+1}=\epsilon_{1,0}<\cdots\\ \cdots<\epsilon_{1,k_1}<\epsilon_{1,k_1+1}=\epsilon_{2,0}<\cdots<\epsilon_{p,k_p}<\epsilon_{p,k_p+1}=\epsilon_{max}
	\end{multline*} 
	(with $p\geq 1$, and for any $i\in\{0,\dots,p\}$, $k_i\geq 0$) such that, $(P,M,Q^\epsilon,\tilde{Q}^\epsilon)$ is an admissible quadruple if and only if $\epsilon\in[0,\epsilon_{max}[$, and for $\epsilon,\,\eta\in[0,\epsilon_{max}[$ the following three assertions are equivalent:
	\begin{itemize}
	\item $X^\epsilon$ is isomorphic to $X^\eta$ (where $X^\epsilon$ and $X^\eta$ are the varieties associated to the admissible quadruples $(P,M,Q^\epsilon,\tilde{Q}^\epsilon)$ and $(P,M,Q^\eta,\tilde{Q}^\eta)$ respectively);
	\item the faces of $Q^\epsilon$ (or  $\tilde{Q}^\epsilon$) and $Q^\eta$ (or  $\tilde{Q}^\eta$) are ``the same'', in the following sense: up to deleting inequalities corresponding to some $x_j$ with $j\in\{1,\dots,m\}$ but without changing $\tilde{Q}^\epsilon$ and $\tilde{Q}^\eta$, we have that for any set $I$ of rows, the face of $\tilde{Q}^\epsilon$ corresponding to $I$ (defined by replacing inequalities by equalities for the rows in $I$) is non empty if and only the  face of $\tilde{Q}^\eta$ corresponding to $I$ is non empty;
	\item there exists $i\in\{0,\dots,p\}$ such that $\epsilon$ and $\eta$ are both in $[\epsilon_{i,0},\epsilon_{i,1}[$, or both in $]\epsilon_{i,k},\epsilon_{i,k+1}[$ with $k\in\{1,\dots,k_i\}$, or both equal to $\epsilon_{i,k}$ with $k\in\{1,\dots,k_i\}$.
	\end{itemize}

	Moreover, for any $i\in\{0,\dots,p\}$ and $k\in\{1,\dots,k_i\}$ there are morphisms from $X^\epsilon$ to $X^{\epsilon_{i,k}}$ with $\epsilon<\epsilon_{i,k}$ big enough and  $\epsilon>\epsilon_{i,k}$ small enough, defining flips. For any $i\in\{1,\dots,p\}$, there are morphisms from $X^\epsilon$ to $X^{\epsilon_{i,0}}$ with $\epsilon<\epsilon_{i,0}$ big enough, defining divisorial contractions. Actually, divisorial contractions appear exactly when an inequality corresponding to some $x_j$ with $j\in\{1,\dots,m\}$ becomes superfluous to define  $\tilde{Q}^\epsilon$.
	
\vskip\baselineskip
Also, there exists $P'$ and $M'$ such that $(P',M',Q^{\epsilon_{max}},\tilde{Q}^{\epsilon_{max}})$ is an admissible quadruple associated to a variety $X^{\epsilon_{max}}$ and such that there is a fibration from $X^\epsilon$ to $X^{\epsilon_{max}}$ with $\epsilon<\epsilon_{max}$ big enough. Moreover, the general fiber of this fibration is a horospherical variety and can be described. 
	
	In fact all fibers could be described with the following strategy: consider a $G$-orbit $G/H''$ of $X^{\epsilon_{max}}$ and list all $G$-orbits of $X^{\epsilon}$ with $\epsilon<\epsilon_{max}$ big enough that are sent to $G/H''$ by the fibration, then if there is a unique biggest such $G$-orbit $\Omega$, the fibers over $G/H''$ are isomorphic to the closure of $L''\cdot v$ where $L''$ is a Levi subgroup of $H''$ and $v$ is the projectivization of a sum of highest weight vectors in $\Omega$. Note that in this paper, there will always be such a biggest $G$-orbit.
	
\vskip\baselineskip
All  morphisms above are $G$-equivariant and the image of any $G$-orbit can be described as follows. To a face of $Q^\epsilon$ (or $\tilde{Q}^\epsilon$) we can associate the maximal set of rows for which equality holds for any element $x$ of the face (in the inequalities $Ax\geq B+\epsilon C$). And similarly to a set of rows we can also naturally associate a face of $Q^\epsilon$ (may be empty). For any $\epsilon$ and $\epsilon_{i,k}$ as above, for any face $F^\epsilon$ of $\tilde{Q}^\epsilon$, we construct a face of $\tilde{Q}^{\epsilon_{i,k}}$ by taking the maximal set of rows associated to $F^\epsilon$ and then the face $F^{\epsilon_{i,k}}$ associated to these rows. Then, since there is a morphism $\phi$ from $X^\epsilon$ to $X^{\epsilon_{i,k}}$, the non-empty face $F^{\epsilon_{i,k}}$ corresponds to the $G$-orbit image by $\phi$ of the $G$-orbit corresponding to $F^\epsilon$.

\vskip\baselineskip	
Several examples illustrating Theorem~\ref{th:recallMMP}, in rank~2, are given in Sections~\ref{LogMMP1} and~\ref{LogMMP2}.
	
\section{First combinatorial classification and first geometric description}
	
\subsection{Reduction to three cases}
	
	In this section, we only use Luna-Vust theory and Corollary~\ref{cor:locfactcrit} to reduce to the three main cases of Theorem~\ref{th:main}.
	
	\begin{lem}\label{lem:FirstReduction}
	Let $X$ be a smooth projective horospherical variety with Picard group $\Zbb^2$. Then one the three following cases occurs (with notation of Section~\ref{sec:notation}).
	\begin{enumerate}[label=Case (\arabic*): ,start=0]
	\item $n=0$, $|\mathcal{R}|=2$, $\mathcal{F}_X=\emptyset$,  and $X=G/P$.
	\item $n\geq 1$, $\mathcal{R}=\mathcal{F}_X\sqcup\{\beta\}$, there exist a basis $(e_1,\dots,e_n)$ of $N$ and $n$ integers $0\leq a_1\leq\cdots\leq a_n$ such that $\sigma$ induces an injective map $\tilde\sigma$ from $\mathcal{F}_X$ to $\{e_1,\dots,e_n,e_{0}:=-e_1-\cdots -e_n\}$, $\sigma(\beta)=a_1e_1+\cdots+a_ne_n$ and $$\mathbb{F}_X=\{(\mathcal{C}_I,\mathcal{F}_I)\,\mid\, I\subsetneq\{0,\dots,n\}\}$$ where $\mathcal{C}_I$ is the cone generated by the $e_i$'s with $i\in I$, and $\mathcal{F}_I=\tilde\sigma^{-1}(\{e_i\,\mid\,i\in I\})$.
	\item $n\geq 2$, $\mathcal{R}=\mathcal{F}_X$, there exist integers $r\geq 1$, $s\geq 1$, $0\leq a_1\leq\cdots\leq a_r$ and a basis $(u_1,\dots,u_r,v_1,\dots,v_s)$ of $N$ such that $\sigma$ induces an injective map  $\tilde\sigma$ from $\mathcal{F}_X=\mathcal{R}$ to $\{u_0,\dots,u_r, v_1, \dots, v_{s+1}\},$ with $u_0:=-u_1-\cdots -u_r$ and $v_{s+1}:=a_1u_1+\cdots +a_ru_r-v_1-\cdots -v_s$, and $$\mathbb{F}_X=\{(\mathcal{C}_{I,J},\mathcal{F}_{I,J})\,\mid\, I\subsetneq\{0,\dots,r\}\mbox{ and } J\subsetneq\{1,\dots,s+1\}\}$$ where $\mathcal{F}_{I,J}=\tilde\sigma^{-1}(\{u_i\,\mid\,i\in I\}\cup\{v_j\,\mid\,j\in J\} )$ and where $\mathcal{C}_{I,J}$ is the cone generated by the $u_i$'s with $i\in I$ and the $v_j$'s with $j\in J$.
\end{enumerate}	 
	\end{lem}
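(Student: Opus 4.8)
The plan is to read everything off the two expressions for the Picard number in Corollary~\ref{cor:locfactcrit}, using only that a smooth variety is locally factorial and that a complete variety has a complete fan (Theorem~\ref{th:Luna-Vust}). Since $X$ is smooth projective with $\Pic(X)\cong\Zbb^2$, we have $\rho_X=2$, so the second formula of Corollary~\ref{cor:locfactcrit} reads
$$(|\mathbb{F}_X(1)|-n)+|\mathcal{R}\setminus\mathcal{F}_X|=2,$$
a sum of two non-negative integers. First I would record the completeness constraint coming from Theorem~\ref{th:Luna-Vust}: the underlying (uncolored) fan is a smooth, hence simplicial, complete fan in $N_\Qbb\cong\Qbb^n$, and such a fan needs at least $n+1$ rays when $n\ge1$, so $|\mathbb{F}_X(1)|-n\ge1$ in that range; the case $n=0$ forces $|\mathbb{F}_X(1)|=0$.

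This immediately yields the trichotomy. If $n=0$, then there are no $G$-stable divisors and $\mathcal{F}_X=\emptyset$, so $\rho_X=|\mathcal{R}|$; thus $|\mathcal{R}|=2$ and $X=G/P$, which is Case~(0). If $n\ge1$, then $|\mathbb{F}_X(1)|-n\in\{1,2\}$, and the equation leaves exactly two options: $(|\mathbb{F}_X(1)|-n,\,|\mathcal{R}\setminus\mathcal{F}_X|)=(1,1)$, giving $\mathcal{R}=\mathcal{F}_X\sqcup\{\beta\}$ and $|\mathbb{F}_X(1)|=n+1$ (Case~(1)); or $(2,0)$, giving $\mathcal{R}=\mathcal{F}_X$ and $|\mathbb{F}_X(1)|=n+2$ (Case~(2)). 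The value $|\mathbb{F}_X(1)|-n=2$ is impossible when $n=1$, since a complete fan in $\Qbb^1$ has exactly two rays; hence $n\ge2$ in Case~(2).

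It then remains to describe the colored fan. The uncolored fan is a smooth complete fan with $n+1$ (resp. $n+2$) rays, so by the classical classification of such fans it is the fan of $\Pbb^n$ (resp. the Kleinschmidt fan of a smooth projective toric variety of Picard number $2$): in the first case there is a basis $(e_1,\dots,e_n)$ of $N$ with rays $e_0:=-e_1-\cdots-e_n,e_1,\dots,e_n$; in the second the $n+2$ rays split into two groups obeying the relations $u_0+u_1+\cdots+u_r=0$ and $v_{s+1}+v_1+\cdots+v_s=a_1u_1+\cdots+a_ru_r$, the maximal cones being obtained by omitting one ray from each group. To insert the colors I invoke Corollary~\ref{cor:locfactcrit} once more: any $\alpha\in\mathcal{F}_X$ lies in a colored cone whose cone is generated by part of a basis on which $\sigma$ is injective, forcing $\sigma(\alpha)$ to be one of the ray generators; and the identity $|\mathbb{F}_X(1)|=m+|\mathcal{F}_X|$ built into that same formula shows $\sigma$ is injective on $\mathcal{F}_X$ with color rays disjoint from the $G$-stable rays. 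This produces the injective maps $\tilde\sigma$ into the set of rays in both cases, while in Case~(1) the extra root $\beta$ contributes the vector $\sigma(\beta)=a_1e_1+\cdots+a_ne_n$.

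The only genuinely non-formal step, and the main obstacle, is the normalization (together with the fan classification itself, which one either cites from Kleinschmidt or reproves for $n+2$ rays). In Case~(1) I would exploit the symmetry of the $\Pbb^n$-fan: because $e_0+e_1+\cdots+e_n=0$, changing which ray plays the role of $e_0$ replaces the coordinates of $\sigma(\beta)$ by their affine differences, so choosing the ray along which the affine coordinate of $\sigma(\beta)$ is minimal makes all coordinates non-negative, and a permutation of the basis gives $0\le a_1\le\cdots\le a_n$. The same device applied inside the $u$-group normalizes the coefficients of the second Kleinschmidt relation to $0\le a_1\le\cdots\le a_r$ and fixes the displayed forms of $u_0$ and $v_{s+1}$; this is exactly the passage to Kleinschmidt's normal form, and it is where the bookkeeping concentrates. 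Finally I would record $\mathbb{F}_X=\{(\mathcal{C}_I,\mathcal{F}_I)\}$ (resp. $\{(\mathcal{C}_{I,J},\mathcal{F}_{I,J})\}$) by matching the colored faces with the proper subsets $I\subsetneq\{0,\dots,n\}$ (resp. the pairs $I\subsetneq\{0,\dots,r\}$, $J\subsetneq\{1,\dots,s+1\}$), using the colored-face axioms of Definition~\ref{defi:fan}.
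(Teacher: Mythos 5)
Your proposal is correct and follows essentially the same route as the paper's proof: the Picard-number formula of Corollary~\ref{cor:locfactcrit} yields the trichotomy, the uncolored fan is identified with the fan of $\Pbb^n$ (resp. Kleinschmidt's fan for Picard number $2$), and the colors are reinserted using local factoriality and the colored-face axioms. The only differences are cosmetic: you spell out the normalization of $\sigma(\beta)$ by affine differences and the reason $n\ge 2$ in Case~(2), both of which the paper leaves implicit in its relabeling of the $e_i$'s and its citation of \cite{Kleinschmidt}.
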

	
	\begin{rem}
	If $X$ is a toric variety, $\mathcal{R}=\emptyset$ then we are necessarily in Case~(2), and the lemma is already known \cite[Theorem~1]{Kleinschmidt}, and $X$ is the decomposable projective bundle $\Pbb(\mathcal{O}\oplus\mathcal{O}(a_1)\oplus\cdots\oplus\mathcal{O}(a_r)$ over $\Pbb^s$.
	\end{rem}
	
	\begin{proof}
	By Corollary~\ref{cor:locfactcrit}, the map $\sigma$ induces an injective map from $\mathcal{F}_X$ to $\mathbb{F}_X(1)$ and the Picard number of $X$ is $\rho_X=(|\mathbb{F}_X(1)|-n) + |\mathcal{R}\backslash\mathcal{F}_X|$. But, since $X$ and then $\mathbb{F}_X$ are complete, $|\mathbb{F}_X(1)|-n\geq 0$ with equality if and only if $n=0$. (And $|\mathcal{R}\backslash\mathcal{F}_X|\geq 0$.) Thus, since $\rho_X=2$ we distinguish three distinct cases:
	\begin{enumerate}[label=Case (\arabic*): ,start=0]
	\item $n=0$ and $|\mathcal{R}\backslash\mathcal{F}_X|=2$;
	\item $|\mathbb{F}_X(1)|=n+1$ and $|\mathcal{R}\backslash\mathcal{F}_X|=1$;
	\item $|\mathbb{F}_X(1)|=n+2$ and $|\mathcal{R}\backslash\mathcal{F}_X|=0$.
	\end{enumerate}	
	We now detail each case.
	\begin{enumerate}[label=Case (\arabic*): ,start=0]
	\item In the case where $n=0$, $X$ is the complete homogeneous variety $G/P$ (and $\mathcal{F}_X=\emptyset$). And then $|\mathcal{R}|=2$.
	\item 
	
	Consider the fan $\tilde{\mathbb{F}}:=\{\mathcal{C}\,\mid\,(\mathcal{C},\mathcal{F})\in\mathbb{F}_X\}$
	 associated to the colored fan $\mathbb{F}_X$ (in fact it is the fan of the toric fiber $Y$ of the toroidal variety $\tilde{X}:=G\times^P Y$ obtained from $X$ by erasing all colors of $X$). Since $X$ is locally factorial, the fan $\tilde{\mathbb{F}}$ is the fan of a smooth toric variety of Picard number~1 (because $|\tilde{\mathbb{F}}_X(1)|=n+1$). Then it is well-known that such a fan is the fan of the projective space $\Pbb^n$. In particular, there exists a basis $(e_1,\dots,e_n)$ of $N$ such that $\tilde{\mathbb{F}}=\{\mathcal{C}_I\,\mid\, I\subsetneq\{0,\dots,n\}\}$ where $e_{0}:=-e_1-\cdots -e_n$ and $\mathcal{C}_I$ is the cone generated by the $e_i$ with $i\in I$.
	
	Denote by $\beta$ the unique element of $\mathcal{R}\backslash\mathcal{F}_X$. Then, up to reordering the $e_i$'s (for $i\in\{0,\dots,n\}$), we can suppose that $\sigma(\beta)$ is in $\mathcal{C}_{\{1,\dots,n\}}$ and equals $a_1e_1+\cdots+a_ne_n$ with $0\leq a_1\leq\cdots\leq a_n$.
	
	\item As above, consider the fan $\tilde{\mathbb{F}}$. Since $X$ is locally factorial, it is the fan of a smooth toric variety of Picard number~2 (because $|\tilde{\mathbb{F}}_X(1)|=n+2$). Then, by \cite[Theorem~1]{Kleinschmidt}, there exist integers $r\geq 1$, $s\geq 1$, $0\leq a_1\leq\cdots\leq a_r$ and a basis $(u_1,\dots,u_r,v_1,\dots,v_s)$ of $N$ such that $\tilde{\mathbb{F}}=\{\mathcal{C}_{I,J}\,\mid\, I\subsetneq\{0,\dots,r\}\mbox{ and } J\subsetneq\{1,\dots,s+1\}\}$, where $u_0:=-u_1-\cdots -u_r$, $v_{s+1}:=a_1u_1+\cdots +a_ru_r-v_1-\cdots -v_s$ and $\mathcal{C}_{I,J}$ is the cone generated by the $u_i$'s with $i\in I$ and the $v_j$'s with $j\in J$.
	\end{enumerate}	
	
	We conclude by using the following facts: for any $\alpha\in\mathcal{F}_X$ and for any $(\mathcal{C},\mathcal{F})\in\mathbb{F}_X$, we have $\alpha\in\mathcal{F}$ if and only if $\sigma(\alpha)\in\mathcal{C}$;  and for any $\alpha\in\mathcal{F}_X$, $\sigma(\alpha)$ is the primitive element of an edge of $\mathbb{F}_X$ (using again Corollary~\ref{cor:locfactcrit}).
\end{proof}
	
	\begin{rem}\label{rem:debutMMP}
	In section~\ref{sec:MMP}, we will use the MMP or the Log MMP to study and compare geometrically all these varieties $X$. We can already describe some Mori fibrations from these varieties, by using the following description of $G$-equivariant morphisms between horospherical varieties in terms of colored fans (\cite{Knop89}). Let $G/H$ and $G/H'$ be two horospherical homogeneous spaces with $H\subset H'$, and denote by $\pi:\,G/H\longrightarrow G/H'$ the projection.  We keep the same notations as before for the data associated to $G/H$ and we use the same notations with prime for the data associated to $G/H'$. In particular, we have $M'\subset M$, $P\subset P'$ and $\mathcal{R}'\subset\mathcal{R}$. By duality, we also have a projection $\pi_*:\,N_\Qbb\longrightarrow N'_\Qbb$. Let $X$ be a $G/H$-embedding with colored fan $\mathbb{F}_X$ and let $X'$ be a $G/H'$-embedding with colored fan $\mathbb{F}_{X'}$. Then the morphism $\pi$ extends to a $G$-equivariant morphism from $X$ to $X'$ if and only if for any colored cone $(\mathcal{C},\mathcal{F})\in\mathbb{F}_X$, there exists a colored cone $(\mathcal{C}',\mathcal{F}')\in\mathbb{F}_{X'}$ such that $\pi_*(\mathcal{C})\subset \mathcal{C}'$ and $\mathcal{F}\cap\mathcal{R}'\subset\mathcal{F}'$. 
	
	\begin{enumerate}[label=Case (\arabic*): ,start=0]
	\item If $X$ is a complete homogeneous variety $G/P$ of Picard group $\Zbb^2$, then the MMP gives two Mori fibrations from $X$ to the complete homogeneous varieties $G/P_1$ and $G/P_2$ of Picard group $\Zbb$, where $P_1$ and $P_2$ are the maximal proper parabolic subgroups of $G$ containing $B$ such that $P=P_1\cap P_2$. Note moreover that $G/P$ is a product if and only if $\Aut^0(G/P)$ is not simple.
	\item Let $G/H'=X'$ be $G/P(\varpi_\beta)$, i.e. $M'=\{0\}$, $P'=P(\varpi_\beta)$, $\mathbb{F}_{X'}=\{(\{0\},\emptyset)\}$ (and $\mathcal{R}'=\{\beta\}$). Then we can easily check the condition above to prove that there exists a $G$-equivariant morphism from $X$ to $G/P(\varpi_\beta)$. Note that the general fiber of this fibration is smooth horospherical of Picard group $\Zbb$ (in particular, it is homogeneous or one of the two-orbit varieties described in \cite{2orbits}).
	\item Let $P'$ be the parabolic subgroup containing $B$ (and $P$) such that $\mathcal{R}':=\tilde\sigma^{-1}(\{v_j\,\mid\,j\in \{1,\dots,s+1\}\} )$. Let $M'$ be the sublattice of $M$ orthogonal to $\Zbb u_1\oplus\cdots\oplus\Zbb u_r\subset N$. The pair $(P',M')$ corresponds to a horospherical homogeneous space $G/H'$ with $H'$ containing $H$. Also the dual lattice $N'$ of $M'$ is the image of the projection from $N$ to $\Zbb u_1\oplus\cdots\oplus\Zbb u_r$. We denote by $v_{1}',\dots,v_{s+1}'$ the images of $v_1,\dots,v_{s+1}$ in $N'$, in particular $v_{s+1}'=-v_{1}'-\cdots -v_{s}'$. And finally we denote by $\mathbb{F}_{X'}$ the colored fan $\{\mathcal{C}_{J}',\mathcal{F}_{J}')\,\mid\,J\subsetneq\{1,\dots,s\}\}$ where $\mathcal{C}_{J}'$ is the cone generated by the $v_{j}'$ with $j\in J$, and $\mathcal{F}_{J}'=\tilde\sigma^{-1}(\{v_j\,\mid\,j\in J\})$. The colored fan $\mathbb{F}_{X'}$ corresponds to a $G/H'$-embedding $X'$. Then we can check the condition above to prove that there exists a $G$-equivariant morphism  from $X$ to $X'$, which is a Mori fibration. Note that $X'$ and the general fiber of this fibration are smooth horospherical varieties of Picard group $\Zbb$ (in particular, they are homogeneous or one of the two-orbit varieties described in \cite{2orbits}).
	\end{enumerate}
	In the rest of the paper, in cases (1) and (2), we will denote this fibration by $\psi:\,X\longrightarrow Z$.
	\end{rem}

\subsection{Description via polytopes}	
	
	We now describe $X$ embedded in the projectivization of a $G$-module, by choosing the smallest ample Cartier divisor of $X$ and by applying Corollary~\ref{cor:smoothembedd}. We first study the nef cone of $X$, which is 2-dimensional.
	
	Recall that any Cartier divisor of $X$ is linearly equivalent to a $B$-stable divisor, and any prime $G$-stable divisor corresponds to an edge of $\mathbb{F}_X$ that is not generated by  some $\sigma(\alpha)$ with $\alpha\in\mathcal{F}_X$, and any other  $B$-stable prime divisor is the closure of a color of $G/H$. Then in Cases (1) and (2), we have $n+2$ prime $B$-stable divisors that we can denote naturally as follows:
	\begin{enumerate}[label=Case (\arabic*): ,start=1]
	\item $D_{n+1}=D_\beta$; for any $i\in\{0,\dots,n\}$, $D_i$ is the $B$-stable divisor corresponding to the edge generated by $e_i$ (which equals $D_\alpha$ with $\alpha\in\mathcal{F}_X=\mathcal{R}\backslash\{\beta\}$ if and only if the edge is generated by $\sigma(\alpha)$, and which is $G$-stable otherwise).
	\item for any $i\in\{0,\dots,r\}$,  $D_i$ is the $B$-stable divisor corresponding to the edge generated by $u_i$; and for any $j\in\{1,\dots,s+1\}$,  $D_{j+r}$ is the $B$-stable divisor corresponding to the edge generated by $v_i$ (which equals $D_\alpha$ with $\alpha\in\mathcal{F}_X=\mathcal{R}$ if and only if the edge is generated by $\sigma(\alpha)$, and which is $G$-stable otherwise).
	\end{enumerate}

	\begin{prop}\label{prop:NefCone}
	In both cases (1) and (2), the nef cone of $X$ is generated by $D_0$ and $D_{n+1}$. In particular, $D_0+D_{n+1}$ is ample. Moreover $(D_0,D_{n+1})$ is a basis of $\operatorname{Pic}(X)$.
	\end{prop}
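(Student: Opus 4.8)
The plan is to read off both the Picard group and the nef cone directly from the divisor criterion of Theorem~\ref{th:divcrit}. Since $X$ is smooth it is locally factorial, so every $B$-stable divisor is Cartier and carries a support function $h_D$, and $\operatorname{Pic}(X)=\operatorname{Cl}(X)$ is generated by the classes of $D_0,\dots,D_{n+1}$; by Theorem~\ref{th:divcrit}(1) the principal classes form the image of $M$ under $\chi\mapsto\sum_{\alpha\in\mathcal R}\langle\chi,\sigma(\alpha)\rangle D_\alpha+\sum_j\langle\chi,x_j\rangle X_j$. First I would run $\chi$ over the basis of $M$ dual to $(e_1,\dots,e_n)$ in Case~(1), resp.\ to $(u_1,\dots,u_r,v_1,\dots,v_s)$ in Case~(2). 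Using $e_0=-\sum_k e_k$, resp.\ $u_0=-\sum_i u_i$ and $v_{s+1}=\sum_i a_iu_i-\sum_j v_j$, these relations become $D_i\sim D_0-a_iD_{n+1}$ for the fibre divisors (with $i=1,\dots,n$ in Case~(1) and $i=1,\dots,r$ in Case~(2)) and, in Case~(2) only, $D_{r+j}\sim D_{n+1}$ for $j=1,\dots,s$. Each such relation eliminates exactly one generator, leaving $(D_0,D_{n+1})$ as a free $\Zbb$-basis of $\operatorname{Pic}(X)\cong\Zbb^2$; this already gives the last assertion of the proposition.

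Next, for an arbitrary $B$-stable divisor $D=\sum_i c_iD_i$ I would substitute these relations to express $[D]$ in the basis $(D_0,D_{n+1})$. Its first coordinate is $x$, the sum of the coefficients of the divisors attached to the rays $e_0,\dots,e_n$ (Case~(1)), resp.\ $u_0,\dots,u_r$ (Case~(2)); its second coordinate is $y=c_\beta-\sum_k a_kc_k$ in Case~(1) and $y=\sum_{j=1}^{s+1}c_{r+j}-\sum_{i=1}^r a_ic_i$ in Case~(2). The crux is then to show, via Theorem~\ref{th:divcrit}(2), that $D$ is ample if and only if $x>0$ and $y>0$ (and globally generated if and only if $x\ge0$, $y\ge0$). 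In Case~(1) the underlying fan is that of $\Pbb^n$, so $h_D$ is automatically a well-defined piecewise linear function whose (strict) convexity is equivalent to $\sum_{i=0}^n c_i>0$, i.e.\ $x>0$; the only remaining condition from Theorem~\ref{th:divcrit}(2) is the color inequality $h_D(\sigma(\beta))<c_\beta$ for the unique $\beta\in\mathcal R\setminus\mathcal F_X$, and since $\sigma(\beta)=\sum_k a_ke_k$ lies in the single maximal cone $\mathcal C_{\{1,\dots,n\}}$ this reads $\sum_k a_kc_k<c_\beta$, i.e.\ $y>0$.

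In Case~(2) we have $\mathcal R=\mathcal F_X$, so there is no color inequality and ampleness is exactly strict convexity of $h_D$ on Kleinschmidt's fan. Here I would pair $D$ with the two extremal curve classes coming from the primitive relations $\sum_{i=0}^r u_i=0$ and $\sum_{j=1}^{s+1}v_j=\sum_{i=1}^r a_iu_i$; these two inequalities are precisely $x>0$ and $y>0$ (alternatively one quotes \cite{Kleinschmidt} directly, since in this case $\operatorname{Pic}(X)$ and the convexity condition coincide with those of the toric variety with fan $\tilde{\mathbb{F}}$). In both cases the ample cone is thus the open first quadrant, so the nef cone, being its closure, is $\Qbb_{\ge0}D_0+\Qbb_{\ge0}D_{n+1}$ with extremal rays $D_0=(1,0)$ and $D_{n+1}=(0,1)$; in particular $D_0+D_{n+1}=(1,1)$ lies in the interior and is ample. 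The step needing the most care is the convexity computation on Kleinschmidt's fan in Case~(2): one must verify that the two primitive wall relations generate all the convexity inequalities, which is exactly where the rank-$2$ structure of the toric fan (Kleinschmidt's classification) is used.
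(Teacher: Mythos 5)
Your proposal is correct: the relations $D_i\sim D_0-a_iD_{n+1}$ (and $D_{r+j}\sim D_{n+1}$ in Case~(2)) do follow from Theorem~\ref{th:divcrit}(1) applied to the dual basis characters of $M$, and the ampleness criterion does reduce to $x>0$, $y>0$ in both cases, so the nef cone is the closed quadrant spanned by $D_0$ and $D_{n+1}$. Your organization, however, differs from the paper's. The paper argues ``from the inside'': it writes down the support functions $h_{D_0}$ and $h_{D_{n+1}}$ explicitly on each maximal cone, checks via Theorem~\ref{th:divcrit} that both divisors are globally generated but \emph{not} ample --- so each spans an extremal ray of the two-dimensional nef cone --- and pins down the lattice $\operatorname{Pic}(X)$ by verifying that $aD_0+bD_{n+1}$ is Cartier exactly when $a,b\in\Zbb$. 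You argue ``from the outside'': you present $\operatorname{Pic}(X)$ by generators and relations, express an arbitrary $B$-stable divisor in the resulting basis, characterize ampleness of \emph{all} divisors at once (strict convexity on the fan of $\Pbb^n$ plus the single color inequality for $\beta$ in Case~(1); wall or primitive-relation inequalities on Kleinschmidt's fan in Case~(2)), and then take the closure of the ample cone. Your version buys the explicit coordinates $(x,y)$ and the linear equivalences $D_i\sim D_0-a_iD_{n+1}$, which the paper in effect re-derives and uses later (e.g. $-K_X\sim(\sum_i b_i)D_0+(b_\beta-\sum_i a_ib_i)D_{n+1}$ in Section~5), together with a description of the full ample cone. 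The paper's version buys brevity: only two support functions are computed, and no appeal to toric Mori theory is needed --- your Case~(2) step, that the two primitive relations imply all the convexity inequalities $y+a_{i_0}x>0$ coming from the walls, is the one place where you lean on an external input (Batyrev's primitive collections or \cite{Kleinschmidt}), though, as you note, either reference suffices and the verification is also elementary since $a_0=0$ makes $y>0$ itself a wall inequality.
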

	
	\begin{proof} We begin by computing the piecewise linear functions $h_{D_0}$ and  $h_{D_{n+1}}$ associated to these two Cartier divisors.
	
	\begin{enumerate}[label=Case (\arabic*): ,start=1]
	\item Consider the basis $(e_1^*,\dots, e_n^*)$ of $M$ that is dual to the basis $(e_1,\dots, e_n)$ of $N$. Then $h_{D_0}$ is defined on $N_\Qbb$ by: $(h_{D_0})_{|\mathcal{C}_I}=0$ if $I=\{1,\dots,n\}$; and for any $i\in\{1,\dots,n\}$, $(h_{D_0})_{|\mathcal{C}_I}=-e_i^*$ where $I=\{0,\dots,i-1,i+1,\dots,n\}$. And $h_{D_{n+1}}=0$.
	
	\item Consider the basis $(u_1^*,\dots, u_r^*, v_1^*,\dots,v_s^*)$ of $M$ that is dual to the basis $(u_1,\dots, u_r, v_1, \dots, v_s)$ of $N$. Then $h_{D_0}$ is defined on $N_\Qbb$ by: for any $J\subsetneq\{1,\dots,s+1\}$,  $(h_{D_0})_{|\mathcal{C}_{I,J}}=0$ if $I=\{1,\dots,r\}$; for any $i\in\{1,\dots,r\}$, $(h_{D_0})_{|\mathcal{C}_{I,J}}=-u_i^*$ where $I=\{0,\dots,i-1,i+1,\dots,r\}$ and $J=\{1,\dots,s\}$;  and, for any $i\in\{1,\dots,r\}$, for any $j\in\{1,\dots,s\}$, $(h_{D_0})_{|\mathcal{C}_{I,J}}=-u_i^*-a_iv_j^*$ where $I=\{0,\dots,i-1,i+1,\dots,r\}$ and $J=\{1,\dots,j-1,j+1,\dots,s+1\}$. And $h_{D_{n+1}}$ is defined on $N_\Qbb$ by: for any $I\subsetneq\{0,\dots,r\}$,  for  any $j\in\{1,\dots,s\}$, $(h_{D_{n+1}})_{|\mathcal{C}_{I,J}}=-v_j^*$ where $J=\{1,\dots,j-1,j+1,\dots,s+1\}$; and for any $I\subsetneq\{0,\dots,r\}$, $(h_{D_{n+1}})_{|\mathcal{C}_{I,J}}=0$ if $J=\{1,\dots,s\}$.
	\end{enumerate}
	
	By Theorem~\ref{th:divcrit}, one checks that $D_0$ and $D_{n+1}$ are globally generated but not ample. We also check that for any $a$ and $b$ in $\Qbb$, $aD_0+bD_{n+1}$ is Cartier if and only if $a$ and $b$ are integers.
	\end{proof}
	
	Before applying Corollary~\ref{cor:smoothembedd}, we reduce to the case where $G$ is the product of simply connected simple groups and a torus, with the following lemma.

	\begin{lem}[cf. proof of Proposition 3.10 in \cite{these}]\label{lem:Gred}
	Let $G':=[G,G]$ and let $\mathbb{T}$ be the torus $P/H$. Then $X$ is also a horospherical $G'\times \mathbb{T}$-variety. Moreover, if $\hat{G'}$ is the universal cover of $\hat{G'}$,  $X$ is also a horospherical $\hat{G'}\times \mathbb{T}$-variety.
	\end{lem}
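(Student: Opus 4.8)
The plan is to reduce the acting group from an arbitrary connected reductive $G$ to a product of simply connected simple groups and a torus, without changing the variety $X$. First I would recall the general principle behind horospherical (more generally spherical) geometry: the intrinsic combinatorial data attached to $X$ lives in $N_\Qbb$ together with the colors in $\mathcal{R}$, and these are governed by the lattice $M\subset\mathfrak{X}(P)$ and the images $\sigma(\alpha)=\alpha^\vee_M$. The key observation is that none of this data depends on the center of $G$ or on the isogeny type of the semisimple part; it only sees the root datum of $(G,B,T)$ modulo the subgroup $H\supset U$. So the strategy is to exhibit an action of $G':=[G,G]$ and of the torus $\mathbb{T}=P/H$ that produces the same open orbit $G/H$ (as an abstract variety with the same stable divisors and colors), and then pass to the universal cover $\hat{G'}$.

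The first step is to show that $G'\times\mathbb{T}$ acts on $X$ with the same open orbit. Since $X$ is horospherical, $G/H$ fibers as a torus bundle $G/H\to G/P$ over the flag variety, and $\mathbb{T}=P/H$ is precisely the fiber torus. The semisimple group $G'$ surjects onto $G/Z(G)^0$ up to isogeny and hence acts transitively on the flag variety $G/P\simeq G'/(P\cap G')$; meanwhile $\mathbb{T}$ acts on the fibers. I would verify that the combined action of $G'\times\mathbb{T}$ on $G/H$ is transitive, identifying the stabilizer $H''$ of the base point so that $G/H\simeq (G'\times\mathbb{T})/H''$ with $H''\supset U'\times\{1\}$ (the unipotent radical of a Borel of $G'\times\mathbb{T}$ is just $U'$, since $\mathbb{T}$ is a torus). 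This makes $G/H$ a horospherical $G'\times\mathbb{T}$-homogeneous space. Because $X$ is a normal completion of this orbit and the $G'\times\mathbb{T}$-action extends the $G$-action restricted along $G'\times\mathbb{T}\to G$, the same colored fan in the same $N_\Qbb$ describes $X$ as a $G'\times\mathbb{T}$-variety; the set of colors $\mathcal{R}$ and the map $\sigma$ are unchanged because passing from $G$ to $G'$ does not alter the relevant coroots restricted to $M$.

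The second step is to replace $G'$ by its universal cover $\hat{G'}$. Here I would use that the isogeny $\hat{G'}\to G'$ induces an action of $\hat{G'}$ on $X$ through which the kernel (a finite central subgroup) acts trivially, so $\hat{G'}\times\mathbb{T}$ acts with the same orbits; the preimage $\hat{H}$ of $H''$ still contains the unipotent radical $U'$ (which lifts isomorphically under the central isogeny), so $X$ remains horospherical for $\hat{G'}\times\mathbb{T}$. Finally, since $\hat{G'}$ is simply connected semisimple it decomposes as a product of simply connected simple groups, and $\mathbb{T}$ is a torus, giving exactly the desired form. The main obstacle I anticipate is the careful bookkeeping in the first step: checking that the stabilizer $H''$ in $G'\times\mathbb{T}$ genuinely contains a maximal unipotent subgroup and that the lattice $M$ and colored fan are literally preserved, rather than merely abstractly isomorphic. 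This is where one must use that $\mathbb{T}=P/H$ supplies precisely the characters of $P$ that became trivial on $H$, so that $M$ is recovered as a sublattice of $\mathfrak{X}(P\cap G')\oplus\mathfrak{X}(\mathbb{T})$ compatibly with the original $M\subset\mathfrak{X}(P)$. Once this identification is made, the claim follows from the Luna–Vust classification (Theorem~\ref{th:Luna-Vust}), since equal data yield isomorphic varieties.
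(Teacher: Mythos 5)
The paper itself gives no proof of this lemma (it simply refers to the proof of Proposition~3.10 in \cite{these}), so your argument has to stand on its own. The parts concerning the open orbit are correct: since $G=G'\cdot Z(G)^0$ and $Z(G)^0\subset P$, the group $G'$ acts transitively on $G/P$; the right action of $\mathbb{T}=P/H$ (well defined because $P$ normalizes $H$) is transitive on the fibres of $G/H\to G/P$; hence $G'\times\mathbb{T}$ acts transitively on $G/H$, with stabilizer containing $U\times\{1\}$, which is indeed the unipotent radical of the Borel subgroup $(B\cap G')\times\mathbb{T}$. The passage to the universal cover $\hat{G'}$ via the central isogeny is also unproblematic.

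The genuine gap is the step where you pass from the action on $G/H$ to an action on all of $X$. Your justification --- that ``the $G'\times\mathbb{T}$-action extends the $G$-action restricted along $G'\times\mathbb{T}\to G$'' --- invokes a homomorphism that does not exist: $\mathbb{T}=P/H$ is a subquotient of $G$, not a subgroup, and its action on $G/H$ is by \emph{right} translations $gH\mapsto gpH$, which is not obtained from the left $G$-action on $X$ by restriction along any group homomorphism. Consequently, the existence of a $(G'\times\mathbb{T})$-action on $X$ extending the one on the open orbit is precisely what must be proved, and it is not automatic: an automorphism of the open orbit of a normal (even projective, $G$-equivariant) completion need not extend to the completion. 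The missing argument is the following. Each $t\in\mathbb{T}$ is a $G$-equivariant automorphism of $G/H$ (one has $\Aut^G(G/H)=N_G(H)/H=P/H$), and such an automorphism extends to a given embedding $X$ if and only if it maps the colored fan $\mathbb{F}_X$ to itself (functoriality in Luna--Vust theory, \cite{Knop89}); since $\mathbb{T}$ is connected, it acts trivially on $N$ and on the set of colors, so every $t$ does extend, and one then checks the resulting action map $\mathbb{T}\times X\to X$ is algebraic. Alternatively, in the projective case one can linearize: by Corollary~\ref{cor:smoothembedd}, $X$ is the closure of $G\cdot[\sum_{\chi}v_{\chi+v^0}]$ in $\Pbb(\oplus_{\chi\in\tilde{Q}_D\cap M}V(\chi+v^0))$, and letting $\mathbb{T}$ act on the summand $V(\chi+v^0)$ by the character $\chi\in M=\mathfrak{X}(\mathbb{T})$ defines an algebraic action commuting with $G$ which restricts to right translation on the open orbit, hence preserves $X$. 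For the same reason, your closing appeal to Theorem~\ref{th:Luna-Vust} (``equal data yield isomorphic varieties'') is circular: that theorem classifies embeddings of a fixed homogeneous space under a fixed group, so it cannot compare the $G$-variety $X$ with an abstract $(G'\times\mathbb{T})$-variety built from the same fan until one already knows that $G'\times\mathbb{T}$ acts on $X$ itself.
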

	
	Without loss of generality by the lemma, we now assume that $G$ is the product $G'$ of simply connected simple groups and a torus $\mathbb{T}$. In particular, $P$ is the product of a parabolic subgroup of $G'$ with $\mathbb{T}$, and the characters of $P$ are sums of weights of the maximal torus of $G'$ and characters of $\mathbb{T}$. Hence a basis of $M\simeq\mathfrak{X}(\mathbb{T})$ is of the form $(\chi_i+\theta_i)_{i\in\{1,\dots,n\}}$ such that $(\chi_i)_{i\in\{1,\dots,n\}}$ form a basis of $M=\mathfrak{X}(\mathbb{T})$, and the $\theta_i$'s are weights of the maximal torus of $G'$.

	With these assumptions, we get the following result.
	
	\begin{lem}\label{lem:Xembedded} 
	The embedding of $X$ given by the ample Cartier divisor $D_0+D_{n+1}$ is:
	\begin{enumerate}[label=Case (\arabic*): ,start=1]
	\item $$X\hookrightarrow\mathbb{P}(\bigoplus_{i=0}^nV(\chi_i+\varpi_i+(1+a_i)\varpi_\beta)),$$ where $\chi_0=0$, $\chi_1,\dots,\chi_n$ are characters of $\mathbb{T}$, and for any $i\in\{0,\dots,n\}$, $\varpi_i$ is either $\varpi_\alpha$ if $e_i=\sigma(\alpha)$ with $\alpha\in\mathcal{F}_X$ or 0 otherwise.
	\item 	$$X\hookrightarrow\mathbb{P}(\bigoplus_{i,b_1,\dots,b_{s+1}}V(\chi_i+\varpi_i+\sum_{j=1}^{s+1}b_j(\chi_{r+j}+\varpi_{r+j})),$$ where $\chi_0=\chi_{n+1}=0$, $\chi_1,\dots,\chi_n$ are characters of $\mathbb{T}$, and for any $i\in\{0,\dots,n+1\}$, $\varpi_i$ is either $\varpi_\alpha$ if $u_i$ or $v_{i-r}$ is $\sigma(\alpha)$ with $\alpha\in\mathcal{F}_X$ or 0 otherwise; and where the sum is taken over all $s+2$-tuples of non-negative integers $(i,b_1,\dots,b_{s+1})$ such that $0\leq i\leq r$ and $\sum_{j=1}^{s+1}b_j=1+a_i$ (with $a_0:=0$).
	\end{enumerate}
	\end{lem}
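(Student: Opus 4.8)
The plan is to apply Corollary~\ref{cor:smoothembedd} to the smallest ample Cartier divisor $D:=D_0+D_{n+1}$ provided by Proposition~\ref{prop:NefCone}: this immediately realizes $X$ as the closure of a $G$-orbit of a sum of highest weight vectors in $\Pbb(\bigoplus_{\chi\in\tilde{Q}_D\cap M}V(\chi+v^0))$, so the whole task reduces to three explicit computations, namely the pseudo-moment polytope $\tilde{Q}_D$ together with its lattice points, the weight $v^0=\sum_{\alpha\in\mathcal{R}}a_\alpha\varpi_\alpha$, and the decomposition of each highest weight $\chi+v^0$ into fundamental weights of $G'$ and characters of $\mathbb{T}$.

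First I would compute $h_D=h_{D_0}+h_{D_{n+1}}$ from the piecewise linear functions written down in the proof of Proposition~\ref{prop:NefCone} and evaluate it on the primitive generators of the rays of $\mathbb{F}_X$. Since $D$ is globally generated, $h_D$ is convex, and $\mathbb{F}_X$ is a complete simplicial fan; hence the condition ``$h_D+\chi\geq 0$ on $N_\Qbb$'' defining $\tilde{Q}_D$ in Theorem~\ref{th:divcrit}(2) (together with the color inequality, which is automatic in Case~(1) and vacuous in Case~(2)) reduces to the inequalities at the ray generators. Working in the basis of $M$ dual to the chosen basis of $N$, this presents $\tilde{Q}_D$ as a standard simplex in Case~(1), with lattice points $0,e_1^*,\dots,e_n^*$, and in Case~(2) as the polytope $\{p_i\geq 0,\ q_j\geq 0,\ \sum_i p_i\leq 1,\ \sum_j q_j\leq 1+\sum_i a_ip_i\}$ (with $p_i=\chi(u_i)$, $q_j=\chi(v_j)$), whose lattice points are exactly the tuples $(i,b_1,\dots,b_{s+1})$ of the statement once one sets $b_{s+1}:=(1+a_i)-\sum_{j\leq s}q_j$.

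The heart of the argument is identifying the highest weight attached to each lattice point. The weight $v^0$ is read off from the coefficients of $D$ on the colors, giving $v^0=\varpi_\beta+\varpi_0$ in Case~(1) and $v^0=\varpi_0+\varpi_{n+1}$ in Case~(2). For the lattice points themselves I would decompose each dual basis vector (an element $e_j^*$, resp.\ $u_i^*$ or $v_j^*$, of $M$) as $\chi_j+\theta_j$ with $\chi_j\in\mathfrak{X}(\mathbb{T})$ and $\theta_j\in\bigoplus_{\alpha\in\mathcal{R}}\Zbb\varpi_\alpha$, using that the coefficient of $\varpi_\alpha$ in $\theta_j$ equals $\langle\alpha^\vee,e_j^*\rangle=\langle\sigma(\alpha),e_j^*\rangle$, since $\langle\alpha^\vee,\chi_j\rangle=0$ and $\sigma(\alpha)=\alpha^\vee_M$ is the restriction of $\alpha^\vee$ to $M$. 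Combined with the injectivity of $\tilde\sigma$ and the defining relations $e_0=-e_1-\cdots-e_n$ (resp.\ $u_0=-\sum u_i$ and $v_{s+1}=\sum a_iu_i-\sum v_j$), this yields $\theta_j=\varpi_j-\varpi_0+a_j\varpi_\beta$ in Case~(1), and the analogous formulas $u_i^*=\chi_i+\varpi_i-\varpi_0+a_i\varpi_{n+1}$, $v_j^*=\chi_{r+j}+\varpi_{r+j}-\varpi_{n+1}$ in Case~(2).

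Substituting these decompositions and adding $v^0$, the $\varpi_0$ contributed by the special ray $e_0$ (resp.\ $u_0$) cancels against the $\varpi_0$ in $v^0$, while the coefficients of the ``moving'' fundamental weight assemble correctly: in Case~(1) the $a_j\varpi_\beta$ from $\theta_j$ and the $\varpi_\beta$ from $v^0$ give $(1+a_j)\varpi_\beta$; in Case~(2) the relation $\sum_{j=1}^{s+1}b_j=1+a_i$ (with $\chi_{n+1}=0$) collapses the leftover coefficient of $\varpi_{n+1}$ into the $j=s+1$ summand. This produces exactly the weights $\chi_i+\varpi_i+(1+a_i)\varpi_\beta$ and $\chi_i+\varpi_i+\sum_{j=1}^{s+1}b_j(\chi_{r+j}+\varpi_{r+j})$ of the two cases. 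I expect the main obstacle to be precisely this bookkeeping with the special rays $e_0$, $u_0$ and $v_{s+1}$: being negative combinations of the basis, they pair to $-1$ with the relevant dual vectors, so one must track carefully the convention $\varpi_i=0$ at $G$-stable rays in order to see the cancellations that make the extra fundamental-weight terms disappear from the ``non-special'' highest weights.
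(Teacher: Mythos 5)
Your proposal is correct and follows essentially the same route as the paper's proof: it computes the pseudo-moment polytope of $(X,D_0+D_{n+1})$ and its lattice points in the dual basis of $M$, identifies $v^0$ from the coefficients of $D$ on the colors, decomposes $e_i^*$ (resp.\ $u_i^*$, $v_j^*$) as a character of $\mathbb{T}$ plus the combination $\varpi_i-\varpi_0+a_i\varpi_\beta$ (resp.\ the analogous expressions), and concludes via Corollary~\ref{cor:smoothembedd}. The only difference is that you make explicit two steps the paper leaves implicit (deriving the polytope from the inequalities $h_D+\chi\geq 0$ at the ray generators, and deriving the decomposition of the dual basis vectors by pairing with $\sigma(\alpha)=\alpha^\vee_M$), which is sound bookkeeping rather than a different argument.
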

	
	\begin{proof}
	In each case, we describe the pseudo-moment polytope of $(X,D_0+D_{n+1})$ in a particular basis of $M$ and then the moment polytope of $(X,D_0+D_{n+1})$. Then we use Corollary~\ref{cor:smoothembedd} to conclude.
	\begin{enumerate}[label=Case (\arabic*): ,start=1]
	\item  By the previous lemma and the description of the images of colors, for any $i\in\{1,\dots,n\}$, the element $e_i^*$ is of the form $\chi_i+\varpi_i-\varpi_0+a_i\varpi_\beta$, where $\chi_1,\dots,\chi_n$ are characters of $\mathbb{T}$ and for any $i\in\{0,\dots,n\}$, $\varpi_i$ is either $\varpi_\alpha$ if $e_i=\sigma(\alpha)$ with $\alpha\in\mathcal{F}_X$ or 0 otherwise.
	
	 The pseudo-moment polytope of $(X,D_0+D_{n+1})$ is the simplex with vertices $0$, $e_1^*,\dots, e_n^*$. The weight of the canonical section of $ D_0+D_{n+1}$ is $\varpi_0+\varpi_\beta$, where $\varpi_0$ is either $\varpi_\alpha$ if $e_0=\sigma(\alpha)$ with $\alpha\in\mathcal{F}_X$ or 0 otherwise. Hence, the moment polytope of $(X,D_0+D_{n+1})$ is the simplex with vertices $0+\varpi_0+\varpi_\beta=\chi_0+\varpi_0+(1+a_0)\varpi_\beta$ and $(\chi_i+\varpi_i-\varpi_0+a_i\varpi_\beta)+(\varpi_0+\varpi_\beta)=\chi_i+\varpi_i+(1+a_i)\varpi_\beta$ for any $i\in\{1,\dots,n\}$.
	
	\item  By the previous lemma and the description of the images of colors, for any $i\in\{1,\dots,r\}$ the element $u_i^*$ is of the form $\chi_i+\varpi_i-\varpi_0+a_i\varpi_{n+1}$ and for any $j\in\{1,\dots,s\}$ the element $v_j^*$ is of the form $\chi_{r+j}+\varpi_{r+j}-\varpi_{n+1}$, where $\chi_1,\dots,\chi_n$ are characters of $\mathbb{T}$, and for any $i\in\{0,\dots,n+1\}$, $\varpi_i$ is either $\varpi_\alpha$ if $u_i$ (with $0\leq i\leq r$) or $v_{i-r}$ (with $r+1\leq i\leq n+1$) is $\sigma(\alpha)$ with $\alpha\in\mathcal{F}_X$ or 0 otherwise. 
	
	The pseudo-moment polytope of $(X,D_0+D_{n+1})$ is the polytope with the following vertices: $0$, $u_1^*,\dots, u_r^*$, $v_1^*,\dots,v_s^*$ and $u_i^*+(a_i+1)v_j^*$ for any $1\leq i\leq r$ and for any $1\leq j\leq s$. Note that the lattice points of this polytope are exactly $0$, $v_1^*,\dots,v_s^*$ and for any $1\leq i\leq r$ all the points of the form $u_i^*+\sum_{j=1}^s b_jv_j^*$ where the $b_j$'s are non-negative integers such that $\sum_{j=1}^sb_j\leq a_i+1$. Moreover, the weight of the canonical section of $ D_0+D_{n+1}$ is $\varpi_0+\varpi_{n+1}$, where $\varpi_0$ (respectively $\varpi_{n+1}$) is either $\varpi_\alpha$ if $u_0$ (respectively $v_{s+1}$) equals $\sigma(\alpha)$ with $\alpha\in\mathcal{F}_X$ or 0 otherwise. Hence, the moment polytope of $(X,D_0+D_{n+1})$ is the polytope with vertices $0+\varpi_0+\varpi_{n+1}=\chi_0+\varpi_0+(1+a_0)(\chi_{n+1}+\varpi_{n+1})$; for any $i\in\{1,\dots,r\}$, $\chi_i+\varpi_i+(a_i+1)(\chi_{n+1}+\varpi_{n+1})$; for any $j\in\{1,\dots,s\}$, $\chi_0+\varpi_0+\chi_{r+j}+\varpi_{r+j}$; and for any $1\leq i\leq r$, for any $1\leq j\leq s$, $\chi_i+\varpi_i-\varpi_0+a_i\varpi_{n+1}+(a_i+1)(\chi_{r+j}+\varpi_{r+j}-\varpi_{n+1})+\varpi_0+\varpi_{n+1}=\chi_i+\varpi_i+(a_i+1)(\chi_{r+j}+\varpi_{r+j})$.
	
	In particular, the lattice points of the pseudo-moment polytope translated by $\varpi_0+\varpi_{n+1}$ are exactly the $\chi_i+\varpi_i+\sum_{j=1}^{s+1}b_j(\chi_{r+j}+\varpi_{r+j})$ where the sum is taken over all $s+2$-tuples of non-negative integers $(i,b_1,\dots,b_{s+1})$ such that $0\leq i\leq r$ and $\sum_{j=1}^{s+1}b_j=1+a_i$.
\end{enumerate}
\end{proof}

	Recall that, by Lemma~\ref{lem:Gred}, $(\chi_1,\dots,\chi_n)$ is a basis of $\mathfrak{X}(\mathbb{T})$. Hence, there exists a subtorus $\mathbb{S}$ of $\mathbb{T}$ such that: $(\chi_{i|\mathbb{S}} )_{i\in\{1,\dots,n\},\,\varpi_i=0}$ is a basis of $\mathfrak{X}(\mathbb{S})$, and for any $i\in\{1,\dots,n\}$ such that $\varpi_i\neq 0$, we have $\chi_{i|\mathbb{S}}=0$.
	
\begin{lem}
In both cases (1) and (2), $X$ is also a horospherical $G'\times\mathbb{S}$-variety.
\end{lem}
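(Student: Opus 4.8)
\emph{Strategy.} The plan is to check directly the three defining properties of a horospherical $(G'\times\mathbb{S})$-variety: normality, the existence of a dense $(G'\times\mathbb{S})$-orbit, and the presence of a maximal unipotent subgroup in the generic stabilizer. Two of these are immediate. Normality is independent of the acting group, so $X$ stays normal. A Borel subgroup of $G'\times\mathbb{S}$ has the \emph{same} unipotent radical $U$ as a Borel of $G=G'\times\mathbb{T}$ (the torus factors contribute nothing unipotent), and $U\subseteq G'\subseteq G'\times\mathbb{S}$; since $U$ lies in the $G$-stabilizer $H$ of the base point $[v]$ of the embedding of Lemma~\ref{lem:Xembedded}, it lies in its $(G'\times\mathbb{S})$-stabilizer as well. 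Hence the whole content of the lemma is the \textbf{density of $(G'\times\mathbb{S})\cdot[v]$ in $X$}.

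\emph{Reduction to a torus dominance over the flag variety.} Writing $P=P_{G'}\times\mathbb{T}$, I would argue density along the projection $X\dashrightarrow G/P=G'/P_{G'}$. Since $G'$ already acts transitively on $G'/P_{G'}$, the orbit $(G'\times\mathbb{S})\cdot[v]$ surjects onto the base, and it is dense in $X$ if and only if the stabilizer of the base point, namely $P_{G'}\times\mathbb{S}$, acts with dense orbit on the fiber $P/H$. Under the canonical isomorphism $\mathbb{T}\cong P/H$ (valid here because the restriction $M\to\mathfrak{X}(\mathbb{T})$, $\chi_i+\theta_i\mapsto\chi_i$, is an isomorphism), this fiber is $\mathbb{T}$, on which $\mathbb{S}$ acts by its inclusion and $P_{G'}$ acts by translation through $q_{G'}\colon P_{G'}\to P/H$. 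So density is equivalent to the equality of subtori $q_{G'}(P_{G'})\cdot\mathbb{S}=\mathbb{T}$, i.e. to the statement that no nonzero character of $\mathbb{T}$ is trivial on both $q_{G'}(P_{G'})$ and $\mathbb{S}$.

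\emph{The lattice computation.} This last point is a short computation on $M$ in the basis $(\chi_i+\theta_i)_i$. For a character $\sum_i c_i(\chi_i+\theta_i)$, triviality on $\mathbb{S}$ forces $c_i=0$ whenever $\varpi_i=0$, because the restrictions $\chi_{i|\mathbb{S}}$ for those indices form a basis of $\mathfrak{X}(\mathbb{S})$ while $\chi_{i|\mathbb{S}}=0$ for the others, by the very definition of $\mathbb{S}$. Triviality on $q_{G'}(P_{G'})$ means $\sum_i c_i\theta_i=0$, which in view of the previous vanishing reduces to $\sum_{i:\varpi_i\neq0}c_i\theta_i=0$; I would finish by showing that the $\theta_i$ with $\varpi_i\neq0$ are linearly independent. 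Using the explicit shape of $\theta_i$ from the proof of Lemma~\ref{lem:Xembedded} (in Case~(1), $\theta_i=\varpi_i-\varpi_0+a_i\varpi_\beta$, and analogously in Case~(2)), the weights $\varpi_i=\varpi_{\alpha_i}$ attached to these indices are the fundamental weights of \emph{distinct} colors $\alpha_i\in\mathcal{F}_X$ (distinct because $\tilde\sigma$ is injective), and they differ from $\varpi_0$ and from $\varpi_\beta$ (as $\beta\notin\mathcal{F}_X$); being fundamental weights of pairwise different simple roots they are linearly independent, killing the remaining $c_i$.

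\emph{Main obstacle.} The difficulty is purely bookkeeping: one must track, in each of the two cases of Lemma~\ref{lem:Xembedded}, precisely which $\theta_i$ enter the sum and confirm that the fundamental weights they carry are pairwise distinct and independent from the ``background'' weights $\varpi_0,\varpi_\beta$ (resp. $\varpi_0,\varpi_{n+1}$ in Case~(2)). Granting this, all three properties hold and $X$ is a horospherical $(G'\times\mathbb{S})$-variety, with open orbit $(G'\times\mathbb{S})/(H\cap(G'\times\mathbb{S}))$ still containing $U$ in its stabilizer; the point of $\mathbb{S}$ is exactly that it is the smallest subtorus of $\mathbb{T}$ making this work, the color directions (where $\varpi_i\neq0$) being already supplied by $G'$.
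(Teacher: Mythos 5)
Your proof is correct, but it takes a genuinely different route from the paper's. The paper argues summand by summand on the embedding of Lemma~\ref{lem:Xembedded}: for each $i$ with $\varpi_i\neq 0$, since $\chi_{i|\mathbb{S}}=0$, the $G$-module $V_G(\chi_i+\varpi_i+(1+a_i)\varpi_\beta)$ is exactly the $(G'\times\mathbb{S})$-module $V_{G'\times\mathbb{S}}(\varpi_i+(1+a_i)\varpi_\beta)$, and the $G$-orbit and $(G'\times\mathbb{S})$-orbit of the corresponding highest weight vector coincide --- the scaling by $\mathbb{T}$ on such a summand is absorbed into $G'$ because the orbit of a highest weight vector of nonzero weight is a cone. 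You instead work with the abstract horospherical structure: project the open orbit onto $G/P$, identify the fiber with $\mathbb{T}\cong P/H$, and reduce density to the group-theoretic statement $q_{G'}(P_{G'})\cdot\mathbb{S}=\mathbb{T}$, which you verify by showing that no nonzero element of $M$ vanishes on both subtori; this uses precisely the definition of $\mathbb{S}$ and the linear independence of the $\theta_i$ with $\varpi_i\neq 0$, which holds because the $\alpha_i$, $\alpha_0$ and $\beta$ (resp.\ $\alpha_{n+1}$ in Case~(2)) are pairwise distinct simple roots (injectivity of $\tilde\sigma$). Both proofs rest on the same two inputs (triviality of $\chi_{i|\mathbb{S}}$ when $\varpi_i\neq 0$, and distinctness of the relevant fundamental weights), but yours establishes transitivity of $G'\times\mathbb{S}$ on the whole open orbit in one stroke, making explicit a point the paper leaves implicit, namely the passage from equality of orbits of individual highest weight vectors to equality of orbits of their sum; the paper's version, in exchange, is shorter and stays entirely inside the projective embedding, never invoking the lattice $M$ or the fibration over $G/P$. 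Your closing observation that $\mathbb{S}$ is the smallest subtorus for which the argument works is correct and is not stated in the paper.
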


\begin{proof}
Consider Case (1). For any $i\in\{1,\dots,n\}$ such that $\varpi_i\neq 0$, the $G$-orbit and the $G'\times\mathbb{S}$-orbit of the highest weight vector $v_{\chi_i+\varpi_i+(1+a_i)\varpi_\beta}$ in
$$V_G( \chi_i+\varpi_i+(1+a_i)\varpi_\beta)\simeq V_{G'\times\mathbb{S}}( \chi_i+\varpi_i+(1+a_i)\varpi_\beta)=V_{G'\times\mathbb{S}}(\varpi_i+(1+a_i)\varpi_\beta)$$
are equal. Case (2) is similar.
\end{proof}
	
	We can replace $\chi_i+\varpi_i$ with $\varpi_{\alpha_i}$ such that 
	\begin{itemize}[label=$\ast$]
	\item if $\chi_{i|\mathbb{S}}=0$ and $\varpi_i\neq 0$, $\alpha_i$ is a simple root of $G'$ (that is supposed to be a product of simply connected simple groups);
	\item $\mathbb{S}$ is a product of $\Cbb^*$'s whose trivial simple roots are the $\alpha_i$'s with $i$ such that $\chi_{i|\mathbb{S}}\neq 0$ and $\varpi_i=0$;
	\item if $i=0$ or $n+1$, $\chi_{i|\mathbb{S}}=0$, and $\varpi_i=0$, we have that $\alpha_i$ is the trivial root of $\{1\}$.
	\end{itemize} 
	This finally gives the following proposition.
	
	\begin{prop}\label{prop:embeded}
	
	Let $X$ be a smooth projective horospherical variety of Picard group $\Zbb^2$ as in Case (1) or (2). Then $X$ is isomorphic to a smooth closure of a $G$-orbit of a sum of highest weight vectors as follows where $G$ is the product
 $G_0\times\cdots\times G_t$ of simply connected simple groups, $\Cbb^*$ and $\{1\}$:
	\begin{enumerate}[label=Case (\arabic*): ,start=1]
	\item $$\mathbb{P}(\bigoplus_{i=0}^nV(\varpi_{\alpha_i}+(1+a_i)\varpi_\beta)),$$ where \begin{itemize}[label=$\ast$]
	
	\item $n\geq 1$ and $\beta$ is a (non-trivial) simple root of $G_0$;
	 \item $\alpha_0,\dots,\alpha_n$ are distinct simple roots (may be trivial) of $G$ distinct from $\beta$;
	
	\item for any $k\in\{1,\dots,t\}$, $G_k=\{1\}$ if and only if $k=1$ and $\alpha_0$ is the trivial root of $G_1$;
	
	\item and $0=a_0\leq a_1\leq\cdots\leq a_n$ are integers.
	\end{itemize}
	
	\item 	$$\mathbb{P}(\bigoplus_{i,b_1,\dots,b_{s+1}}V(\varpi_{\alpha_i}+\sum_{j=1}^{s+1}b_j(\varpi_{\alpha_{r+j}})),$$ where 
	
	\begin{itemize}[label=$\ast$]
	
	\item the sum is taken over all $s+2$-tuples of non-negative integers $(i,b_1,\dots,b_{s+1})$ such that $0\leq i\leq r$ and $\sum_{j=1}^{s+1}b_j=1+a_i$ (with $a_0:=0$);
	\item 	$r\geq 1$, $s\geq 1$ and $r+s=n$;
	
	\item 	$\alpha_0,\dots,\alpha_{n+1}$ are distinct simple roots (may be trivial) of $G$;
	\item 	for any $k\in\{0,\dots,t\}$, $G_k=\{1\}$ if and only if, $k=0$ and $\alpha_0$ is the trivial root of $G_0$, or $k=t$ and $\alpha_{n+1}$ is the trivial root of $G_t$;
	\item 	and $0=a_0\leq a_1\leq\cdots\leq a_r$ are integers.
	 \end{itemize}
	\end{enumerate}
	\end{prop}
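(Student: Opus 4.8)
The plan is to read off the proposition directly from the explicit embeddings of Lemma~\ref{lem:Xembedded}, combined with the two successive group reductions (Lemma~\ref{lem:Gred} and the reduction to the subtorus $\mathbb{S}$), and then to reinterpret every weight occurring there through the extended simple-root convention for $\Cbb^*$ and $\{1\}$. All of the geometry has already been done; what remains is essentially bookkeeping, namely converting each pair $(\chi_i,\varpi_i)$ into a (possibly trivial) simple root $\alpha_i$ of a product group $G=G_0\times\cdots\times G_t$ and then checking the structural conditions.

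I would fix $G=G'\times\mathbb{S}$ from the reduction to $\mathbb{S}$, with $G'$ a product of simply connected simple groups and $\mathbb{S}\simeq(\Cbb^*)^d$, and classify each index $i$ into three types according to the behaviour recorded by that reduction: type (a), $\varpi_i\neq 0$, where $\chi_{i|\mathbb{S}}=0$ and the weight restricts to the fundamental weight $\varpi_{\alpha_i}$ of a simple factor of $G'$, so $\alpha_i$ is a genuine simple root; type (b), $\varpi_i=0$ but $\chi_{i|\mathbb{S}}\neq 0$, where $\chi_{i|\mathbb{S}}$ is a basis vector of $\mathfrak{X}(\mathbb{S})$ and hence the identity character of one $\Cbb^*$ factor, so $\alpha_i$ is the trivial root of that factor with $\varpi_{\alpha_i}=\chi_i$; and type (c), $\varpi_i=0$ and $\chi_{i|\mathbb{S}}=0$, where the weight is $0$ and $\alpha_i$ is the trivial root of a $\{1\}$ factor. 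Substituting $\chi_i+\varpi_i=\varpi_{\alpha_i}$ into the embeddings of Lemma~\ref{lem:Xembedded} turns $V(\chi_i+\varpi_i+(1+a_i)\varpi_\beta)$ and $V(\chi_i+\varpi_i+\sum_{j=1}^{s+1}b_j(\chi_{r+j}+\varpi_{r+j}))$ into precisely the modules displayed in the proposition.

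Next I would check the stated conditions. Distinctness of the $\alpha_i$ splits over the three types: the colors (type (a)) are distinct by the injectivity of $\tilde\sigma$ from Lemma~\ref{lem:FirstReduction}; the $\Cbb^*$-roots (type (b)) are distinct because the corresponding $\chi_{i|\mathbb{S}}$ form a basis; and any $\{1\}$-roots sit on distinct $\{1\}$ factors by construction. In Case (1), that $\beta$ is non-trivial and distinct from every $\alpha_i$ follows from $\beta\in\mathcal{R}\setminus\mathcal{F}_X$: it is a genuine simple root of the semisimple part, whereas the colors lie in $\mathcal{F}_X$ and the trivial roots come only from torus factors. The numerical bounds $n\geq 1$ (resp.\ $r,s\geq 1$) and the chain $0=a_0\leq a_1\leq\cdots$ are inherited verbatim from Lemma~\ref{lem:FirstReduction}.

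The step I would treat most carefully --- and the only real content beyond bookkeeping --- is determining which indices can be of type (c), since this is what pins the $\{1\}$ factors to the asserted positions. In Case (1) the reduction to $\mathbb{S}$ gives, for every $i\in\{1,\dots,n\}$, the implication $\varpi_i=0\Rightarrow\chi_{i|\mathbb{S}}\neq 0$, so no interior index is of type (c); only $i=0$ (where $\chi_0=0$) can yield a $\{1\}$ factor, which forces such a factor, when present, to be unique and attached to $\alpha_0$ (labelled $G_1$, after the factor $G_0$ carrying $\beta$). In Case (2) the same dichotomy holds on the interior indices $1,\dots,n$, while only the two boundary indices $i=0$ (for $u_0$) and $i=n+1$ (for $v_{s+1}$), both with $\chi_i=0$, can be of type (c); hence at most two $\{1\}$ factors, necessarily at the positions $G_0$ and $G_t$. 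Fixing a consistent ordering of the simple and torus factors then yields the proposition.
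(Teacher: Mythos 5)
Your proposal is correct and follows essentially the same route as the paper: Proposition~\ref{prop:embeded} is obtained there precisely by combining Lemma~\ref{lem:Gred}, Lemma~\ref{lem:Xembedded} and the reduction to the subtorus $\mathbb{S}$, and then performing the replacement $\chi_i+\varpi_i\mapsto\varpi_{\alpha_i}$ according to the three types (genuine simple root, trivial root of a $\Cbb^*$ factor, trivial root of a $\{1\}$ factor) that you describe. Your explicit argument for why type-(c) indices can occur only at $i=0$ in Case~(1), respectively $i\in\{0,n+1\}$ in Case~(2), is in fact spelled out more carefully than in the paper, which merely records this as the convention in the bulleted list preceding the proposition.
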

	
	These two cases of Proposition~\ref{prop:embeded} justify {\it the definition of two types of varieties}. In Case (2), we already only consider the case where $s=1$ to simplify the definition ; we will prove in Section~\ref{sect43} that we can reduce to this case.

\begin{defi}\label{def:varieties}
Let $G=G_0\times\cdots\times G_t$ be a product of simply connected simple groups, $\Cbb^*$ and $\{1\}$ (with $t\geq 0$).
\begin{enumerate}[label= (\arabic*) ,start=1]
		\item Suppose $G_0$ to be a simple group. Let $\beta$ be a simple root of $G_0$ (non-trivial), let $n\geq \max\{1,t\}$,  let $\alpha_0,\dots,\alpha_n$ be distinct, possibly trivial, simple roots of $G$ different from $\beta$ and let $0=a_0\leq a_1\leq \cdots\leq a_n$ be integers. Suppose also that, for any $k\in\{1,\dots,t\}$, $G_k=\{1\}$ if and only if $k=1$ and $\alpha_0$ is the trivial root of $G_1$. Denote $\underline{\alpha}:=(\alpha_0,\dots,\alpha_n)$ and $\underline{a}:=(a_0,\dots,a_n)$. We define $\mathbb{X}^1(G,\beta,\underline{\alpha},\underline{a})$ to be the closure of the $G$-orbit of a sum of highest weight vectors in $$\Pbb\left(\bigoplus_{i=0}^nV(\varpi_{\alpha_i}+(1+a_i)\varpi_\beta) \right).$$
		\item Suppose $t\geq 1$. Let $n\geq 2$, let $0=a_0\leq a_1\leq \cdots\leq a_{n-1}$ be integers, and let $\alpha_0,\dots,\alpha_{n+1}$ be distinct, possibly trivial, simple roots of $G$. Suppose also that, for any $k\in\{0,\dots,t\}$, $G_k=\{1\}$ if and only if, $k=0$ and $\alpha_0$ is the trivial root of $G_0$, or $k=t$ and $\alpha_{n+1}$ is the trivial root of $G_t$. Denote $\underline{\alpha}:=(\alpha_0,\dots,\alpha_{n+1})$ and $\underline{a}:=(a_0,\dots,a_{n-1})$. We define $\mathbb{X}^2(G,\underline{\alpha},\underline{a})$ to be the closure of the $G$-orbit of a sum of highest weight vectors in $$\Pbb\left(\bigoplus_{i=0}^{n-1} \bigoplus_{b=0}^{1+a_i}V(\varpi_{\alpha_i}+b\varpi_{\alpha_{n}}+(1+a_i-b)\varpi_{\alpha_{n+1}}) \right).$$ 
\end{enumerate}
\end{defi}

	\begin{rem}
	Up to reordering the $G_k$'s and taking $t$ minimal, we can assume that:
	\begin{enumerate}[label=Case (\arabic*): ,start=1]
	\item the map $\{\alpha_0,\dots,\alpha_n\}\backslash R_0 \longrightarrow\{1,\dots,t\}$ is surjective and increasing, where $R_0$ denotes the set of simple roots of $G_0$;
	  \item the map $\{\alpha_0,\dots,\alpha_{n+1}\} \longrightarrow\{0,\dots,t\}$ is surjective and increasing.
	  \end{enumerate}
	  
	\end{rem}

	\begin{ex}\label{ex:X1X2}
	We give here some examples of smooth horospherical varieties of types $\mathbb{X}^1$ and $\mathbb{X}^2$:
\begin{align*}
&W_1:=\mathbb{X}^1(E_6\times\{0\}\times\Cbb^*\times\Cbb^*,\alpha_4(E_6),(\alpha(\{0\}),\alpha_5(E_6),\alpha(\Cbb^*),\alpha(\Cbb^*),\alpha_1(E_6),\alpha_2(E_6)),(0,0,1,1,2,2));\\
&W_2:=\mathbb{X}^2(\operatorname{SL}_2\times\operatorname{SL}_3\times\operatorname{Sp}_8\times \operatorname{Spin}_{7},(\alpha_1(\operatorname{SL}_2),\alpha_1(\operatorname{SL}_3),\alpha_1(\operatorname{Sp}_8),\alpha_1(\operatorname{Spin}_{7}),\alpha_3(\operatorname{Spin}_{7})),(0,0,1));\\
&\mathrm{and}\, W_3 :=\mathbb{X}^2(\{0\}\times\Cbb^*\times\Cbb^*\times\operatorname{SL}_2\times\{0\},(\alpha(\{0\},\alpha(\Cbb^*),\alpha(\Cbb^*),\alpha(\operatorname{SL}_2),\alpha(\{0\})),(0,2,3)).
\end{align*}
	 
 We will see that $W_3$ is the toric variety $\Pbb(\mathcal{O}_{\Pbb^2}\oplus\mathcal{O}_{\Pbb^2}(2)\oplus\mathcal{O}_{\Pbb^2}(3))$.
	\end{ex}
	\section{Reduction to the cases of Theorem~\ref{th:main}}
	
	This section we define the restricted conditions mentioned in Theorem~\ref{th:main}, and we prove that we can reduce the cases of Proposition~\ref{prop:embeded} to the varieties $\mathbb{X}^1(G,\beta,\underline{\alpha},\underline{a})$ and $\mathbb{X}^2(G,\underline{\alpha},\underline{a})$ with these restricted conditions.
	
	\subsection{Smooth horospherical varieties and $G$-modules}
	To prove Theorem~\ref{th:main} from Proposition~\ref{prop:embeded}, we replace sums of irreducible $G$-modules with irreducible $\mathbb{G}$-modules with $G\subset\mathbb{G}$ as soon as we can. Then we enlarge the group $G$ and we reduce to ``smaller'' cases (for example to horospherical varieties with smaller rank).  For this, we first need to apply the smoothness criterion to $X$ (Theorem~\ref{th:smooth}), which comes from the fact that  horospherical $G$-modules (i.e. $G$-modules that are horospherical as varieties) are the $\Cbb^*$-modules $\Cbb$, the $\operatorname{SL}_d$-modules $V(\varpi_1)=\Cbb^d$ and $\operatorname{Sp}_d$-modules (with $d$ even) $V(\varpi_1)=\Cbb^d$. And then we use easy facts as ``the $\SL_d\times\SL_e$-module $\Cbb^{d}\oplus\Cbb^e$ is isomorphic to the $\SL_{d+e}$-module $\Cbb^{d+e}$''.

	As in \cite[Theorem~1.7]{2orbits}, the smoothness criterion reveals 8 configurations including the 5 configurations that give the five families of horospherical two-orbit varieties corresponding to non-homogeneous smooth projective horospherical varieties of Picard group $\Zbb$. We recall these 8 configurations in the following definition.
	
	\begin{defi}\label{def:smoothtriple}
	Let $K$ be a simple algebraic group over $\Cbb$ and let $\gamma$, $\delta$ be two simple roots of $K$. The {\it triple} $(K,\gamma,\delta)$  is said to be {\it smooth} if $(\mbox{type of }K,\gamma,\delta)$ is one of the following 8 cases, up to exchanging $\gamma$ and $\delta$  (with the notation of Bourbaki \cite{Bourbaki8}).
	\begin{enumerate}[label=\arabic*.]
	\item $(A_m,\alpha_1,\alpha_m)$, with $m\geq 2$

\item $(A_m,\alpha_i,\alpha_{i+1})$, with $m\geq 3$ and $i\in\{1,\ldots,m-1\}$

\item $(B_m,\alpha_{m-1},\alpha_m)$, with $m\geq 3$

\item $(B_3,\alpha_1,\alpha_3)$

\item $(C_m,\alpha_i,\alpha_{i+1})$ with $m\geq 2$ and $i\in\{1,\ldots,m-1\}$

\item $(D_m,\alpha_{m-1},\alpha_m)$, with $m\geq 4$

\item $(F_4,\alpha_2,\alpha_3)$

\item $(G_2,\alpha_1,\alpha_2)$
	\end{enumerate}
	
	We say that the triple $(\mbox{type of }K,\gamma,\delta)$ is smooth of two-orbit type if it is one of the cases 3, 4, 5, 7 or 8 above.
	\end{defi}
	
	\begin{rem}\label{rem:2orbits}
	The smooth triples of two-orbit type correspond bijectively to the isomorphism classes of non-homogeneous projective smooth horospherical varieties with Picard group $\Zbb$. These varieties have two orbits under the action of their automorphism groups, which are given in \cite[Theorem~1.11]{2orbits} and justify that all these varieties are distinct. 
	\end{rem}
	
	Here we also need to introduce another ``smooth object'' (only used in Case (1)).
	
\begin{defi}\label{def:smoothtriplebis} 
	Let $K$ be a simple algebraic group over $\Cbb$ and let $\beta$ be a simple root of $K$ and let $R$ be a subset of simple roots of $K$, all distinct from $\beta$. Let $n$ be a non-negative integer.
	Denote by $L$ the Levi subgroup of the maximal parabolic subgroup $P(\varpi_\beta)$ of $K$, then the semi-simple part of $L$ is a quotient by a finite central group of a product of simple groups $L^1,\dots,L^{q}$ (with $q\geq 0$).

	The {\it quadruple} $(K,\beta,R,n)$  is said to be {\it smooth} if 
	
	\begin{enumerate}
	\item $n=1$, $R=\{\gamma,\delta\}$ such that  $\gamma$ and $\delta$ are simple roots of the same $L^k$ such that the triple $(L^k,\gamma,\delta)$ is smooth;
	\item or for any $k\in\{1,\dots,q\}$, at most one simple root of $L^k$ is in $R$, and if $\gamma\in R$ is a simple root of $L_k$, then $L_k$ is of type $A$ or $C$ and $\gamma$ is a short extremal simple root of $L_k$.
	
	\end{enumerate}
	\end{defi}	
	
	We can list all smooth quadruples $(K,\beta,R,n)$ (see the appendix).
We remark, in particular, that $R$ is at most of cardinality 3.

We can now define the restricted conditions that allow us to state Theorems~\ref{th:main} and~\ref{th:main2}.

\begin{defi}\label{def:RC1}
Let $X=	\mathbb{X}^1(G,\beta,\underline{\alpha},\underline{a})$ as in Definition~\ref{def:varieties}. Recall that $R_0$ is the maximal subset of $\{\alpha_0,\dots,\alpha_n\}$ consisting of simple roots of $G_0$. We say that $X$ satisfies the restricted condition (a), (b) or (c) respectively if it satisfies all the following properties including (a), (b) or (c) respectively.

\begin{enumerate}

\item  The quadruple $(G_0,\beta,R_0,n)$ is smooth. 
	
\item If $R_0$ is empty, then $G_0$ is the universal cover of the automorphism group of $G/P(\varpi_\beta)$.
	
\item If $i<j$ and $a_i=a_j$ then  $\alpha_j\in R_0$. Moreover, if  $\alpha_i$ and $\alpha_j$ are in $R_0$, we suppose them to be ordered with Bourbaki's notation as simple roots of $G_0$.
	
\item One of the three following cases occurs.
	\begin{enumerate}[label=(\alph*)]
	\item 
	
	We have $n=t=1$,  $\alpha_0$ and $\alpha_1$ are both simple roots of $G_1$ such that the triple $(G_1,\alpha_0,\alpha_1)$ is smooth; in particular, $R_0=\emptyset$ and $a_0<a_1$. 
	
\vskip\baselineskip
In the two next cases, the map $\{\alpha_0,\dots,\alpha_n\}\backslash R_0 \longrightarrow\{1,\dots,t\}$ is surjective and strictly increasing, and for any $k\in\{1,\dots,t\}$, either $G_k$ is isomorphic to some $\SL_{d_k}$ and $\alpha_{i_k}$ is the first simple root of $G_k$, or  $G_k$ is isomorphic to $\Cbb^*$ or $\{1\}$ and $\alpha_{i_k}$ is the trivial simple root of $G_k$.
	
\item The simple root $\alpha_n$ is not trivial (in particular if $a_{n-1}=a_n$). 
	\item The simple root $\alpha_n$ is trivial (and then $a_{n-1}<a_n$).
			\end{enumerate}
			
			\end{enumerate}
\end{defi}

\begin{defi}\label{def:RC2}
Let $X=	\mathbb{X}^2(G,\underline{\alpha},\underline{a})$ as in Definition~\ref{def:varieties}. We say that $X$ satisfies the restricted condition (a), (b) or (c) respectively if it satisfies all the following properties including (a), (b) or (c) respectively.
	\begin{enumerate}

\item We have $0=a_0< a_1<\cdots< a_n$.
\item The triple $(G_t,\alpha_{n},\alpha_{n+1})$ is smooth of two-orbit type; in particular, $\alpha_{n}$ and $\alpha_{n+1}$ are both simple roots of $G_t$ and $\alpha_0,\dots,\alpha_{n-1}$ are simple roots of $G_0\times G_1\times\cdots\times G_{t-1}$.

 \item  One of the three following cases occurs.
	 
	\begin{enumerate}[label=(\alph*)]
	\item  We have $n=2$, $t=1$ and the triple $(G_0,\alpha_{0},\alpha_{1})$ is smooth.
	
\vskip\baselineskip
In the two next cases:  $t=n$, the map $\{\alpha_0,\dots,\alpha_{n-1}\} \longrightarrow\{0,\dots,t-1\}$ is surjective and strictly increasing; and for any $i\in\{1,\dots,t\}$, either $G_i$ is isomorphic to some $\SL_{d_i}$ and $\alpha_i$ is the first simple root of $G_i$, or  $G_i$ is isomorphic to $\Cbb^*$ or $\{1\}$ and  $\alpha_i$ is  the trivial simple root of $G_i$.
	
	\item The simple root $\alpha_{n-1}$ is not  trivial.
	\item The simple root $\alpha_{n-1}$ is trivial.
			\end{enumerate}\end{enumerate}			
\end{defi}

\begin{rem}\label{rem:decprojbund}
	In Theorem~\ref{th:main}, the decomposable projective bundles over projective spaces are the horospherical varieties $X$ in Case (1) with restricted condition (b) or (c), and such that $R_0=\emptyset$ and   $\varpi_\beta$ is the first simple root of $G_0=\operatorname{SL}_{d_0}$ for some $d_0\geq 2$ (and $0<a_1<\cdots<a_n$).
	\end{rem}
	
	\begin{ex}
	The three varieties given in Examples~\ref{ex:X1X2} do not satisfy the restricted condition. Indeed, for $W_1$ we have $a_2=a_3$ but  $\alpha_3$ is not a simple root of $G_0$. For $W_2$, we have $a_0=a_1$ and  $G_2=\operatorname{Sp}_8$. And for $W_3$, $(G_t,\alpha_n,\alpha_{n+1})$ is not smooth of two-orbit type.
	
	But we will prove in the rest of the section that these three varieties are isomorphic to horospherical varieties of type $\mathbb{X}^1$ or $\mathbb{X}^2$ satisfying the restricted condition. More precisely, $$W_1\simeq\mathbb{X}^1(E_6\times\{0\}\times\operatorname{SL}_2,\alpha_4(E_6),(\alpha(\{0\}),\alpha_5(E_6),\alpha(\operatorname{SL}_2),\alpha_1(E_6),\alpha_2(E_6)),(0,0,1,2,2))$$ which satisfies the restricted condition (b);
		$$W_2\simeq\mathbb{X}^2(\operatorname{SL}_5\times\operatorname{SL}_8\times \operatorname{Spin}_{7},(\alpha_1(\operatorname{SL}_5),\alpha_1(\operatorname{SL}_8),\alpha_1(\operatorname{Spin}_{7}),\alpha_3(\operatorname{Spin}_{7})),(0,1))$$ and satisfies the restricted condition (b);
	 $$\mbox{and } W_3:=\mathbb{X}^1(\operatorname{SL}_3\times\{0\}\times\Cbb^*\times\Cbb^*,\alpha_1(\operatorname{SL}_3), (\alpha(\{0\}),\alpha(\Cbb^*),\alpha(\Cbb^*)),(0,2,3))$$ and satisfies the restricted condition (c).
	 
	 We can give other examples satisfying the restricted condition in Case (1) (a): for any $G_0$ and $\beta$, $$\mathbb{X}^1(G_0\times\operatorname{SL}_4,\beta,(\alpha_1(\operatorname{SL}_4),\alpha_3(\operatorname{SL}_4)),(0,1));$$
	 
	 in Case (2) (a): $$\mathbb{X}^2(\operatorname{SL}_4\times\operatorname{Sp_6},(\alpha_1(\operatorname{SL}_4),\alpha_2(\operatorname{SL}_4),\alpha_2(\operatorname{Sp_6}),\alpha_3(\operatorname{Sp_6})),(0,1));$$
	 
	 and in Case (2) (c): $$\mathbb{X}^2(\{0\}\times\Cbb^*\times F_4,(\alpha(\{0\}),\alpha(\Cbb^*), \alpha_2(F_4),\alpha_3(F_4)),(0,1)).$$
	\end{ex}

We begin by applying the smoothness criterion to get some part of the restricted condition. We suppose that $X$ is as in Proposition~\ref{prop:embeded}. Recall that the colored fans $\mathbb{F}^1$ and $\mathbb{F}^2$ of the horospherical varieties in Cases (1) and (2) respectively are as follows.

 The colored fan $\mathbb{F}^1$ is the complete colored fan whose maximal colored cones are generated by all $u_0,\dots,u_n$ except one and with all possible colors except $\beta$, where $(u_1,\dots,u_n)$ is a basis of $N$ and $u_0=-u_1-\cdots-u_n$. Recall also that the map $\sigma$ is injective from the set $\mathcal{R}\backslash\{\beta\}$ of colors of the horospherical variety to $\{u_0,\dots,u_n\}$ and $\sigma(\beta)=\beta_M^\vee$ is $a_1u_1+\cdots+a_nu_n$.
 
 The colored fan $\mathbb{F}^2$ is the complete colored fan whose maximal colored cones are generated by all $u_0,\dots,u_r,v_1,\dots,v_{s+1}$ except one $u_i$ and one $v_j$, and with all possible colors, where $(u_1,\dots,u_r,v_1,\dots,v_s)$ is a basis of $N$, $u_0=-u_1-\cdots-u_r$ and $v_s=a_1u_1+\cdots+a_ru_r-v_1-\cdots-v_s$. Recall also that the map $\sigma$ is injective from the set $\mathcal{R}$ of colors of the horospherical variety to $\{u_0,\dots,u_r,v_1,\dots,v_s\}$.
	
	\begin{lem}\label{lem:WithSmoothness} 
	~
	\begin{enumerate}[label=Case (\arabic*): ,start=1]
	\item The quadruple $(G_0,\beta,R_0,n)$ is smooth.  If there exist $0\leq i<j\leq n$ such that $\alpha_i$ and $\alpha_j$ are simple roots of the same simple group $G_{k}$ with $k\in\{1,\dots,t\}$ then $n=1$, $i=0$ and $j=1$ (also $t=k=1$). Moreover in that case, the triple $(G_{k},\alpha_i,\alpha_j)$ is smooth.
	
\noindent otherwise, for any $i\in\{0,\dots,n\}$, the simple root $\alpha_i$ is either trivial or in $G_0$ or the short extremal simple root of some simple group $G_k$ with $k\in\{1,\dots,t\}$ that is of type $A$ or $C$.

	\item 	If there exist $0\leq i<j\leq n+1$ such that $\alpha_i$ and $\alpha_j$ are simple roots of the same simple group $G_{k}$ with $k\in\{0,\dots,t\}$ then either $r=1$, $i=0$ and $j=1$, or $s=1$, $i=n$ and $j=n+1$. Moreover in that case, the triple $(G_{k},\alpha_i,\alpha_j)$ is smooth.
	
\noindent For any $i\in\{0,\dots,n\}$, such that the simple root $\alpha_i$ is the unique $\alpha_j$ of a simple group $G_{k}$ with $k\in\{0,\dots,t\}$, the root $\alpha_i$ is either trivial or the short extremal simple root of $G_{k}$ that is of type $A$ or $C$. 
	\end{enumerate}
	\end{lem}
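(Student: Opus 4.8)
The plan is to apply the smoothness criterion of Theorem~\ref{th:smooth} to every maximal colored cone of the fans $\mathbb{F}^1$ and $\mathbb{F}^2$ described just above, and to translate the resulting conditions on smooth pairs $(\mathcal{S}\backslash\mathcal{R},\mathcal{F})$ into the stated conditions on the $\alpha_i$'s. The starting observation is that for a colored cone with color set $\mathcal{F}\subseteq\mathcal{R}$ one has $(\mathcal{S}\backslash\mathcal{R})\cup\mathcal{F}=\mathcal{S}\backslash(\mathcal{R}\backslash\mathcal{F})$, so the relevant subgraph $\Gamma_{(\mathcal{S}\backslash\mathcal{R})\cup\mathcal{F}}$ is exactly the Dynkin diagram of $G$ with the vertices of $\mathcal{R}\backslash\mathcal{F}$ deleted. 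In Case~(1) each maximal colored cone omits a single ray $u_k$ and carries all colors except $\beta$ and (if nontrivial) $\alpha_k$, so $\mathcal{R}\backslash\mathcal{F}$ is $\{\beta\}$ or $\{\beta,\alpha_k\}$ and the graph is the diagram of $G$ with $\beta$ removed (together with $\alpha_k$ when it is a color). Removing $\beta$ splits the diagram of the simple factor $G_0$ into the simple factors $L^1,\dots,L^q$ of the Levi of $P(\varpi_\beta)$ while leaving $G_1,\dots,G_t$ untouched; this is precisely the combinatorics encoded by Definition~\ref{def:smoothtriplebis}, which is why the condition on $G_0$ ultimately takes the form ``$(G_0,\beta,R_0,n)$ is smooth''.

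The heart of the argument is the first clause of smoothness of a pair: each connected component may contain at most one color. Since $G$ is a product, two colors $\alpha_i,\alpha_j$ can lie in a common component only if they are simple roots of the same factor $G_k$. I would then show that for $n\geq 2$ in Case~(1) one can always choose a maximal cone whose omitted ray $u_k$ is distinct from both $u_i$ and $u_j$ (possible since $\{0,\dots,n\}$ has at least three elements), so that $\alpha_i$ and $\alpha_j$ appear simultaneously in $\mathcal{F}$ and violate the first clause; this forces $n=1$, whence the only two colors are $\alpha_0,\alpha_1$ and $t=k=1$. In Case~(2) the same counting over the cones of $\mathbb{F}^2$, each omitting exactly one $u_i$ and one $v_j$, shows that two colors in the same factor must be either both among the $u$'s (forcing $r=1$, hence $i=0$, $j=1$) or both among the $v$'s (forcing $s=1$, hence $i=n$, $j=n+1$): a $u$-color and a $v$-color always co-occur in the cone omitting some other $u$ and some other $v$, so they can never share a factor.

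It remains to read off the local conditions. When exactly one color $\alpha_i$ lies in a factor $G_k$ with $k\geq 1$ (Case~(1)) or $k\geq 0$ (Case~(2)), clause~(2) of smoothness applied to its component forces $G_k$ to be of type $A$ or $C$ and $\alpha_i$ to be a short extremal simple root; when $\alpha_i$ lies in $G_0$ in Case~(1), this data over the components $L^1,\dots,L^q$ is packaged into the smoothness of the quadruple $(G_0,\beta,R_0,n)$. Finally, in the forced rank-one situation each of the two relevant maximal cones carries only one of the two colors, so smoothness imposes that $\alpha_0$ be short extremal in the component of $G_k\backslash\{\alpha_1\}$ containing it and, symmetrically, that $\alpha_1$ be short extremal in the component of $G_k\backslash\{\alpha_0\}$ containing it. I expect the main obstacle to be the last step: verifying that the conjunction of these two extremality requirements is \emph{exactly} the list of smooth triples of Definition~\ref{def:smoothtriple}, and carrying out the analogous matching after the Levi decomposition for $G_0$. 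This is a finite but delicate case analysis over the Dynkin types, and it is the only genuinely non-formal part of the proof.
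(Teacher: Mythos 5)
Your proposal is correct and follows essentially the same route as the paper: apply the smoothness criterion of Theorem~\ref{th:smooth} to suitable colored cones, use the ``at most one color per connected component'' clause with a cone containing both $\alpha_{i,M}^\vee$ and $\alpha_{j,M}^\vee$ to force $n=1$ (resp. $r=1$ or $s=1$), and use the second clause on cones carrying a single color to get the type $A$/$C$ short-extremal condition. The only step you leave open --- that the two one-cone-at-a-time extremality conditions in the rank-one situation characterize exactly the smooth triples of Definition~\ref{def:smoothtriple} --- is precisely the case-by-case verification the paper also does not redo, citing instead the proof of Theorem~1.7 of \cite{2orbits}.
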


	\begin{proof}
	~
	\begin{enumerate}[label=Case (\arabic*): ,start=1]
	\item With notation of Definition~\ref{def:smoothtriplebis} (with $K=G_0$), suppose $\gamma$ and $\delta$ are two simple roots of the same $L^j$. If $n>1$, then there exists a maximal colored cone of $\mathbb{F}_X$ that contains $\gamma^\vee_{M}$ and $\delta^\vee_{M}$. By applying Theorem~\ref{th:smooth}, we get a contradiction.  Then $n=1$ and applying Theorem~\ref{th:smooth} to the two one-dimensional colored cones of $\mathbb{F}_X$, we have that the pairs $(R_0\backslash\{\beta,\delta\},\gamma)$ and $(R_0\backslash\{\beta,\gamma\},\delta)$ are smooth, so that $(L^j,\gamma,\delta)$ is smooth (from a case by case study done in \cite[Proof of Theorem~1.7]{2orbits}).
	
	Suppose that $\alpha$ is the unique simple root of $L^j$ in $R_0$. By applying Theorem~\ref{th:smooth} to the colored cone $(\Qbb_{\geq 0}\alpha_M^\vee,\{\alpha\})$ we get that $L^j$ is of type $A$ or $C$ and $\alpha$ is a short extremal simple root of $L^j$.	
	This finishes the proof of the smoothness of $(G_0,\beta,R_0,n)$.
	
	If there exist $0\leq i<j\leq n$ such that $\alpha_i$ and $\alpha_j$ are simple roots of the same simple group $G_{k}$ with $k\in\{1,\dots,t\}$ then as above Theorem~\ref{th:smooth} implies that $n=1$ and $(G_{k},\alpha_i,\alpha_j)$ is smooth. The fact that $i=0$, $j=1$ and  $t=k=1$ is obvious.
	
	Now, let $i\in\{0,\dots,n\}$ such that the simple root $\alpha_i$ is the unique $\alpha_j$ of a simple group $G_{k}$ with $k\in\{1,\dots,t\}$ and suppose that $\alpha_i$ is not trivial. Apply again Theorem~\ref{th:smooth} to the colored cone $(\Qbb_{\geq 0}\alpha_M^\vee,\{\alpha\})$ to get that $\alpha_i$ is    the short extremal simple root $G_k$ with $k\in\{1,\dots,t\}$ that is of type $A$ or $C$. This finishes the proof of the lemma in Case (1).
	
	\item Suppose there exist $0\leq i<j\leq n+1$ such that $\alpha_i$ and $\alpha_j$ are simple roots of the same simple group $G_{k}$ with $k\in\{0,\dots,t\}$. Then Theorem~\ref{th:smooth} implies that $(G_{k},\alpha_i,\alpha_j)$ is smooth (still from the case by case study done in \cite[Proof of Theorem~1.7]{2orbits}). But this also gives a contradiction if there exists a maximal colored cone of $\mathbb{F}_X$ that contains $\alpha^\vee_{i,M}$ and $\alpha^\vee_{j,M}$. This contradiction occurs if and only if $0\leq i\leq r$ and $r+1\leq j\leq n+1$, or $0\leq i,j\leq r$ and $r\geq 2$, or $r+1\leq i,j\leq n+1$ and $s\geq 2$.
	
	We conclude the proof of the lemma in Case (2) as in Case (1).
	\end{enumerate}
	\end{proof}

	Now we list different ways to replace sums of irreducible $G$-modules with irreducible $\mathbb{G}$-modules with $G\subset\mathbb{G}$.
	
	\begin{lem}\label{lem:gather}
	
	Let $\tau\geq 1$. For $i\in\{1,\dots,\tau\}$, let $G_i$ be $\Cbb^*$, $\operatorname{SL}_{d_i}$ (with $d_i\geq 2$) or $\operatorname{Sp}_{d_i}$ (with $d_i\geq 2$ even). If $G_i=\Cbb^*$ set $d_i=1$ and $\varpi_1^i$ the identity character of $\Cbb^*$. otherwise, set $\varpi_1^i$ the first fundamental weight of $G_i$. Let $G=G_1\times\cdots\times G_\tau$.
	
	\begin{enumerate}[label=(\alph*)]
	\item   Let $\mathbb{G}=\operatorname{SL}_{d}$ where $d=d_1+\cdots+d_\tau$.
	
	\noindent Then $V_{\mathbb{G}}(\varpi_1)=\bigoplus_{i=1}^\tau V_G(\varpi^i_1)$ and  $G\cdot\left(\sum_{i=1}^\tau v_{\varpi_1}^i\right)\subset\mathbb{G}\cdot v_{\varpi_1}$.
	
		\item Let $\mathbb{G}=\operatorname{SL}_{d}$ where $d=d_1+\cdots+d_\tau+1$.
		
		\noindent Then $V_{\mathbb{G}}(\varpi_1)=V_G(0)\oplus\bigoplus_{i=1}^\tau V_G(\varpi_1^i) $  and  $G\cdot\left(1+\sum_{i=1}^\tau v_{\varpi_1}^i\right)\subset\mathbb{G}\cdot v_{\varpi_1}$, where $1$ denotes the unit in the trivial $G$-module $V_G(0)=\Cbb$.
		
\vskip\baselineskip
\noindent With notation of Bourbaki \cite{Bourbaki8} (we put primes to write differently fundamental weights  of $\mathbb{G}$ from those of $G$).

		\item Let $G=\operatorname{SL}_d$ (with $d\geq 3$) and $\mathbb{G}=\operatorname{SO}_{2d}$.
		
		\noindent Then $V_{\mathbb{G}}(\varpi_1')=V_G(\varpi_1)\oplus V_G(\varpi_{d-1})$ and $G\cdot\left(v_{\varpi_1}+v_{\varpi_{d-1}}\right)\subset\mathbb{G}\cdot v_{\varpi_1'}$. 
		
		\item
		Let $G=\operatorname{SL}_d$ (with $d\geq 4$), $\mathbb{G}=\operatorname{SL}_{d+1}$ and $1\leq i\leq d-2$. 
		
		\noindent Then $V_{\mathbb{G}}(\varpi_{i+1}')=V_G(\varpi_i)\oplus V_G(\varpi_{i+1})$ and $G\cdot\left(v_{\varpi_i}+v_{\varpi_{i+1}}\right)\subset\mathbb{G}\cdot v_{\varpi_{i+1}'}$. 
		
		\item Let $G=\operatorname{Spin}_{2d}$ (with $d\geq 4$) and $\mathbb{G}=\operatorname{Spin}_{2d+1}$.
		
		\noindent Then $V_{\mathbb{G}}(\varpi_{d}')=V_G(\varpi_{d-1})\oplus V_G(\varpi_{d})$ and $G\cdot\left(v_{\varpi_{d-1}}+v_{\varpi_{d}}\right)\subset\mathbb{G}\cdot v_{\varpi_{d}'}$.
	\end{enumerate}
	
	Moreover in each case, the projectivizations of the $G$-orbit and the $\mathbb{G}$-orbit have the same dimension, in particular the two  projective varieties defined as the closure of these two orbits in the corresponding projective spaces are the same.
	\end{lem}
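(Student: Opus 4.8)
The plan is to treat the five cases (a)--(e) in parallel along the same three-step scheme: first establish the $G$-module decomposition of $V_{\mathbb{G}}(\varpi')$ (where $\varpi'$ denotes the relevant fundamental weight of $\mathbb{G}$ in each case), then prove the orbit inclusion, and finally deduce the equality of closures from a dimension count.

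For the decompositions I would simply restrict the relevant $\mathbb{G}$-module to $G$ and read off the branching. In (a) and (b) the standard module $\Cbb^d$ of $\operatorname{SL}_d$ splits as the block decomposition $\bigoplus_i \Cbb^{d_i}$ (together with the fixed line $V_G(0)$ in case (b)). In (c) the standard module of $\operatorname{SO}_{2d}$ is $\Cbb^d\oplus(\Cbb^d)^*$ for the embedding $\operatorname{SL}_d\hookrightarrow\operatorname{SO}_{2d}$ preserving $q(v,\phi)=\phi(v)$, and $(\Cbb^d)^*\simeq V_G(\varpi_{d-1})$. In (d) one uses $\Lambda^{i+1}(\Cbb^d\oplus\Cbb)=\Lambda^{i+1}\Cbb^d\oplus\Lambda^i\Cbb^d$, and in (e) the standard fact that the spin module of $\operatorname{Spin}_{2d+1}$ restricts to $\operatorname{Spin}_{2d}$ as the sum of the two half-spin modules $V_G(\varpi_{d-1})\oplus V_G(\varpi_d)$. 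In each case I would also pin down the highest weight vectors of the summands explicitly in these models.

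For the orbit inclusion, the key remark is that the affine orbit $\mathbb{G}\cdot v_{\varpi'}$ is exactly the set of nonzero \emph{extremal} vectors of $V_{\mathbb{G}}(\varpi')$ --- nonzero vectors in (a),(b); nonzero isotropic vectors in (c); nonzero decomposable $(i+1)$-vectors in (d); nonzero pure spinors in (e) --- because $\mathbb{G}$ acts transitively on each of these sets (transitivity on nonzero vectors, Witt's theorem, transitivity on decomposables, and transitivity on pure spinors, respectively). It then suffices to check that $v:=\sum_i v_{\mu_i}$ is such an extremal vector, which in each case is a one-line verification: $v\neq 0$; or $q(v)=0$; or $v=e_1\wedge\cdots\wedge e_i\wedge(e_{i+1}+e_{d+1})$ is decomposable; or $v$ is pure (checked through the $\Lambda^\bullet\Cbb^d$ model of the spin representation and the Cartan purity relations). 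Once $v\in\mathbb{G}\cdot v_{\varpi'}$, the inclusion $G\cdot v\subset\mathbb{G}\cdot v_{\varpi'}$ is immediate since the right-hand side is $\mathbb{G}$-stable, hence $G$-stable.

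Finally, since $G\cdot[v]\subseteq\mathbb{G}\cdot[v_{\varpi'}]$ and both are irreducible (orbits of connected groups) and locally closed, equality of their closures follows once I show they have the same dimension, where $\dim\mathbb{G}\cdot[v_{\varpi'}]=\dim\mathbb{G}/P(\varpi')$ is the dimension of the relevant flag variety (projective space, quadric, Grassmannian, or spinor variety). I would compute $\dim G\cdot[v]$ through the tangent space $\mathfrak{g}\cdot v\pmod{\Cbb v}$: in (a),(b) one gets $\mathfrak{g}_i\cdot v_i=\Cbb^{d_i}$ for each block (using that $\operatorname{SL}_{d_i}$, $\operatorname{Sp}_{d_i}$ and $\Cbb^*$ each send the highest weight vector onto the full standard module via their Lie algebra), so the orbit is dense in $\Pbb^{d-1}$; and in (c) a direct computation of $\mathfrak{sl}_d\cdot(e_1+e_d^*)$ yields dimension $2d-2=\dim(\text{quadric})$. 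I expect the main obstacle to be precisely these dimension computations in the Grassmannian case (d) and especially the spin case (e): there one must verify that the specific $G$-orbit through $[v]$ is dense in the closed $\mathbb{G}$-orbit, which requires a careful stabilizer or tangent-space analysis inside the spinor model rather than the soft transitivity arguments that suffice for the inclusion.
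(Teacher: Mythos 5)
Your framework is sound, and it is genuinely different from the paper's proof: the paper disposes of (a) and (b) as ``easy and left to the reader'' and simply cites \cite[Propositions~1.8, 1.9 and 1.10]{2orbits} for (c), (d), (e), whereas you give a self-contained argument. Your three steps are all correct in principle: the branching decompositions are right (block decomposition for (a)--(b); $\Cbb^d\oplus(\Cbb^d)^*$ inside the standard $\operatorname{SO}_{2d}$-module for (c); $\Lambda^{i+1}(\Cbb^d\oplus\Cbb)$ for (d); spin of $B_d$ restricting to the two half-spins of $D_d$ for (e)); the identification of $\mathbb{G}\cdot v_{\varpi'}$ with nonzero/isotropic/decomposable/pure vectors plus transitivity is the right mechanism for the inclusion, and your verifications that the sum of highest weight vectors is extremal are correct in each case (e.g.\ $q(e_1+e_d^*)=e_d^*(e_1)=0$ in (c), and $e_1\wedge\cdots\wedge e_i\wedge(e_{i+1}+e_{d+1})$ in (d)); your tangent-space computations for (a), (b), (c) do give density of the projectivized $G$-orbit.

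However, there is a genuine gap: for (d) and (e) you only \emph{announce} that a ``careful stabilizer or tangent-space analysis'' is needed, and you do not carry it out. This is not a minor omission, because the dimension equality is exactly the content of the ``moreover'' clause, i.e.\ the whole point of the lemma (it is what the cited propositions of \cite{2orbits} actually prove), and it is the only thing standing between the orbit inclusion and the equality of closures. The gap is fillable, and in fact more softly than you anticipate, by identifying the $G$-orbit of $[v]$ geometrically. In (d), $[v]$ is the $(i+1)$-plane $W=\langle e_1,\dots,e_i,e_{i+1}+e_{d+1}\rangle$; the planes $W'$ with $e_{d+1}\notin W'$ and $W'\not\subset\Cbb^d$ are parametrized by pairs (an $i$-plane $V_0\subset\Cbb^d$, a nonzero vector of $\Cbb^d/V_0$), on which $\operatorname{SL}_d$ acts transitively, so $G\cdot[v]$ is this single orbit, which is open in $\operatorname{Gr}(i+1,d+1)$. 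In (e), the annihilator of $v_{\varpi_{d-1}}+v_{\varpi_d}$ in the Clifford model is a maximal isotropic $U\subset\Cbb^{2d+1}$ with $U\not\subset\Cbb^{2d}$ (one checks $U=\langle e_1,\dots,e_{d-1},\,e_d-e_d^*+e_0\rangle$); the maximal isotropics not contained in $\Cbb^{2d}$ again form a single $\operatorname{Spin}_{2d}$-orbit (the stabilizer of $U\cap\Cbb^{2d}$ surjects onto the $\Cbb^*$ of admissible extensions), and this orbit is open in the spinor variety. Until you supply these (or equivalent stabilizer computations), cases (d) and (e) of the lemma are not proved.
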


\begin{rem}
\leavevmode
\begin{enumerate}
\item In the first case of Lemma~\ref{lem:gather}, with $\tau=1$ we have in particular that, for $d$ even, $V_{\operatorname{Sp}_{d}}(\varpi_1)=V_{\operatorname{SL}_{d}}(\varpi_1)$. Note also that $\operatorname{Sp}_{d}/P(\varpi_1)=\operatorname{SL}_{d}/P(\varpi_1)(=\Pbb^{d-1})$.
\item Cases (c), (d) and (e) correspond to the triples of Definition~\ref{def:smoothtriple} that are not of two-orbit type.
\end{enumerate}
\end{rem}

\begin{proof}
The first two items are easy and left to the reader. The last three items are given in \cite[Propositions~1.8,~1.9 and 1.10]{2orbits}.
\end{proof}

	In Case (2), we need the following generalization of Lemma~\ref{lem:gather}.

\begin{lem}\label{lem:gatherbis}
Let $a\in\Nbb^*$.

Let $\tau\geq 0$. For $i\in\{0,\dots,\tau\}$, let $G_i$ be $\Cbb^*$, $\operatorname{SL}_{d_i}$ (with $d_i\geq 2$) or $\operatorname{Sp}_{d_i}$ (with $d_i\geq 2$ even). If $G_i=\Cbb^*$ set $d_i=1$ and $\varpi_1^i$ the identity character of $\Cbb^*$. Else set $\varpi_1^i$ the first fundamental weight of $G_i$. Let $G=G_0\times\cdots\times G_\tau$. 
\begin{enumerate}[label=(\alph*)]
	\item  Let $\mathbb{G}=\operatorname{SL}_{d}$ where $d=d_0+\cdots+d_\tau$.
	Then $$V_{\mathbb{G}}(a\varpi_1)=\bigoplus_{b_0,\dots,b_\tau}V_G(\sum_{i=0}^{\tau}b_i\varpi_1^i),$$  where  the sum is taken over all $(\tau+1)$-tuples of non-negative integers $(b_0,\dots,b_\tau)$ such that $\sum_{i=0}^{\tau}b_i=a$.
	And $$G\cdot\left(\sum_{b_0,\dots,b_\tau}v_{\sum_{i=0}^\tau b_i\varpi_1^i}\right)\subset\mathbb{G}\cdot v_{a\varpi_1}.$$
	
		\item Let $\mathbb{G}=\operatorname{SL}_{d}$ where $d=d_0+\cdots+d_\tau+1$.
		Then $$V_{\mathbb{G}}(a\varpi_1)=\bigoplus_{b_0,\dots,b_\tau}V_G(\sum_{i=0}^{\tau}b_i\varpi_1^i),$$ where  the sum is taken over all $(\tau+1)$-tuples of non-negative integers $(b_0,\dots,b_\tau)$ such that $\sum_{i=0}^{\tau}b_i\leq a$. And $$G\cdot\left(\sum_{b_0,\dots,b_\tau}v_{\sum_{i=0}^\tau b_i\varpi_1^i}\right)\subset\mathbb{G}\cdot v_{a\varpi_1}.$$
		
\vskip\baselineskip
With notation of Bourbaki \cite{Bourbaki8} (we put primes to write differently fundamental weights of $\mathbb{G}$ from those of $G$).

		\item Let $G=\operatorname{SL}_d$ (with $d\geq 3$) and $\mathbb{G}=\operatorname{SO}_{2d}$. Then $$V_{\mathbb{G}}(a\varpi_1')=\bigoplus_{b=0}^aV_G(b\varpi_1+(a-b)\varpi_{d-1})\,\mbox{ and }\,G\cdot\left(\sum_{b=0}^a v_{b\varpi_1+(a-b)\varpi_{d-1}}\right)\subset\mathbb{G}\cdot v_{a\varpi_1'}.$$
		
		\item
		Let $G=\operatorname{SL}_d$ (with $d\geq 4$), $\mathbb{G}=\operatorname{SL}_{d+1}$ and $1\leq i\leq d-2$. Then $$V_{\mathbb{G}}(a\varpi_{i+1}')=\bigoplus_{b=0}^aV_G(b\varpi_i+(b-a)\varpi_{i+1})\,\mbox{ and }\,G\cdot\left(\sum_{b=0}^a v_{b\varpi_i+(a-b)\varpi_{i+1}}\right)\subset\mathbb{G}\cdot v_{a\varpi_{i+1}'}.$$
		
		\item Let $G=\operatorname{Spin}_{2d}$ (with $d\geq 4$) and $\mathbb{G}=\operatorname{Spin}_{2d+1}$. Then $$V_{\mathbb{G}}(a\varpi_{d}')=\bigoplus_{b=0}^aV_G(b\varpi_{d-1}+(b-a)\varpi_{d})\,\mbox{ and }\, G\cdot\left(\sum_{b=0}^a v_{b\varpi_{d-1}+(b-a)\varpi_{d}}\right)\subset\mathbb{G}\cdot v_{a\varpi_{d}'}.$$
	\end{enumerate}
	Moreover in each case, the projectivizations the $G$-orbit and the $\mathbb{G}$-orbit have the same dimension, in particular the two  projective varieties defined as the closure of these two orbits in the corresponding projective spaces are the same.
\end{lem}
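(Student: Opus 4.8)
The plan is to treat the three assertions of each case separately: the $\mathbb{G}$-module identity $V_\mathbb{G}(a\varpi')=\bigoplus_{\chi'}V_G(\chi')$ (where $\varpi'$ is the relevant fundamental weight of $\mathbb{G}$, namely $\varpi_1$ in (a), (b) and $\varpi_1'$, $\varpi_{i+1}'$, $\varpi_d'$ in (c), (d), (e)), the orbit inclusion $G\cdot(\sum_{\chi'}v_{\chi'})\subset\mathbb{G}\cdot v_{a\varpi'}$, and the equality of orbit-closure dimensions. Throughout, the guiding observation is that $V_\mathbb{G}(a\varpi')$ is the $a$-th Cartan power of $V_\mathbb{G}(\varpi')$, so each statement should be reducible to the case $a=1$, which is precisely Lemma~\ref{lem:gather}, by means of the degree-$a$ Veronese re-embedding of the closed orbit $\mathbb{G}/P(\varpi')$.

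For the module identities in cases (a) and (b) I would argue directly: $V_\mathbb{G}(\varpi_1)=\Cbb^d$ is the standard $\operatorname{SL}_d$-module, whose restriction to $G$ is $\bigoplus_i V_{G_i}(\varpi_1^i)$ (together with a trivial line in case (b)), and $V_\mathbb{G}(a\varpi_1)=\operatorname{Sym}^a(\Cbb^d)$ is already $\mathbb{G}$-irreducible. Expanding the symmetric power of a direct sum and using that $\operatorname{Sym}^{b}(\Cbb^{d_i})=V_{G_i}(b\varpi_1^i)$ for each factor (valid for $G_i=\operatorname{SL}_{d_i}$, $\operatorname{Sp}_{d_i}$ and $\Cbb^*$ alike) yields exactly the stated decompositions, indexed by $\sum b_i=a$ and $\sum b_i\le a$ respectively. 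For cases (c), (d), (e), where $\mathbb{G}$ no longer acts transitively on the nonzero vectors of $V_\mathbb{G}(\varpi')$, the same weight list is obtained by computing the $G$-highest weights occurring in the Cartan power $V_\mathbb{G}(a\varpi')$; this is the $a$-fold analogue of the branching rules of \cite[Propositions~1.8--1.10]{2orbits}, and I would verify it by highest-weight theory, the extreme weights of the Cartan power being the $a$-fold sums of the extreme weights already identified for $a=1$.

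For the orbit inclusions, in cases (a) and (b) I would take the explicit vector $e:=\sum_i e_i$ (with $e_i=v_{\varpi_1^i}$, and the trivial basis vector adjoined in case (b)); since $\operatorname{SL}_d$ acts transitively on $\Cbb^d\setminus\{0\}$ we may write $e=g_0\cdot v_{\varpi_1}$, whence $e^a=g_0\cdot v_{a\varpi_1}\in\mathbb{G}\cdot v_{a\varpi_1}$, and the multinomial expansion of $e^a$ is a sum of the highest weight vectors $v_{\chi'}$ with all coefficients nonzero. In cases (c), (d), (e) I would instead invoke Lemma~\ref{lem:gather} to write the $a=1$ sum of highest weight vectors as $w=g_0\cdot v_{\varpi'}$ for some $g_0\in\mathbb{G}$, then apply the $\mathbb{G}$-equivariant Cartan projection $\pi\colon\operatorname{Sym}^aV_\mathbb{G}(\varpi')\twoheadrightarrow V_\mathbb{G}(a\varpi')$ to $w^a$ and use $\pi(w^a)=g_0\cdot v_{a\varpi'}\in\mathbb{G}\cdot v_{a\varpi'}$. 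The delicate point is to check that $\pi(w^a)$ has nonzero component along \emph{every} highest weight vector $v_{\chi'}$: each $\chi'$ is a sum of the $a=1$ weights taken with suitable multiplicities, so the corresponding monomial in $w^a$ projects to a nonzero multiple of $v_{\chi'}$, but one must rule out cancellations among the several monomials of weight $\chi'$. I expect this no-cancellation verification to be the main obstacle. It can be settled either by a direct coefficient computation or, more cheaply, by replacing $w$ with a sufficiently general sum of highest weight vectors, which is harmless because by Proposition~\ref{prop:classifpoly} the orbit closure of a sum of highest weight vectors depends only on the set of weights appearing, not on the chosen scalars.

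Finally, the ``moreover'' clause follows once the inclusion is in hand. The orbit $\mathbb{G}\cdot[v_{a\varpi'}]$ is the closed orbit $\mathbb{G}/P(\varpi')$, re-embedded by the degree-$a$ Veronese, so $\overline{G\cdot[\sum v_{\chi'}]}\subseteq\mathbb{G}\cdot[v_{a\varpi'}]\cong\mathbb{G}/P(\varpi')$. The $\mathbb{G}$-equivariant isomorphism between the $a=1$ closed orbit and $\mathbb{G}\cdot[v_{a\varpi'}]$ is in particular $G$-equivariant and carries the $G$-orbit of $w$ (which is dense in $\mathbb{G}/P(\varpi')$ by Lemma~\ref{lem:gather}) onto the $G$-orbit of $\pi(w^a)$; hence the latter is again dense and of dimension $\dim\mathbb{G}/P(\varpi')$. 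By irreducibility of the closed $\mathbb{G}$-orbit the two closures then coincide, which is the asserted equality of the two projective varieties, compatibly with the embedding furnished by Corollary~\ref{cor:smoothembedd}.
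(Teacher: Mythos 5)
Your treatment of cases (a) and (b) is complete and genuinely more elementary than the paper's: there $V_\mathbb{G}(a\varpi_1)=\operatorname{Sym}^a\Cbb^d$ is already irreducible, the symmetric power of a direct sum expands as you say, and the multinomial expansion of $e^a=(g_0\cdot v_{\varpi_1})^a=g_0\cdot v_{a\varpi_1}$ settles the orbit inclusion. The genuine gap is in cases (c), (d), (e), where the module identity is asserted rather than proved. Knowing that the extreme weights of $V_\mathbb{G}(a\varpi')$ are the $a$-fold sums of the two extreme weights of $V_\mathbb{G}(\varpi')$ can only give one inclusion of the claimed decomposition: since $\mu v_1+\nu v_2\in\mathbb{G}\cdot v_{\varpi'}$ for all $\mu,\nu\in\Cbb^*$ (Lemma~\ref{lem:gather} plus the torus action), every power $(\mu v_1+\nu v_2)^a$ lies in the Cartan component, and a Vandermonde argument then places each monomial $v_1^bv_2^{a-b}$ inside $V_\mathbb{G}(a\varpi')$, so each $V_G(b\lambda_1+(a-b)\lambda_2)$ does occur. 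But nothing in your sketch shows that these summands \emph{exhaust} $V_\mathbb{G}(a\varpi')$; that is a branching statement (for (c), that the degree-$a$ harmonics on $\Cbb^{2d}$ decompose under $\operatorname{SL}_d$ into exactly those $a+1$ pieces, and analogously for (d), (e)) which requires a dimension count or a classical branching rule, carried out separately in each case. Your ``cheap'' fix for the no-cancellation point is also flawed: the monomials $v_1^bv_2^{a-b}$ have pairwise distinct $T$-weights, so cancellation between them is impossible in the first place, and replacing $w$ by a general $\mu v_1+\nu v_2$ merely rescales each Cartan projection $\pi(v_1^bv_2^{a-b})$ by a nonzero factor, so genericity cannot cure a vanishing projection (the Vandermonde observation above is what actually cures it). Finally, Proposition~\ref{prop:classifpoly} gives abstract isomorphism of orbit closures for different choices of scalars, not equality of subsets of $\Pbb(V_a)$, which is what the lemma and its use in Section~4.3 (where $X$ must literally be a subvariety of $\mathbb{X}$) require; the correct reading is that the highest weight vectors themselves may be rescaled.

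The paper avoids all of this with a uniform geometric argument worth comparing with yours. By Lemma~\ref{lem:gather}, the closure $X$ of the $G$-orbit of $[x_1]$ in $\Pbb(V_1)$ \emph{is} the flag variety $\mathbb{G}/P(\varpi)$, a smooth projective variety of Picard group $\Zbb$. One then computes $H^0(X,\mathcal{O}_X(a))$ in two ways: as a $\mathbb{G}$-module it is $V_\mathbb{G}(a\varpi)^*$ by Borel--Weil, while as a module for the horospherical group $G$ it is, by Theorem~\ref{th:divcrit}(3) applied to the $a$-th multiple of the hyperplane divisor, the sum of the $V_G(\chi)^*$ over the lattice points $\chi$ of the dilated moment polytope --- which is exactly the claimed list of weights, in all five cases simultaneously and with no branching computation. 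The orbit inclusion then also comes for free: the image of $x_1$ under the re-embedding of $X$ by the complete linear system of $\mathcal{O}_X(a)$ is on one hand a sum of $G$-highest weight vectors (Corollary~\ref{cor:smoothembedd}) and on the other hand a highest weight vector of $V_\mathbb{G}(a\varpi)$ for a suitable Borel subgroup of $\mathbb{G}$, because its $\mathbb{G}$-orbit is the closed orbit $\mathbb{G}/P(\varpi)$. To salvage your approach you would need to supply the three branching rules in (c), (d), (e); otherwise the section-space argument is the efficient route.
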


\begin{proof}
Remark that for $a=1$ the lemma is Lemma~\ref{lem:gather}. 
For any $a\geq 1$, we denote by $V_a$ the $G$-module that we consider in each case.

Consider the horospherical $G$-variety $X$ defined as the closure of the $G$-orbit of a sum $x_1$ of highest weight vectors in $\Pbb(V_1)$: it is a smooth projective variety with Picard group $\Zbb$ (it is isomorphic to $\Pbb^{d-1}$, $\Pbb^{d-1}$, the quadric $Q^{2d-2}$, the Grassmannian $\operatorname{Gr}(i+1,d+1)$, $\operatorname{Spin}(2d+1)/P(\varpi_d)$ respectively). Moreover $V_1^*$ is the $G$-module of global sections of $\mathcal{O}_X(1)$. And, for any $a\geq 1$, the $G$-module $V_a^*$ is the set of global sections of $\mathcal{O}_X(a)$. But, in each case, $X$ is also a homogeneous projective $\mathbb{G}$-variety $\mathbb{G}/P(\varpi)$ (with $\varpi=\varpi_1$, $\varpi_1$, $\varpi_1'$, $\varpi_{i+1}'$ and $\varpi_d'$ respectively) by Lemma~\ref{lem:gather}, then $V_a$ is also the irreducible $\mathbb{G}$-module $V_{\mathbb{G}}(a\varpi)$. 

Also, the image of $x_1$ in $\Pbb(V_a)$ is the projectivization of a highest weight vector in $V_{\mathbb{G}}(a\varpi)$  for a good choice of a Borel subgroup of $\mathbb{G}$ (because $\mathbb{G}\cdot x_1$ is the homogeneous projective $\mathbb{G}$-variety $\mathbb{G}/P(\varpi)$).
\end{proof}
	
\subsection{Proof of Theorem~\ref{th:main} in Case (1)} 
The first part is already proved by Proposition~\ref{prop:embeded} and Lemma~\ref{lem:WithSmoothness}, in particular $X$ is embedded as the closure of the $G$-orbit of a sum of highest weight vectors in $$\mathbb{P}:=\mathbb{P}\left(\bigoplus_{i=0}^nV(\varpi_{\alpha_i}+(1+a_i)\varpi_\beta)\right).$$

It remains to prove that we can suppose that 
\begin{itemize}[label=$\ast$]
\item $G_0$ is the universal cover of the automorphism group of $G_0/P(\varpi_\beta)$ if $R_0$ is empty;
\item if $i<j$ and $a_i=a_j$ then  $\alpha_j\in R_0$;
\item and some groups $G_k$ of type $C$ can be replaced by groups of type $A$. 
\end{itemize}

$\bullet$ If $R_0$ is empty and $G_0$ is not the universal cover of the automorphism group of $G_0/P(\varpi_\beta)$, then $G_0/P(\varpi_\beta)$ is isomorphic to $G_0'/P(\varpi_{\beta'})$ where $G_0'$ is the universal cover of $\operatorname{Aut} (G_0/P(\varpi_\beta))$ and $(G_0,\beta, G_0',\beta')$ is one of the following: $(\operatorname{Sp}_{2m},\varpi_1,\operatorname{SL}_{2m},\varpi_1)$, $(G_2,\varpi_1,\operatorname{Spin}_7,\varpi_1)$, $(\operatorname{Spin}_{2m+1},\varpi_m,\operatorname{Spin}_{2m+2},\varpi_m)$ or $(\operatorname{Spin}_{2m+1},\varpi_m,\operatorname{Spin}_{2m+2},\varpi_{m+1})$. In any case, $V_{G_0}(\varpi_\beta)\simeq V_{G_0'}(\varpi_{\beta'})$ and $G_0\cdot v_{\varpi_\beta}\simeq G_0'\cdot v_{\varpi_{\beta'}}$. Hence, the fact that $R_0$ is empty implies that  $\bigoplus_{i=0}^nV_G(\varpi_{\alpha_i}+(1+a_i)\varpi_\beta)\simeq \bigoplus_{i=0}^nV_\mathbb{G}(\varpi_{\alpha_i}+(1+a_i)\varpi_{\beta'})$ where $\mathbb{G}=G_0'\times G_1\times\cdots\times G_t$, and $X$ is isomorphic to the closure of the $\mathbb{G}$-orbit of a sum of highest weight vectors in $$\mathbb{P}:=\mathbb{P}\left(\bigoplus_{i=0}^nV_\mathbb{G}(\varpi_{\alpha_i}+(1+a_i)\varpi_{\beta'})\right).$$

$\bullet$ Suppose that there is $0\leq i<j\leq n$ such that $\alpha_i$ and $\alpha_j$ are simple roots of the same simple group among $G_1,\dots,G_t$. Then by Lemma~\ref{lem:WithSmoothness}, we have
$n=1$, $i=0$, $j=1$ (also $t=1$) and the triple $(G_1,\alpha_i,\alpha_j)$ is smooth. In particular, $X$ is embedded as the closure of the $G$-orbit of a sum of  highest weight vectors in $$\mathbb{P}\left(V(\varpi_{\alpha_0}+\varpi_\beta)\oplus V(\varpi_{\alpha_1}+(1+a_1)\varpi_\beta)\right).$$ If $a_1=0$, the $G$-module $V(\varpi_{\alpha_0}+\varpi_\beta)\oplus V(\varpi_{\alpha_1}+(1+a_1)\varpi_\beta)$ is isomorphic to the tensor product of the $G_0$-module $V(\varpi_\beta)$ by the $G_1$-module $V(\varpi_{\alpha_0})\oplus V(\varpi_{\alpha_1})$, so that $X$ is the product of $G/P(\varpi_\beta)$ by the smooth projective horospherical variety of Picard group $\Zbb$ defined as the closure of the $G_1$-orbit of a sum of  highest weight vectors in $\mathbb{P}(V(\varpi_{\alpha_0})\oplus V(\varpi_{\alpha_1})).$

We conclude that if $X$ is not a product, $X$ is as in Case (1a) (with $a_1>0$).

\vskip\baselineskip
From now on, we suppose that there is no $0\leq i<j\leq n$ such that $\alpha_i$ and $\alpha_j$ are simple roots of the same simple group among  $G_1,\dots,G_t$.

\vskip\baselineskip
$\bullet$ Suppose that there exists $0\leq i<j\leq n$ such that $a_i=a_j$ and both $\alpha_i$ and $\alpha_j$ are not simple roots of $G_0$. 

Up to reordering, assume that $\alpha_i$ and $\alpha_j$ are simple roots of $G_{1}$ and $G_{2}$ ($t\geq 2$). Note that if $i=0$ and $\alpha_0$ is trivial, $G_1=\{1\}$.
By Lemma~\ref{lem:WithSmoothness}, $G_{1}$ and $G_2$ are $\{1\}$, $\Cbb^*$ ($d_k=1$ in these two cases), $\operatorname{SL}_{d_k}$ (with $d_k\geq 2$) or $\operatorname{Sp}_{d_k}$ (with $d_k\geq 2$ even) and $\alpha_i$, respectively $\alpha_j$, is either a trivial root or a short extremal root of $G_{1}$, respectively $G_2$. 

Let $\mathbb{G}=G_0\times G_3\times \cdots\times G_{t}\times \operatorname{SL}_{d_{1}+d_2}$. By Lemma~\ref{lem:gather} ((a) if $i>0$ or $\alpha_0$ is not trivial and (b) otherwise), the $G$-module $V(\varpi_{\alpha_i}+(1+a_i)\varpi_\beta)\oplus V(\varpi_{\alpha_j}+(1+a_j)\varpi_\beta)$ is isomorphic to the  $\mathbb{G}$-module $ V((1+a_i)\varpi_\beta)\otimes\Cbb^{d_{1}+d_2}$. And $X$ is a subvariety of the closure $\mathbb{X}$ of the $\mathbb{G}$-orbit of a sum of  highest weight vectors in $\mathbb{P}$ under the action of $\mathbb{G}$. 

We can now compare the dimension of the open $\mathbb{G}$-orbit $\Omega_\mathbb{X}$ of $\mathbb{X}$ with the dimension of the open $G$-orbit of $X$. Indeed $\Omega_\mathbb{X}$ is isomorphic to a horospherical homogeneous space of rank~$n-1$ over $((G_0\times G_3\times\cdots\times G_{t})/(P\cap G_0\times G_3\times\cdots\times G_{t}))\times (\operatorname{SL}_{d_{1}+d_2}/P(\varpi_1))$, while $G/H$ is of rank~$n$ over $((G_0\times G_3\times\cdots\times G_{t})/P\cap (G_0\times G_3\times\cdots\times G_{t}))\times ((G_{1}\times G_2)/P\cap (G_{1}\times G_2))$. But the dimension of $\operatorname{SL}_{d_{1}+d_2}/P(\varpi_1)$ is $d_{1}+d_2-1$ while the dimension of $(G_{1}\times G_2)/P\cap (G_{1}\times G_2)$ is $(d_{1}-1)+(d_2-1)$. Hence $\Omega_\mathbb{X}$ and $G/H$ have the same dimension, so that $X=\mathbb{X}$.

Then we can replace, without changing $X$, the product of the two simple groups corresponding to two simple roots $\alpha_i$ and $\alpha_j$ with $a_i=a_j$, with a unique simple group of type $A$. Note that $n$ decreases by this change. (Also note that, if $i=0$ and $\alpha_0$ is trivial then the new $\alpha_0$ is not trivial any more.)

With similar arguments, we can also replace any group $G_1,\dots,G_t$, of type $C$ and that contains a unique simple root $\alpha_i$, by a group of type $A$.

\vskip\baselineskip
$\bullet$ What we did just above also works in the cases where $n=1$, $a_1=0$, $\alpha_0$ and $\alpha_1$ are simple roots of $G_1$ and $G_2$ (and $t=2$). In that case, this proves that $X$ is the closure of the $\operatorname{SL}_{d}\times G_0$-orbit of a  highest weight vector in $\mathbb{P}\left(\Cbb^{d}\bigotimes V(\varpi_\beta)\right).$ Hence, in that case, $X$ is isomorphic to $\Pbb^{d-1}\times G_0/P(\varpi_\beta)$.

Hence, we conclude the proof by iteration.

\subsection{Proof of Theorem~\ref{th:main} in Case (2)}\label{sect43}

The first part is already proved by Proposition~\ref{prop:embeded} and Lemma~\ref{lem:WithSmoothness}, in particular $X$ is embedded as the closure of the $G$-orbit of a sum of  highest weight vectors in $$\mathbb{P}:=\mathbb{P}\left(\bigoplus_{i,b_1,\dots,b_{s+1}}V(\varpi_{\alpha_i}+\sum_{j=1}^{s+1}b_j\varpi_{\alpha_{r+j}})\right),$$ where the sum is taken over all $s+2$-tuples of non-negative integers $(i,b_1,\dots,b_{s+1})$ such that $0\leq i\leq r$ and $\sum_{j=1}^{s+1}b_j=1+a_i$.

It remains to prove that we can suppose that 
\begin{itemize}[label=$\ast$]
\item $s=1$, $\alpha_{n},\,\alpha_{n+1}$ are both simple roots of $G_t$ and $(G_t,\alpha_{n},\alpha_{n+1})$ is smooth of two-orbit type;
\item $0<a_1<\cdots<a_r$;
\item and some groups $G_k$ of type $C$ can be replaced by groups of type $A$. 
\end{itemize}

$\bullet$ Suppose first that $s>1$, or $s=1$ and $\alpha_{n}$, $\alpha_{n+1}$ are not simple roots of the same simple group $G_k$.  Up to reordering and applying Lemma~\ref{lem:WithSmoothness}, for any $j\in\{1,\dots,s\}$, $\alpha_{r+j}$ is either a trivial root of $G_{t-s+j}$ that is $\Cbb^*$ or $\{1\}$, or a short extremal simple root of $G_{t-s+j}$ that is of type $A$ or $C$. Moreover, the simple groups $G_{t-s+1},\dots,G_{t}$ contain no other $\alpha_i$ with $i\in\{0,\dots,r\}$. Also, $G_{t-s+j}=\{1\}$ if and only if $j=s$ and $\alpha_{r+s}$ is trivial.

We now apply Lemma~\ref{lem:gatherbis} ((a) if $\alpha_{r+s}$ is not trivial and (b) otherwise). Hence, there exists $d\leq 2$ such that, with  $G\subset\mathbb{G}:=G_{0}\times\cdots\times G_{t-s}\times \operatorname{SL}_d$, we have  $$\mathbb{P}=\mathbb{P}\left(\bigoplus_{i,b_1,\dots,b_{s+1}}V(\varpi_{\alpha_i})\otimes V(\sum_{j=1}^{s+1}b_j\varpi_{\alpha_{r+j}})\right)=\mathbb{P}\left(\bigoplus_{i=0}^rV_\mathbb{G}(\varpi_{\alpha_i})\otimes V_\mathbb{G}((1+a_i)\varpi_1)\right),$$ $X$ is a subvariety of the closure $\mathbb{X}$ of the $\mathbb{G}$-orbit $\Omega_\mathbb{X}$ of a sum of  highest weight vectors  in $\Pbb$, and $\dim((G_{t+1-s}\times\cdots\times G_{t})/P\cap(G_{t+1-s}\times\cdots\times G_{t})=d-s-1$. In particular the dimension of $\Omega_\mathbb{X}$ (which is horospherical of rank $r$) equals the dimension of $G/H$. Hence, $X=\mathbb{X}$. Now remark that $\mathbb{X}$ is a horospherical variety as in Case (1).

\vskip\baselineskip
$\bullet$ From now on, we suppose that $s=1$ (and $n=r+1$), and that $\alpha_{n}$, $\alpha_{n+1}$ are both simple roots of $G_t$ (up to reordering). In particular, $X$ is of type $\mathbb{X}^2(G,\underline{\alpha},\underline{a})$ and then is embedded as the closure of the $G$-orbit of a sum of  highest weight vectors in $$\mathbb{P}\left(\bigoplus_{i=0}^{n-1}\bigoplus_{b=0}^{1+a_i}V(\varpi_{\alpha_i}+b\varpi_{\alpha_{r+1}}+(1+a_i-b)\varpi_{\alpha_{r+2}})\right).$$  Note now that for any $k\in\{0,\dots,t\}$, $G_k=\{1\}$ if and only if $k=0$ and $\alpha_0$ is trivial.

\vskip\baselineskip
Recall that, by Lemma~\ref{lem:WithSmoothness}, $\alpha_0,\dots,\alpha_{r}$ are not simple roots of $G_t$ and the triple $(G_t,\alpha_{n},\alpha_{n+1})$ is smooth. Then $X$ is embedded as the closure of the $G$-orbit of a sum of  highest weight vectors in $$\mathbb{P}:=\mathbb{P}\left(\bigoplus_{i=0}^{n-1}\bigoplus_{b=0}^{1+a_i}V(\varpi_{\alpha_i})\otimes V(b\varpi_{\alpha_{r+1}}+(1+a_i-b)\varpi_{\alpha_{r+2}})\right).$$
 
If $(G_t,\alpha_{n},\alpha_{n+1})$ is not of two-orbit type, we can apply Lemma~\ref{lem:gatherbis} ((c), (d) or (e)): we get $G\subset \mathbb{G}$ with $\mathbb{G}:=G_0\times\cdots\times G_{t-1}\times \mathbb{G}_t$ such that $\mathbb{P}=\mathbb{P}\left(\bigoplus_{i=0}^rV_\mathbb{G}(\varpi_{\alpha_i})\otimes V_\mathbb{G}((1+a_i)\varpi)\right)$, $X$ is a subvariety of the closure  $\mathbb{X}$ of the $\mathbb{G}$-orbit $\Omega_\mathbb{X}$ of a sum of  highest weight vectors  in $\Pbb$, and $\dim(G_t/P\cap G_t)+1=\dim (\mathbb{G}_t/P(\varpi))$. In particular the dimension of  $\Omega_\mathbb{X}$ (which is horospherical of rank $r$) equals the dimension of $G/H$. Hence, $X=\mathbb{X}$. And remark that $\mathbb{X}$ is a horospherical variety as in Case (1).

\vskip\baselineskip
$\bullet$ Now suppose that $r>1$, or $r=1$ and $\alpha_0$, $\alpha_1$ are not simple roots of the same simple group.

Let $i\neq i'$ in $\{0,\dots,r\}$ such that $a_i=a_{i'}$.
Up to reordering and applying Lemma.~\ref{lem:WithSmoothness}, $\alpha_i$ and $\alpha_{i'}$ are, trivial or short extremal, simple roots respectively of $G_0$ and $G_1$ that are $\Cbb^*$, $\{1\}$ or simple groups of type $A$ or $C$. Moreover $G_0$ and $G_1$ contain no other $\alpha_k$'s.

We can apply Lemma~\ref{lem:gather} ((a) if $i>0$ or $\alpha_0$ is trivial and (b) otherwise) to get $G\subset \mathbb{G}:=\operatorname{SL}_d\times G_2\cdots\times G_{t}$ such that 
\begin{multline*}
\mathbb{P}=\mathbb{P}\left(\left(\bigoplus_{k\neq i,\,i'}\bigoplus_{b=0}^{1+a_k}V_\mathbb{G}(\varpi_{\alpha_k})\otimes V_\mathbb{G}(b\varpi_{\alpha_{r+1}}+(1+a_k-b)\varpi_{\alpha_{r+2}})\right)\right.\\
\oplus \left.\left(\bigoplus_{b=0}^{1+a_i}V_\mathbb{G}(\varpi)\otimes V_\mathbb{G}(b\varpi_{\alpha_{r+1}}+(1+a_i-b)\varpi_{\alpha_{r+2}})\right)\right),
\end{multline*} 

$X$ is a subvariety of the closure $\mathbb{X}$ of the $\mathbb{G}$-orbit $\Omega_\mathbb{X}$ of a sum of  highest weight vectors  in $\Pbb$, and $\dim((G_0\times G_1)/P\cap( G_0\times G_1))+1=d-1$. In particular the dimension of  $\Omega_\mathbb{X}$ (which is horospherical of rank $(r-1)+1$) equals the dimension of $G/H$. Hence, $X=\mathbb{X}$. Now remark that $\mathbb{X}$ is either a horospherical variety as in Case (2) of rank one less than $X$, or a horospherical variety as in Case (1) if $r=1$.

With similar arguments, we can also replace any group $G_0,\dots,G_{t-1}$, of type $C$ and that contains a unique simple root $\alpha_i$, by a group of type $A$.

\vskip\baselineskip
$\bullet$ By iteration of the above process, we can now assume that $0<a_1<\cdots<a_r$, or that $r=1$ (and $t=1$) and $\alpha_0$, $\alpha_1$ are two simple roots of $G_0$. In the second case, note that by Lemma.~\ref{lem:WithSmoothness}, the triple $(G_0,\alpha_0,\alpha_1)$ is smooth.  

Suppose $r=1$, $\alpha_0$, $\alpha_1$ are two simple roots of $G_0$ and that $a_1=a_0=0$. Then, $X$ is the closure of the $G_0\times G_1$-orbit of a sum of  highest weight vectors  in $$\mathbb{P}=\mathbb{P}\left((V_{G_0}(\varpi_{\alpha_{0}})\oplus V_{G_0}(\varpi_{\alpha_{1}}))\otimes (V_{G_1}(\varpi_{\alpha_{2}})\oplus V_{G_1}(\varpi_{\alpha_{3}}))\right).$$ 
Hence in that case, $X$ is the product of two varieties: the closure of the $G_0$-orbit of a sum of  highest weight vectors  in $\mathbb{P}\left((V_{G_0}(\varpi_{\alpha_{0}})\oplus V_{G_0}(\varpi_{\alpha_{1}}))\right)$
 and the closure of the $G_1$-orbit of a sum of  highest weight vectors  in $\mathbb{P}\left((V_{G_1}(\varpi_{\alpha_{2}})\oplus V_{G_1}(\varpi_{\alpha_{3}}))\right)$.
 
 Hence, in any case we can assume that $0<a_1<\cdots<a_r$. This finishes the proof of Theorem~\ref{th:main}.\hfill$\qed$

\section{The MMP and Log MMP for smooth projective horospherical varieties of Picard group $\Zbb^2$}\label{sec:MMP}
	
The main goal of this section is to prove Theorem~\ref{th:main2}. For this we apply the Log MMP from the horospherical varieties $\mathbb{X}^1$ and $\mathbb{X}^2$. 
	
The principle of the Log MMP is the following. We begin with a pair $(X,\Delta)$ where $X$ is a not too singular projective variety and $\Delta$ is a $\Qbb$-divisor such that $K_X+\Delta$ is $\Qbb$-Cartier. We want to contract curves having negative intersection with $K_X+\Delta$ in order to get a new variety with smaller Picard number. In general, we can do this by choosing an extremal ray (whose curves have negative intersection with $K_X+\Delta$) in the cone of effective curves up to numerical equivalence. 
	 
	 In our context, note that this cone is two dimensional and then has two extremal rays; this explains why we have two ways to do the Log MMP. 
	 
	 After contracting a curve it may happen that the new variety is too singular, so that we have to partially desingularize it in a natural and unique way; we call this a flip. 
	 
	 To continue the program, we have to choose again an extremal ray in the cone of effective curves of the new variety, until we finish with a minimal model (when there is no curve with negative intersection with $K_X+\Delta$) or a fibration (when the dimension decreases).
	 
\vskip\baselineskip
For horospherical varieties, we can compute a Log MMP to the end just by choosing an ample divisor at the beginning (and not an extremal ray at each step), and by considering a one-parameter family of polytopes (Theorem~\ref{th:recallMMP}).

\subsection{Generalities}	
	
Let $X$ be a smooth projective horospherical variety with Picard group $\Zbb^2$. Here, we suppose that $X$ is as in Case (1) or (2) of Lemma~\ref{lem:FirstReduction} (or Theorem~\ref{th:main}).

 By Proposition~\ref{prop:NefCone}, up to linear equivalence, the ample Cartier divisors of $X$ are of the form $D=d_0D_0+d_{n+1}D_{n+1}$ with positive integers $d_0$ and $d_{n+1}$. 

We can apply \cite{MMPhoro} to the polarized variety $(X,D)$ and obtain a description of the MMP from $X$, via moment polytopes (if $X$ is Fano, we obtain two different paths of the program depending on the choice of $d_0$ and $d_{n+1}$; if $X$ is not Fano, we obtain a unique path of the program). 

Moreover, we can also choose a $B$-stable $\Qbb$-divisor $\Delta$ of $X$ and apply  \cite{LogMMPhoro} to the polarized pair $((X,D),\Delta)$ and obtain a description of the Log MMP from $(X,\Delta)$, via moment polytopes as described in Section~\ref{sec:recallMMP}. To get a uniform Log MMP for any smooth projective horospherical variety with Picard group $\Zbb^2$, we choose $D=D_0+D_{n+1}$ and $\Delta=-D_i-K_X$ for $i\in\{0,n+1\}$.

\begin{rem}
	In Case (1), an anticanonical divisor of $X$ is (see for example \cite[Proposition 3.1]{Fanohoro}) $$-K_X=\sum_{i=0}^{n}b_iD_i+b_{\beta}D_{\beta}\sim (\sum_{i=0}^{n}b_i)D_0 +(b_{\beta}-\sum_{i=0}^{n}a_ib_i)D_{n+1},$$ where $b_i=1$ if $D_i$ is $G$-stable, $b_i=b_{\alpha_i}\geq 2$ if $D_i$ is the color $D_{\alpha_i}$ and $b_\beta\geq 2$ (recall that $D_\beta=D_{n+1}$).  In particular, $X$ is Fano (i.e., $-K_X$ ample) if and only if $b_{\beta}>\sum_{i=1}^{n}a_ib_i$.
	
	To describe the MMP from $X$ we could choose the ample divisor $D=(\sum_{i=0}^{n}b_i)D_1+(b_{\beta}+1)D_\beta$, so that $D+\epsilon K_X$ is ample for any $\epsilon\in[0,1[$ and $D+K_X\sim (\sum_{i=0}^{n}a_ib_i+1)D_\beta$ is not ample but globally generated. Then, for that choice of $D$, the MMP from $X$ consists of the Mori fibration to $G/P(\varpi_\beta)$ described in Remark~\ref{rem:debutMMP}.\\ 
	Moreover, this Mori fibration is also the unique contraction of the Log MMP obtained with the choices  $D=D_0+D_{n+1}$ and $\Delta=-D_0-K_X$ in Theorem~\ref{th:recallMMP} (in that case, $Q^1$ is a multiple of $\varpi_\beta$).
	
\vskip\baselineskip
In Case (2), an anticanonical divisor of $X$ is $$-K_X=\sum_{i=0}^{r}b_iD_i+\sum_{j=1}^{s+1}b_{r+j}D_{r+j}\sim (\sum_{i=0}^{r}b_i)D_0 +(\sum_{j=1}^{s+1}b_{r+j}-\sum_{i=0}^{r}a_ib_i)D_{n+1},$$ where $b_i=1$ (respectively $b_{r+j}$) if $D_i$ (resp. $D_{r+j}$) is $G$-stable and $b_i=b_{\alpha_i}\geq 2$ (resp. $b_{r+j}=b_{\alpha_{r+j}}\geq 2$) if $D_i$ is the color $D_{\alpha_i}$ (respectively $D_{r+j}$ is the color $D_{\alpha_{r+j}}$). In particular, $X$ is Fano if and only if the inequality $\sum_{j=1}^{s+1}b_{r+j}>\sum_{i=0}^{r}a_ib_i$ is s.
	
To describe the MMP from $X$ we could choose the ample divisor $D=(\sum_{i=0}^{r}b_i)D_0+(1+\sum_{j=1}^{s+1}b_{r+j})D_{n+1}$, so that $D+\epsilon K_X$ is ample for any $\epsilon\in[0,1[$ and $D+K_X\sim (1+\sum_{i=0}^{r}a_ib_i)D_{n+1}$ is not ample but globally generated. Then, for that choice of $D$, the MMP from $X$ consists of the Mori fibration $\psi$ from $X$ to $Z$ described in Remark~\ref{rem:debutMMP}. Moreover, this Mori fibration is also the unique contraction of the Log MMP obtained with the choices  $D=D_0+D_{n+1}$ and $\Delta=-D_0-K_X$ in Theorem~\ref{th:recallMMP} (in that case, $Q^1$ is a simplex of dimension $s$).
\end{rem}
	
	Hence, in both cases, we will describe the Log MMP obtained with the choices $D=D_0+D_{n+1}$ and $\Delta=-D_{n+1}-K_X$.
	
\vskip\baselineskip
In the next four subsections, $X$ is one the varieties of Theorem~\ref{th:main} in Case (1) or (2).  We begin by constructing the families of polytopes for the log pairs $(X,\Delta=-D_{n+1}-K_X)$ with the choice of ample divisor $D=D_0+D_{n+1}$, and then we describe in detail the Log MMP's obtained with these families.

	\subsection{Case (1): the "second" Log MMP via moment polytopes}\label{LogMMP1}

To describe the one-parameter family $(\tilde{Q}^\epsilon)_{\epsilon\in\Qbb_{\geq 0}}$ defined in Theorem~\ref{th:recallMMP}, we consider the basis $(e_i^*)_{i\in\{1,\dots,n\}}$ of $M$, where for any $i\in\{1,\dots,n\}$, $e_i^*=\varpi_{\alpha_i}-\varpi_{\alpha_0}+a_i\varpi_\beta$, and we define the matrices $\mathcal{A}$, $\mathcal{B}$ and $\mathcal{C}$ as follows 
$$\mathcal{A}= \left(
\begin{array}{cccc}
-1 & \cdots & \cdots & -1\\
1 & 0 &\cdots&0\\
0 & \ddots & \ddots & \vdots\\
\vdots & \ddots & \ddots & 0\\
0 & \cdots & 0 & 1\\
a_1 & \cdots & \cdots & a_n
  \end{array}
\right),\,\,\mathcal{B}= \left(
\begin{array}{c}
-1\\
0 \\
\vdots \\ \vdots \\
0\\

-1
  \end{array}
\right)\mbox{ and }\,\mathcal{C}= \left(
\begin{array}{c}
0 \\
\vdots \\ \vdots \\
0\\
0\\
1
  \end{array}
\right).$$

Then $\tilde{Q}^\epsilon=\{x\in M_\Qbb\,\mid\,\mathcal{A}x\geq \mathcal{B}+\epsilon \mathcal{C}\}$ is the set of $x=(x_1,\dots,x_n)$ such that $x_1,\dots,x_n$ are non-negative, $x_1+\cdots+x_n\leq 1$ and $a_1x_1+\cdots+a_nx_n\geq \epsilon-1$.

\begin{ex}
If $n=2$ we are in one of the following situations: 

\begin{enumerate}

\item $a_2>a_1>0$ and $\alpha_2$ is not trivial;

\item $a_2>a_1>0$ and $\alpha_2$ is  trivial;

\item $a_2>a_1=0$ and $\alpha_2$ is not trivial;

\item $a_2>a_1=0$ and $\alpha_2$ is  trivial;

\item $a_2=a_1>0$;

\item $a_2=a_1=0$.
\end{enumerate}

We draw, in Figure~\ref{figure1}, these polytopes for $\epsilon=0$ in different cases with the hyperplane $H^0:=\{x\in M_\Qbb\,\mid\, a_1x_1+a_2x_2=-1\}$. Note that there is no such hyperplane if $a_2=a_1=0$. 

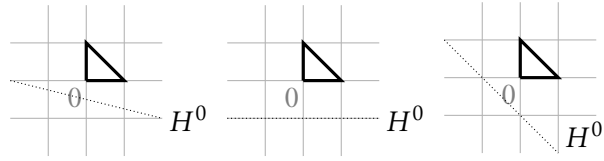
\begin{figure}

\begin{center}
\begin{tikzpicture}[scale=0.5]
\chadom
\draw[very thick ] (2,2) -- (2,3) -- (3,2) -- (2,2);
\draw[densely dotted] (0,2) -- (4,1) ;
\node at (4.7,1) {$H^0$};
\end{tikzpicture}
\begin{tikzpicture}[scale=0.5]
\chadom
\draw[very thick] (2,2) -- (2,3) -- (3,2) -- (2,2);
\draw[densely dotted] (0,1) -- (4,1) ;
\node at (4.7,1) {$H^0$};
\end{tikzpicture}
\begin{tikzpicture}[scale=0.5]
\chadom
\draw[very thick] (2,2) -- (2,3) -- (3,2) -- (2,2);
\draw[densely dotted] (0,3) -- (3,0) ;
\node at (3.7,0.5) {$H^0$};
\end{tikzpicture}
\caption{The polytopes $\tilde{Q}^0$ in the cases where $a_1=1$ and $a_2=2$, $a_1=0$ and $a_2=1$ and $a_1=a_2=1$ respectively}
\label{figure1}
\end{center}
\end{figure}

\end{ex}

$\bullet$ If $a_n=0$, $\tilde{Q}^\epsilon=\tilde{Q}^0$ for any $\epsilon\in [0,1]$ and it is empty if $\epsilon>1$. Moreover, for any $\epsilon\in [0,1]$, $Q^\epsilon$ intersects the interior of $\mathfrak{X}(P)^+_\Qbb$ if and only if $\epsilon<1$. 
In that case, the Log MMP described by the family $(Q^\epsilon)_{\epsilon\in\Qbb_{\geq 0}}$ consists of a fibration $\phi_0:\,X\longrightarrow Y^0$. 

The fibers of this fibration can be easily computed (by the strategy given in Section~\ref{sec:recallMMP}) because the faces of $Q^0$ are ``the same'' as the faces of $Q^1$ and then the fibration induces a bijection between the sets of $G$-orbits of $X$ and $Y^0$. Then the fibers of $\phi_0$  are isomorphic to the homogeneous projective spaces $(\bigcap_{i\in I}P(\varpi_{\alpha_i}))/(P(\varpi_\beta)\cap\bigcap_{i\in I}P(\varpi_{\alpha_i}))$ (of Picard group $\Zbb$), with $\emptyset\neq I\subset\{0,\dots,n\}$. Here, we use the following notation: if $\alpha_i$ is trivial, $P(\varpi_{\alpha_i})=G$ (and otherwise, it is the proper maximal parabolic subgroup of $G$ associated to $\alpha_i$).

 In particular, the general fiber of the fibration is $(\bigcap_{i=0}^nP(\varpi_{\alpha_i}))/(P(\varpi_\beta)\cap\bigcap_{i=0}^nP(\varpi_{\alpha_i}))$ and the smallest fibers are the $P(\varpi_{\alpha_i})/(P(\varpi_\beta)\cap P(\varpi_{\alpha_i}))$ with $i\in\{0,\dots,n\}$. Then we deduce that $\alpha_0\not\in R_0$ if and only if there exists a fiber isomorphic to $G/P(\varpi_\beta)$.

\vskip\baselineskip
$\bullet$ Suppose now that $a_n\neq 0$, then $\tilde{Q}^\epsilon$ is the intersection of the simplex $\tilde{Q}=\operatorname{Conv}(e_0^*,e_1^*,\dots,e_n^*)$ with the closed half-space $H_+^\epsilon:=\{x\in M_\Qbb\,\mid\, a_1x_1+\cdots+a_nx_n\geq \epsilon-1\}$, where $e_0^*:=0$. We denote by  $H_{++}^\epsilon$ the interior of $H_{+}^\epsilon$ and by $H^\epsilon$ the hyperplane $H_{+}^\epsilon\backslash H_{++}^\epsilon$.

In the next proposition, we give a description of the non-empty faces of $\tilde{Q}^\epsilon$ by distinguishing whether  a face is in the hyperplane $H^\epsilon$ or not.

Note first that the non-empty faces of the simplex $\tilde{Q}$ are the $F_I:=\operatorname{Conv}(e_i^*\,\mid\, i\in\{0,\dots,n\}\backslash I)$, with $I\subsetneq \{0,\dots,n\}$. In particular, the facets of $\tilde{Q}$ are the $F_i:=F_{\{i\}}$ and for any 
$I\subsetneq \{0,\dots,n\}$, $F_I=\bigcap_{i\in I}F_i$.

Then, for any $I\subsetneq \{0,\dots,n\}$, we define $F_I^\epsilon:=F_I\cap H_+^\epsilon$ and $F_{I,\beta}^\epsilon:=F_I\cap H^\epsilon$. They are faces (may be empty and not distinct) of $\tilde{Q}^\epsilon$.

\begin{prop}[recall that $a_0=0$ and that $a_n\neq 0$ here]\label{prop:polyfaces1}
\leavevmode

The polytope $\tilde{Q}^\epsilon$ is of dimension $n$ if and only if $\epsilon<\max_{i=0}^n(1+a_i)=1+a_n$.

\vspace{0.1cm}

Suppose now that $\epsilon<1+a_n$.
The non-empty faces of $\tilde{Q}^\epsilon$ are the distinct $F_I^\epsilon$ and $F_{I,\beta}^\epsilon$ $($with $I\subsetneq \{0,\dots,n\})$ defined as follows:
\begin{itemize}[label=$\ast$]
\item $F_I^\epsilon$ (of codimension $|I|$) if $\epsilon<\max_{i\not\in I}(1+a_i)$;
\item $F_{I,\beta}^\epsilon$ (of codimension $|I|+1$ or $|I|$ respectively) if $\min_{i\not\in I}(1+a_i)< \epsilon< \max_{i\not\in I}(1+a_i)$ or $\epsilon=\min_{i\not\in I}(1+a_i)=\max_{i\not\in I}(1+a_i)$. 
\end{itemize}
In particular, the facets of $\tilde{Q}^\epsilon$ are: $F_i^\epsilon$ with $i\in\{0,\dots,n-1\}$ (for any $\epsilon<1+a_n$), $F_n^\epsilon$ if $\epsilon<1+a_{n-1}$,
$F_{\emptyset,\beta}^\epsilon$ if $\epsilon>1$, and $F_{n,\beta}^\epsilon$ if $\epsilon=1$ and $a_{n-1}=0$. 

Moreover, we can write any face of $\tilde{Q}^\epsilon$ as the intersection of all the facets that contain it, as follows. \\
For any $I\subsetneq \{0,\dots,n\}$ such that $\epsilon<\max_{i\not\in I}(1+a_i)$, $F_I^\epsilon=\bigcap_{i\in I}F_i^\epsilon$.\\  
For any $I\subsetneq \{0,\dots,n\}$ such that $\min_{i\not\in I}(1+a_i)< \epsilon< \max_{i\not\in I}(1+a_i)$, $F_{I,\beta}^\epsilon= F_{\emptyset,\beta}^\epsilon\cap\bigcap_{i\in I}F_i^\epsilon$.\\
For any $I\subsetneq \{0,\dots,n\}$ such that $\epsilon=\min_{i\not\in I}(1+a_i)=\max_{i\not\in I}(1+a_i)$, $F_{I,\beta}^\epsilon= F_{n,\beta}^\epsilon\cap\bigcap_{i\in I} F_i^\epsilon$ if $\epsilon=1$, $n\in I$ and $a_{n-1}=0$ or $F_{I,\beta}^\epsilon= \bigcap_{i\in I}F_i^\epsilon$ if $\epsilon\neq 1$, $n\not\in I$ or $a_{n-1}\neq 0$. 
\end{prop}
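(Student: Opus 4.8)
\emph{Plan.} The statement is a piece of elementary convex geometry: $\tilde{Q}^\epsilon$ is the simplex $\tilde{Q}=\operatorname{Conv}(e_0^*,\dots,e_n^*)$ cut by the half-space $H_+^\epsilon=\{\ell\geq \epsilon-1\}$, where $\ell(x):=a_1x_1+\cdots+a_nx_n$ is the linear form with $\ell(e_i^*)=a_i$ (here $e_0^*=0$ and $a_0=0$). I would set $c:=\epsilon-1$ and, for each $I\subsetneq\{0,\dots,n\}$, record that $\ell$ restricted to the face $F_I=\operatorname{Conv}(e_i^*\mid i\notin I)$ takes exactly the values of the interval $[m_I,M_I]$ with $m_I:=\min_{i\notin I}a_i$ and $M_I:=\max_{i\notin I}a_i$; this is immediate since $F_I$ is the convex hull of the $e_i^*$ ($i\notin I$) and $\ell$ is linear. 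The dimension statement follows at once: $\tilde{Q}^\epsilon$ is full-dimensional iff the open half-space $H_{++}^\epsilon$ meets the interior of $\tilde{Q}$, iff $\max_{\tilde{Q}}\ell=a_n>c$, iff $\epsilon<1+a_n$; for $\epsilon=1+a_n$ one gets the proper face $\tilde Q\cap\{\ell=a_n\}$ and for $\epsilon>1+a_n$ the empty set.

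For the enumeration I would first invoke the standard description of the faces of an intersection with a half-space. Writing $\tilde Q^\epsilon$ by the inequalities $x_i\geq 0$, $\sum_j x_j\leq 1$ and $\ell\geq c$, every nonempty face is the locus where a chosen subset of these become equalities; the simplex-constraints made tight single out an index set $I$ and cut out $F_I$, so every face of $\tilde Q^\epsilon$ is either $F_I^\epsilon=F_I\cap H_+^\epsilon$ (if $\ell=c$ is not imposed) or $F_{I,\beta}^\epsilon=F_I\cap H^\epsilon$ (if it is). Each is genuinely a face of $\tilde Q^\epsilon$, because the supporting hyperplane of $\tilde Q$ exposing $F_I$ still supports $\tilde Q^\epsilon\subseteq\tilde Q$, and $\{\ell\geq c\}$ is a supporting constraint. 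Comparing $c$ with $[m_I,M_I]$ then gives everything: $F_I^\epsilon$ is nonempty of dimension $n-|I|$ exactly when $c<M_I$, i.e. $\epsilon<\max_{i\notin I}(1+a_i)$ (it equals $F_I$ if $c\leq m_I$, and is a full-dimensional truncation if $m_I<c<M_I$); while $F_{I,\beta}^\epsilon$ is nonempty iff $m_I\leq c\leq M_I$, of dimension $n-|I|-1$ when $m_I<c<M_I$ and of dimension $n-|I|$ when $c=m_I=M_I$ (where $\ell\equiv c$ on $F_I$).

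The delicate point, and the step I expect to be the main obstacle, is the bookkeeping of the two remaining boundary cases $c=m_I<M_I$ and $m_I<c=M_I$, which must be shown not to create new faces but to coincide with faces already listed under a larger index set, so that the ``distinct'' faces are enumerated once and only once. Concretely, if $c=M_I>m_I$ then $F_I^\epsilon=F_I\cap\{\ell=M_I\}=\operatorname{Conv}(e_i^*\mid i\notin I,\ a_i=M_I)=F_J$ with $J:=\{0,\dots,n\}\setminus\{i\notin I:a_i=M_I\}$, and on $F_J$ one has $m_J=M_J=c$, so this set is exactly the listed face $F_{J,\beta}^\epsilon$ of the equality type; symmetrically for $c=m_I<M_I$. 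This matches the two displayed conditions (the strict double inequality, resp. $\epsilon=\min_{i\notin I}(1+a_i)=\max_{i\notin I}(1+a_i)$) and yields completeness and non-redundancy of the list. Specializing to codimension one then gives the facets: $F_i^\epsilon$ is a facet iff $\epsilon<\max_{j\neq i}(1+a_j)$, which holds for every $i\in\{0,\dots,n-1\}$ since $a_n$ then lies among the $a_j$ with $j\neq i$, and holds for $i=n$ iff $\epsilon<1+a_{n-1}$; the codimension-one $F_{I,\beta}^\epsilon$ are $F_{\emptyset,\beta}^\epsilon$ when $1<\epsilon<1+a_n$ (the strict case with $|I|=0$) and $F_{n,\beta}^\epsilon$ when $\epsilon=1$ and $a_{n-1}=0$ (the only realization of the equality case with $|I|=1$, since $a_0=0$ and $a_n\neq 0$).

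Finally, I would obtain the intersection-of-facets formulas as formal consequences of $F_I=\bigcap_{i\in I}F_i$ in the simplex together with the distributivity of $\cap\,H_+^\epsilon$ and $\cap\,H^\epsilon$ over intersections: $F_I^\epsilon=\bigcap_{i\in I}F_i^\epsilon$ because $(\bigcap_{i\in I}F_i)\cap H_+^\epsilon=\bigcap_{i\in I}(F_i\cap H_+^\epsilon)$, and $F_{I,\beta}^\epsilon=F_{\emptyset,\beta}^\epsilon\cap\bigcap_{i\in I}F_i^\epsilon$ (resp. with $F_{n,\beta}^\epsilon$ replacing $F_{\emptyset,\beta}^\epsilon$ in the equality case with $n\in I$, $\epsilon=1$, $a_{n-1}=0$) because $H^\epsilon\subseteq H_+^\epsilon$ forces $(\tilde Q\cap H^\epsilon)\cap(F_I\cap H_+^\epsilon)=F_I\cap H^\epsilon$. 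The one thing to verify along the way is that each intersectand is really a facet; for $i=n\in I$ this uses that $n\in I$ forces $M_I\leq a_{n-1}$, hence $\epsilon<M_I+1\leq 1+a_{n-1}$, so that $F_n^\epsilon$ is a facet, which is precisely what the case hypothesis guarantees.
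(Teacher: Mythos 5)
Your proposal is correct and takes essentially the same route as the paper's proof: both view $\tilde{Q}^\epsilon$ as the simplex $\tilde{Q}$ truncated by the half-space $H_+^\epsilon$ and classify the nonempty faces $F_I\cap H_+^\epsilon$ and $F_I\cap H^\epsilon$ by comparing $\epsilon-1$ with the range of the linear form on $F_I$, the paper phrasing non-redundancy by requiring $F_I^\epsilon$ to meet $H_{++}^\epsilon$ and $F_{I,\beta}^\epsilon$ to lie in no proper face of $F_I$, while you equivalently exhibit the larger index set $J$ realizing each boundary coincidence. The only microscopic imprecision is that excluding the equality-type facet for $I=\{0\}$ uses the standing hypothesis $\epsilon<1+a_n$ rather than just $a_0=0$ and $a_n\neq 0$, but this is immediate in your framework.
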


\begin{proof}
The polytope $\tilde{Q}^\epsilon$ is of dimension $n$ if and only if $\tilde{Q}$ intersects $H_{++}^\epsilon$ if and only if there exists $i\in\{0,\dots,n\}$ such that $e_i^*\in H_{++}^\epsilon$ if and only if there exists $i\in\{0,\dots,n\}$ such that $a_i>\epsilon-1$ if and only if $a_n>\epsilon-1$ (because $0=a_0\leq\cdots\leq a_n$). This proves the first statement of the proposition.

\vspace{0.1cm}

Suppose now that $\epsilon<1+a_n$.
For any non-empty face $F$  of $\tilde{Q}^\epsilon$, either $F\not\subset H^\epsilon$ and $F$ is the intersection of a non-empty face of $\tilde{Q}$ with $H_+^\epsilon$,  or $F\subset H^\epsilon$ and $F$ is the intersection of a non-empty face of $\tilde{Q}$ with $H^\epsilon$. 

Let $I\subsetneq \{0,\dots,n\}$. The set $F_I^\epsilon$ is not empty if and only if there exists $i\not\in I$ such that $e_i^*\in H_{+}^\epsilon$ if and only if there exists $i\not\in I$ such that $a_i\geq \epsilon-1$ if and only if $\epsilon\leq\max_{i\not\in I}(1+a_i)$. Moreover, $F_I^\epsilon$ is not empty and not included in $H^\epsilon$ if and only if it intersects $H_{++}^\epsilon$ if and only if there exists $i\not\in I$ such that $e_i^*\in H_{++}^\epsilon$ if and only if there exists $i\not\in I$ such that $a_i>\epsilon-1$ if and only if $\epsilon<\max_{i\not\in I}(1+a_i)$. Also, in that latter case, the dimension of $F_I^\epsilon$ is the same as the dimension of $F_I$; in particular the non-empty $F_I^\epsilon$ that are not included in $H^\epsilon$ are all distinct.

Similarly, $F_{I,\beta}^\epsilon$ is not empty if and only if there exist $i$ and $j$ not in $I$ (may be equal) such that $e_i^*\in H_{+}^\epsilon$ and $e_j^*\not\in H_{++}^\epsilon$ (i.e., $a_i\geq \epsilon-1$ and $a_j\leq \epsilon-1$). Then  $F_{I,\beta}^\epsilon$ is not empty if and only $\min_{i\not\in I}(1+a_i)\leq \epsilon\leq\max_{i\not\in I}(1+a_i)$. Moreover, $F_{I,\beta}^\epsilon$ is not empty and included in no proper face of $F_I$ (i.e., $H^\epsilon$ intersects the relative interior of $F_I$) if and only if there exist $i\neq j$ not in $I$ such that $e_i^*\in H_{++}^\epsilon$ and $e_j^*\not\in H_{+}^\epsilon$ (i.e., $a_i> \epsilon-1$ and $a_j< \epsilon-1$) or for any $i\not\in I$ we have $e_i^*\in H^\epsilon$ (i.e., $a_i=\epsilon-1$). Then  $F_{I,\beta}^\epsilon$ is not empty and included in no proper face of $F_I$ if and only $\min_{i\not\in I}(1+a_i)< \epsilon<\max_{i\not\in I}(1+a_i)$ or $\epsilon=\min_{i\not\in I}(1+a_i)=\max_{i\not\in I}(1+a_i)$. Note also that the non-empty $F_{I,\beta}^\epsilon$ that are not included in a proper face of $F_I$ are all distinct and yield all non-empty faces of $\tilde{Q}^\epsilon$ included in $H^\epsilon$. This finishes the proof of the second statement of the proposition. 

To describe the facets, it is sufficient to find the  $F_i^\epsilon$ with $\epsilon< \max_{j\neq i}(1+a_j)$, the $F_{i,\beta}^\epsilon$ with $\epsilon$ equal to both $\min_{j\neq i}(1+a_j)$ and $\max_{j\neq i}(1+a_j)$, and
 $F_{\emptyset,\beta}^\epsilon$ with $1=\min_{i=0}^n(1+a_i)< \epsilon<\max_{i=0}^n(1+a_i)=1+a_n$. We easily find the $F_i^\epsilon$ with $i\in\{0,\dots,n-1\}$ for any $\epsilon<1+a_n$, and $F_n ^\epsilon$ for any $\epsilon<1+a_{n-1}$. We conclude by noticing that, for any $i\in\{0,\dots,n\}$, we have $\epsilon=\min_{j\neq i}(1+a_j)=\max_{j\neq i}(1+a_j)<1+a_n$  if and only if $i=n$ and $0=a_0=\cdots=a_{n-1}$ (and in particular, $\epsilon=1$).
 
 To get the last statement, apply the fact that any face of a polytope is the intersection of the facets containing it.
\end{proof}

From Proposition~\ref{prop:polyfaces1}, we deduce the following result with the following notation.
Let us denote by $i_0:=0,i_1,\dots,i_k,i_{k+1}:=n+1$ some increasing positive integers so that
$$0=a_{i_0}=\cdots=a_{i_1-1}<a_{i_1}=\cdots=a_{i_1-1}<\cdots<a_{i_l}=\cdots=a_{i_l-1}<\cdots <a_{i_k}=\cdots=a_n.$$

\begin{cor}
The isomorphism classes of the horospherical varieties $X^\epsilon$ associated to the polytopes in the family $(Q^\epsilon)_{\epsilon\in\Qbb_{\geq 0}}$ are given by the following subsets of $\Qbb_{\geq 0}$:
\begin{itemize}[label=$\ast$]
\item $[0,1[$;
\item $]1+a_{i_l},1+a_{i_{l+1}}[$ for any $l\in\{0,\dots,k-2\}$;
\item $\{1+a_{i_l}\}$ for any $l\in\{0,\dots,k-2\}$;
\item $]1+a_{i_{k-1}},1+a_{i_k}[$ 
and $\{1+a_{i_{k-1}}\}$ if $i_k \neq n$ (i.e., if $a_{n-1}=a_n$) or the simple root $\alpha_n$ is not trivial (i.e., when $X$ is as in Case (1b) of Theorem~\ref{th:main});
\item $[1+a_{i_{k-1}},1+a_{i_k}[$ if $i_k=n$ (i.e., if $a_{n-1}<a_n$) and the simple root $\alpha_n$ is trivial (i.e., when $X$ is as in Case (1c) of Theorem~\ref{th:main}). 
\end{itemize}
\end{cor}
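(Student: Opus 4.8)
The plan is to combine Proposition~\ref{prop:polyfaces1} with the flip/divisorial-contraction dictionary recorded in and after Theorem~\ref{th:recallMMP}. By the first statement of Proposition~\ref{prop:polyfaces1}, $\tilde{Q}^\epsilon$ is $n$-dimensional exactly for $\epsilon<1+a_n=\epsilon_{max}$, and every non-emptiness or facet condition appearing there is expressed through comparisons of $\epsilon$ with the values $1+a_i$, equivalently with the strictly increasing critical values $1=1+a_{i_0}<1+a_{i_1}<\cdots<1+a_{i_k}=\epsilon_{max}$. Hence on $[0,1[$, on each open interval $]1+a_{i_l},1+a_{i_{l+1}}[$, and at each singleton $\{1+a_{i_l}\}$ the face pattern of $\tilde{Q}^\epsilon$ is constant, so by the isomorphism criterion stated after Theorem~\ref{th:recallMMP} the class of $X^\epsilon$ is constant on each such subset. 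This is the easy half; what remains is to decide, at each critical value, whether the singleton $\{1+a_{i_l}\}$ is isolated or instead merges to the right with the open interval above it.

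For this I would invoke the dictionary of Theorem~\ref{th:recallMMP}: an interior crossing where no $G$-stable inequality is lost is a flip and gives an isolated class, whereas a value at which an inequality attached to a $G$-stable divisor $x_j$ becomes superfluous is a divisorial contraction, and there the value joins the open interval above it into a single left-closed class. So I would track the facets $F_i$ of the simplex $\tilde{Q}$. By Proposition~\ref{prop:polyfaces1}, $F_i^\epsilon$ is a facet iff $\epsilon<\max_{j\ne i}(1+a_j)$; since $a_n$ is maximal, this threshold equals $1+a_n=\epsilon_{max}$ for $i<n$ and equals $1+a_{n-1}$ for $i=n$. Therefore the only facet of $\tilde{Q}$ whose defining inequality can become superfluous strictly before $\epsilon_{max}$ is $F_n$, and this happens exactly at $\epsilon=1+a_{n-1}$, a value lying below $\epsilon_{max}$ precisely when $a_{n-1}<a_n$, i.e. when $i_k=n$. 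Consequently every critical value other than the top one $1+a_{i_{k-1}}$ is an interior flip producing an isolated singleton, which accounts for $[0,1[$, the singletons $\{1+a_{i_l}\}$ and the intervals $]1+a_{i_l},1+a_{i_{l+1}}[$ with $l\le k-2$, all pairwise distinct because the face pattern genuinely changes as the vertices with $a_i=a_{i_l}$ cross $H^\epsilon$.

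The decisive and hardest point is the nature of the top critical value $1+a_{i_{k-1}}$. When $i_k<n$, that is $a_{n-1}=a_n$, no simplex facet is ever lost before $\epsilon_{max}$, so this value is again an interior flip and $\{1+a_{i_{k-1}}\}$ is isolated, separate from $]1+a_{i_{k-1}},1+a_{i_k}[$, matching Case (1b). When $i_k=n$, the facet $F_n$ becomes superfluous at $1+a_{n-1}=1+a_{i_{k-1}}$, and by the dictionary the transition is a divisorial contraction, forcing the merge $[1+a_{i_{k-1}},1+a_{i_k}[$, if and only if the lost inequality, the row $e_n$, is $G$-stable. Recalling from the description of the $B$-stable divisors that $D_i$ is a color when $\alpha_i$ is non-trivial and $G$-stable when $\alpha_i$ is trivial, the row $e_n$ is $G$-stable exactly when $\alpha_n$ is trivial. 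Thus the merge into $[1+a_{i_{k-1}},1+a_{i_k}[$ occurs precisely when $i_k=n$ and $\alpha_n$ is trivial, which is Case (1c); when $\alpha_n$ is non-trivial the superfluous inequality is a color, cannot be deleted, and the transition is still a flip, so the singleton stays isolated, giving Case (1b). The main obstacle is exactly this last bookkeeping, matching the combinatorial event ``$F_n$ becomes superfluous'' to the $G$-stable-versus-color nature of its defining row, since that single distinction is what determines the two possible shapes of the top stratum.
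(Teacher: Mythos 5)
Your proposal reaches the correct conclusions and rests on the same two pillars as the paper (Proposition~\ref{prop:polyfaces1} plus the theory recalled in Section~\ref{sec:recallMMP}), but it organizes the hard half differently. The paper works directly with the face-comparison criterion (the second bullet in the list of equivalences after Theorem~\ref{th:recallMMP}): it pins each critical value down by explicit non-emptiness computations for intersections of facets (playing $I=\{i_{l+1},\dots,n\}$ against $I=\{0,\dots,i_l-1\}$ combined with $F^\epsilon_{\emptyset,\beta}$), and then, for Case~(1c), it verifies the deletion clause by hand: for $\epsilon\in[1+a_{n-1},1+a_n[$ all the polytopes are simplexes that can be defined after deleting the row of the trivial root $\alpha_n$, so their faces are ``the same''. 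You instead invoke the dictionary ``divisorial contractions appear exactly when an inequality attached to some $x_j$ becomes superfluous'' together with the general shape of the classes ($[\epsilon_{i,0},\epsilon_{i,1}[$, open intervals, singletons at flips), and reduce everything to one bookkeeping question: the only row of $\mathcal{A}$ that can become superfluous before $\epsilon_{max}$ is the row of $F_n$, this happens at $1+a_{n-1}$, which lies in the admissible range exactly when $i_k=n$, and it triggers a divisorial contraction (hence the merge) exactly when that row is $G$-stable, i.e.\ when $\alpha_n$ is trivial. Both are legitimate uses of the recalled theory; the paper's route makes the polytope combinatorics fully explicit, while yours is shorter once the dictionary is granted, makes the geometry (flip versus divisorial contraction) transparent, and in effect establishes Corollary~\ref{cor:LogMMP1} at the same time.

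The one step you assert rather than prove is that each $1+a_{i_l}$ genuinely \emph{is} a critical value, i.e.\ that the face pattern changes there in the deletion-aware sense, so that the singletons are classes distinct from both neighbouring intervals; without this, a singleton could in principle merge with the intervals on \emph{both} sides and your list would collapse. This is exactly what the paper spends its middle paragraph on, and it does follow from the non-emptiness criteria in the proof of Proposition~\ref{prop:polyfaces1}: at $\epsilon=1+a_{i_l}$ the face attached to the rows $\{0,\dots,i_l-1\}\cup\{\beta\}$ is non-empty while it is empty just below, and the face attached to the rows $\{i_{l+1},\dots,n\}$ is non-empty while it is empty just above. One must also check that the deletion clause cannot erase these differences: the rows $e_0,\dots,e_{n-1}$ and $\beta$ are never deletable (the former never become superfluous, the latter is a color), and the row $e_n$, which enters the second family of faces, is not superfluous for $\epsilon\leq 1+a_{i_l}<1+a_{n-1}$ when $l\leq k-2$, and is a color (hence not deletable) in the remaining subcase of Case~(1b). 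Add this verification and your argument is complete.
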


\begin{proof}
We apply the theory described in Section~\ref{sec:recallMMP}, in particular the fact that the isomorphism classes of the varieties $X^\epsilon$ are obtained by looking at the $\epsilon$'s for which ``the faces of $Q^\epsilon$ change''.

Note first that, by Proposition~\ref{prop:polyfaces1}, $(P,M,Q^\epsilon,\tilde{Q}^\epsilon)$ is an admissible quadruple if and only if $\epsilon<1+a_n$.

Also, the facets of $\tilde{Q}^\epsilon$ are: $F_i^\epsilon$ 
with $i\in\{0,\dots,n-1\}$, $F_n^\epsilon$ 
 if $\epsilon<1+a_{n-1}$,
$F_{\emptyset,\beta}^\epsilon$ 
 if $\epsilon>1$, and $F_{n,\beta}^\epsilon$ (orthogonal to $\alpha_{n,M}^\vee$) if $\epsilon=1$ and $a_{n-1}=0$. In particular,  for any $\epsilon,\,\eta\in  [0,1+a_n[$, if $a_{n-1}\neq 0$, the facets of $Q^\epsilon$ and $Q^\eta$ are ``the same'' 
  if and only if $\epsilon$ and $\eta$ are both in $[0,1]$ or $]1,1+a_{n-1}[$ or $[1+a_{n-1},1+a_n[$ (which may be empty). And if $a_{n-1}=0$, the facets of $Q^\epsilon$ and $Q^\eta$ are ``the same'' 
  for any $\epsilon,\,\eta\in  [0,1+a_n[$ (indeed, in that case, the facets $F_n^\epsilon$ if $\epsilon<1$,
$F_{\emptyset,\beta}^\epsilon$ if $\epsilon>1$, and $F_{n,\beta}^\epsilon$ if $\epsilon=1$ are ``the same'', in particular all orthogonal to $\beta_{M}^\vee=a_n\alpha_{n,M}^\vee$).

We now use a consequence of the proof of Proposition~\ref{prop:polyfaces1}: for any $I\subsetneq \{0,\dots,n\}$, $\bigcap_{i\in I}F_i^\epsilon$ is not empty if and only if $\epsilon\leq\max_{i\not\in I}(1+a_i)$, $F_{\emptyset,\beta}^\epsilon\cap\bigcap_{i\in I}F_i^\epsilon$ is not empty if and only if $\min_{i\not\in I}(1+a_i)\leq\epsilon\leq\max_{i\not\in I}(1+a_i)$ and $F_{n,\beta}^\epsilon\cap\bigcap_{i\in I}F_i^\epsilon$ is not empty if and only if $\min_{i\not\in I}(1+a_i)=\epsilon=\max_{i\not\in I}(1+a_i)$. In particular for any $l\in\{0,\dots,k-2\}$, suppose that for $I=\{i_{l+1},\dots,n\}$ and that $\bigcap_{i\in I}F_i^\epsilon$ is not empty; suppose also that for $I=\{0,\dots,i_l-1\}$ and that $F_{\emptyset,\beta}^\epsilon\cap\bigcap_{i\in I}F_i^\epsilon$ is not empty; then $\epsilon=1+a_{i_l}$. Similarly for any $l\in\{0,\dots,k-2\}$, suppose that for $I=\{i_{l+1}-1,\dots,n\}$ and $\bigcap_{i\in I}F_i^\epsilon$ is not empty; suppose also that for $I=\{0,\dots,i_l-1\}$ and that $F_{\emptyset,\beta}^\epsilon\cap\bigcap_{i\in I}F_i^\epsilon$ is not empty; then $\epsilon\in [1+a_{i_l},1+a_{i_{l+1}}]$. 
If $i_k\neq n$, $F_n^\epsilon$ is still a facet of $Q^\epsilon$ and what we did above with $l\in\{0,\dots,k-2\}$ can be done as well with $l=k-1$.

Hence, this proves that if the two varieties $X^\epsilon$ and $X^\eta$ are isomorphic then $\epsilon$ and $\eta$ are in one of the subsets described in the corollary.

To conclude, we have to prove that the two varieties $X^\epsilon$ and $X^\eta$ are isomorphic when $\epsilon$ and $\eta$ are in one of these subsets. It is obvious from Proposition~\ref{prop:polyfaces1} except in the case where $i_k=n$ and the simple root $\alpha_n$ is trivial. But in that case, all polytopes $Q^\epsilon$ with $\epsilon\in[1+a_{n-1},1+a_n[=[1+a_{i_{k-1}},1+a_{i_k}[$ 
are simplexes  with facets $F_i^\epsilon$ for $i\in\{0,\dots,n-1\}$ and $F_{\emptyset,\beta}^\epsilon$ or $F_{n,\beta}^\epsilon$ if $\epsilon= 1+a_{n-1}=1$, i.e., they could be defined even deleting the row corresponding to the simple root $\alpha_n$ that is trivial, so that their faces are ``the same''.
\end{proof}

We can reformulate this corollary as follows, and get the first statement of Theorem~\ref{th:main2} in Case (1).
We denote $X^0=X$ and for any $l\in\{1,\cdots, k\}$, $X^l:=X^\epsilon$ with $\epsilon\in ]1+a_{i_{l-1}},1+a_{i_{l}}[$, and for any $l\in\{0,\cdots, k\}$, $Y^l:=X^{1+a_{i_l}}$.

\begin{cor}\label{cor:LogMMP1}
The family $(Q^\epsilon)_{\epsilon\in\Qbb_{\geq 0}}$ describes a Log MMP from $X$ as follows: 
\begin{itemize}[label=$\ast$]
\item $k$ flips $\phi_l:\,X^l\longrightarrow Y^l\longleftarrow X^{l+1}\,:\phi_l^+$ for any $l\in\{0,\cdots, k-1\}$ and a fibration $\phi_k:\,X^k\longrightarrow Y^k$, if $i_k \neq n$ or the simple root $\alpha_n$ is not trivial;
\item $k-1$ flips $\phi_l:\,X^l\longrightarrow Y^l\longleftarrow X^{l+1}\,:\phi_l^+$ for any $l\in\{0,\cdots, k-2\}$, followed by a divisorial contraction $\phi_{k-1}:\,X^{k-1}\longrightarrow Y^{k-1}\simeq X^k$ and a fibration $X^k\longrightarrow Y^k\simeq\operatorname{pt}$, if $i_k = n$ and the simple root $\alpha_n$ is trivial.
\end{itemize}
\end{cor}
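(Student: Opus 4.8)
The plan is to read the statement off the previous corollary, which already cuts $[0,\epsilon_{max}[$ into the isomorphism classes of the family $(X^\epsilon)$, and to label each elementary step of the Log MMP by means of the three criteria recalled after Theorem~\ref{th:recallMMP}: a step is a flip when only faces of positive codimension of $\tilde{Q}^\epsilon$ change while its dimension and its facets attached to $G$-stable rows are preserved, it is a divisorial contraction when an inequality attached to a $G$-stable divisor $x_j$ becomes superfluous (i.e.\ a facet coming from a $G$-stable row disappears), and it is a fibration when the dimension of $\tilde{Q}^\epsilon$ drops. The critical values at which the class changes are $1+a_{i_l}$ for $l\in\{0,\dots,k-1\}$ together with $\epsilon_{max}=1+a_{i_k}=1+a_n$, and by definition $X^l$ and $Y^l$ are attached respectively to the open interval $]1+a_{i_{l-1}},1+a_{i_l}[$ and to the point $1+a_{i_l}$; so everything reduces to deciding the nature of each $\phi_l$.

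First I would handle the top of the family. By Proposition~\ref{prop:polyfaces1}, $\tilde{Q}^\epsilon$ is $n$-dimensional exactly for $\epsilon<1+a_n$, and one computes directly $\tilde{Q}^{\epsilon_{max}}=\operatorname{Conv}(e_i^*\mid a_i=a_n)$, a simplex of dimension $n-i_k<n$ spanning a sublattice of $M$ of strictly smaller rank. The resulting drop of dimension gives, by the last part of Theorem~\ref{th:recallMMP}, the fibration $\phi_k\colon X^k\to Y^k=X^{\epsilon_{max}}$.

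Next I would settle the flip/divisorial-contraction alternative using the facet list of Proposition~\ref{prop:polyfaces1}. The facets $F_i^\epsilon$ with $i\in\{0,\dots,n-1\}$ persist for every $\epsilon<1+a_n$, the $\beta$-facet $F_{\emptyset,\beta}^\epsilon$ is only gained (from $\epsilon>1$ on), and the unique facet that can be lost inside the range is $F_n^\epsilon$, which ceases to be a facet precisely at $\epsilon=1+a_{n-1}$. As the row cutting out $F_n^\epsilon$ is the one attached to $e_n$, it is the color $D_{\alpha_n}$ when $\alpha_n$ is non-trivial and a $G$-stable divisor when $\alpha_n$ is trivial. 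Hence every value $1+a_{i_l}$ at which no facet is lost yields a flip $\phi_l\colon X^l\to Y^l\leftarrow X^{l+1}\colon\phi_l^+$. When $i_k\neq n$ one has $1+a_{n-1}=1+a_n=\epsilon_{max}$, so $F_n^\epsilon$ survives throughout $[0,\epsilon_{max}[$ and all $k$ steps are flips; the same conclusion holds at $1+a_{i_{k-1}}=1+a_{n-1}$ when $i_k=n$ but $\alpha_n$ is non-trivial, since the facet lost there is a color and the dimension is unchanged. This gives the first item. When $i_k=n$ and $\alpha_n$ is trivial, the facet $F_n^\epsilon$ lost at $1+a_{i_{k-1}}=1+a_{n-1}$ is $G$-stable, so $\phi_{k-1}$ is a divisorial contraction; the identification $Y^{k-1}\simeq X^k$ is exactly the single-class statement on $[1+a_{i_{k-1}},1+a_n[$ from the previous corollary, and since $i_k=n$ forces $\tilde{Q}^{\epsilon_{max}}=\{e_n^*\}$ with $\alpha_n$ trivial leaving no flag direction, one gets $Y^k\simeq\operatorname{pt}$. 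This gives the second item.

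I expect the main obstacle to be the rigorous use of the divisorial-contraction criterion at $\epsilon=1+a_{n-1}$: one must be certain that losing the facet $F_n^\epsilon$ is a small modification (a flip) exactly when it is cut out by a color, and a genuine contraction of the $G$-stable divisor $D_n$ exactly when $\alpha_n$ is trivial. This rests on the precise meaning of ``an inequality attached to some $x_j$ becomes superfluous'' in Theorem~\ref{th:recallMMP} and on the bookkeeping, inherited from the description of the rows of $\mathcal{A}$, of which rays are images of colors and which carry $G$-stable divisors; once Proposition~\ref{prop:polyfaces1} is granted, the remaining facet combinatorics is routine.
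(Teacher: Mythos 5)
Your proposal is correct and follows essentially the same route as the paper: the corollary is obtained as a direct reformulation of the preceding corollary on isomorphism classes of the $X^\epsilon$, combined with the dictionary of Section~\ref{sec:recallMMP} (interior singleton classes give flips, a half-open class $[\epsilon_{i,0},\cdot[$ signals a divisorial contraction exactly when a $G$-stable inequality becomes superfluous, and the dimension drop at $\epsilon_{max}$ gives the final fibration). Your explicit bookkeeping — that the only facet which can disappear is $F_n^\epsilon$ at $\epsilon=1+a_{n-1}$, cut out by a color row precisely when $\alpha_n$ is non-trivial, and your identification of $Q^{\epsilon_{max}}$ forcing $Y^k\simeq\operatorname{pt}$ in Case~(1c) — is exactly the content the paper leaves implicit in the proof of the preceding corollary and in Proposition~\ref{prop:polyfaces1}.
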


\begin{ex}
In the five different cases with $n=2$ and $a_2\neq 0$, we illustrate this corollary in terms of polytopes in Figures~\ref{figure2},~\ref{figure3},~\ref{figure4},~\ref{figure5} and \ref{figure6}.

\begin{figure}
\begin{center}

\begin{tikzpicture}\node [rectangle] (a) at (0,0) {
    \begin{tikzpicture}[scale=0.5]
     \chadom
\draw[very thick] (2,2) -- (2,3) -- (3,2) -- (2,2);
\draw[densely dotted] (0,2.5) -- (4,0.5) ;
\node at (4.7,1) {$H^0$};
    \end{tikzpicture}
};

\node [rectangle] (b) at (2,-3.25) {
    \begin{tikzpicture}[scale=0.5]
     \chadom
\draw[very thick] (2,2) -- (2,3) -- (3,2) -- (2,2);
\draw[densely dotted] (0,3) -- (4,1) ;
\node at (4.7,1.5) {$H^1$};
    \end{tikzpicture}
};

\node [rectangle] (c) at (4,0) {
    \begin{tikzpicture}[scale=0.5]
     \chadom
\draw[very thick] (2,2.25) -- (2,3) -- (3,2) -- (2.5,2) -- (2,2.25);
\draw[densely dotted] (0,3.25) -- (4,1.25) ;
\node at (4.7,1.5) {$H^\frac{3}{2}$};
    \end{tikzpicture}
};
\node [rectangle] (d) at (6,-3.25) {
    \begin{tikzpicture}[scale=0.5]
     \chadom
\draw[very thick] (2,2.5) -- (2,3) -- (3,2) -- (2,2.5);
\draw[densely dotted] (0,3.5) -- (4,1.5) ;
\node at (4.7,1.5) {$H^2$};
    \end{tikzpicture}
};
\node [rectangle] (e) at (8,0) {
    \begin{tikzpicture}[scale=0.5]
     \chadom
\draw[very thick] (2,2.75) -- (2.5,2.5) -- (2,3) -- (2,2.75);
\draw[densely dotted] (0,3.75) -- (4,1.75) ;
\node at (4.7,2) {$H^\frac{5}{2}$};
    \end{tikzpicture}
};
\node [rectangle] (f) at (10,-3.25) {
    \begin{tikzpicture}[scale=0.5]
     \chadom
\node at (2,3) {$\bullet$};
\draw[densely dotted] (0,4) -- (4,2) ;
\node at (4.7,2) {$H^3$};
    \end{tikzpicture}
};
\draw (a) [->]  to node[above right] {$\phi_0$} (b);
\draw (c) [->]  to node[above left] {$\phi_0^+$} (b);
\draw (c) [->]  to node[above right] {$\phi_1$} (d);
\draw (e) [->]  to node[above left] {$\phi_1^+$} (d);
\draw (e) [->]  to node[above right] {$\phi_2$} (f);
\end{tikzpicture}

\caption{The Log MMP described by the polytopes $\tilde{Q}^\epsilon$ in the case where $n=2$, $a_1=1$, $a_2=2$ and $\alpha_2$ is not trivial.}
\label{figure2}
\end{center}
\end{figure}
\begin{figure}
\begin{center}
\begin{tikzpicture}
\node [rectangle] (a) at (0,0) {
    \begin{tikzpicture}[scale=0.5]
     \chadom
\draw[very thick] (2,2) -- (2,3) -- (3,2) -- (2,2);
\draw[densely dotted] (0,2.5) -- (4,0.5) ;
\node at (4.7,1) {$H^0$};
    \end{tikzpicture}
};

\node [rectangle] (b) at (2,-3.25) {
    \begin{tikzpicture}[scale=0.5]
     \chadom
\draw[very thick] (2,2) -- (2,3) -- (3,2) -- (2,2);
\draw[densely dotted] (0,3) -- (4,1) ;
\node at (4.7,1.5) {$H^1$};
    \end{tikzpicture}
};

\node [rectangle] (c) at (4,0) {
    \begin{tikzpicture}[scale=0.5]
     \chadom
\draw[very thick] (2,2.25) -- (2,3) -- (3,2) -- (2.5,2) -- (2,2.25);
\draw[densely dotted] (0,3.25) -- (4,1.25) ;
\node at (4.7,1.5) {$H^\frac{3}{2}$};
    \end{tikzpicture}
};
\node [rectangle] (d) at (6,-3.25) {
    \begin{tikzpicture}[scale=0.5]
     \chadom
\draw[very thick] (2,2.5) -- (2,3) -- (3,2) -- (2,2.5);
\draw[densely dotted] (0,3.5) -- (4,1.5) ;
\node at (4.7,1.5) {$H^2$};
    \end{tikzpicture}
};

6

\node [rectangle] (f) at (9.5,-6) {
    \begin{tikzpicture}[scale=0.5]
     \chadom
\node at (2,3) {$\bullet$};
\draw[densely dotted] (0,4) -- (4,2) ;
\node at (4.7,2) {$H^3$};
    \end{tikzpicture}
};
\draw (a) [->]  to node[above right] {$\phi_0$} (b);
\draw (c) [->]  to node[above left] {$\phi_0^+$} (b);
\draw (c) [->]  to node[above right] {$\phi_1$} (d);

\draw (d) [->]  to node[above right] {$\phi_2$} (f);
\end{tikzpicture}

\caption{The Log MMP described by the polytopes $\tilde{Q}^\epsilon$ in the case where $n=2$, $a_1=1$, $a_2=2$ and $\alpha_2$ is trivial.}
\label{figure3}
\end{center}
\end{figure}

\begin{figure}
\begin{center}

\begin{tikzpicture}\node [rectangle] (a) at (0,0) {
    \begin{tikzpicture}[scale=0.5]
     \chadom
\draw[very thick] (2,2) -- (2,3) -- (3,2) -- (2,2);
\draw[densely dotted] (0,1) -- (4,1) ;
\node at (4.7,1) {$H^0$};
    \end{tikzpicture}
};

\node [rectangle] (b) at (2,-3.25) {
    \begin{tikzpicture}[scale=0.5]
     \chadom
\draw[very thick] (2,2) -- (2,3) -- (3,2) -- (2,2);
\draw[densely dotted] (0,2) -- (4,2) ;
\node at (4.7,2) {$H^1$};
    \end{tikzpicture}
};

\node [rectangle] (c) at (4,0) {
    \begin{tikzpicture}[scale=0.5]
     \chadom
\draw[very thick] (2,2.5) -- (2,3) -- (3,2.5) -- (2,2.5);
\draw[densely dotted] (0,2.5) -- (4,2.5) ;
\node at (4.7,2.5) {$H^\frac{3}{2}$};
    \end{tikzpicture}
};
\node [rectangle] (d) at (6,-3.25) {
    \begin{tikzpicture}[scale=0.5]
     \chadom
\node at (2,3) {$\bullet$};
\draw[densely dotted] (0,3) -- (4,3) ;
\node at (4.7,3) {$H^2$};
    \end{tikzpicture}
};

\draw (a) [->]  to node[above right] {$\phi_0$} (b);
\draw (c) [->]  to node[above left] {$\phi_0^+$} (b);
\draw (c) [->]  to node[above right] {$\phi_1$} (d);
\end{tikzpicture}

\caption{The Log MMP described by the polytopes $\tilde{Q}^\epsilon$ in the case where $n=2$, $a_1=0$, $a_2=1$ and $\alpha_2$ is not trivial.}
\label{figure4}
\end{center}
\end{figure}

\begin{figure}
\begin{center}

\begin{tikzpicture}\node [rectangle] (a) at (0,0) {
    \begin{tikzpicture}[scale=0.5]
     \chadom
\draw[very thick] (2,2) -- (2,3) -- (3,2) -- (2,2);
\draw[densely dotted] (0,1) -- (4,1) ;
\node at (4.7,1) {$H^0$};
    \end{tikzpicture}
};

\node [rectangle] (b) at (3.5,-2.5) {
    \begin{tikzpicture}[scale=0.5]
     \chadom
\draw[very thick] (2,2) -- (2,3) -- (3,2) -- (2,2);
\draw[densely dotted] (0,2) -- (4,2) ;
\node at (4.7,2) {$H^1$};
    \end{tikzpicture}
};

\node [rectangle] (d) at (7,-5) {
    \begin{tikzpicture}[scale=0.5]
     \chadom
\node at (2,3) {$\bullet$};
\draw[densely dotted] (0,3) -- (4,3) ;
\node at (4.7,3) {$H^2$};
    \end{tikzpicture}
};

\draw (a) [->]  to node[above right] {$\phi_0$} (b);
\draw (b) [->]  to node[above right] {$\phi_1$} (d);
\end{tikzpicture}

\caption{The Log MMP described by the polytopes $\tilde{Q}^\epsilon$ in the case where $n=2$, $a_1=0$, $a_2=1$ and $\alpha_2$ is trivial.}
\label{figure5}
\end{center}
\end{figure}

\begin{figure}
\begin{center}

\begin{tikzpicture}

\node [rectangle] (a) at (0,0) {
    \begin{tikzpicture}[scale=0.5]
     \chadom
\draw[very thick] (2,2) -- (2,3) -- (3,2) -- (2,2);
\draw[densely dotted] (0,3) -- (3,0) ;
\node at (3.5,0.5) {$H^0$};
    \end{tikzpicture}
};

\node [rectangle] (b) at (2,-3.25) {
    \begin{tikzpicture}[scale=0.5]
     \chadom
\draw[very thick] (2,2) -- (2,3) -- (3,2) -- (2,2);
\draw[densely dotted] (0,4) -- (4,0) ;
\node at (4.7,0.5) {$H^1$};
    \end{tikzpicture}
};

\node [rectangle] (c) at (4,0) {
    \begin{tikzpicture}[scale=0.5]
     \chadom
\draw[very thick] (2,2.5) -- (2,3) -- (3,2) -- (2.5,2) -- (2,2.5);
\draw[densely dotted] (0.5,4) -- (4,0.5) ;
\node at (4.7,1) {$H^\frac{3}{2}$};
    \end{tikzpicture}
};
\node [rectangle] (d) at (6,-3.25) {
    \begin{tikzpicture}[scale=0.5]
     \chadom
\draw[very thick]  (2,3) -- (3,2) ;
\draw[densely dotted] (1,4) -- (4,1) ;
\node at (4.7,1) {$H^2$};
    \end{tikzpicture}
};

\draw (a) [->]  to node[above right] {$\phi_0$} (b);
\draw (c) [->]  to node[above left] {$\phi_0^+$} (b);
\draw (c) [->]  to node[above right] {$\phi_1$} (d);
\end{tikzpicture}

\caption{The Log MMP described by the polytopes $\tilde{Q}^\epsilon$ in the case where $n=2$ and $a_1=a_2=1$.}

\label{figure6}
\end{center}
\end{figure}

\end{ex}

\subsection{Proof of the last statement of Theorem~\ref{th:main2} in Case (1)}

The previous section proves that $a_{i_1},\dots,a_{i_k}$ are invariants of $X$.
To finish the proof of Theorem~\ref{th:main2} in Case (1), we have to prove that $G_0,\dots,G_t$, $\alpha_0,\dots,\alpha_n,\beta$ and $i_1,\dots,i_k$ are also invariants of $X$. For this, we have to describe some exceptional loci and some fibers of the different morphisms of the Log MMP.

\vskip\baselineskip
We first distinguish two cases by the following result. 

\begin{prop} 
Define the simple subgroups of $P(\varpi_\beta)$ as in Definition~\ref{def:smoothtriplebis}.
\begin{itemize}[label=$\ast$]
\item Suppose that $n=1$ and that $\alpha_0$ and $\alpha_1$ are two simple roots of the same simple subgroup of $P(\varpi_\beta)$. Then, the fiber of $\psi:\,X\longrightarrow G/P(\varpi_\beta)$ is either a homogeneous variety different from a projective space (a quadric $Q^{2m}$ with $m\geq 2$, a Grassmannian $\operatorname{Gr}(i,m)$ with $p\geq 5$ and $2\leq i\leq m-2$, or a spinor variety $\operatorname{Spin}(2m+1)/P(\varpi_m)$ with $m\geq 4$), or a two-orbit variety as in \cite{2orbits}.

\item Suppose that $n>1$ or that $\alpha_0$ and $\alpha_1$ are not two simple roots of the same simple subgroup of $P(\varpi_\beta)$. Then, the fiber of $\psi:\,X\longrightarrow G/P(\varpi_\beta)$ is a projective space.
\end{itemize}
\end{prop}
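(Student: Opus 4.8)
The plan is to identify the fibre $F_\psi$ of $\psi\colon X\lra G/P(\varpi_\beta)$ with an explicit Levi-orbit closure and then to recognise it through the gathering lemmas and the classification of~\cite{2orbits}. Since $\psi$ is the $G$-equivariant Mori fibration onto $G/P(\varpi_\beta)$ described in Case~(1) of Remark~\ref{rem:debutMMP}, the strategy for computing fibres recalled in Section~\ref{sec:recallMMP} applies: the biggest $G$-orbit mapping to the base point $o:=[eP(\varpi_\beta)]$ is the open orbit $\Omega$, so $F_\psi=\overline{L\cdot x_0}$, where $L$ is a Levi subgroup of $P(\varpi_\beta)$ and $x_0=[\sum_{i=0}^n v_{\varpi_{\alpha_i}+(1+a_i)\varpi_\beta}]$. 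I would write $L=L_0\times G_1\times\cdots\times G_t$, with $L_0$ the Levi of the maximal parabolic of $G_0$ attached to $\beta$, and record that $\varpi_\beta$ restricts to a character of $L$ that is trivial on $L^{ss}$, so that $V_L(\varpi_{\alpha_i}+(1+a_i)\varpi_\beta)\cong V_L(\varpi_{\alpha_i})\otimes\Cbb_{(1+a_i)\varpi_\beta}$; thus $F_\psi$ sits inside $\Pbb(\bigoplus_{i=0}^n V_{L^{ss}}(\varpi_{\alpha_i}))$, the central twists only rescaling the summands.

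Next I would treat the ``otherwise'' case ($n>1$, or $n=1$ with $\alpha_0,\alpha_1$ in distinct simple factors). By Lemma~\ref{lem:WithSmoothness} each $\alpha_i$ is then trivial or a short extremal root of a factor of type $A$ or $C$, so every $V_{L^{ss}}(\varpi_{\alpha_i})$ is a vector representation $\Cbb^{d_i}$ (the factors of $L^{ss}$ containing no $\alpha_i$ fixing $x_0$). Applying Lemma~\ref{lem:gather}(a), and~(b) when a trivial root occurs, to gather the relevant summands shows that the closure of the orbit is already the whole ambient space $\Pbb(\bigoplus_i V_{L^{ss}}(\varpi_{\alpha_i}))=\Pbb^{d-1}$ with $d=\sum_i d_i$; since $\Pbb^{d-1}$ fills the ambient projective space, the central torus cannot enlarge it, and hence $F_\psi=\Pbb^{d-1}$ is a projective space.

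It then remains to treat the case $n=1$ with $\alpha_0,\alpha_1$ simple roots of one common simple factor $L^k$ of $L^{ss}$, where $F_\psi\subset\Pbb(V_{L^k}(\varpi_{\alpha_0})\oplus V_{L^k}(\varpi_{\alpha_1}))$. Here the single character $\varpi_{\alpha_1}-\varpi_{\alpha_0}$ is non-trivial on the maximal torus of $L^k$, so a one-parameter subgroup of $L^k$ already realises every rescaling $[v_{\varpi_{\alpha_0}}+c\,v_{\varpi_{\alpha_1}}]$ with $c\in\Cbb^*$; the central factor therefore contributes nothing and $F_\psi=\overline{L^k\cdot[v_{\varpi_{\alpha_0}}+v_{\varpi_{\alpha_1}}]}$. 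By the smoothness of $(G_0,\beta,R_0,n)$, or of $(G_1,\alpha_0,\alpha_1)$ when the two roots lie in $G_1$, the triple $(L^k,\alpha_0,\alpha_1)$ is one of the eight configurations of Definition~\ref{def:smoothtriple}. When it is of two-orbit type, $F_\psi$ is one of the two-orbit varieties of~\cite{2orbits} (Remark~\ref{rem:2orbits}); when it is not, i.e.\ in cases~1,~2,~6, Lemma~\ref{lem:gather}(c),(d),(e) identifies $F_\psi$ with a quadric $Q^{2m}$, a Grassmannian $\operatorname{Gr}(i,m)$ or a spinor variety $\operatorname{Spin}(2m+1)/P(\varpi_m)$. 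None of these is a projective space, which is exactly the announced dichotomy.

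The main obstacle I anticipate is the first step: pinning down $F_\psi$ as the Levi-orbit closure $\overline{L\cdot x_0}$ and, above all, controlling the differing central characters $(1+a_i)\varpi_\beta$. The resolution is the weight argument used above: in the ``otherwise'' case the orbit already fills the ambient space, so the twists are irrelevant, while in the remaining case a single weight difference inside the simple factor $L^k$ absorbs the whole central action. Once this is in place, the two cases follow formally from Lemma~\ref{lem:gather} together with the classification of smooth projective horospherical varieties of Picard group $\Zbb$ recalled in Definition~\ref{def:smoothtriple} and Remark~\ref{rem:2orbits}.
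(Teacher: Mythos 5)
Your proof is correct and is essentially the paper's own argument with the details filled in: the paper's proof consists precisely of your first step (the fibre of $\psi$ is the closure of the $P(\varpi_\beta)$-orbit of a sum of highest weight vectors in $\Pbb(V(\varpi_{\alpha_0})\oplus\cdots\oplus V(\varpi_{\alpha_n}))$, the $\varpi_\beta$-twists being discarded) followed by a single citation of \cite[Section~1]{2orbits}; your case analysis via Lemma~\ref{lem:WithSmoothness}, Lemma~\ref{lem:gather}, Definition~\ref{def:smoothtriple} and Remark~\ref{rem:2orbits} simply unwinds that citation, and your torus argument in the case $n=1$ makes explicit why the central twists are harmless there.

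One sentence in your second paragraph should be corrected, although the fix is immediate. When trivial roots occur, the $L^{ss}$-orbit of $x_0$ does \emph{not} fill $\Pbb\bigl(\bigoplus_i V_{L^{ss}}(\varpi_{\alpha_i})\bigr)$: for instance for $W_3=\mathbb{X}^1(\operatorname{SL}_3\times\{0\}\times\Cbb^*\times\Cbb^*,\alpha_1(\operatorname{SL}_3),(\alpha(\{0\}),\alpha(\Cbb^*),\alpha(\Cbb^*)),(0,2,3))$ all three $\alpha_i$ are trivial, $L^{ss}\simeq\operatorname{SL}_2$ acts trivially on the ambient $\Cbb^3$, and the fibre $\Pbb^2$ of $\Pbb(\Ocal_{\Pbb^2}\oplus\Ocal_{\Pbb^2}(2)\oplus\Ocal_{\Pbb^2}(3))\longrightarrow\Pbb^2$ is produced entirely by the central $\Cbb^*\times\Cbb^*$-action. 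The group to which Lemma~\ref{lem:gather}(a)/(b) applies is therefore not $L^{ss}$ but the product of the simple factors of $L^{ss}$ meeting $\{\alpha_0,\dots,\alpha_n\}$ with the $\Cbb^*$-factors of $G$, which lie in the centre of $L$ rather than in $L^{ss}$; the lemma then says that the orbit of this subgroup of $L$ already closes up to $\Pbb^{d-1}$, and since $F_\psi$ is sandwiched between this orbit closure and the ambient space, $F_\psi=\Pbb^{d-1}$. So your concluding claim should read: the orbit of $L^{ss}$ \emph{together with the $\Cbb^*$-factors} fills the ambient space, hence the residual central action (the $\varpi_\beta$-twists) cannot enlarge it. With this restatement the argument is complete.
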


\begin{proof}
The fiber of $\psi:\,X\longrightarrow G/P(\varpi_\beta)$ is the smooth projective $P(\varpi_\beta)$-variety of Picard group $\Zbb$ isomorphic to the closure of the $P(\varpi_\beta)$-orbit of a sum of highest weight vectors in $\mathbb{P}:=\mathbb{P}(V(\varpi_{\alpha_0})\oplus\cdots\oplus V(\varpi_{\alpha_n}))$. Hence, the proposition is a consequence of \cite[Section~1]{2orbits}.
\end{proof}

$\bullet$ In the case where $n=1$ and that $\alpha_0$ and $\alpha_1$ are two simple roots of the same simple subgroup of $P(\varpi_\beta)$, $G=G_0$, the Log MMP described by Corollary~\ref{cor:LogMMP1} consists of a fibration if $a_1=0$, or a flip and a fibration if $a_1>0$.

\vskip\baselineskip
-- Suppose first that $a_1=0$. There are two cases to deal with.

\vskip\baselineskip
If $\alpha_1$ is between $\alpha_0$ and $\beta$ in the Dynkin diagram of $G_0$ (and similarly, up to exchanging $\alpha_0$ and $\alpha_1$, $\alpha_0$ is between $\alpha_1$ and $\beta$), since $X\subset\Pbb(V(\varpi_{\alpha_0}+\varpi_\beta)\oplus V(\varpi_{\alpha_1}+\varpi_\beta))$ and $Y^0\subset\Pbb(V(\varpi_{\alpha_0})\oplus V(\varpi_{\alpha_1}))$, we easily compute that 
the fibration $\phi_0:\,X\longrightarrow Y^0$ has two different types of fibers:
one isomorphic to $P(\varpi_{\alpha_0})/(P(\varpi_{\alpha_0})\cap P(\varpi_\beta))$ over a $G$-orbit isomorphic to $G/P(\varpi_{\alpha_0})$ and another one of smaller dimension isomorphic to $P(\varpi_{\alpha_1})/(P(\varpi_{\alpha_1})\cap P(\varpi_\beta))$.

In particular, the pair $(G/P(\varpi_{\alpha_0}),G/P(\varpi_{\beta}))$ is an invariant of $X$. Then if $G_0$ is not the universal cover of the automorphism group of $G/P(\varpi_{\beta})$  it must be the universal cover of the automorphism group of $G/P(\varpi_{\alpha_0})$, so that $G_0$ is an invariant of $X$. Also, $\phi_0^{-1}(G/P(\varpi_{\alpha_0}))=G/(P(\varpi_{\alpha_0})\cap P(\varpi_\beta))$, then the pair $(\alpha_0,\beta)$ is an invariant of $X$ up to symmetries of the Dynkin diagram of $G_0$. 

Moreover, if $\beta$ is fixed, the possible symmetries are the ones (which fixed $\beta$) in type $A_m$ with $m\geq 5$ odd, $\varpi_\beta=\varpi_{\frac{m+1}{2}}$ and any $\alpha_0$, type $E_6$ with $\varpi_\beta=\varpi_4$ and $\varpi_{\alpha_0}=\varpi_1,\,\varpi_3,\,\varpi_5$ or $\varpi_6$ , and type $D_m$ with $m\geq 4$, $\varpi_\beta=\varpi_i$ for any $i\in\{1,\dots,m-2\}$ and $\varpi_{\alpha_0}=\varpi_{m-1}$ or $\varpi_{m}$.

The description of the fiber of $\psi:\,X\longrightarrow G/P(\varpi_\beta)$, together with Remark~\ref{rem:2orbits}, implies that $\alpha_0$ and $\alpha_1$ are also invariants of $X$ up to symmetries of the Dynkin diagram of $G_0$. 

\vskip\baselineskip
Otherwise (this occurs only in types $D$ and $E$), $G_0$ is the universal cover of the automorphism group of $G/P(\varpi_{\beta})$, and then $G_0$ and $\beta$ are invariants of $X$ up to symmetries of the Dynkin diagram of $G_0$. 

We also easily compute that the fibration $\phi_0:\,X\longrightarrow Y^0$ has at least two different types of fibers:
one smaller isomorphic to $(P(\varpi_{\alpha_0})\cap P(\varpi_{\alpha_1}))/(P(\varpi_{\alpha_0})\cap P(\varpi_{\alpha_1})\cap P(\varpi_\beta))$ over the open $G$-orbit of $Y^0$, and two  others fibers isomorphic to $P(\varpi_{\alpha_0})/(P(\varpi_{\alpha_0})\cap P(\varpi_\beta))$ over $G/P(\varpi_{\alpha_0})$, respectively isomorphic to $P(\varpi_{\alpha_1})/(P(\varpi_{\alpha_1})\cap P(\varpi_\beta))$ over $G/P(\varpi_{\alpha_1})$ (note that the latter are possibly isomorphic).

In particular, the pair $(G/P(\varpi_{\alpha_0}),G/P(\varpi_{\alpha_1}))$ is an invariant of $X$ and then the pair $(\alpha_0,\alpha_1)$ is also an invariant of $X$ up to symmetries of the Dynkin diagram of $G_0$.

\vskip\baselineskip
-- Suppose now that $a_1>0$. We have then the following inclusions
\begin{align*}
&X\subset\Pbb(V(\varpi_{\alpha_0}+\varpi_\beta)\oplus V(\varpi_{\alpha_1}+(1+a_1)\varpi_\beta)),\, Y^0\subset\Pbb(V(\varpi_{\alpha_0})\oplus V(\varpi_{\alpha_1}+a_1\varpi_\beta)),\\
&X^1\subset\Pbb(V(\varpi_{\alpha_0}+\varpi_{\alpha_1})\oplus V(2\varpi_{\alpha_1}+a_1\varpi_\beta))\,\,\mathrm{and}\,\, Y^1\simeq G/P(\varpi_{\alpha_1})\subset\Pbb( V(\varpi_{\alpha_1})).
\end{align*}
In particular $X$, $Y^0$ and $X^1$ have two closed $G$-orbits and one open $G$-orbit so that we easily compute exceptional locus and fibers as follows. For example, the exceptional locus of $\phi_0:\,X\longrightarrow Y^0$ is the $G$-orbit of $X$ isomorphic to $G/(P(\varpi_{\alpha_0})\cap P(\varpi_\beta))$. Then the universal cover of its automorphism group $G_0$ is an invariant of $X$. And then $\beta$ is also an invariant of $X$ up to symmetries of the Dynkin diagram of $G_0$. 

Note now that the exceptional locus of $\phi_0$ is sent to the $G$-orbit of $Y^0$ isomorphic to $G/P(\varpi_{\alpha_0})$ so that the triple $(G/P(\varpi_{\alpha_0}),G/P(\varpi_{\alpha_1}),G/P(\varpi_\beta))$ is an invariant of $G$. Also the (same) description of the fiber of $\psi:\,X\longrightarrow G/P(\varpi_\beta)$ implies that the subgroup or $P(\varpi_\beta)$ and the pair $(\alpha_0,\alpha_1)$ are invariants of $X$ (up to symmetries in type $A$, $D$ and $E$ as in the case where $a_1=0$). Hence, the triple $(\beta,\alpha_0,\alpha_1)$ is an invariant of $X$ up to symmetries of the Dynkin diagram of $G_0$. 

\vskip\baselineskip
$\bullet$ Now we suppose that $n>1$ or that $\alpha_0$ and $\alpha_1$ are not two simple roots of the same simple subgroup of $P(\varpi_\beta)$.

We define various exceptional loci in $X$ as follows. Let $l\in\{0,\dots,k-1\}$, define $E_l$ to be the closure in $X$ of the set of points $x\in X$ such that $x$ is in the open isomorphism set of the first $l$ contractions and $x$ is in the exceptional locus of $\phi_l$.

\begin{prop}\label{prop:exceptloci}
For any $l\in\{0,\dots,k\}$ the exceptional locus $E_l$ is the closure in $X$ of the $G$-orbit associated to the non-empty face $F_{I_l}$ of $Q$ with $I_l:=\{i_{l+1},\dots,n\}$. In particular $E_l$ is isomorphic to the closure of the $G$-orbit of a sum of  highest weight vectors in $$\mathbb{P}:=\mathbb{P}(\bigoplus_{i=0}^{i_{l+1}-1}V(\varpi_{\alpha_i}+(1+a_i)\varpi_\beta)),$$ and $E_l$ is a smooth projective horospherical of Picard group $\Zbb^2$ as in Case (1), unless $l=0$, $i_1=1$ so that  $E_l$ is homogeneous (projective of Picard group $\Zbb$ or $\Zbb^2$).
\end{prop}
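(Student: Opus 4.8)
The plan is to reduce everything to the combinatorics of the one-parameter family $(\tilde{Q}^\epsilon)$ together with the orbit--face dictionary. First I would observe that $E_l$ is a closed $G$-stable subset of $X$: the rational map $X=X^0\dashrightarrow X^l$ built from the flips $\phi_0,\dots,\phi_{l-1}$ is $G$-equivariant and restricts to an isomorphism on a $G$-stable open subset, while the contraction $\phi_l$ is $G$-equivariant with $G$-stable exceptional locus. Hence $E_l$ is a union of $G$-orbit closures, and by the order-preserving bijection of Proposition~\ref{prop:classifpoly} between $G$-orbits of $X$ and non-empty faces of $Q$, the whole task reduces to identifying which face governs $E_l$.

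For that identification I would use Proposition~\ref{prop:polyfaces1} to track the wall-crossing at the critical value $\epsilon_l:=1+a_{i_l}$ where $\phi_l$ occurs (by Corollary~\ref{cor:LogMMP1} the contractions happen exactly at the values $1+a_{i_l}$). Since $\tilde{Q}^0=\tilde{Q}=\operatorname{Conv}(e_0^*,\dots,e_n^*)$ and $e_i^*\in H_{++}^\epsilon$ iff $a_i>\epsilon-1$, throughout the interval of $\epsilon$ defining $X^l$ the vertices surviving on the positive side of the moving hyperplane are exactly those with $i\geq i_l$. The face $F_{I_l}$, with $I_l=\{i_{l+1},\dots,n\}$, equals $\operatorname{Conv}(e_0^*,\dots,e_{i_{l+1}-1}^*)$, and as $\epsilon\nearrow\epsilon_l$ its trace collapses onto the hyperplane, the orbit dimension dropping by $i_l$ via Corollary~\ref{cor:classifpoly}. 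Using the orbit-image recipe of Section~\ref{sec:recallMMP} (pass to the maximal set of rows attaining equality, then to the associated face of $\tilde{Q}^{\epsilon_l}$), I would show that $F_{I_l}$ is the largest face whose associated orbit is contracted by $\phi_l$ and that every other contracted face lies in its closure; thus the exceptional locus of $\phi_l$ is the closure of the orbit $\Omega^l$ attached to $F_{I_l}$ in $X^l$. Because the chain $F_{I_0}\subseteq F_{I_1}\subseteq\cdots\subseteq F_{I_l}$ is increasing, the smaller faces $F_{I_0},\dots,F_{I_{l-1}}$, whose orbits are modified by the earlier flips, lie in the boundary of $\Omega^l$, so the dense orbit of $\Omega^l$ sits in the isomorphism locus of $\phi_0,\dots,\phi_{l-1}$; transporting back along $X\dashrightarrow X^l$ then identifies $E_l$ with the closure in $X$ of the orbit attached to the face $F_{I_l}$ of $Q$. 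In the divisorial case (1c), $i_k=n$ forces $F_{I_{k-1}}$ to be a facet, consistent with $E_{k-1}$ being a divisor.

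Once the face is pinned down, the explicit description follows formally. By Proposition~\ref{prop:classifpoly}(2), $E_l$ is the horospherical orbit closure attached to the admissible quadruple of $F_{I_l}$; and since the pseudo-moment polytope $\tilde{Q}=\operatorname{Conv}(e_0^*,\dots,e_n^*)$ is a standard simplex whose only lattice points are its vertices, the lattice points of the moment face $v^0+F_{I_l}$ are exactly $v^0+e_i^*=\varpi_{\alpha_i}+(1+a_i)\varpi_\beta$ for $0\leq i\leq i_{l+1}-1$ (recall $v^0=\varpi_{\alpha_0}+\varpi_\beta$ and $e_i^*=\varpi_{\alpha_i}-\varpi_{\alpha_0}+a_i\varpi_\beta$). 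Corollary~\ref{cor:smoothembedd} then embeds $E_l$ as the closure of a $G$-orbit of a sum of highest weight vectors in $\Pbb(\bigoplus_{i=0}^{i_{l+1}-1}V(\varpi_{\alpha_i}+(1+a_i)\varpi_\beta))$. Corollary~\ref{cor:smooth} gives that $E_l$ is smooth horospherical, and its rank equals $\dim F_{I_l}=i_{l+1}-1$. When $i_{l+1}\geq 2$ this is a rank $\geq 1$ variety of the shape of Definition~\ref{def:varieties}~(1), hence of Picard group $\Zbb^2$; the single exceptional case $l=0$, $i_1=1$ makes $F_{I_0}$ a vertex, so $\Omega$ is closed and $E_0=G/P_F$ is homogeneous, of Picard group $\Zbb$ or $\Zbb^2$ according to the codimension of $P_F$.

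The hard part will be the wall-crossing bookkeeping of the second paragraph: one must check rigorously, from Proposition~\ref{prop:polyfaces1} and the order-preserving orbit--face dictionary, that the exceptional locus of each $\phi_l$ is precisely one orbit closure rather than a reducible union, and that the generic point of $\Omega^l$ genuinely avoids the loci already modified by $\phi_0,\dots,\phi_{l-1}$, so that the pullback to $X$ is clean and irreducible. Everything else is routine once the correct face $F_{I_l}$ has been identified.
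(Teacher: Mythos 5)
Your proposal is correct and follows essentially the same route as the paper's proof: identify the exceptional locus of each $\phi_l$ as the closure of the orbit attached to $F_{I_l}$ via the orbit--face dictionary and the row/face image recipe of Section~\ref{sec:recallMMP}, transport this back to $X$ using that this orbit avoids the earlier exceptional loci (the increasing chain $F_{I_0}\subseteq\cdots\subseteq F_{I_l}$), and conclude with Proposition~\ref{prop:classifpoly} together with Corollaries~\ref{cor:smoothembedd} and~\ref{cor:smooth}. The ``hard part'' you defer is exactly what the paper's proof carries out --- computing $\phi_l(\Omega^l_I)$ and $\phi_l(\Omega^l_{I,\beta})$ for every $I$, showing that $\phi_l(\Omega^l_{I_l})=\omega^l_{I_l\cup\{0,\dots,i_l-1\},\beta}$ is a non-isomorphic horospherical homogeneous space while every orbit not contained in $\overline{\Omega^l_{I_l}}$ keeps its face data --- and your one quantitative slip (the orbit dimension drops by strictly more than $i_l$, since the flag factor $G/P_F$ also shrinks when the face falls into the hyperplane $H^{\epsilon_l}$) is harmless, because any positive drop witnesses the contraction.
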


Note that for $l=k$, $I_k=\emptyset$ and $E_k=X$.

\begin{proof}
Let $l\in\{0,\dots,k\}$ and $\epsilon_l\in\Qbb_{\geq 0}$ such that $X^l=X^{\epsilon_l}$.

We denote by $\Omega_I^l$ and $\Omega_{I,\beta}^l$ the $G$-orbits of $X^l$ associated to the non-empty faces $F_I^{\epsilon_l}$ and $F_{I,\beta}^{\epsilon_l}$ of the polytope $\tilde{Q}^{\epsilon_l}$. We denote by $\omega_I^l$ and $\omega_{I,\beta}^l$ the $G$-orbits of $Y^l=X^{1+a_{i_l}}$ associated to the non-empty faces $F_I^{1+a_{i_l}}$ and $F_{I,\beta}^{1+a_{i_l}}$ of the polytope $\tilde{Q}^{1+a_{i_l}}$.  Recall that, for any $\epsilon\in\Qbb_{\geq 0}$,  we have an order on the $G$-orbits of $X^\epsilon$ compatible with the order on the non-empty faces of $\tilde{Q}^\epsilon$: in particular $\Omega_I^l\subset \overline{\Omega_{I'}^l}$ and $\Omega_{I,\beta}^l\subset \overline{\Omega_{I',\beta}^l}$ respectively if and only if $I'\subset I$, and $\Omega_{I,\beta}^l\subset \overline{\Omega_I^l}$ (as soon as these orbits are defined, i.e., as soon as the corresponding faces are non-empty).

 For any $I\subsetneq\{0,\dots,n\}$ such that there exists $i\geq i_l$ not in $I$ (i.e., such that $\Omega_I^l$ is defined), $\phi_l(\Omega_I^l)=\omega_I^l$ if there exists $i\geq i_{l+1}$ not in $I$, and $\phi_l(\Omega_I^l)=\omega_{I\cup\{0,\dots i_l-1\},\beta}^l$ if for any $i\geq i_{l+1}$, $i\in I$. Indeed $I\cup\{0,\dots i_l-1\}$ is the minimal subset of $\{0,\dots,n\}$ containing $I$ such that $\omega_{I\cup\{0,\dots i_l-1\},\beta}^l$ is defined and there is no $I'$ containing $I$ such that $\omega_{I'}^l$ is defined. And for any $I\subsetneq\{0,\dots,n\}$ such that there exist $i\geq i_l$ and $i'<i_l$ not in $I$ (i.e., such that $\Omega_{I,\beta}^l$ is defined), $\phi_l(\Omega_{I,\beta}^l)=\omega_{I,\beta}^l$ if there exists $i\geq i_{l+1}$ not in $I$, and $\phi_l(\Omega_{I,\beta}^l)=\omega_{I\cup\{0,\dots i_l-1\},\beta}^l$ if for any $i\geq i_{l+1}$, $i\in I$. Indeed $I\cup\{0,\dots i_l-1\}$ is the minimal subset of $\{0,\dots,n\}$ containing $I$ such that $\omega_{I\cup\{0,\dots i_l-1\},\beta}^l$ is defined.

In particular, we have $\phi_l(\Omega_{I_l}^l)=\omega_{I_l\cup\{0,\dots i_l-1\},\beta}^l$ (which is also $\phi_l(\Omega_{I_l,\beta}^l)$ if $l\geq 1$). But $\Omega_{I_l}^l$ and $\omega_{I_l\cup\{0,\dots i_l-1\},\beta}^l$ are non isomorphic horospherical homogeneous spaces by Proposition~\ref{prop:classifpoly}, so that $\Omega_{I_l}^l$ is in the exceptional locus of $\phi_l$. Moreover, if $\Omega$ is a $G$-orbit of $X^{\epsilon_l}$ not contained in $\overline{\Omega_{I_l}^l}$, it is of the form $\Omega_I^l$ or $\Omega_{I,\beta}^l$ where $I_l\not\subset I$. Hence, in that case $\phi_l(\Omega)=\Omega$. And then the exceptional locus of $\phi_l$ is $\overline{\Omega_{I_l}^l}$. Note that $\Omega_{I_l}^0,\dots,\Omega_{I_l}^{l-1}$ are not in the exceptional locus of $\phi_0,\dots,\phi_{l-1}$ respectively, to conclude that $E_l=\overline{\Omega_{I_l}^0}$.

We use again Proposition~\ref{prop:classifpoly} to see that $E_l=\overline{\Omega_{I_l}^0}$ corresponds to the admissible quadruple $(P_F,M_F,F,\tilde{F})$ with $F=F_{I_l}^0$ (and with some ample divisor of $E_l$). Then we conclude by Corollaries~\ref{cor:smoothembedd}  and~\ref{cor:smooth}.
\end{proof}

The Log MMP now defines, by restriction, fibrations $\tilde{\phi_l}:E_l\backslash E_{l-1}\longrightarrow E'_l:=\overline{\omega_{I_l\cup\{0,\dots i_l-1\},\beta}^l}$, for any $l\in\{0,\dots,k\}$. 

\begin{defi}We say that the fibers of $\tilde{\phi_l}$ are locally maximal over $\omega\subset E'_l$ if the dimensions of the fibers of $\tilde{\phi_l}$ over any point of 
$\omega$ are the same and bigger than the dimension of the fibers of $\tilde{\phi_l}$ over any point of a neighborhood of $\omega$ that is not in $\omega$.

We say that the fibers of $\tilde{\phi_l}$ are locally almost maximal over $\omega\subset E'_l$ if there exists $\omega'\subsetneq\omega$ such that the fibers of $\tilde{\phi_l}$ are locally maximal over $\omega'$ and the fibers of $\tilde{\phi_l}_{|\tilde{\phi_l}^{-1}(E'_l\backslash\omega')}$ are locally maximal over $\omega\backslash\omega'\subset E'_l\backslash\omega'$.
\end{defi}

We now prove the following result, which implies in particular that $i_1,\dots,i_k$ are invariant of $X$.

\begin{prop}\label{prop:dimfibres1}
 Suppose that $n> 1$ or that $\alpha_0$ and $\alpha_1$ are not two simple roots of the same simple subgroup of $P(\varpi_\beta)$. Let $l\in\{0,\dots,k\}$.

The map $\tilde{\phi_l}$ is surjective and we distinguish four distinct cases.
\begin{enumerate}

\item we have $i_{l+1}-i_l=1$ and $\alpha_{i_l}$ is not a simple root of $G_0$. The fibers of $\tilde{\phi_l}$ are locally maximal over $E'_l$ and $\dim E_l -\dim E_{l-1}=1+\dim E'_l$ (here we set $\dim E_{-1}:=\dim G/P(\varpi_\beta)-1$ so that it still holds for $l=0$).  Moreover, $E'_l$ is homogeneous and isomorphic to $G/P(\varpi_{\alpha_{i_l}})$ (which is a point if $\alpha_{i_l}$ is trivial).

\item we have $i_{l+1}-i_l=1$ and $\alpha_{i_l}$ is a simple root of $G_0$. The fibers of $\tilde{\phi_l}$ are locally maximal over $E'_l$ and $\dim E_l -\dim E_{l-1}\neq 1+\dim E'_l$ (also here $\dim E_{-1}:=\dim G/P(\varpi_\beta)-1$ so that it still holds for $l=0$). Moreover, $E'_l$ is homogeneous and isomorphic to $G/P(\varpi_{\alpha_{i_l}})$.

\item we have $i_{l+1}-i_l>1$ and $\alpha_{i_l}$ is not a simple root of $G_0$. The fibers of $\tilde{\phi_l}$ are locally maximal over a unique proper subset of $E'_l$, which is a closed $G$-orbit $\omega'$ of $E'_l$ isomorphic to $G/P(\varpi_{\alpha_{i_l}})$. Also the fibers of $\tilde{\phi_l}$ are locally almost maximal over exactly $i_{l+1}-i_l-1(>0)$ subvarieties of $E'_l$ containing $\omega'$, respectively of dimensions $\dim G/P(\varpi_{\alpha_{i_l}})+\dim G/P(\varpi_{\alpha_j})+1$ with $j\in\{i_l+1,\dots,i_{l+1}-1\}$.

\item we have $i_{l+1}-i_l>1$ and $\alpha_{i_l}$ is a simple root of $G_0$. The fibers of $\tilde{\phi_l}$ are locally maximal over $i_{l+1}-i_l$ closed $G$-orbits, which are respectively isomorphic to $G/P(\varpi_{\alpha_{j}})$ with $j\in\{i_l,\dots,i_{l+1}-1\}$.
\end{enumerate}

Moreover, in the four cases, the dimension of the fibers over all pointed subsets of $E'_l$ are as follows.

\begin{enumerate}
\item The dimension of the fibers of $\tilde{\phi_l}$ is $1+\dim E_{l-1}$ (in particular $\dim G/P(\varpi_\beta)$ if $l=0$).

\item The dimension of the fibers of $\tilde{\phi_l}$ is $$d_{i_l}:=i_l+\dim \left(P(\varpi_{\alpha_{i_l}})/(P(\varpi_\beta)\cap\bigcap_{i=0}^{i_l}P(\varpi_{\alpha_i}))\right).$$

\item The dimension of the locally maximal fibers of $\tilde{\phi_l}$ is $1+\dim E_{l-1}$ (in particular $\dim G/P(\varpi_\beta)$ if $l=0$). And for any $j\in\{i_l+1,\dots,i_{l+1}-1\}$, the dimension of locally almost maximal fibers of $\tilde{\phi_l}$ over the subset of $E'_l$ of dimension $\dim G/P(\varpi_{\alpha_{i_l}})+\dim G/P(\varpi_{\alpha_j})+1$ is $$d_j:=i_l+\dim \left(P(\varpi_{\alpha_j})/(P(\varpi_\beta)\cap\bigcap_{i=0}^{i_l-1}P(\varpi_{\alpha_i})\cap P(\varpi_{\alpha_j}))\right).$$

\item For any $j\in\{i_l,\dots,i_{l+1}-1\}$, the dimension of locally maximal fibers of $\tilde{\phi_l}$ over the closed $G$-orbit isomorphic to $G/P(\varpi_{\alpha_{j}})$  is $$d_j:=i_l+\dim \left(P(\varpi_{\alpha_j})/(P(\varpi_\beta)\cap\bigcap_{i=0}^{i_l-1}P(\varpi_{\alpha_i})\cap P(\varpi_{\alpha_j}))\right).$$
\end{enumerate}

\end{prop}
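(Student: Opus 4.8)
The plan is to reduce everything to orbit-by-orbit bookkeeping in the moment polytopes and then apply the dimension formula of Corollary~\ref{cor:classifpoly}. First I would fix $\epsilon_l\in\,]1+a_{i_{l-1}},1+a_{i_l}[$ so that $X^l=X^{\epsilon_l}$, and record from Proposition~\ref{prop:polyfaces1} the position of the vertices $e_0^*,\dots,e_n^*$ relative to the wall $H^{\epsilon_l}$: exactly $e_{i_l}^*,\dots,e_n^*$ lie in $H_{++}^{\epsilon_l}$ and $e_0^*,\dots,e_{i_l-1}^*$ lie strictly below it. By Proposition~\ref{prop:exceptloci}, $E_l$ is the orbit closure attached to the face $F_{I_l}=\operatorname{Conv}(e_0^*,\dots,e_{i_{l+1}-1}^*)$, while $E'_l$ is attached to the simplex face $F_J=\operatorname{Conv}(e_{i_l}^*,\dots,e_{i_{l+1}-1}^*)$ lying in the hyperplane $H^{1+a_{i_l}}$, where $J=\{0,\dots,i_l-1\}\cup\{i_{l+1},\dots,n\}$. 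The $G$-orbits of $E_l\setminus E_{l-1}$ and of $E'_l$ are then indexed, via Proposition~\ref{prop:classifpoly}, by the faces of $F_{I_l}^{\epsilon_l}$ meeting $H_{++}^{\epsilon_l}$ and by the faces of $F_J$ respectively, and $\tilde{\phi_l}$ is computed by the image-of-orbit rule of Theorem~\ref{th:recallMMP} (maximal set of rows, then the associated face of $\tilde{Q}^{1+a_{i_l}}$).

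Surjectivity of $\tilde{\phi_l}$ is immediate: it is the restriction of a dominant $G$-equivariant morphism, hence its image is a $G$-stable closed subset containing the image of the open orbit of $E_l\setminus E_{l-1}$, i.e. all of $E'_l$. For the structure of $E'_l$, I would note that when $i_{l+1}-i_l=1$ the face $F_J$ is the single vertex $e_{i_l}^*$, so $E'_l$ is one closed $G$-orbit; computing $P_F$ for that vertex (the minimal parabolic over $P$ for which the $\varpi_\beta$-direction has been absorbed and only $\varpi_{\alpha_{i_l}}$ survives) identifies it with $G/P(\varpi_{\alpha_{i_l}})$, a point exactly when $\alpha_{i_l}$ is trivial. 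When $i_{l+1}-i_l>1$, $F_J$ is a genuine simplex of dimension $i_{l+1}-i_l-1$, so by Corollaries~\ref{cor:smoothembedd} and~\ref{cor:classifpoly} the variety $E'_l$ is a smooth horospherical variety of Picard group $\Zbb$ and rank $i_{l+1}-i_l-1$ whose closed $G$-orbits are the $G/P(\varpi_{\alpha_j})$, $j\in\{i_l,\dots,i_{l+1}-1\}$, attached to the vertices of $F_J$.

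With these identifications in hand, for each face $S\subseteq\{i_l,\dots,i_{l+1}-1\}$ indexing an orbit $\omega_S$ of $E'_l$ I would determine the unique largest $G$-orbit $\Omega_S$ of $E_l\setminus E_{l-1}$ mapping onto $\omega_S$, and obtain the fiber dimension as $\dim\Omega_S-\dim\omega_S$ by Corollary~\ref{cor:classifpoly}, using $\dim=\dim(G/P_F)+\dim F$ in each case. The four-way split is then transparent: whether $i_{l+1}-i_l=1$ or $>1$ controls whether $E'_l$ is a single orbit or has interior strata, while---because $\beta$ is a simple root of $G_0$---whether $\alpha_{i_l}\in R_0$ controls whether $P(\varpi_{\alpha_{i_l}})$ and $P(\varpi_\beta)$ lie in a common simple factor, so that their intersection is strictly smaller, giving the value $d_{i_l}=i_l+\dim\bigl(P(\varpi_{\alpha_{i_l}})/(P(\varpi_\beta)\cap\bigcap_{i=0}^{i_l}P(\varpi_{\alpha_i}))\bigr)$ of cases~2 and~4, or split across distinct factors, giving the value $1+\dim E_{l-1}$ (equal to $\dim G/P(\varpi_\beta)$ when $l=0$) of cases~1 and~3. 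The ``locally maximal'' and ``locally almost maximal'' loci are exactly the strata where this fiber dimension jumps, which I would pin down by watching the wall $H^\epsilon$ cross the successive vertices $e_{i_l}^*,\dots,e_{i_{l+1}-1}^*$ as $\epsilon\to 1+a_{i_l}$.

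The main obstacle will be the precise combinatorial bookkeeping in the third and fourth cases: correctly identifying, for each stratum of $E'_l$, the unique largest preimage orbit, and thereby locating where the fiber dimension is locally maximal versus merely locally almost maximal. This requires applying the image-of-face rule of Theorem~\ref{th:recallMMP} carefully, together with the interplay between the flag directions contributed by the $\alpha_j$ and the direction $\sigma(\beta)=a_1u_1+\cdots+a_nu_n$ cut out by the wall, while keeping track of the fact that $\beta\in G_0$ forces relations precisely when some $\alpha_j\in R_0$.
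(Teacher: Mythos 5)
Your plan does follow the paper's own route: index the orbits of $E_l\setminus E_{l-1}$ and of $E'_l$ by faces of the (pseudo-)moment polytopes, compute each fiber dimension orbitwise as $\dim\Omega_S-\dim\omega_S$ via Corollary~\ref{cor:classifpoly}, and separate the four cases by $i_{l+1}-i_l$ and by whether $\alpha_{i_l}\in R_0$. But the two points you treat as routine are where the content lies, and your shortcuts for them do not work. First, surjectivity of $\tilde{\phi_l}$ is not ``immediate'': $E_l\setminus E_{l-1}$ is not complete, so its image is only a constructible $G$-stable subset containing the open orbit of $E'_l$, and that alone does not force it to contain the closed orbits (compare the $\Cbb^*$-equivariant inclusion $\Cbb^*\subset\Pbb^1$, which is dominant but misses both fixed points). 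The paper gets surjectivity by explicitly describing the preimage of every orbit $\omega^l_{I,\beta}$ of $E'_l$ as the nonempty union $\bigsqcup_J\Omega^l_J$ over all $J$ with $J\cap I_{l-1}=I\cap I_{l-1}$, whose closure is $\overline{\Omega^l_{I\cap I_{l-1}}}$; this same computation is what identifies the ``unique largest preimage orbit'' your plan calls for, so it cannot be skipped.

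Second, and more seriously, your tool for what you call the main obstacle --- ``watching the wall $H^\epsilon$ cross the successive vertices $e_{i_l}^*,\dots,e_{i_{l+1}-1}^*$'' --- cannot distinguish anything: by the very definition of the block, $a_{i_l}=\cdots=a_{i_{l+1}-1}$, so $H^\epsilon$ meets all of these vertices simultaneously at $\epsilon=1+a_{i_l}$, and no ordering information is available from the wall. What actually separates the locally maximal from the locally almost maximal loci in cases (3) and (4) is the closed formula the paper extracts from Corollary~\ref{cor:classifpoly}, namely $\delta_{l,\omega}=i_l+\dim\bigl(\bigcap_{i\notin I}P(\varpi_{\alpha_i})\big/\bigl(\bigcap_{i\notin I}P(\varpi_{\alpha_i})\cap\bigcap_{i=0}^{i_l-1}P(\varpi_{\alpha_i})\cap P(\varpi_\beta)\bigr)\bigr)$, together with the monotonicity observation: $\delta_{l,\omega}$ is largest when $I$ is largest, and removing an index $i$ from $I$ (i.e., adding one more parabolic to the intersection) strictly changes $\delta_{l,\omega}$ if and only if $\alpha_i$ is a root of $G_0$ --- roots lying in other simple factors contribute nothing, because $P(\varpi_\beta)$ contains those factors entirely, and here the standing hypothesis (each such $\alpha_i$ is the unique distinguished root of its factor) is used. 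You do state this dichotomy, but only for $\alpha_{i_l}$ itself; it must be applied to every face $S$ of $E'_l$ to produce the stratification, and it, not any wall-crossing order, is what yields the lists of strata and dimensions in cases (3) and (4).
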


\begin{proof}

We keep the notation of the proof of Proposition~\ref{prop:exceptloci}. And we use Corollary~\ref{cor:classifpoly} to compute the dimension of the fibers. Let $\omega$ be a $G$-orbit of $Y^l$ in $\overline{\omega_{I_l\cup\{0,\dots i_l-1\},\beta}^l}$. Then there exists $I\subsetneq\{0,\dots,n\}$ containing $I_l\cup\{0,\dots i_l-1\}$ such that $\omega=\omega_{I,\beta}^l$. Then $\tilde{\phi_l}^{-1}(\omega)=\bigsqcup_J\Omega_J^l$ where the union is taken over all $J$ such that $J\cap I_{l-1} = I\cap I_{l-1}$. In particular, $\tilde{\phi_l}$ is surjective and $\overline{\tilde{\phi_l}^{-1}(\omega)}=\overline{\Omega_{I\cap I_{l-1}}^l}$.  We then compute
\begin{align*}
\dim(\omega)&=\dim(F_{I,\beta}^l)+\dim (G/\bigcap_{i\not\in I}P(\varpi_{\alpha_i})),\\
\mathrm{and}\,\, \dim(\Omega_{I\cap I_{l-1}}^l)&=\dim(F_{I\cap I_{l-1}})+\dim (G/P(\varpi_\beta)\cap\bigcap_{i\not\in I\cap I_{l-1}}P(\varpi_{\alpha_i})),
\end{align*}
so that the dimension $\delta_{l,\omega}$ of a fiber of $\tilde{\phi_l}$ over $\omega$ is 
\begin{align*}
\delta_{l,\omega}&=\dim(F_{I\cap I_{l-1}})-\dim(F_{I,\beta}^l))+(\dim (G/(P(\varpi_\beta)\cap\bigcap_{i\not\in I\cap I_{l-1}}P(\varpi_{\alpha_i}))-\dim (G/\bigcap_{i\not\in I}P(\varpi_{\alpha_i}))) \\
&= i_l+\dim(\bigcap_{i\not\in I}P(\varpi_{\alpha_i})/(P(\varpi_\beta)\cap\bigcap_{i\not\in I\cap I_{l-1}}P(\varpi_{\alpha_i}))\\ 
&=  i_l+\dim(\bigcap_{i\not\in I}P(\varpi_{\alpha_i})/(\bigcap_{i\not\in I}P(\varpi_{\alpha_i})\cap\bigcap_{i=0}^{i_l-1}P(\varpi_{\alpha_i})\cap P(\varpi_\beta)).
\end{align*}

The dimension $\delta_{l,\omega}$ is the biggest when $I$ is as big as possible (it would be $I=\{0,\dots,n\}$ if it was allowed to define $\omega$). Moreover, if we remove from $I$ some $i$, the dimension changes if and only if $j$ is  such that $\alpha_i$ is in $G_0$ (i.e., $\alpha_i$ is not trivial and not the only simple root $\alpha_j$ in a simple group of $G$ different from $G_0$, by hypothesis). From this, we will deduce the different following cases.

 If $\alpha_{i_l}$ is not a simple root of $G_0$, then the locus in $\overline{\omega_{I_l\cup\{0,\dots i_l-1\},\beta}^l}$ where the fibers of $\tilde{\phi_l}$ are maximal is the unique closed $G$-orbit $ \omega':=\omega_{\{0,\dots n\}\backslash\{i_l\},\beta}^l$ isomorphic to $G/P(\varpi_{\alpha_{i_l}})$. This gives the first case of the proposition if $i_{l+1}-i_l=1$. 
 And if $i_{l+1}-i_l>1$ the locus in $\overline{\omega_{I_l\cup\{0,\dots i_l-1\},\beta}^l}$ where the fiber of $\tilde{\phi_l}$ is almost maximal is the 
 union of the subsets $ \omega_{\{0,\dots n\}\backslash\{i_l,j\},\beta}^l\cup\omega'$ with $j\in\{i_l+1,\dots,i_{l+1}-1\}$, which are affine cones over $G/P(\varpi_{\alpha_i})$. This gives the third case of the proposition.

Now, if $\alpha_{i_l}$ is a simple root of $G_0$ (i.e., for any $j\in\{i_l,\dots,i_{l+1}-1\}$, 
$\alpha_j$ is a simple root of $G_0$), then the locus in $\overline{\omega_{I_l\cup\{0,\dots i_l-1\},\beta}^l}$ where the fiber of $\tilde{\phi_l}$ is maximal
is the (disjoint) union of the $i_{l+1}-i_l$ closed $G$-orbits $\omega_{\{0,\dots n\}\backslash\{j\},\beta}^l$ of $\overline{\omega_{I_l\cup\{0,\dots i_l-1\},\beta}^l}$, which are respectively isomorphic to $G/P(\varpi_{\alpha_j})$ for any $j\in\{i_l,\dots,i_{l+1}-1\}$. This gives the second case of the proposition if $i_{l+1}-i_l=1$ and the fourth case if $i_{l+1}-i_l>1$.

\end{proof}
We easily deduce the following.
\begin{cor}
With the notation of Proposition~\ref{prop:dimfibres1}: for any $j\in\{0,\dots,n\}$, we have
\begin{align*}
\dim G/P(\varpi_\beta)+d_j-\dim E_{l-1}-1&=\dim P(\varpi_{\alpha_j})/(P(\varpi_\beta)\cap P(\varpi_{\alpha_j}))\\
\textrm{and}\quad\dim G/P(\varpi_{\alpha_j})+d_j-\dim E_{l-1}-1&=\dim P(\varpi_\beta)/(P(\varpi_\beta)\cap P(\varpi_{\alpha_j})).
\end{align*}

In particular, for any $l\in\{0,\dots,k\}$, the sets $$\{(\dim P(\varpi_{\alpha_j})/(P(\varpi_\beta)\cap P(\varpi_{\alpha_j})),\dim P(\varpi_\beta)/(P(\varpi_\beta)\cap P(\varpi_{\alpha_j})))\,\mid\,j\in\{i_l,\dots,i_{l+1}-1\}\}$$ are invariants of $X$.
\end{cor}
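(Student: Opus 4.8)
The plan is to reduce both displayed identities to a single equality of dimensions of homogeneous spaces, and then to note that each quantity occurring in the two formulas is the dimension of a canonical object of the Log MMP from $X$. I keep the abbreviation $\Pi:=P(\varpi_\beta)\cap\bigcap_{i=0}^{i_l-1}P(\varpi_{\alpha_i})$ (with $\Pi=P(\varpi_\beta)$ when $i_l=0$). First I would compute $\dim E_{l-1}$: by Proposition~\ref{prop:exceptloci} the variety $E_{l-1}$ is the closure of the $G$-orbit attached to the face $F_{\{i_l,\dots,n\}}$ of $Q$, hence a smooth horospherical variety of type $\mathbb{X}^1$ of rank $i_l-1$ whose associated parabolic subgroup is exactly $\Pi$. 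Corollary~\ref{cor:classifpoly} then gives $\dim E_{l-1}=\dim(G/\Pi)+(i_l-1)$, which also reproduces the convention $\dim E_{-1}=\dim G/P(\varpi_\beta)-1$ when $l=0$.

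Substituting this, together with $d_j=i_l+\dim\bigl(P(\varpi_{\alpha_j})/(\Pi\cap P(\varpi_{\alpha_j}))\bigr)$ from Proposition~\ref{prop:dimfibres1}, into the two left-hand sides, the terms $\dim G$ and $i_l$ cancel, and a direct computation shows that each of the two displayed identities is equivalent to the single equality
\[ \dim\bigl(\Pi/(\Pi\cap P(\varpi_{\alpha_j}))\bigr)=\dim\bigl(P(\varpi_\beta)/(P(\varpi_\beta)\cap P(\varpi_{\alpha_j}))\bigr). \]
(The two identities differ from one another by $\dim P(\varpi_{\alpha_j})-\dim P(\varpi_\beta)$ on each side, so they are equivalent to each other, and each is equivalent to the equality above.) The right-hand side counts the positive roots $\gamma$ with $[\gamma:\beta]=0$ and $[\gamma:\alpha_j]>0$, while the left-hand side counts those same roots subject to the extra conditions $[\gamma:\alpha_i]=0$ for all $i<i_l$; so the equality amounts to showing that every positive root $\gamma$ with $[\gamma:\beta]=0$ and $[\gamma:\alpha_j]>0$ automatically satisfies $[\gamma:\alpha_i]=0$ for $i<i_l$.

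This is the only substantive point, and I would prove it by distinguishing two cases, using $j\geq i_l$ (so $\alpha_j\notin\{\alpha_0,\dots,\alpha_{i_l-1}\}$). If $\alpha_j$ is a simple root of a factor $G_k$ with $k\geq 1$, then $\gamma$ is supported on $G_k$; by Lemma~\ref{lem:WithSmoothness} (in the present regime no two of the $\alpha$'s lie in a common factor $G_k$ with $k\geq1$), $G_k$ contains no $\alpha_i$ other than $\alpha_j$, and since $\beta$ lies in $G_0$ this forces $[\gamma:\beta]=0$ and $[\gamma:\alpha_i]=0$ for all $i\neq j$. If instead $\alpha_j\in R_0$, then $\gamma$ is supported on the connected component $L^m$ of the Dynkin diagram of $G_0$ with $\beta$ removed that contains $\alpha_j$; here I invoke the smoothness of the quadruple $(G_0,\beta,R_0,n)$ (Definition~\ref{def:smoothtriplebis}), which in the regime of Proposition~\ref{prop:dimfibres1} ($n>1$, or $\alpha_0,\alpha_1$ not in a common simple subgroup of $P(\varpi_\beta)$) is necessarily of its second type and hence guarantees that each such component meets $R_0$ in at most one root. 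Thus $L^m$ meets $R_0$ only in $\alpha_j$, whence $[\gamma:\alpha_i]=0$ for every $i<i_l$, establishing the equality.

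Finally, for the invariance statement I would observe that all four quantities $\dim G/P(\varpi_\beta)$, $\dim G/P(\varpi_{\alpha_j})$, $d_j$ and $\dim E_{l-1}$ are intrinsic to $X$: the base $G/P(\varpi_\beta)$ is the target of the Mori fibration of the ``first'' canonical Log MMP; $\dim E_{l-1}$ is the dimension of the exceptional locus of the $(l-1)$-st contraction (Proposition~\ref{prop:exceptloci}); and, for each block, the pairs $\bigl(\dim G/P(\varpi_{\alpha_j}),d_j\bigr)$ are precisely the dimensions of the distinguished base loci and of the fibers over them listed in Proposition~\ref{prop:dimfibres1}. Since these are all determined by the two canonical ways of running the Log MMP, the two displayed identities exhibit the whole set of pairs as a function of invariants of $X$, giving the claim. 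The main obstacle is the root count of the previous paragraph, which is exactly where the smoothness hypothesis enters; the rest is bookkeeping with the dimension formula of Corollary~\ref{cor:classifpoly}.
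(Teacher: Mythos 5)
Your proof is correct and is essentially the paper's own (implicit) argument: the paper states this corollary with only the words ``We easily deduce the following'' after Proposition~\ref{prop:dimfibres1}, and the details you supply --- the computation $\dim E_{l-1}=\dim\bigl(G/\Pi\bigr)+i_l-1$ for $\Pi=P(\varpi_\beta)\cap\bigcap_{i=0}^{i_l-1}P(\varpi_{\alpha_i})$ via Proposition~\ref{prop:exceptloci} and Corollary~\ref{cor:classifpoly}, the reduction of both displayed identities to the single equality $\dim\bigl(\Pi/(\Pi\cap P(\varpi_{\alpha_j}))\bigr)=\dim\bigl(P(\varpi_\beta)/(P(\varpi_\beta)\cap P(\varpi_{\alpha_j}))\bigr)$, and the root count showing that $[\gamma:\beta]=0$ and $[\gamma:\alpha_j]>0$ force $[\gamma:\alpha_i]=0$ for $i<i_l$ --- are exactly the suppressed deduction. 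You also correctly identify where the structural hypotheses enter (Lemma~\ref{lem:WithSmoothness} for the factors $G_k$, $k\geq 1$, and type-(2) smoothness of $(G_0,\beta,R_0,n)$ for the components of the diagram of $G_0$ minus $\beta$), and your invariance argument reads off the same Log MMP data the paper intends.
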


And then we conclude the proof of Case (1) of Theorem~\ref{th:main2} (i.e., that $G_0$, $\beta$, $\alpha_0,\dots,\alpha_n$ are invariants of $X$) by the following lemma (still in the case where $n> 1$ or that $\alpha_0$ and $\alpha_1$ are not two simple roots of the same simple subgroup of $P(\varpi_\beta)$).

\begin{lem}
Let $G$, $G'$ be two products of simply connected simple groups and $\Cbb^*$'s. Let $\beta$, $\beta'$ be two simple roots of two of the simple factors $G_0$ and $G'_0$ of $G$ and $G'$ respectively. And let $\alpha_0,\dots,\alpha_n$, respectively $\alpha'_0,\dots,\alpha'_n$ be simple roots of $G$, $G'$ both as in Case (1) of Theroem~\ref{th:main} (with the same integers $k$ and $i_1,\dots,i_k$).

Suppose that $G/P(\varpi_\beta)$ is isomorphic to $G'/P(\varpi_{\beta'})$ and that for any $l\in\{0,\dots,k\}$,  $$\begin{array}{l}
 \{(\dim P(\varpi_{\alpha_j})/(P(\varpi_\beta)\cap P(\varpi_{\alpha_j})),\dim P(\varpi_\beta)/(P(\varpi_\beta)\cap P(\varpi_{\alpha_j})))\,\mid\,j\in\{i_l,\dots,i_{l+1}-1\}\}= \\ 
 \{(\dim P(\varpi_{\alpha'_j})/(P(\varpi_{\beta'})\cap P(\varpi_{\alpha'_j})),\dim P(\varpi_{\beta'})/(P(\varpi_{\beta'})\cap P(\varpi_{\alpha'_j})))\,\mid\,j\in\{i_l,\dots,i_{l+1}-1\}\}.\end{array}$$

Then $G=G'$, $\beta=\beta'$ and for any $i\in\{0,\cdots,n\}$, $\alpha_i=\alpha'_i$ up to reordering the $\alpha_i$'s and $\alpha'_i$'s inside the sets $\{i_l,\dots,i_{l+1}-1\}$.
\end{lem}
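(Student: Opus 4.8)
The plan is to reconstruct, from the two invariants appearing in the hypothesis (the isomorphism class of the flag variety $G/P(\varpi_\beta)$ and the block-indexed multisets of dimension pairs), first the pair $(G_0,\beta)$ and then each simple root $\alpha_j$ together with its simple factor; matching the factors will then give $G=G'$.

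First I would make the dimension pairs explicit, using that $P(\varpi_\beta)=P_{G_0}(\varpi_\beta)\times G_1\times\cdots\times G_t$. Set $\delta:=\dim G/P(\varpi_\beta)=\dim G_0/P_{G_0}(\varpi_\beta)$. If $\alpha_j$ is trivial then $P(\varpi_{\alpha_j})=G$ and the pair is $(\delta,0)$. If $\alpha_j$ is a non-trivial root of a factor $G_k$ with $k\neq 0$, then $P(\varpi_{\alpha_j})/(P(\varpi_\beta)\cap P(\varpi_{\alpha_j}))\simeq G_0/P_{G_0}(\varpi_\beta)$ while $P(\varpi_\beta)/(P(\varpi_\beta)\cap P(\varpi_{\alpha_j}))\simeq G_k/P_{G_k}(\varpi_{\alpha_j})$, so the pair is $(\delta,\dim G_k/P_{G_k}(\varpi_{\alpha_j}))$. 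Finally, if $\alpha_j\in R_0$ the whole computation takes place inside $G_0$: counting positive roots, the first coordinate equals the number of positive roots of $G_0$ having zero $\alpha_j$-coefficient and positive $\beta$-coefficient. The decisive observation is that this last number is strictly smaller than $\delta$, because $G_0$ is simple and $\alpha_j\neq\beta$, so there is always a positive root with both $\alpha_j$- and $\beta$-coefficient positive. Hence the rule ``$\alpha_j\in R_0$ if and only if the first coordinate is $<\delta$'' recovers from the pairs alone the partition of the indices into those in $R_0$ and those not, compatibly with the blocks $\{i_l,\dots,i_{l+1}-1\}$.

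Next I would recover $(G_0,\beta)$. The variety $G/P(\varpi_\beta)$ is a flag variety of Picard number one of a simple group, so its isomorphism type fixes $(G_0,\beta)$ up to Dynkin-diagram automorphisms apart from a finite list of coincidences: the pairs $(\operatorname{Sp}_{2m},\varpi_1)$ versus $(\SL_{2m},\varpi_1)$, $(G_2,\varpi_1)$ versus $(\operatorname{Spin}_7,\varpi_1)$, and the spinor coincidences of types $B$ and $D$ (these are exactly the isomorphisms of Lemma~\ref{lem:gather} and of the list in the proof of Theorem~\ref{th:main}). When $R_0=\emptyset$ these are removed by the restricted condition of Definition~\ref{def:RC1} forcing $G_0$ to be the universal cover of $\Aut(G_0/P(\varpi_\beta))$, so $G_0=G'_0$ and $\beta=\beta'$ up to symmetry. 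When $R_0\neq\emptyset$, the roots of $R_0$ give pairs whose two coordinates encode the relative position of $\alpha_j$ and $\beta$ inside $G_0$; since the smoothness of the quadruple $(G_0,\beta,R_0,n)$ of Definition~\ref{def:smoothtriplebis} confines $R_0$ to the short list of the appendix, a finite inspection shows these positions are incompatible with the other member of each coincidence, again yielding $G_0=G'_0$ and $\beta=\beta'$.

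Finally, with $(G_0,\beta)$ known, I would identify the roots block by block. Outside $R_0$ the second coordinate is $\dim G_k/P_{G_k}(\varpi_{\alpha_j})$, equal to $d_k-1$ when $G_k\simeq\SL_{d_k}$ and to $0$ when $G_k\simeq\Cbb^*$, which determines $d_k$ and hence the factor $G_k$ with its marked root. Inside $R_0$ the pair determines $\alpha_j$ as a simple root of $G_0$ up to symmetry, once more through the finite list of smooth quadruples. Matching the equal multisets of pairs within each block, and using that the conclusion permits reordering inside $\{i_l,\dots,i_{l+1}-1\}$ and Dynkin symmetries, gives $\alpha_i=\alpha'_i$ for all $i$ and, identifying the factors, $G=G'$. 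I expect the main obstacle to be precisely the case $R_0\neq\emptyset$ under one of the exceptional coincidences, where one must verify by hand, from the explicit list of smooth quadruples, that the dimension pairs of the roots in $R_0$ single out the correct simple group and cannot be matched inside the coincident group.
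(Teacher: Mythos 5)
Your proposal is correct and follows essentially the same route as the paper's proof: detecting membership in $R_0$ from the first coordinate of the dimension pair (equal to $\dim G/P(\varpi_\beta)$ exactly when $\alpha_j\notin R_0$), recovering $(G_0,\beta)$ by ruling out Akhiezer's coincidences of flag varieties via the restricted condition (when $R_0=\emptyset$) or via the pairs attached to roots of $R_0$ (when $R_0\neq\emptyset$), reading off the factors $\operatorname{SL}_{d_k}$, $\Cbb^*$, $\{1\}$ from the second coordinate, and identifying the roots inside $R_0$ by a finite inspection. The case-by-case verifications you defer are precisely the paper's Steps 2 and 4 (the three coincidence pairs and the table of configurations where $P(\varpi_\beta)$ has two simple Levi factors of types $A_p$/$A_p$ or $A_{2p-1}$/$C_p$), so the plan is complete in structure.
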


\begin{proof} We proceed in several steps.
\vskip\baselineskip
\noindent\textbf{Step 1.} For any $l\in\{0,\dots,k\}$, $\alpha_{i_l}\not\in R_0$ if and only if $\alpha_{i_l'}\not\in R_0'$, and in that case, $\alpha_{i_l}$ and $\alpha_{i_l'}$ are both extremal simple roots of $\operatorname{SL}_{m+1}$ with $m=\dim P(\varpi_\beta)/(P(\varpi_\beta)\cap P(\varpi_{\alpha_j}))=\dim P(\varpi_{\beta'})/(P(\varpi_{\beta'})\cap P(\varpi_{\alpha_j'}))$. Indeed, we have that $\alpha_{i_l}\not\in R_0$ if and only if
$$\dim P(\varpi_{\alpha_{i_l}})/(P(\varpi_\beta)\cap P(\varpi_{\alpha_{i_l}}))=\dim G/P(\varpi_\beta)=\dim G/P(\varpi_{\beta'})=\dim P(\varpi_{\alpha_{i_l}'})/(P(\varpi_{\beta'})\cap P(\varpi_{\alpha_{i_l}'}))$$
and this is equivalent to saying that $\alpha_{i_l'}\not\in R_0'$. The second statement is obvious from the hypothesis on the $\alpha_i$'s and $\alpha'_i$'s. Note that $\alpha_{i_l+1},\dots,\alpha_{i_{l+1}-1}$ are in $R_0$ by hypothesis.

\vskip\baselineskip
\noindent\textbf{Step 2.} $G_0=G'_0$ and $\beta=\beta'$ up to symmetries of the Dynkin diagram. otherwise, $R_0$ and $R_0'$ are not empty and $\{(G_0,\varpi_\beta),(G_0',\varpi_{\beta'})\}$ is one of the three following sets up to symmetries of the Dynkin diagram (by \cite[Section~3.3]{Akhiezer}): $\{(\operatorname{Sp}_{2m},\varpi_1),(\operatorname{SL}_{2m},\varpi_1)\}$, $\{(\operatorname{Spin}_{2m+1},\varpi_m),(\operatorname{Spin}_{2m+2},\varpi_{m+1})\}$ or $\{(G_2,\varpi_1),(\operatorname{Spin}_{7},\varpi_{1})\}$. Let $\alpha_j\in R_0$, there exists $l\in\{0,\dots,k\}$ such that $j\in \{i_l,\dots,i_{l+1}-1\}$. By Step~1, $\alpha_j'\in R_0'$ and up to reordering $\alpha_i$'s and $\alpha'_i$'s in $\{i_l,\dots,i_{l+1}-1\}$ we can suppose that $\dim P(\varpi_{\alpha_{j}})/(P(\varpi_\beta)\cap P(\varpi_{\alpha_{j}}))=\dim P(\varpi_{\alpha_{j}'})/(P(\varpi_{\beta}')\cap P(\varpi_{\alpha_{j}'}))$ and $\dim P(\varpi_\beta)/(P(\varpi_\beta)\cap P(\varpi_{\alpha_j}))=\dim P(\varpi_{\beta'})/(P(\varpi_{\beta'})\cap P(\varpi_{\alpha_j'}))$. 
We have to check that this is not possible in the three cases. 

If $((G_0,\varpi_\beta),(G_0',\varpi_{\beta'}))$ is $((\operatorname{Sp}_{2m},\varpi_1),(\operatorname{SL}_{2m},\varpi_1))$ then $\varpi_{\alpha_j}$ is the fundamental weight $\varpi_2$ of $\operatorname{Sp}_{2m}$ (by the smooth condition) so that $\dim P(\varpi_{\alpha_{j}})/(P(\varpi_\beta)\cap P(\varpi_{\alpha_{j}}))=1$ and $\varpi_{\alpha_j'}$ has to be the fundamental weight $\varpi_2$ (by the smooth condition and because $\dim P(\varpi_{\alpha_{j}'})/(P(\varpi_{\beta'})\cap P(\varpi_{\alpha_{j}'}))=1$).
But then 
$$\dim P(\varpi_\beta)/(P(\varpi_\beta)\cap P(\varpi_{\alpha_j}))=2m-3<2m-2=\dim P(\varpi_{\beta'})/(P(\varpi_{\beta'})\cap P(\varpi_{\alpha_j'})).$$

If $((G_0,\varpi_\beta),(G_0',\varpi_{\beta'}))$ is $((\operatorname{Spin}_{2m+1},\varpi_m),(\operatorname{Spin}_{2m+2},\varpi_{m+1}))$ then  $\varpi_{\alpha_j}$ is the fundamental weight $\varpi_1$ or $\varpi_{m-1}$ of $\operatorname{Spin}_{2m+1}$. In both cases,  $\dim P(\varpi_\beta)/(P(\varpi_\beta)\cap P(\varpi_{\alpha_j}))=m-1$. But $\varpi_{\alpha_j'}$ is the fundamental weight $\varpi_1$  or $\varpi_{m}$ of $\operatorname{Spin}_{2m+2}$ so that $\dim P(\varpi_{\beta'})/(P(\varpi_{\beta'})\cap P(\varpi_{\alpha_j'}))=m$.

If $((G_0,\varpi_\beta),(G_0',\varpi_{\beta'}))$ is $((G_2,\varpi_1),(\operatorname{Spin}_{7},\varpi_{1}))$, then $\varpi_{\alpha_j}$ is the fundamental weight $\varpi_2$ of $G_2$ and $\varpi_{\alpha_j'}$ is the fundamental weight $\varpi_3$ of $\operatorname{Spin}_{7}$. But then
$$\dim P(\varpi_\beta)/(P(\varpi_\beta)\cap P(\varpi_{\alpha_j}))=1<3=\dim P(\varpi_{\beta'})/(P(\varpi_{\beta'})\cap P(\varpi_{\alpha_j'})).$$

We can now assume that $G_0=G_0'$ and $\beta=\beta'$. There are at most three simple subgroups of $P(\varpi_\beta)$ (their Dynkin diagram can be obtained from the Dynkin diagram of $G_0$ by removing $\beta$).

\vskip\baselineskip
\noindent\textbf{Step 3.} Let $\alpha_j\in R_0$ and $\alpha_j'\in R_0'$ such that $\dim P(\varpi_\beta)/(P(\varpi_\beta)\cap P(\varpi_{\alpha_j}))=\dim P(\varpi_{\beta})/(P(\varpi_{\beta})\cap P(\varpi_{\alpha_j'})).$ By the smooth condition, $\alpha_j$ and $\alpha_j'$ are extremal short simple roots of a simple subgroup of $P(\varpi_\beta)$ of type $A$ or $C$. We have then $\dim P(\varpi_\beta)/(P(\varpi_\beta)\cap P(\varpi_{\alpha_j}))=p$ (resp. $2p-1$) if the type is $A_p$ (resp. $C_{p}$). Hence, we have two cases: they are extremal short simple roots of simple subgroups of $P(\varpi_\beta)$ both of type $A_p$, or they are extremal short simple roots of simple subgroups  of $P(\varpi_\beta)$ of types $A_{2p-1}$ and $C_p$.

\vskip\baselineskip
\noindent\textbf{Step 4.} Suppose moreover that $\dim P(\varpi_{\alpha_j})/(P(\varpi_\beta)\cap P(\varpi_{\alpha_j}))=\dim P(\varpi_{\alpha_j'})/(P(\varpi_\beta)\cap P(\varpi_{\alpha_j'}))$, then one checks that $\alpha_j=\alpha_j'$ up to symmetries, by studying all cases up to symmetries, where $P(\varpi_\beta)$ has at least two simple subgroups of types $A_p$ and $A_p$ with $p\geq 1$, or $A_{2p-1}$ and $C_p$ with $p\geq 2$.
\vskip\baselineskip
\begin{center}
\begin{tabular}{|c|c|c|c|}
\hline
 Type of $G_0$ & $\varpi_\beta$ & $\varpi_{\alpha_j}$  &  $\dim P(\varpi_{\alpha_j})/(P(\varpi_\beta)\cap P(\varpi_{\alpha_j}))$ \\
\hline
 $A_m,\,m\geq 5$ & $\varpi_{\frac{m+1}{2}}$ & $\varpi_1$ or $\varpi_\frac{m+3}{2}$  & $\frac{(m+1)(m-1)}{4}$ or $\frac{m+1}{2}$, and\\
   $m$ odd &  &   &  $\frac{(m+1)(m-1)}{4}=\frac{m+1}{2}\frac{m-1}{2}\geq 2\frac{m+1}{2}$\\
\hline

 $B_3$ & $\varpi_{2}$ & $\varpi_{1}$ or $\varpi_{3}$  & $2$ or $3$
\\
\hline
 $B_6$ & $\varpi_{4}$ & $\varpi_{1}$ or $\varpi_{6}$  & $18$ or $8$
\\
\hline
$B_6$ & $\varpi_{4}$ & $\varpi_{3}$ or $\varpi_{6}$  & $5$ or $8$
\\
\hline
 $C_m,\,m\geq 3$ & $\varpi_{i}$ & $\varpi_{1}$ or $\varpi_{i+1}$  & $\frac{(4m-3i)(i-1)}{2}=\frac{3i(i-1)}{2}$ or $i$,
\\
  $m$ multiple of 3 & $i=\frac{2}{3}m$ & & and $\frac{3i(i-1)}{2}>i$ because $i\geq 2$\\
 \hline

 $C_m,\,m\geq 3$ & $\varpi_{i}$ & $\varpi_{i-1}$ or $\varpi_{i+1}$  & $2m-2i-1=i-1$ or $i$,
\\
  $m$ multiple of 3 & $i=\frac{2}{3}m$ & & \\
\hline
 $D_7$ & $\varpi_{4}$ & $\varpi_{1}$ or $\varpi_{7}$  & 21 or 12
\\
\hline
 $D_7$ & $\varpi_{4}$ & $\varpi_{3}$ or $\varpi_{7}$  & 6 or 12
\\
\hline
 $E_6$ & $\varpi_{4}$ & $\varpi_{1}$ or $\varpi_{5}$  & 15 or 6
\\
\hline
\end{tabular}
\end{center}
\end{proof}

\subsection{Case (2): the "second" Log MMP via moment polytopes}
\label{LogMMP2}
		
To describe the one-parameter family $(\tilde{Q}^\epsilon)_{\epsilon\in\Qbb_{\geq 0}}$ defined in Theorem~\ref{th:recallMMP}, we consider the basis $(u_i^*)_{i\in\{1,\dots,r\}}\cup(v_1^*)$ of $M$, where for any $i\in\{1,\dots,r\}$, $u_i^*=\varpi_{\alpha_i}-\varpi_{\alpha_0}+a_i\varpi_{\alpha_{r+2}}$ and $v_1^*=\varpi_{\alpha_{r+1}}-\varpi_{\alpha_{r+2}}$ and we define the matrices $\mathcal{A}$, $\mathcal{B}$ and $\mathcal{C}$ as follows 
$$\mathcal{A}= \left(
\begin{array}{cccccc}
-1 & \cdots & -1 & 0 \\
1 & 0 &\cdots&  0\\
0 & \ddots & \ddots &  \vdots\\

\vdots &\ddots & \ddots  & 0\\
0 & \cdots & 0 & 1\\
a_1 & \cdots & a_r & -1
  \end{array}
\right),\,\,\mathcal{B}= \left(
\begin{array}{c}
-1\\
0 \\
\vdots \\  \vdots \\
0\\

-1
  \end{array}
\right)\mbox{ and }\,\mathcal{C}= \left(
\begin{array}{c}
0 \\
\vdots \\  \vdots \\
\vdots\\
0\\
1
  \end{array}
\right).$$

Then $\tilde{Q}^\epsilon=\{x\in M_\Qbb\,\mid\,\mathcal{A}x\geq \mathcal{B}+\epsilon \mathcal{C}\}$ is the set of $x=(x_1,\dots,x_n)$ such that $x_1,\dots,x_n$ are non-negative, $x_1+\cdots+x_r\leq 1$ and $a_1x_1+\cdots+a_rx_r-x_{r+1}-\cdots -x_n\geq \epsilon-1$.

In particular, $\tilde{Q}^\epsilon$ is the intersection of the closed half-space $H_+^\epsilon:=\{x\in M_\Qbb\,\mid\, a_1x_1+\cdots+a_rx_r-x_{r+1}\geq \epsilon-1\}$ with $\tilde{Q}^0$. We denote by  $H_{++}^\epsilon$ the interior of $H_{+}^\epsilon$ and by $H^\epsilon$ the hyperplane $H_{+}^\epsilon\backslash H_{++}^\epsilon$.

 \begin{ex}
If $n=2$ (i.e., $r=s=1$) we have $a_1>0$, and either $\alpha_1$ is trivial or not. We draw, in Figure~\ref{figure7}, such a polytope for $\epsilon=0$ with the hyperplane $H^0:=\{x\in M_\Qbb\,\mid\, a_1x_1-x_2=-1\}$. 

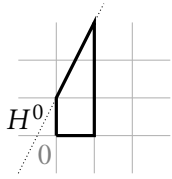
\begin{figure}[!ht]

\begin{center}
\begin{tikzpicture}[scale=0.5]
\chadombis
\draw[very thick ] (1,1) -- (2,1) -- (2,4) -- (1,2) -- (1,1);
\draw[densely dotted] (0,0) -- (2.25,4.5) ;
\node at (0.2,1.5) {$H^0$};
\end{tikzpicture}
\caption{The polytope $\tilde{Q}^0$ in the case where $a_1=2$}
\label{figure7}
\end{center}
\end{figure}

\end{ex}
 
 Note that $\tilde{Q}^0$ is a polytope with vertices $u_0^*:=0$, $u_1^*,\dots,u_r^*$, $u_0^*+(1+a_0)v_0^*,\dots,u_r^*+(1+a_r)v_1^*$ (recall that $a_0=0$) and facets $F_I:=\operatorname{Conv}((u_i^*\,\mid\,i\not\in I)\cup(u_i^*+(1+a_i)v_1^*\,\mid\,i\not\in I))$, $F_{I,1}:=\operatorname{Conv}(u_i^*\,\mid\,i\not\in I)$ and $F_{I,2}:=\operatorname{Conv}(u_i^*+(1+a_i)v_1^*\,\mid\,i\not\in I)$ with $I\subsetneq\{0,\dots,r\}$. 
 In particular, the facets of $\tilde{Q}^0$ are the $F_i:=F_{\{i\}}$ with $i\in\{0,\dots,r\}$, $F_{\emptyset,1}$  and $F_{\emptyset,2}$. Moreover for any 
$I\subsetneq \{0,\dots,n\}$, $F_I=\bigcap_{i\in I}F_i$, $F_{I,1}=\bigcap_{i\in I}F_i\cap F_{\emptyset,1}$ and $F_{I,2}=\bigcap_{i\in I}F_i\cap F_{\emptyset,2}$.

Then, for any $I\subsetneq \{0,\dots,r\}$, we define $F_I^\epsilon:=F_I\cap H_+^\epsilon$, $F_{I,1}^\epsilon:=F_{I,1}\cap H_+^\epsilon$, $F_{I,2}^\epsilon:=F_I\cap H^\epsilon$ and finally $F_{I,1,2}^\epsilon:=F_{I,1}\cap H^\epsilon$. They are faces (possibly empty and not distinct) of $\tilde{Q}^\epsilon$.
 (Recall $0=a_0<a_1<\cdots<a_r$ and $n=r+1$.)

\begin{prop}\label{prop:polyfaces2}
The polytope $\tilde{Q}^\epsilon$ is of dimension $n$ if and only if $\epsilon<1+a_r$.

\vspace{0.1cm}

Suppose now that $\epsilon<1+a_r$.
The non-empty faces of $\tilde{Q}^\epsilon$ are the distinct following $F_I^\epsilon$, $F_{I,1}^\epsilon$, $F_{I,2}^\epsilon$ and  $F_{I,1,2}^\epsilon$ with $I\subsetneq \{0,\dots,r\}$:
\begin{itemize}[label=$\ast$]
\item $F_I^\epsilon$ (of codimension $|I|$) if $\epsilon<\max_{i\not\in I}(1+a_i)$;
\item $F_{I,1}^\epsilon$ (of codimension $|I|+1$) if $\epsilon<\max_{i\not\in I}(1+a_i)$;
 \item $F_{I,2}^\epsilon$ (of codimension $|I|+1$) if $ \epsilon< \max_{i\not\in I}(1+a_i)$;
 \item $F_{I,1,2}^\epsilon$ (of codimension $|I|+2$, respectively $|I|+1$) if $\min_{i\not\in I}(1+a_i)< \epsilon< \max_{i\not\in I}(1+a_i)$, respectively if $\epsilon=\min_{i\not\in I}(1+a_i)=\max_{i\not\in I}(1+a_i)$.
\end{itemize}
In particular, the facets of $\tilde{Q}^\epsilon$ are: $F_i^\epsilon$ with $i\in\{0,\dots,r-1\}$, $F_r^\epsilon$ if $\epsilon<1+a_{r-1}$,
$F_{\emptyset,1}^\epsilon$ and $F_{\emptyset,2}^\epsilon$. Moreover, we can write any face of $\tilde{Q}^\epsilon$ as the intersection of all the facets that contain it, as follows. \\
For any $I\subsetneq \{0,\dots,r\}$ such that $\epsilon<\max_{i\not\in I}(1+a_i)$, $F_I^\epsilon=\bigcap_{i\in I}F_i^\epsilon$.\\  
For any $I\subsetneq \{0,\dots,r\}$ such that $\epsilon<\max_{i\not\in I}(1+a_i)$, $F_{I,1}^\epsilon=\bigcap_{i\in I}F_i^\epsilon\cap F_{\emptyset,1}^\epsilon$.\\  
For any $I\subsetneq \{0,\dots,r\}$ such that $\epsilon<\max_{i\not\in I}(1+a_i)$, $F_{I,2}^\epsilon=\bigcap_{i\in I}F_i^\epsilon\cap F_{\emptyset,2}^\epsilon$.\\  
For any $I\subsetneq \{0,\dots,r\}$ such that $\min_{i\not\in I}(1+a_i)\leq \epsilon\leq \max_{i\not\in I}(1+a_i)$, $F_{I,1,2}^\epsilon= \bigcap_{i\in I}F_i^\epsilon\cap F_{\emptyset,1}^\epsilon\cap F_{\emptyset,2}^\epsilon$.
\end{prop}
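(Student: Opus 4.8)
The plan is to mirror the proof of Proposition~\ref{prop:polyfaces1}, the only genuinely new feature being that now \emph{two} facets of $\tilde{Q}^0$ interact with the cutting hyperplane, namely the ``bottom'' $F_{\emptyset,1}$ and the ``top'' $F_{\emptyset,2}$, rather than the single relevant facet coming from the simplex in Case~(1). Throughout I would work with the linear form $x\longmapsto a_1x_1+\cdots+a_rx_r-x_{r+1}$ that defines $H^\epsilon$, $H^\epsilon_+$ and $H^\epsilon_{++}$, recording its values on the vertices of $\tilde{Q}^0$: it takes the value $a_i$ on the bottom vertex $u_i^*$, and the value $-1$ on every top vertex $u_i^*+(1+a_i)v_1^*$ (all of which lie on $H^0$). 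For the first statement, $\tilde{Q}^\epsilon=\tilde{Q}^0\cap H^\epsilon_+$ has maximal dimension $n$ if and only if $\tilde{Q}^0$ meets the open half-space $H^\epsilon_{++}$, i.e.\ some vertex has value $>\epsilon-1$; since the top vertices all have value $-1$ and $\max_i a_i=a_r$, this happens exactly when $\epsilon<1+a_r$.

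Next I would classify the faces exactly as in Case~(1): every non-empty face of $\tilde{Q}^\epsilon$ is either of the form $F\cap H^\epsilon_+$ for a face $F$ of $\tilde{Q}^0$ not contained in $H^\epsilon$, or of the form $F\cap H^\epsilon$. Using $F_I=\bigcap_{i\in I}F_i$, $F_{I,1}=F_I\cap F_{\emptyset,1}$ and $F_{I,2}=F_I\cap F_{\emptyset,2}$, and observing that for $\epsilon>0$ each top face $F_{I,2}\subset H^0$ fails to meet $H^\epsilon_+$, the only surviving candidates are $F_I^\epsilon$ and $F_{I,1}^\epsilon$ (not in $H^\epsilon$) together with $F_{I,2}^\epsilon=F_I\cap H^\epsilon$ and $F_{I,1,2}^\epsilon=F_{I,1}\cap H^\epsilon$ (in $H^\epsilon$). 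Non-emptiness and codimension are then read off from the vertex values. The face $F_I^\epsilon$ (resp.\ $F_{I,1}^\epsilon$) is non-empty of the same dimension as $F_I$ (resp.\ $F_{I,1}$), hence of codimension $|I|$ (resp.\ $|I|+1$), precisely when some $u_i^*$ with $i\notin I$ lies in $H^\epsilon_{++}$, i.e.\ when $\epsilon<\max_{i\notin I}(1+a_i)$. The face $F_{I,2}^\epsilon$ is always a proper slice of $F_I$ by $H^\epsilon$, of codimension $|I|+1$ (non-empty for $\epsilon<\max_{i\notin I}(1+a_i)$, the top of $F_I$ lying strictly below $H^\epsilon$). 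Finally $F_{I,1,2}^\epsilon$ is the slice of the bottom face $F_{I,1}$: of codimension $|I|+2$ when $H^\epsilon$ meets its relative interior ($\min_{i\notin I}(1+a_i)<\epsilon<\max_{i\notin I}(1+a_i)$), and of codimension $|I|+1$ in the boundary case $\epsilon=\min_{i\notin I}(1+a_i)=\max_{i\notin I}(1+a_i)$, which, the $a_i$ being strictly increasing, forces $|I|=r$.

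The facets are the codimension-one members of these lists. Because $0=a_0<a_1<\cdots<a_r$ \emph{strictly}, the degenerate ``extra facet'' phenomenon of Case~(1) (the facet $F_{n,\beta}^\epsilon$) cannot occur, and one finds exactly $F_i^\epsilon$ for $i\in\{0,\dots,r-1\}$, $F_r^\epsilon$ for $\epsilon<1+a_{r-1}$, and the two facets $F_{\emptyset,1}^\epsilon$, $F_{\emptyset,2}^\epsilon$. The intersection formulas follow by intersecting the three relations $F_I=\bigcap_{i\in I}F_i$, $F_{I,1}=F_I\cap F_{\emptyset,1}$, $F_{I,2}=F_I\cap F_{\emptyset,2}$ with $H^\epsilon_+$ or $H^\epsilon$; for instance $\bigcap_{i\in I}F_i^\epsilon=F_I^\epsilon$ gives $F_{I,2}^\epsilon=F_I^\epsilon\cap F_{\emptyset,2}^\epsilon=\bigcap_{i\in I}F_i^\epsilon\cap F_{\emptyset,2}^\epsilon$, and similarly for $F_{I,1}^\epsilon$ and $F_{I,1,2}^\epsilon$.

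The main obstacle I anticipate is the bookkeeping needed to guarantee that the listed faces are pairwise distinct and that none is redundantly contained in another: one must carefully separate the ``slice'' faces $F_{I,2}^\epsilon$, $F_{I,1,2}^\epsilon$ lying in $H^\epsilon$ from the faces $F_I^\epsilon$, $F_{I,1}^\epsilon$ transverse to it, and correctly pin down the codimension jump of $F_{I,1,2}^\epsilon$ in the boundary-equality case. This is exactly the step that dominated the proof of Proposition~\ref{prop:polyfaces1}, and I expect the strict monotonicity of the $a_i$ to make it somewhat cleaner here, since it eliminates the coincidences that had to be tracked in Case~(1).
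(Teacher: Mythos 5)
Your proposal is correct and follows essentially the same route as the paper's proof: the same dichotomy between faces of $\tilde{Q}^\epsilon$ transverse to $H^\epsilon$ (of the form $F\cap H^\epsilon_+$) and faces contained in $H^\epsilon$ (of the form $F\cap H^\epsilon$), the same vertex-value computations ($a_i$ on the bottom vertices, $-1$ on the top ones) to decide non-emptiness and codimension, and the same use of the strict inequalities $0=a_0<a_1<\cdots<a_r$ to force $|I|=r$ in the boundary case $\epsilon=\min_{i\not\in I}(1+a_i)=\max_{i\not\in I}(1+a_i)$ and to exclude any extra facet of the type $F_{n,\beta}^\epsilon$ seen in Case (1). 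The only (cosmetic) difference is that you obtain the final intersection formulas directly from the set identities $F_I=\bigcap_{i\in I}F_i$, $F_{I,1}=F_I\cap F_{\emptyset,1}$, $F_{I,2}=F_I\cap F_{\emptyset,2}$ intersected with $H^\epsilon_+$ or $H^\epsilon$, whereas the paper invokes the general fact that any face of a polytope is the intersection of the facets containing it.
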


 Remark that, if $\epsilon=\min_{i\not\in I}(1+a_i)=\max_{i\not\in I}(1+a_i)$, then $I=\{0,\dots,r\}\backslash \{i\}$ where $i$ is such that $\epsilon=1+a_i$.
 
 Note also that $\tilde{Q}^{1+a_r}$ is the point $u_r^*$ so that $Q^{1+a_r}$ is the point $\varpi_{\alpha_r}$.
 
 \begin{proof}
 For any $\epsilon\geq 0$, the polytope $\tilde{Q}^\epsilon$ is of dimension~$n$  if and only if $\tilde{Q}^0$ intersects $H_{++}^\epsilon$
  if and only if there exists $i\in\{0,\dots,r\}$ such that $u_i^*$ (or $u_i^*+(1+a_i)v_1^*$) is in $H_{++}^\epsilon$ 
  if and only if there exists $i\in\{0,\dots,r\}$ such that $a_i>\epsilon-1$ (or $-1>\epsilon-1$) 
  if and only if $a_r>\epsilon-1$. This proves the first statement of the proposition.
  
  Suppose now that $\epsilon<1+a_r$. A non-empty face of $\tilde{Q}^\epsilon$ is either the intersection with $H_{+}^\epsilon$ of a non-empty face of $\tilde{Q}^0$ that intersects $H_{++}^\epsilon$, or the intersection of a non-empty face of $\tilde{Q}^0$ with $H^\epsilon$.
  
 Let $I\subsetneq \{0,\dots,r\}$. The set $F_I^\epsilon$ is not empty if and only if there exists $i\not\in I$ such that $u_i^*$ (or $u_i^*+(1+a_i)v_1^*$) is in $H_{+}^\epsilon$ if and only if there exists $i\not\in I$ such that $a_i\geq \epsilon-1$ (or $-1\geq\epsilon-1$) if and only if $\epsilon\leq\max_{i\not\in I}(1+a_i)$. Moreover with the same argument, $F_I^\epsilon$ is not empty and intersects $H_{++}^\epsilon$ if and only if $\epsilon<\max_{i\not\in I}(1+a_i)$. Also, in that case, the dimension of $F_I^\epsilon$ is the same as the dimension of $F_I$; in particular the non-empty $F_I^\epsilon$ that intersect $H_{++}^\epsilon$ are all distinct.

Similarly, $F_{I,1}^\epsilon$ is not empty if and only if there exists $i\not\in I$ such that $u_i^*\in H_{+}^\epsilon$ if and only if there exists $i\not\in I$ such that $a_i\geq \epsilon-1$ if and only if $\epsilon\leq\max_{i\not\in I}(1+a_i)$. Also, $F_{I,1}^\epsilon$ is not empty and intersects $H_{++}^\epsilon$ if and only if $\epsilon<\max_{i\not\in I}(1+a_i)$. In that case, the dimension of $F_{I,1}^\epsilon$ is the same as the dimension of $F_{I,1}$; in particular the non-empty $F_{I,1}^\epsilon$ that intersect $H_{++}^\epsilon$ are all distinct and also distinct from the non-empty $F_{I}^\epsilon$.

Let $I\subsetneq \{0,\dots,r\}$. Note that for any $\epsilon\geq 0$ (respectively $\epsilon>0$) and for any $i\in\{0,\dots,r\}$, $u_i^*+(1+a_i)v_1^*\not\in H_{++}^\epsilon$ (respectively $u_i^*+(1+a_i)v_1^*\not\in H_{+}^\epsilon$). Then the set $F_{I,2}^\epsilon$ is not empty
if and only if there exists $i\not\in I$ such that $u_i^*\in H_{+}^\epsilon$ if and only if there exists $i\not\in I$ such that $a_i\geq \epsilon-1$ if and only if $\epsilon\leq\max_{i\not\in I}(1+a_i)$. Moreover, $F_{I,2}^\epsilon$ is not empty and $H^\epsilon$ intersects  $F_I$ in its relative interior if and only if there exists $i\not\in I$ such that $a_i> \epsilon-1$ if and only if $\epsilon<\max_{i\not\in I}(1+a_i)$. Hence, the dimension of $F_{I,2}^\epsilon$ is the  dimension of $F_{I}$ minus 1 if $\epsilon<\max_{i\not\in I}(1+a_i)$ and it equals the dimension of $F_{I}$ if $\epsilon=\max_{i\not\in I}(1+a_i)$. In the first case, the $F_{I,2}^\epsilon$ are all distinct and yield all non-empty faces of $\tilde{Q}^\epsilon$ included in $H^\epsilon$ but not in $F_{\emptyset,1}$. In the second case, $F_{I,2}^\epsilon=F_{I,1,2}^\epsilon$.

Now, the set $F_{I,1,2}^\epsilon$ is not empty
if and only if there exist $i$ and $j$ not in $I$ (may be equal) such that $u_i^*\in H_{+}^\epsilon$ and $u_j^*\not\in H_{++}^\epsilon$ if and only if there exist $i$ and $j$ not in $I$ such that $a_i\geq \epsilon-1$ and $a_j\leq \epsilon-1$ if and only if $\min_{i\not\in I}(1+a_i)\leq\epsilon\leq\max_{i\not\in I}(1+a_i)$. Moreover, $F_{I,1,2}^\epsilon$ is not empty and included in no proper face of $F_{I,1}$ if and only if there exist $i$ and $j$ not in $I$ such that $u_i^*\in H_{++}^\epsilon$ and $u_j^*\not\in H_{+}^\epsilon$ if and only if there exist $i$ and $j$ not in $I$ such that $a_i> \epsilon-1$ and $a_j< \epsilon-1$ (i.e., $a_i< \epsilon-1$ and $a_j> \epsilon-1$) or for any $i\not\in I$ we have $u_i^*\in H^\epsilon$ (i.e., $a_i=\epsilon-1$). Then  $F_{I,1,2}^\epsilon$ is not empty and included in no proper face of $F_{I,1}$ if and only if $\min_{i\not\in I}(1+a_i)< \epsilon<\max_{i\not\in I}(1+a_i)$ or $\epsilon=\min_{i\not\in I}(1+a_i)=\max_{i\not\in I}(1+a_i)$.  In particular, the dimension of $F_{I,1,2}^\epsilon$ is the dimension of $F_{I,1}$ minus 1 if $\min_{i\not\in I}(1+a_i)<\epsilon<\max_{i\not\in I}(1+a_i)$ and it equals the dimension of $F_{I,1}$ if $\epsilon=\min_{i\not\in I}(1+a_i)=\max_{i\not\in I}(1+a_i)$. Note also that the non-empty $F_{I,1,2}^\epsilon$ that are not included in a proper face of $F_{I,1}$ are all distinct and yield all non-empty faces of $\tilde{Q}^\epsilon$ included in $H^\epsilon\cap F_{\emptyset,1}$. This finishes the proof of the second statement of the proposition.

 To get the last statements, use again the fact that a facet is a face of codimension~1 and that any face of a polytope is the intersection of the facets containing it.
\end{proof}

From Proposition~\ref{prop:polyfaces2}, we deduce the following result.

\begin{cor}
The isomorphism classes of the horospherical varieties $X^\epsilon$ associated to the polytopes in the family $(Q^\epsilon)_{\epsilon\in\Qbb_{\geq 0}}$ are given by the following subsets of $\Qbb_{\geq 0}$:
\begin{itemize}[label=$\ast$]
\item $[0,1[$;
\item $]1+a_{i},1+a_{i+1}[$ for any $i\in\{0,\dots,r-2\}$;
\item $\{1+a_i\}$ for any $i\in\{0,\dots,r-2\}$;
\item $]1+a_{r-1},1+a_r[$ 
and $\{1+a_{r-1}\}$ if the simple root $\alpha_r$ is not trivial (i.e., when $X$ is as in Case (2b) of Theorem~\ref{th:main});
\item $[1+a_{r-1},1+a_r[$ if the simple root $\alpha_r$ is trivial (i.e., when $X$ is as in Case (2c) of Theorem~\ref{th:main}). 
\end{itemize}
\end{cor}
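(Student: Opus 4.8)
The plan is to run exactly the argument used for the analogous corollary in Section~\ref{LogMMP1}, feeding it the combinatorics of Proposition~\ref{prop:polyfaces2} instead of Proposition~\ref{prop:polyfaces1}. The governing principle is the isomorphism criterion recalled after Theorem~\ref{th:recallMMP}: one has $X^\epsilon\simeq X^\eta$ if and only if the faces of $\tilde{Q}^\epsilon$ and $\tilde{Q}^\eta$ are ``the same'', up to deleting those inequalities attached to some $x_j$ that are superfluous without changing the polytope. So everything reduces to locating the values of $\epsilon$ at which the face poset of $\tilde{Q}^\epsilon$ jumps.

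First I would record, from Proposition~\ref{prop:polyfaces2}, that $(P,M,Q^\epsilon,\tilde{Q}^\epsilon)$ is admissible exactly for $\epsilon<1+a_r$, and that the facets of $\tilde{Q}^\epsilon$ are $F_0^\epsilon,\dots,F_{r-1}^\epsilon$, $F_{\emptyset,1}^\epsilon$, $F_{\emptyset,2}^\epsilon$ for every admissible $\epsilon$, together with $F_r^\epsilon$ precisely when $\epsilon<1+a_{r-1}$. Then I would plug in the non-emptiness thresholds obtained inside the proof of Proposition~\ref{prop:polyfaces2}: for $I\subsetneq\{0,\dots,r\}$, each of $\bigcap_{i\in I}F_i^\epsilon$, $\bigcap_{i\in I}F_i^\epsilon\cap F_{\emptyset,1}^\epsilon$, $\bigcap_{i\in I}F_i^\epsilon\cap F_{\emptyset,2}^\epsilon$ is non-empty exactly for $\epsilon\le\max_{i\notin I}(1+a_i)$, while $\bigcap_{i\in I}F_i^\epsilon\cap F_{\emptyset,1}^\epsilon\cap F_{\emptyset,2}^\epsilon$ is non-empty exactly for $\min_{i\notin I}(1+a_i)\le\epsilon\le\max_{i\notin I}(1+a_i)$. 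Because $0=a_0<a_1<\cdots<a_r$, all these thresholds sit at the distinct values $1+a_0=1,1+a_1,\dots,1+a_r$; hence the incidence data of $\tilde{Q}^\epsilon$ is constant on each of $[0,1[$, each singleton $\{1+a_i\}$, and each open interval $]1+a_i,1+a_{i+1}[$, and can only change as $\epsilon$ crosses a value $1+a_i$. This yields the inclusion that $X^\epsilon\simeq X^\eta$ forces $\epsilon,\eta$ to lie in a common member of the announced list.

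For the converse I would check that $\epsilon,\eta$ lying in a common listed subset give isomorphic $X^\epsilon$, $X^\eta$; this is immediate from Proposition~\ref{prop:polyfaces2} for every subset except the last, where Cases~(2b) and~(2c) diverge. The decisive object is the facet $F_r^\epsilon$, attached to the ray $u_r$, i.e.\ to the divisor indexed by $\alpha_{n-1}=\alpha_r$; this ray is the color $\sigma(\alpha_r)$ when $\alpha_r$ is non-trivial and a $G$-stable ray (one of the $x_j$) when $\alpha_r$ is trivial. By the facet list, $F_r^\epsilon$ stops being a facet exactly at $\epsilon=1+a_{r-1}$. In Case~(2c), with $\alpha_r$ trivial, the inequality that becomes superfluous is one of the $x_j$, so the isomorphism criterion lets us delete it without changing $\tilde{Q}^\epsilon$; thus $X^{1+a_{r-1}}\simeq X^\epsilon$ for $\epsilon\in{]1+a_{r-1},1+a_r[}$, the two pieces merge into $[1+a_{r-1},1+a_r[$, and the contraction at $1+a_{r-1}$ is divisorial. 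In Case~(2b), with $\alpha_r$ a color, the vanishing of $F_r^\epsilon$ is a genuine (flip-type) change of the face poset, so $\{1+a_{r-1}\}$ survives as an isolated class and $]1+a_{r-1},1+a_r[$ stays apart.

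The hard part will be precisely this last step: one must match the bookkeeping ``$u_r$ is a color / $u_r$ is a $G$-stable $x_j$'' with the non-triviality / triviality of $\alpha_r$, and then invoke the ``deletion of $x_j$-rows'' clause of the isomorphism criterion to see that the transition at $1+a_{r-1}$ is divisorial (hence the identity on isomorphism classes) in Case~(2c) but a flip in Case~(2b). The remaining breakpoints $1+a_0,\dots,1+a_{r-2}$ need no such distinction, since no facet disappears there and the change is a pure incidence (flip) move; the rest of the argument is then a routine transcription of Section~\ref{LogMMP1}.
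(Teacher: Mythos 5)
Your strategy is the paper's own: the isomorphism criterion following Theorem~\ref{th:recallMMP}, the facet list and non-emptiness thresholds of Proposition~\ref{prop:polyfaces2}, and the color versus $G$-stable-ray dichotomy (with the row-deletion clause) to separate Cases (2b) and (2c) at the last breakpoint. But there is a genuine logical gap in your second paragraph: from ``the incidence data is constant on each listed subset and can only change when $\epsilon$ crosses some $1+a_i$'' you conclude that $X^\epsilon\simeq X^\eta$ forces $\epsilon,\eta$ to lie in a common subset. That implication goes the wrong way. Constancy on the subsets gives exactly the opposite inclusion (same subset $\Rightarrow$ isomorphic); to get the inclusion you claim, you must prove that the face data genuinely \emph{does} change at every breakpoint, i.e. that varieties attached to different subsets are non-isomorphic. ``Can only change at $1+a_i$'' does not imply ``does change at $1+a_i$'', so as written one of the two inclusions of the corollary is never established, except at the last breakpoint, where your Case (2b)/(2c) discussion does address it.

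The repair is immediate from the thresholds you quote, and it is exactly what the paper's proof spells out: for $i\in\{0,\dots,r-2\}$, the face $F_{I,1,2}^\epsilon$ with $I=\{0,\dots,r\}\setminus\{i\}$ is non-empty precisely when $\epsilon=1+a_i$, which isolates the singleton classes; and the faces $\bigcap_{j\in I}F_j^\epsilon$ with $I=\{i+1,\dots,r\}$ are non-empty precisely when $\epsilon\le 1+a_i$, which separates the successive open intervals from one another and from $[0,1[$ (the paper phrases this as: non-emptiness of $\bigcap_{j\in\{i+1,\dots,r\}}F_j^\epsilon$ together with non-emptiness of $F_{\emptyset,1,2}^\epsilon\cap\bigcap_{j\le i-1}F_j^\epsilon$ forces $\epsilon=1+a_i$, and an analogous pair forces $\epsilon\in[1+a_i,1+a_{i+1}]$). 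A secondary point: in Case (2c) your sentence ``deleting the superfluous row does not change $\tilde{Q}^\epsilon$, thus $X^{1+a_{r-1}}\simeq X^\epsilon$'' is also too quick; after deletion one must still check that the face posets of the reduced descriptions agree for all $\epsilon\in[1+a_{r-1},1+a_r[$, which the paper does by observing that these polytopes are all simplexes with facets $F_j^\epsilon$ ($0\le j\le r-1$), $F_{\emptyset,1}^\epsilon$ and $F_{\emptyset,2}^\epsilon$.
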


  \begin{proof}
  
We apply the theory described in Section~\ref{sec:recallMMP}, in particular the fact that the isomorphism classes of the varieties $X^\epsilon$ are obtained by looking at the $\epsilon$'s for which ``the faces of $Q^\epsilon$ change''. Note first that, by Proposition~\ref{prop:polyfaces2}, $(P,M,Q^\epsilon,\tilde{Q}^\epsilon)$ is an admissible quadruple if and only if $\epsilon<1+a_r$. Also, the facets of $\tilde{Q}^\epsilon$ are: $F_i^\epsilon$ (orthogonal to $\alpha_{i,M}^\vee$) with $i\in\{0,\dots,r-1\}$, $F_r^\epsilon$ (orthogonal to $\alpha_{r,M}^\vee$) if $\epsilon<1+a_{r-1}$,
$F_{\emptyset,1}^\epsilon$ (orthogonal to $\alpha_{r+1,M}^\vee$) and $F_{\emptyset,2}^\epsilon$ (orthogonal to $\alpha_{r+2,M}^\vee$). In particular,  for any $\epsilon,\,\eta\in  [0,1+a_r[$, the facets of $Q^\epsilon$ and $Q^\eta$ are ``the same'' if and only if $\epsilon$ and $\eta$ are both in $[0,1+a_{r-1}[$ or $[1+a_{r-1},1+a_r[$. 

We now use a consequence of the proof of Proposition~\ref{prop:polyfaces2}: for any $I\subsetneq \{0,\dots,r\}$, $\bigcap_{i\in I}F_i^\epsilon$ is not empty if and only if $\epsilon\leq\max_{i\not\in I}(1+a_i)$, $F_{\emptyset,1}^\epsilon\cap\bigcap_{i\in I}F_i^\epsilon$ is not empty if and only if $\epsilon\leq\max_{i\not\in I}(1+a_i)$, $F_{\emptyset,2}^\epsilon\cap\bigcap_{i\in I}F_i^\epsilon$ is not empty if and only if $\epsilon\leq\max_{i\not\in I}(1+a_i)$,
and $F_{\emptyset,1,2}^\epsilon\cap\bigcap_{i\in I}F_i^\epsilon$ is not empty if and only if we have
$\min_{i\not\in I}(1+a_i)\leq\epsilon\leq\max_{i\not\in I}(1+a_i)$.  In particular, for any $i\in\{0,\dots,r-2\}$, suppose that for $I=\{i+1,\dots,r\}$ and that $\bigcap_{i\in I}F_i^\epsilon$ is not empty; suppose also that for $I=\{0,\dots,i-1\}$ and that $F_{\emptyset,1,2}^\epsilon\cap\bigcap_{i\in I}F_i^\epsilon$ is not empty; then $\epsilon=1+a_i$. Similarly for any $i\in\{0,\dots,r-2\}$, suppose that for $I=\{i+2,\dots,n\}$ and that $\bigcap_{i\in I}F_i^\epsilon$ is not empty; suppose also that for $I=\{0,\dots,i-1\}$ and that $F_{\emptyset,1,2}^\epsilon\cap\bigcap_{i\in I}F_i^\epsilon$ is not empty; then $\epsilon\in [1+a_i,1+a_{i+1}]$. 

Hence, this proves that if two varieties $X^\epsilon$ and $X^\eta$ are isomorphic then $\epsilon$ and $\eta$ are a one of the subsets described in the corollary.

To conclude, we have to prove that the two varieties $X^\epsilon$ and $X^\eta$ are isomorphic when $\epsilon$ and $\eta$ are in one of these subsets. It is obvious from Proposition~\ref{prop:polyfaces2} except in the case where the simple root $\alpha_n$ is trivial. But in that case, all polytopes $Q^\epsilon$ with $\epsilon\in[1+a_{r-1},1+a_r[$ could be defined even deleting the row corresponding to the simple root $\alpha_r$ that is trivial, so that their faces are ``the same'' (they are simplexes with facets $F_i^\epsilon$ for $i\in\{0,\dots,r-1\}$, $F_{\emptyset,1}^\epsilon$ and $F_{\emptyset,2}^\epsilon$).
\end{proof}

 We can reformulate this corollary as follows, and get the first statement of Theorem~\ref{th:main2} in Case (2). We denote $X_0=X$ and for any $i\in\{1,\cdots, r\}$, $X^i:=X^\epsilon$ with $\epsilon\in ]1+a_{i-1},1+a_i[$ and for any $i\in\{0,\cdots, r\}$, $Y^i:=X^{1+a_i}$.

\begin{cor}
The family $(Q^\epsilon)_{\epsilon\in\Qbb_{\geq 0}}$ describes a Log MMP from $X$ as follows: 
\begin{itemize}[label=$\ast$]
\item $r$ flips $\phi_i:\,X^i\longrightarrow Y^i\longleftarrow X^{i+1}\,:\phi_i^+$ for any $i\in\{0,\cdots, r-1\}$ and a fibration $\phi_r:\,X^r\longrightarrow Y^r$, if the simple root $\alpha_r$ is not trivial;
\item $r-1$ flips $\phi_i:\,X^i\longrightarrow Y^i\longleftarrow X^{i+1}\,:\phi_i^+$ for any $i\in\{0,\cdots, r-2\}$, followed by a divisorial contraction $\phi_{r-1}:\,X^{r-1}\longrightarrow Y^{r-1}\simeq X^r$ and a fibration $X^r\longrightarrow Y^r\simeq\operatorname{pt}$, if the simple root $\alpha_r$ is trivial.
\end{itemize}
\end{cor}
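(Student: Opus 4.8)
The plan is to derive this corollary directly from the preceding classification of the isomorphism classes of the $X^\epsilon$, combined with the general dictionary for the Log MMP recorded right after Theorem~\ref{th:recallMMP}. First I would list the singular values read off from the previous corollary: the parameter runs through $[0,1+a_r[$, the class of $X^\epsilon$ is locally constant away from $1+a_0=1,1+a_1,\dots,1+a_{r-1}$, and $\epsilon_{max}=1+a_r$. Matching the open intervals $]1+a_{i-1},1+a_i[$ and the points $\{1+a_i\}$ with, respectively, the generic cells and the transition values of that general reformulation identifies $X^i$ with the variety attached to the open interval and $Y^i=X^{1+a_i}$ with the one attached to $1+a_i$, and supplies, for each $i\leq r-1$, the two morphisms $\phi_i$ and $\phi_i^+$ onto $Y^i$ obtained by approaching $1+a_i$ from below and from above.

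Next I would determine the type of each contraction. By the dictionary after Theorem~\ref{th:recallMMP}, a divisorial contraction occurs at a singular value exactly when the inequality attached to a $G$-stable divisor becomes superfluous for $\tilde{Q}^\epsilon$, and every other change of isomorphism class is a small contraction (a flip). The key computational input is Proposition~\ref{prop:polyfaces2}: among the facets of $\tilde{Q}^\epsilon$, the facets $F_0^\epsilon,\dots,F_{r-1}^\epsilon$, $F_{\emptyset,1}^\epsilon$ and $F_{\emptyset,2}^\epsilon$ persist throughout $[0,1+a_r[$, while only $F_r^\epsilon$ (orthogonal to the ray $u_r=\alpha_{r,M}^\vee$) ceases to be a facet, and it does so precisely at $\epsilon=1+a_{r-1}$. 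Hence the only value at which an inequality can become superfluous is $1+a_{r-1}$; at all the lower singular values $1,1+a_1,\dots,1+a_{r-2}$ no facet and no $G$-stable inequality disappears, so the changes of class forced there by the previous corollary are flips.

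I would then split into the two cases. If $\alpha_r$ (that is $\alpha_{n-1}$, since $n=r+1$) is not trivial, the ray $u_r$ is a color, so $1+a_{r-1}$ behaves like the lower values and is a flip, giving $r$ flips at $1,\dots,1+a_{r-1}$; the dimension of $\tilde{Q}^\epsilon$ then drops at $\epsilon_{max}=1+a_r$ (the polytope degenerates to the point $\varpi_{\alpha_r}$, as noted after Proposition~\ref{prop:polyfaces2}), producing the final fibration $\phi_r\colon X^r\longrightarrow Y^r$. If $\alpha_r$ is trivial, $u_r$ is $G$-stable, so $1+a_{r-1}$ is a divisorial contraction; this is exactly consistent with the previous corollary merging $[1+a_{r-1},1+a_r[$ into a single half-open cell, whence $Y^{r-1}=X^{1+a_{r-1}}\simeq X^r$, while the residual fibration $X^r\longrightarrow Y^r\simeq\operatorname{pt}$ again comes from the degeneration of $\tilde{Q}^\epsilon$ at $1+a_r$; only $r-1$ flips at $1,\dots,1+a_{r-2}$ precede it.

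The one genuinely non-formal step, which I would isolate as the \emph{main obstacle}, is the identification of $u_r$ as $G$-stable precisely when $\alpha_r$ is trivial, since this single fact is what distinguishes a divisorial contraction from a flip at $1+a_{r-1}$ and is the sole source of the case split. I would justify it by tracing the dictionary of Lemma~\ref{lem:FirstReduction} and Proposition~\ref{prop:embeded}: the divisor $D_r$ attached to $u_r$ equals the color $D_{\alpha_r}$ when $\alpha_r$ is a genuine simple root and is $G$-stable when $\alpha_r$ is the trivial root of a $\Cbb^*$ or $\{1\}$ factor, so the superfluity of its inequality at $1+a_{r-1}$ translates into a divisorial contraction exactly in the trivial case. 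Everything else is the same bookkeeping that gives Corollary~\ref{cor:LogMMP1} in Case (1).
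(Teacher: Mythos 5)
Your proposal is correct and follows essentially the same route as the paper: the paper also obtains this corollary purely as a reformulation of the preceding corollary on the isomorphism classes of the $X^\epsilon$, combined with the general dictionary following Theorem~\ref{th:recallMMP} (isolated classes $\{1+a_i\}$ give flips, the half-open cell $[1+a_{r-1},1+a_r[$ gives a divisorial contraction onto $Y^{r-1}\simeq X^r$, and $\epsilon_{max}=1+a_r$ gives the fibration). Your explicit isolation of the key fact — that the only disappearing facet is $F_r^\epsilon$ at $\epsilon=1+a_{r-1}$, and that the corresponding divisor $D_r$ is $G$-stable exactly when $\alpha_r$ is trivial, which is what triggers the divisorial contraction via the superfluous-inequality criterion — is precisely what the paper leaves implicit in its case split, so this is a faithful filling-in of detail rather than a different argument.
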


\begin{ex}
In the two different cases with $n=2$ and $a_1=2$, we illustrate this corollary in terms of polytopes in Figures~\ref{figure8} and \ref{figure9}.
\end{ex}
\begin{figure}[!ht]
\begin{center}
\begin{tikzpicture}
\node [rectangle] (a) at (0,0) {\begin{tikzpicture}[scale=0.5]
\chadombis
\draw[very thick ] (1,1) -- (2,1) -- (2,4) -- (1,2) -- (1,1);
\draw[densely dotted] (0,0) -- (2.25,4.5) ;
\node at (0.2,1.5) {$H^0$};
\end{tikzpicture}
};

\node [rectangle] (b) at (2,-3.25) {
    \begin{tikzpicture}[scale=0.5]
     \chadombis
\draw[very thick] (1,1) -- (2,1) -- (2,3) -- (1,1);
\draw[densely dotted] (0.5,0) -- (2.5,4) ;
\node at (2.9,3.5) {$H^1$};
    \end{tikzpicture}
};

\node [rectangle] (c) at (4,0) {
    \begin{tikzpicture}[scale=0.5]
     \chadombis
     \draw[very thick] (1.5,1) -- (2,1) -- (2,2) -- (1.5,1);
\draw[densely dotted] (1,0) -- (3,4) ;
\node at (3,2.7) {$H^2$};
    \end{tikzpicture}
};

\node [rectangle] (d) at (6,-3.25) {
    \begin{tikzpicture}[scale=0.5]
     \chadombis
\node at (2,1) {$\bullet$};
\draw[densely dotted] (1.5,0) -- (3.5,4) ;
\node at (3.5,2.5) {$H^3$};
    \end{tikzpicture}
};
\draw (a) [->]  to node[above right] {$\phi_0$} (b);
\draw (c) [->]  to node[above left] {$\phi_0^+$} (b);
\draw (c) [->]  to node[above right] {$\phi_1$} (d);
\end{tikzpicture}

\caption{The Log MMP described by the polytopes $\tilde{Q}^\epsilon$ in the case where $n=2$, $a_1=2$ and $\alpha_1$ is not trivial.}
\label{figure8}
\end{center}
\end{figure}

\begin{figure}[ht]
\begin{center}
\begin{tikzpicture}
\node [rectangle] (a) at (0,0) {\begin{tikzpicture}[scale=0.5]
\chadombis
\draw[very thick ] (1,1) -- (2,1) -- (2,4) -- (1,2) -- (1,1);
\draw[densely dotted] (0,0) -- (2.25,4.5) ;
\node at (0.2,1.5) {$H^0$};
\end{tikzpicture}
};

\node [rectangle] (b) at (3.5,-2.5) {
    \begin{tikzpicture}[scale=0.5]
     \chadombis
\draw[very thick] (1,1) -- (2,1) -- (2,3) -- (1,1);
\draw[densely dotted] (0.5,0) -- (2.5,4) ;
\node at (2.9,3.5) {$H^1$};
    \end{tikzpicture}
};

\node [rectangle] (c) at (7,-5) {
\begin{tikzpicture}[scale=0.5]
     \chadombis
\node at (2,1) {$\bullet$};
\draw[densely dotted] (1.5,0) -- (3.5,4) ;
\node at (3.5,2.5) {$H^3$};
    \end{tikzpicture}
};

\draw (a) [->]  to node[above right] {$\phi_0$} (b);
\draw (b) [->]  to node[above right] {$\phi_1$} (c);
\end{tikzpicture}

\caption{The Log MMP described by the polytopes $\tilde{Q}^\epsilon$ in the case where $n=2$, $a_1=2$ and $\alpha_1$ is trivial.}
\label{figure9}
\end{center}
\end{figure}
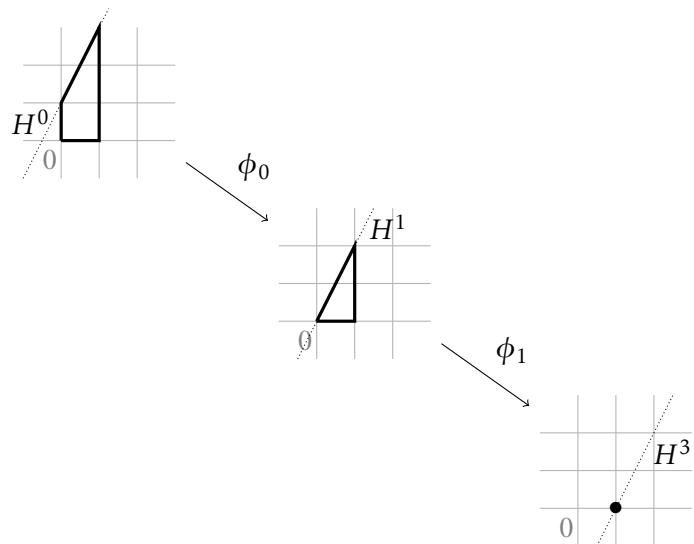

\subsection{Proof of the last statement of Theorem~\ref{th:main2} in Case (2)}

The previous section proves that $a_1,\dots,a_r$ are invariants of $X$. To finish the proof of Theorem~\ref{th:main2} in Case (2), we have to prove that $G_0,\dots,G_t$ and $\alpha_0,\dots,\alpha_{r+2}$ are also invariants. Since the "first" Log MMP consists of a fibration $\psi:\,X\longrightarrow Z$ where $Z$ is a  two-orbit variety embedded in $\Pbb(V(\varpi_{\alpha_{r+1}})\oplus V(\varpi_{\alpha_{r+2}}))$ as in \cite{2orbits}, $G_t$, $\alpha_{r+1}$ and $\alpha_{r+2}$ are invariants of $X$. As in Case (1), we will describe some exceptional loci and some fibers of different morphisms of the Log MMP, but we first distinguish two cases by the following result.
\vskip 1cm

\begin{prop}
Suppose that $r=1$ and that $\alpha_0$ and $\alpha_1$ are two simple roots of $G_0$ (and then $t=1$). Then, the general fiber of $\psi:\,X\longrightarrow Z$ is either a homogeneous variety different from a projective space (a quadric $Q^{2m}$ with $m\geq 2$, a Grassmannian $\operatorname{Gr}(i,m)$ with $m\geq 5$ and $2\leq i\leq m-2$, or a spinor variety $\operatorname{Spin}(2m+1)/P(\varpi_m)$ with $m\geq 4$), or a two-orbit variety as in \cite{2orbits}.

Suppose that $r>1$ or that $\alpha_0$ and $\alpha_1$ are simple roots of $G_0$ and $G_1$ respectively. Then, the general fiber of $\psi:\,X\longrightarrow Z$ is a projective space.

\end{prop}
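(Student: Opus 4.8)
The plan is to identify the general fiber $F_\psi$ explicitly as a smooth projective horospherical variety of Picard group $\Zbb$, and then to read off its isomorphism type from the classification recalled in Section~\ref{sec:notation} together with \cite[Section~1]{2orbits}, exactly as was done for the analogous statement in Case~(1).

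First I would determine $F_\psi$ via the fiber description of Section~\ref{sec:recallMMP}. The fibration $\psi\colon X\longrightarrow Z$ is the Mori fibration of Remark~\ref{rem:debutMMP} Case~(2), whose dual map $\pi_*$ on $N_\Qbb$ kills $u_1,\dots,u_r$ and keeps the $v$-direction; hence $F_\psi$ is a horospherical variety of rank $r$ and Picard group $\Zbb$, with colors among $\alpha_0,\dots,\alpha_r$. Taking for $G/H''$ the open $G$-orbit of $Z$, the unique biggest $G$-orbit of $X$ sent to it is the open $G$-orbit $G/H$ itself, so the fiber over a general point is the closure of $L''\cdot v$, where $L''$ is a Levi subgroup of the stabilizer $H''$ and $v$ is the image of the sum of highest weight vectors. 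Since the weights $\varpi_{\alpha_{r+1}}$ and $\varpi_{\alpha_{r+2}}$ involve only $G_t$, the factors $G_0,\dots,G_{t-1}$ act trivially on $Z$ and are therefore contained in $H''$, whereas the $G_t$-part is used up transitively over the open orbit of $Z$, so its Levi contributes no positive-dimensional direction to the fiber beyond the flag part. I would confirm this by the dimension count, using Corollary~\ref{cor:classifpoly},
$$\dim F_\psi=\dim X-\dim Z=\sum_{k=0}^{t-1}\dim\bigl(G_k/(P\cap G_k)\bigr)+r,$$
which matches the dimension of the closure of the $(G_0\times\cdots\times G_{t-1})$-orbit of a sum of highest weight vectors in $\Pbb\bigl(\bigoplus_{i=0}^rV(\varpi_{\alpha_i})\bigr)$. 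This identifies $F_\psi$ with this latter variety, which is smooth projective horospherical of Picard group $\Zbb$ by Corollaries~\ref{cor:smoothembedd} and~\ref{cor:smooth}.

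Once $F_\psi$ is identified, the conclusion follows from \cite[Section~1]{2orbits}. In the case $r=1$ with $\alpha_0,\alpha_1$ simple roots of $G_0$ (so $t=1$), the restricted condition forces $(G_0,\alpha_0,\alpha_1)$ to be a smooth triple, and $F_\psi$ is the closure of the $G_0$-orbit in $\Pbb(V(\varpi_{\alpha_0})\oplus V(\varpi_{\alpha_1}))$: by Lemma~\ref{lem:gather}~(c),~(d),~(e) it is a quadric, a Grassmannian or a spinor variety when the triple is not of two-orbit type, and it is one of the two-orbit varieties of \cite{2orbits} when the triple is of two-orbit type. In the complementary case (when $r>1$, or $\alpha_0$ and $\alpha_1$ are simple roots of distinct factors), the reductions leading to the restricted conditions (b) or (c) guarantee that $\alpha_0,\dots,\alpha_r$ lie in pairwise distinct simple factors, each being the first simple root of some $\operatorname{SL}_{d_i}$ or a trivial root; then Lemma~\ref{lem:gather}~(a),~(b) yields $\bigoplus_{i=0}^rV(\varpi_{\alpha_i})\simeq V_{\operatorname{SL}_d}(\varpi_1)$ and $F_\psi\simeq\Pbb^{d-1}$.

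The main obstacle I anticipate is the first step, the precise identification of $F_\psi$: one must verify rigorously that over the open orbit of $Z$ the factor $G_t$ does not enlarge the fiber, i.e.\ that the Levi of $H''$ acts on $F_\psi$ only through $G_0\times\cdots\times G_{t-1}$ up to the flag part. I would secure this either by the dimension count above or, more robustly, by computing the general fiber directly from the slice of the moment polytope $\tilde{Q}^0$ over a general interior point of the one-dimensional polytope $Q^1$: this slice is an $r$-dimensional simplex whose associated admissible quadruple is precisely that of $\Pbb\bigl(\bigoplus_{i=0}^rV(\varpi_{\alpha_i})\bigr)$.
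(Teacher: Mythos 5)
Your proof is correct and takes essentially the same approach as the paper: the paper's own proof simply asserts that the general fiber of $\psi$ is the smooth projective horospherical $G_0\times\cdots\times G_{t-1}$-variety of Picard group $\Zbb$ given as the closure of the $G_0\times\cdots\times G_{t-1}$-orbit of a sum of highest weight vectors in $\Pbb(V(\varpi_{\alpha_0})\oplus\cdots\oplus V(\varpi_{\alpha_r}))$, and then concludes by citing \cite[Section~1]{2orbits}. You arrive at the same identification and the same final citation, merely supplying the justification (the fiber strategy of Section~\ref{sec:recallMMP} together with the dimension count) that the paper leaves implicit.
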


\begin{proof}
The general fiber of $\psi:\,X\longrightarrow Z$ is the smooth projective horospherical $G_0\times\cdots\times G_{t-1}$-variety of Picard group $\Zbb$ isomorphic to the closure of the $G_0\times\cdots\times G_{t-1}$-orbit of a sum of  highest weight vectors in $\mathbb{P}:=\mathbb{P}(V(\varpi_{\alpha_0})\oplus\cdots\oplus V(\varpi_{\alpha_r}))$. Hence, the proposition is a consequence of \cite[Section~1]{2orbits}.
\end{proof}

$\bullet$ In the case where $r=1$ and that $\alpha_0$ and $\alpha_1$ are two simple roots of $G_0$, $G=G_0\times G_1$ and the description of the general fiber of $\psi:\,X\longrightarrow G/P(\varpi_\beta)$, with Remark~\ref{rem:2orbits}, implies that $G_0$, $\alpha_0$ and $\alpha_1$ are invariants of $X$.

\vskip\baselineskip
$\bullet$ Now we suppose that $r>1$ or that $\alpha_0$ and $\alpha_1$ are not two simple roots of the same simple subgroup of $P(\varpi_\beta)$.

\vskip\baselineskip
We define various exceptional loci in $X$ as follows. Let $i\in\{0,\dots,r\}$, define $E_i$ to be the closure in $X$ of the set of points $x\in X$ such that $x$ is in the open isomorphism set of the first $i$ contractions and $x$ is in the exceptional locus of $\phi_i$.

\begin{prop}
For any $i\in\{0,\dots,r\}$ the exceptional locus $E_i$ is the closure in $X$ of the $G$-orbit associated to the non-empty face $F_{I_i}$ with $I_i:=\{i+1,\dots,r\}$. In particular $E_i$ is isomorphic to the closure of the $G$-orbit of a sum of  highest weight vectors in $$\mathbb{P}:=\mathbb{P}\left(\bigoplus_{j=0}^{i}\bigoplus_{b=0}^{1+a_j}V(\varpi_{\alpha_j}+b\varpi_{\alpha_{r+1}}+(1+a_j-b)\varpi_{\alpha_{r+2}})\right),$$ hence for $i\in\{1,\dots,r\}$, $E_i$ is a smooth projective horospherical variety of Picard group $\Zbb^2$ as in Case (2), and $E_0$ is the product a two-orbit variety with a homogeneous variety (projective of Picard group $\Zbb$).
\end{prop}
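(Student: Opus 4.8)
The proof will follow \emph{verbatim} the strategy of Proposition~\ref{prop:exceptloci} in Case~(1), now reading the combinatorics off the description of the faces of $\tilde{Q}^\epsilon$ given in Proposition~\ref{prop:polyfaces2}. First I would fix, for each $i$, a rational $\epsilon_i$ with $X^i=X^{\epsilon_i}$, and denote by $\Omega_I^i$, $\Omega_{I,1}^i$, $\Omega_{I,2}^i$, $\Omega_{I,1,2}^i$ the $G$-orbits of $X^i$ attached (via Proposition~\ref{prop:classifpoly}) to the faces $F_I^{\epsilon_i}$, $F_{I,1}^{\epsilon_i}$, $F_{I,2}^{\epsilon_i}$, $F_{I,1,2}^{\epsilon_i}$, and by the corresponding $\omega$-orbits the orbits of $Y^i=X^{1+a_i}$. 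Recall that the orbit-closure order is the reverse of inclusion of index sets, so the whole combinatorics of the contraction $\phi_i$ is governed by which faces survive as $\epsilon$ crosses $1+a_i$, i.e.\ by the moment when the vertex $u_i^*$ leaves the open half-space $H_{++}^\epsilon$. Since the $a_j$'s are strictly increasing here (restricted condition), the indices $j$ with $u_j^*\notin H_+^{\epsilon_i}$ are exactly those with $j<i$.

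Then I would compute the image under $\phi_i$ of every $G$-orbit, exactly as in the proof of Proposition~\ref{prop:exceptloci}. An orbit whose label keeps some index $j\ge i+1$ outside $I$ is mapped isomorphically, whereas the orbit $\Omega_{I_i}^i$ attached to $F_{I_i}^{\epsilon_i}$, with $I_i=\{i+1,\dots,r\}$, is sent to the closed orbit $\omega_{\{0,\dots,r\}\setminus\{i\},\,1,2}^i$ of $Y^i$ (the vertex orbit at which $u_i^*$ now lies on the hyperplane $H^{1+a_i}$). By Proposition~\ref{prop:classifpoly} these are non-isomorphic horospherical homogeneous spaces (the source has a face of dimension $i+1$ while the image is a vertex, so by Corollary~\ref{cor:classifpoly} their ranks differ), hence $\Omega_{I_i}^i$ lies in the exceptional locus of $\phi_i$; and every orbit not contained in $\overline{\Omega_{I_i}^i}$ is fixed. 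Therefore the exceptional locus of $\phi_i$ is $\overline{\Omega_{I_i}^i}$. Since $\Omega_{I_i}^0,\dots,\Omega_{I_i}^{i-1}$ avoid the exceptional loci of $\phi_0,\dots,\phi_{i-1}$, tracing back through the flips $\phi_{i-1}^+,\dots,\phi_0^+$ gives $E_i=\overline{\Omega_{I_i}^0}$, the closure in $X$ of the $G$-orbit attached to $F_{I_i}$.

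To obtain the explicit embedding I would apply Proposition~\ref{prop:classifpoly} once more: $E_i=\overline{\Omega_{I_i}^0}$ is the horospherical variety of the admissible quadruple $(P_F,M_F,F,\tilde{F})$ with $F=F_{I_i}^0$, polarized by a suitable ample divisor. The lattice points of $\tilde{F}$ translated by the weight of the canonical section are precisely the $\varpi_{\alpha_j}+b\varpi_{\alpha_{r+1}}+(1+a_j-b)\varpi_{\alpha_{r+2}}$ with $0\le j\le i$ and $0\le b\le 1+a_j$, so Corollary~\ref{cor:smoothembedd} yields the stated embedding in $\mathbb{P}$, and Corollary~\ref{cor:smooth} gives its smoothness as a $G$-stable subvariety of the smooth variety $X$. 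For $i\ge 1$ the resulting data is exactly that of a variety $\mathbb{X}^2$ of rank $i+1\ge 2$ (Definition~\ref{def:varieties}, Case~(2)), whence $\operatorname{Pic}(E_i)=\Zbb^2$ by Corollary~\ref{cor:locfactcrit}; for $i=0$, using $a_0=0$ the $G$-module splits as $V(\varpi_{\alpha_0})\otimes\bigl(V(\varpi_{\alpha_{r+1}})\oplus V(\varpi_{\alpha_{r+2}})\bigr)$, so $E_0$ is the product of the homogeneous space $G_0/P(\varpi_{\alpha_0})$ (of Picard group $\Zbb$) with the two-orbit variety attached to the smooth two-orbit triple $(G_t,\alpha_{r+1},\alpha_{r+2})$.

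The main obstacle is the bookkeeping of the second paragraph: because Case~(2) carries four families of faces $F_I,F_{I,1},F_{I,2},F_{I,1,2}$ instead of the two of Case~(1), pinning down precisely which orbit is contracted by $\phi_i$ and verifying that all remaining orbits are mapped isomorphically requires a careful traversal of the face poset of $\tilde{Q}^\epsilon$ as $\epsilon$ crosses $1+a_i$. The crucial input that forces the contraction — the non-isomorphism of source and image orbits — is the rank comparison via Corollary~\ref{cor:classifpoly}, and it is this identification, rather than the subsequent embedding and smoothness statements, that demands the most attention.
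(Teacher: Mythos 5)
Your proposal is correct and follows essentially the same route as the paper's own proof: identify the $G$-orbits of $X^i$ and $Y^i$ with the faces $F_I^{\epsilon}$, $F_{I,1}^{\epsilon}$, $F_{I,2}^{\epsilon}$, $F_{I,1,2}^{\epsilon}$ via Proposition~\ref{prop:classifpoly}, show that $\phi_i$ contracts exactly $\overline{\Omega_{I_i}^i}$ while every orbit not contained in it is mapped isomorphically, trace back through the earlier flips to obtain $E_i=\overline{\Omega_{I_i}^0}$, and conclude with the admissible quadruple $(P_F,M_F,F,\tilde F)$ together with Corollaries~\ref{cor:smoothembedd} and~\ref{cor:smooth}. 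Your rank comparison (a face of dimension $i+1$ versus a vertex, via Corollary~\ref{cor:classifpoly}) is a valid and slightly more explicit justification of the non-isomorphism of source and image orbits, which the paper attributes simply to Proposition~\ref{prop:classifpoly}.
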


Note that $E_r=X$ and that in any case, the rank of the horospherical $G$-variety $E_i$ is $i+1$.

 \begin{proof} Let $i\in\{0,\dots,r\}$ and $\epsilon_i\in\Qbb_{\geq 0}$ such that $X^i=X^{\epsilon_i}$.
 
We denote by $\Omega_I^i$, $\Omega_{I,1}^i$, $\Omega_{I,2}^i$ and $\Omega_{I,1,2}^i$ the $G$-orbits of $X^i$ associated to the non empty faces $F_I^{\epsilon_i}$,  $F_{I,1}^{\epsilon_i}$, $F_{I,2}^{\epsilon_i}$ and $F_{I,1,2}^{\epsilon_i}$ of the polytope $\tilde{Q}^{\epsilon_i}$. We denote by $\omega_I^i$, $\omega_{I,1}^i$, $\omega_{I,2}^i$ and $\omega_{I,1,2}^i$ the $G$-orbits of $Y^i=X^{1+a_i}$ associated to the non-empty faces $F_I^{1+a_i}$, $F_{I,1}^{1+a_i}$, $F_{I,2}^{1+a_i}$ and $F_{I,1,2}^{1+a_i}$ of the polytope $\tilde{Q}^{1+a_i}$.  Recall that, for any $\epsilon\in\Qbb_{\geq 0}$, we have an order on the $G$-orbits of $X^\epsilon$ compatible with the order on the non-empty faces of $\tilde{Q}^\epsilon$: in particular $\Omega_I^i\subset \overline{\Omega_{I'}^i}$, $\Omega_{I,1}^i\subset \overline{\Omega_{I',1}^i}$, $\Omega_{I,2}^i\subset \overline{\Omega_{I',2}^i}$ and $\Omega_{I,1,2}^i\subset \overline{\Omega_{I',1,2}^i}$ respectively if and only if $I'\subset I$, and $\Omega_{I,1}^i\subset \overline{\Omega_I^i}$, $\Omega_{I,2}^i\subset \overline{\Omega_I^i}$, $\Omega_{I,1,2}^i\subset \overline{\Omega_{I,1}^i}$ and $\Omega_{I,1,2}^i\subset \overline{\Omega_{I,2}^i}$ (as soon as these orbits are defined, i.e., as soon as the corresponding faces are non-empty).

 For any $I\subsetneq\{0,\dots,r\}$ such that there exists $j\geq i$ not in $I$ (i.e., such that $\Omega_I^i$ is defined), $\phi_i(\Omega_I^i)=\omega_I^i$ if there exists $j\geq i+1$ not in $I$, and $\phi_i(\Omega_I^i)=\omega_{I\backslash\{i\},1,2}^i$ if for any $j\geq i+1$, $j\in I$. Indeed $I\cup\{0,\dots i-1\}=I\backslash\{i\}$ is the minimal subset of $\{0,\dots,r\}$ containing $I$ such that $\omega_{I\backslash\{i\},1,2}^i$ is defined and there is no $I'$ containing $I$ such that $\omega_{I'}^i$, $\omega_{I',1}^i$ or $\omega_{I',2}^i$ is defined.
Similarly, with $k=1$ or $2$, for any $I\subsetneq\{0,\dots,r\}$ such that there exists $j\geq i$ not in $I$ (i.e., such that $\Omega_{I,k}^i$ is defined), $\phi_i(\Omega_{I,k}^i)=\omega_{I,k}^i$ if there exists $j\geq i+1$ not in $I$, and $\phi_i(\Omega_{I,k}^i)=\omega_{\{0,\dots,r\}\backslash\{i\},1,2}^i$ if for any $j\geq i+1$, $j\in I$. Indeed $I\cup\{0,\dots i-1\}=\{0,\dots,r\}\backslash\{i\}$ is the minimal subset of $\{0,\dots,r\}$ containing $I$ such that $\omega_{\{0,\dots,r\}\backslash\{i\},1,2}^i$ is defined and there is no $I'$ containing $I$ such that $\omega_{I',k}^i$ is defined.

And for any $I\subsetneq\{0,\dots,r\}$ such that there exist $j\geq i$ and $j'<i$ not in $I$ (i.e., such that $\Omega_{I,1,2}^i$ is defined), $\phi_i(\Omega_{I,1,2}^l)=\omega_{I,1,2}^i$ if there exists $i\geq i+1$ not in $I$, and $\phi_i(\Omega_{I,1,2}^i)=\omega_{\{0,\dots,r\}\backslash\{i\},\beta}^i$ if for any $j\geq i+1$, $j\in I$. Indeed $\{0,\dots,r\}\backslash\{i\}=I\cup\{0,\dots i_l-1\}$ is the minimal subset of $\{0,\dots,n\}$ containing $I$ such that $\omega_{\{0,\dots,r\}\backslash\{i\},1,2}^i$ is defined.

In particular, we have $\phi_i(\Omega_{I_i}^i)=\omega_{I\backslash\{i\},1,2}^i$. But $\Omega_{I_i}^i$ and $\omega_{\{0,\dots,r\}\backslash\{i\},1,2}^i$ are non-isomorphic horospherical homogeneous spaces by Proposition~\ref{prop:classifpoly}, so that $\Omega_{I_i}^i$ is in the exceptional locus of $\phi_l$. Moreover, if $\Omega$ is a $G$-orbit of $X^i$ not contained in $\overline{\Omega_{I_i}^i}$, it is of the form $\Omega_I^i$, $\Omega_{I,1}^i$,  $\Omega_{I,2}^i$ or $\Omega_{I,1,2}^i$ where $I_i\not\subset I$. Hence, in that case $\phi_i(\Omega)=\Omega$. And then the exceptional locus of $\phi_i$ is $\overline{\Omega_{I_i}^i}$. Note that $\Omega_{I_i}^0,\dots,\Omega_{I_i}^{l-1}$ are not in the exceptional locus of $\phi_0,\dots,\phi_{i-1}$ respectively, to conclude that $E_i=\overline{\Omega_{I_i}^0}$.

We use again Proposition~\ref{prop:classifpoly} to see that $E_i=\overline{\Omega_{I_i}^0}$ corresponds to the admissible quadruple $(P_F,M_F,F,\tilde{F})$ with $F=F_{I_i}^0$ (and with some ample divisor of $E_i$). Then we conclude by Corollaries~\ref{cor:smoothembedd}  and~\ref{cor:smooth}.
\end{proof}

 The Log MMP now defines, by restriction, fibrations $\tilde{\phi_i}:E_i\backslash E_{i-1}\longrightarrow E'_i:=\overline{\omega_{\{0,\dots,r\}\backslash\{i\},1,2}^i}$, for any $i\in\{0,\dots,i\}$.

\begin{prop} 
For any $i\in\{0,\dots,r\}$, $E'_i$ is a closed $G$-orbit of $Y^i$ isomorphic to $G/P(\varpi_{\alpha_i})$ (which is a point if $\alpha_i$ is trivial). In particular, the map $\tilde{\phi_i}$ is surjective. Moreover, the dimension of fibers of $\tilde{\phi_i}$ is $$i+1+\dim P(\varpi_{\alpha_i})/(P(\varpi_{\alpha_{r+1}})\cap P(\varpi_{\alpha_{r+2}})\cap\bigcap_{j=0}^i P(\varpi_{\alpha_j})).$$
\end{prop}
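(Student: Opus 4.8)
The plan is to read off all three assertions from the one-parameter family of moment polytopes $(Q^\epsilon)$ together with the orbit-face dictionary of Proposition~\ref{prop:classifpoly}, proceeding as in the proof of Proposition~\ref{prop:dimfibres1} but exploiting the fact that in Case~(2) the inequalities $0=a_0<a_1<\cdots<a_r$ are strict, so each wall value $1+a_i$ is attained at a single index. First I would pin down the face cutting out $E'_i$. By construction $E'_i=\overline{\omega^i_{\{0,\dots,r\}\backslash\{i\},1,2}}$ corresponds to the face $F^{1+a_i}_{\{0,\dots,r\}\backslash\{i\},1,2}$ of $\tilde{Q}^{1+a_i}$. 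Writing $I=\{0,\dots,r\}\backslash\{i\}$, the only index not in $I$ is $i$, so $F_{I,1}=\operatorname{Conv}(u^*_j\mid j\notin I)=\{u^*_i\}$; and since the coordinates of $u^*_i$ satisfy $a_1x_1+\cdots+a_rx_r-x_{r+1}=a_i=(1+a_i)-1$, the point $u^*_i$ lies on $H^{1+a_i}$, whence $F^{1+a_i}_{I,1,2}=F_{I,1}\cap H^{1+a_i}=\{u^*_i\}$. By Proposition~\ref{prop:polyfaces2} this face has codimension $|I|+1=r+1=n$, i.e.\ it is a vertex, so $E'_i$ is a \emph{closed} $G$-orbit.

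Next I would locate this vertex inside the moment polytope $Q^{1+a_i}=v^{1+a_i}+\tilde{Q}^{1+a_i}$. With the choices $D=D_0+D_{n+1}$ and $\Delta=-D_{n+1}-K_X$ one has $v^\epsilon=v^0+\epsilon v^1$ with $v^0=\varpi_{\alpha_0}+\varpi_{\alpha_{r+2}}$ and $v^1=-\varpi_{\alpha_{r+2}}$ (the coefficients of $D_{\alpha_{r+2}}=D_{n+1}$ in $D$ and in $K_X+\Delta=-D_{n+1}$), so $v^{1+a_i}=\varpi_{\alpha_0}-a_i\varpi_{\alpha_{r+2}}$. Combined with $u^*_i=\varpi_{\alpha_i}-\varpi_{\alpha_0}+a_i\varpi_{\alpha_{r+2}}$, the vertex of $Q^{1+a_i}$ is exactly $v^{1+a_i}+u^*_i=\varpi_{\alpha_i}$. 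By Proposition~\ref{prop:classifpoly} the associated closed $G$-orbit is $G\cdot[v_{\varpi_{\alpha_i}}]\cong G/P(\varpi_{\alpha_i})$, which is a point when $\alpha_i$ is trivial since then $\varpi_{\alpha_i}=0$ and $P(\varpi_{\alpha_i})=G$. This proves the first assertion, and surjectivity of $\tilde{\phi_i}$ is then immediate: its image is a nonempty $G$-stable subset of the \emph{single} orbit $E'_i$, hence equals $E'_i$.

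For the fiber dimension I would use that a $G$-equivariant morphism onto a homogeneous space $G/Q=E'_i$ realises $E_i\backslash E_{i-1}$ as an associated bundle $G\times_Q F$, so that every fiber has the same dimension, namely $\dim E_i-\dim E'_i$ (note $E_i\backslash E_{i-1}$ is open dense in $E_i$). By the previous proposition $E_i$ is horospherical of rank $i+1$, with open orbit fibered over the flag variety $G/P_{E_i}$, where $P_{E_i}=\bigcap_{j=0}^iP(\varpi_{\alpha_j})\cap P(\varpi_{\alpha_{r+1}})\cap P(\varpi_{\alpha_{r+2}})$ is the common stabiliser of the highest weight lines $\varpi_{\alpha_j}+b\varpi_{\alpha_{r+1}}+(1+a_j-b)\varpi_{\alpha_{r+2}}$ appearing in its embedding; hence $\dim E_i=(i+1)+\dim G/P_{E_i}$ (equivalently, this is Corollary~\ref{cor:classifpoly} applied to the open face). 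Since $\dim E'_i=\dim G/P(\varpi_{\alpha_i})$ and $P_{E_i}\subset P(\varpi_{\alpha_i})$, subtracting gives $\dim E_i-\dim E'_i=(i+1)+\dim P(\varpi_{\alpha_i})/P_{E_i}$, which is precisely the claimed value $i+1+\dim P(\varpi_{\alpha_i})/\bigl(P(\varpi_{\alpha_{r+1}})\cap P(\varpi_{\alpha_{r+2}})\cap\bigcap_{j=0}^iP(\varpi_{\alpha_j})\bigr)$.

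All the computations are routine once the vertex has been located; the only genuinely delicate point is keeping precise track of the $\epsilon$-dependent shift $v^\epsilon$. It is exactly this shift — and not the (fixed) pseudo-moment polytope — that makes the vertex land on $\varpi_{\alpha_i}$, thereby forcing $E'_i$ to be the \emph{small} flag variety $G/P(\varpi_{\alpha_i})$ rather than $G/P(\varpi_{\alpha_i}+(1+a_i)\varpi_{\alpha_{r+2}})$. Getting this bookkeeping right, together with the degenerate trivial-root case and the identification of $P_{E_i}$ with the denominator in the statement, is where the care is needed.
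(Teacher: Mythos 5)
Your proof is correct and follows essentially the same route as the paper's: identify the face $F^{1+a_i}_{\{0,\dots,r\}\setminus\{i\},1,2}$ of $\tilde{Q}^{1+a_i}$ with the vertex $u_i^*$, translate by $v^{1+a_i}$ so that the corresponding vertex of $Q^{1+a_i}$ is $\varpi_{\alpha_i}$, deduce surjectivity of $\tilde{\phi_i}$ from $G$-equivariance onto the single orbit, and compute the fiber dimension as $\dim E_i-\dim E'_i$ using that $E_i$ is horospherical of rank $i+1$ over $G/\bigl(P(\varpi_{\alpha_{r+1}})\cap P(\varpi_{\alpha_{r+2}})\cap\bigcap_{j=0}^iP(\varpi_{\alpha_j})\bigr)$. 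Your additional bookkeeping (the explicit formula for $v^\epsilon$, the associated-bundle argument for constancy of the fiber dimension, and the degenerate case $i=r$ where $\tilde{Q}^{1+a_r}$ is already a point) only spells out steps the paper leaves implicit.
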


\begin{proof}

Let $i\in\{0,\dots,r\}$. The face $F_{\{0,\dots,r\}\backslash\{i\},1,2}^{1+a_i}$ of $\tilde{Q}^{1+a_i}$ is the vertex $u_i^*$ and then the corresponding face of $Q^{1+a_i}$ is the vertex $\varpi_{\alpha_i}$. In particular, the $G$-orbit $\omega_{\{0,\dots,r\}\backslash\{i\},1,2}^i$ is closed and isomorphic to $G/P(\varpi_{\alpha_i})$.

Now, since $\tilde{\phi_i}$ is $G$-equivariant, it must be surjective. Moreover, the dimension of the fibers of $\tilde{\phi_i}$ is $$\dim E_i-\dim E'_i=(i+1+\dim G/(P(\varpi_{\alpha_{r+1}})\cap P(\varpi_{\alpha_{r+2}})\cap\bigcap_{j=0}^i P(\varpi_{\alpha_j})))-\dim G/P(\varpi_{\alpha_i})$$ that is $i+1+\dim P(\varpi_{\alpha_i})/(P(\varpi_{\alpha_{r+1}})\cap P(\varpi_{\alpha_{r+2}})\cap\bigcap_{j=0}^i P(\varpi_{\alpha_j}))$.
\end{proof}

\begin{cor}
The dimension of the fibers of $\tilde{\phi_i}$ is  $$i+1+\dim G/(P(\varpi_{\alpha_{r+1}})\cap P(\varpi_{\alpha_{r+2}}))+\sum_{j=0}^{i-1}\dim G/P(\varpi_{\alpha_j}).$$
In particular the dimensions $d_j$ of the $G/P(\varpi_{\alpha_j})$'s, which are projective space under $G_i=\operatorname{SL}_{d_j+1}$, are invariants of $X$.
\end{cor}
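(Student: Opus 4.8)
The plan is to obtain the closed formula directly from the fiber-dimension expression established in the preceding Proposition, using nothing more than the product structure of $G$, and then to read off the invariance of the $d_j$ from the fact that the fiber and base dimensions of the $\tilde{\phi_i}$ are intrinsic to the Log MMP of $X$.

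First I would record the relevant product decompositions. We are in the case $r>1$ or $\alpha_0,\alpha_1$ not simple roots of a common simple factor, so by the restricted conditions (b)--(c) of Definition~\ref{def:RC2} each $\alpha_j$ with $j\in\{0,\dots,r\}$ is a simple root of $G_j$, while $\alpha_{r+1},\alpha_{r+2}$ are simple roots of $G_t=G_{r+1}$. Writing $P_{G_j}(\varpi_{\alpha_j})$ for the maximal parabolic of $G_j$ attached to $\alpha_j$, the parabolic $P(\varpi_{\alpha_j})$ is the product of $P_{G_j}(\varpi_{\alpha_j})$ with all the remaining factors $G_k$, and $P(\varpi_{\alpha_{r+1}})\cap P(\varpi_{\alpha_{r+2}})$ modifies only the factor $G_t$. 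Intersecting, for $i\le r$ the subgroup $P(\varpi_{\alpha_{r+1}})\cap P(\varpi_{\alpha_{r+2}})\cap\bigcap_{j=0}^i P(\varpi_{\alpha_j})$ splits as a product over $G_0,\dots,G_t$, so that in the quotient $P(\varpi_{\alpha_i})/\bigl(P(\varpi_{\alpha_{r+1}})\cap P(\varpi_{\alpha_{r+2}})\cap\bigcap_{j=0}^i P(\varpi_{\alpha_j})\bigr)$ the $G_i$-factor and the factors $G_{i+1},\dots,G_{t-1}$ collapse to a point. I would then rewrite the surviving factors by means of the identities $\dim G_j/P_{G_j}(\varpi_{\alpha_j})=\dim G/P(\varpi_{\alpha_j})$ (for $j<i$) and $\dim G_t/(P_{G_t}(\varpi_{\alpha_{r+1}})\cap P_{G_t}(\varpi_{\alpha_{r+2}}))=\dim G/(P(\varpi_{\alpha_{r+1}})\cap P(\varpi_{\alpha_{r+2}}))$, both of which hold because the complementary factors quotient out trivially. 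Substituting into the Proposition gives exactly the announced sum.

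For the ``in particular'' part, I would use that the fiber dimensions $f_i$ of the $\tilde{\phi_i}$, together with the base dimensions $\dim E'_i$, are read off the Log MMP of $X$ and hence are invariants of $X$. Since $G_t,\alpha_{r+1},\alpha_{r+2}$, and therefore the constant $C:=\dim G/(P(\varpi_{\alpha_{r+1}})\cap P(\varpi_{\alpha_{r+2}}))$, are already known to be invariants of $X$ (from the two-orbit variety $Z$), the formula yields $f_i-f_{i-1}=1+d_{i-1}$ with $d_j:=\dim G/P(\varpi_{\alpha_j})$; successive differences of the $f_i$ thus recover $d_0,\dots,d_{r-1}$, while $d_r=\dim E'_r$ is the dimension of the base of the final fibration $\tilde{\phi_r}$. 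Hence every $d_j$ is an invariant of $X$. As here each $G/P(\varpi_{\alpha_j})$ is a projective space $\Pbb^{d_j}$ with $G_j=\SL_{d_j+1}$ and $\alpha_j$ its first simple root (or a point when $\alpha_j$ is trivial), the normal form imposed by Definition~\ref{def:RC2} lets the $d_j$ determine $G_0,\dots,G_{t-1}$ and $\alpha_0,\dots,\alpha_r$; combined with the invariance of $a_1,\dots,a_r$ and of $G_t,\alpha_{r+1},\alpha_{r+2}$ obtained earlier, this finishes Case~(2) of Theorem~\ref{th:main2}.

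The only delicate point is the bookkeeping: one must track precisely which simple factor of $G$ each $\alpha_j$ belongs to and verify that the intersection of the parabolics factors exactly as claimed, and one must be slightly careful that the fiber formula only sees the partial sums $\sum_{j<i}d_j$ (so that $d_r$ has to be supplied from the base of the last morphism). Once the product decomposition is in hand the dimension identity is immediate and the invariance is a one-line recursion, so I do not anticipate any genuine obstacle beyond careful indexing.
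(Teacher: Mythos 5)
Your proposal is correct and follows essentially the same route as the paper: the paper's (very terse) proof likewise observes that under the present hypotheses $\alpha_0,\dots,\alpha_r$ are first simple roots of the distinct type-$A$ factors $G_0,\dots,G_r$ while $\alpha_{r+1},\alpha_{r+2}$ sit in $G_{r+1}=G_t$, so the parabolic quotient in the preceding proposition splits as a product and the dimension formula follows. Your elaboration of the invariance step (recovering $d_0,\dots,d_{r-1}$ from successive differences of the fiber dimensions, and supplying $d_r$ from the base $E'_r\simeq G/P(\varpi_{\alpha_r})$ of the last fibration) is a correct filling-in of what the paper leaves implicit; note it can be shortened further since each $E'_i\simeq G/P(\varpi_{\alpha_i})$ already gives every $d_i$ directly.
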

\begin{proof}
Since $r>1$, or $r=1$ and $\alpha_0,\,\alpha_1$ are not two simple roots of the same simple subgroup of $G$, the simple roots $\alpha_0,\dots,\alpha_r$ are respectively the first simple roots of $G_0,\dots,G_r$ that are of type $A$. (And $\alpha_{r+1},\,\alpha_{r+2}$ are simple roots of $G_{r+1}$.) Then the corollary can be easily deduced from the proposition. 
\end{proof}
 
\section{Appendix}
\begin{prop}\label{prop:smoothtriplebis}
	Let $(K,\beta,R,n)$ be a smooth quadruple. Then we are in one of the following cases, up to symmetries.
	\begin{enumerate}
	\item $n=1$ and one of the following cases occurs.
	\begin{enumerate}[label=(\alph*)]
	\item $K$ is of type $A_m$ ($m\geq 3$). Then, $\beta=\alpha_k$ with $3\leq k\leq m$ and $R=\{\alpha_1,\alpha_{k-1}\}$; or $\beta=\alpha_k$ with $4\leq k\leq m$ and $R=\{\alpha_i,\alpha_{i+1}\}$ with $1\leq i\leq k-2$.
	\item $K$ is of type $B_m$ ($m\geq 3$). Then, $\beta=\alpha_k$ with $3\leq k\leq m$ and $R=\{\alpha_1,\alpha_{k-1}\}$ or $R=\{\alpha_i,\alpha_{i+1}\}$ with $1\leq i\leq k-2$; or $\beta=\alpha_k$ with $1\leq k\leq m-2$ and $R=\{\alpha_{m-1},\alpha_{m}\}$; or $\beta=\alpha_{m-3}$ and $R=\{\alpha_{m-2},\alpha_{m}\}$.
	\item $K$ is of type $C_m$ ($m\geq 3$). Then, $\beta=\alpha_k$ with $3\leq k\leq m$ and $R=\{\alpha_1,\alpha_{k-1}\}$; or $\beta=\alpha_k$ with $4\leq k\leq m$ and $R=\{\alpha_i,\alpha_{i+1}\}$ with $1\leq i\leq k-2$; $\beta=\alpha_k$ with $1\leq k\leq m-2$ and $R=\{\alpha_i,\alpha_{i+1}\}$ with $1\leq i\leq k-2$.
	\item $K$ is of type $D_m$ ($m\geq 4$). Then, $\beta=\alpha_k$ with $3\leq k\leq m-2$ or $k=m$ and $R=\{\alpha_1,\alpha_{k-1}\}$; or $\beta=\alpha_k$ with $4\leq k\leq m-2$ or $k=m$ and $R=\{\alpha_i,\alpha_{i+1}\}$ with $1\leq i\leq k-2$; $\beta=\alpha_k$ with $1\leq k\leq m-4$ and $R=\{\alpha_{m-1},\alpha_{m}\}$; or $m\geq 5$, $\beta=\alpha_{m-3}$ and $R$ is any subset of cardinality~2 of $\{\alpha_{m-2},\alpha_{m-1},\alpha_{m}\}$; or $m\geq 5$, $\beta=\alpha_{m-2}$ and $R=\{\alpha_{m-1},\alpha_{m}\}$; all modulo symmetries.
	\item $K$ is of type $E_6$. Then $\beta=\alpha_1$ and $R=\{\alpha_{2},\alpha_{3}\}$; or $\beta=\alpha_2$ and $R=\{\alpha_{1},\alpha_{6}\}$, $\{\alpha_{1},\alpha_{3}\}$ or $\{\alpha_{3},\alpha_{4}\}$; or $\beta=\alpha_3$ and $R=\{\alpha_{2},\alpha_{6}\}$, $\{\alpha_{2},\alpha_{4}\}$, $\{\alpha_{4},\alpha_{5}\}$ or $\{\alpha_{5},\alpha_{6}\}$; or $\beta=\alpha_4$ and $R=\{\alpha_{1},\alpha_{3}\}$.
	\item  $K$ is of type $E_7$. Then $\beta=\alpha_1$ and $R=\{\alpha_{2},\alpha_{3}\}$; or $\beta=\alpha_2$ and $R=\{\alpha_{1},\alpha_{7}\}$, $\{\alpha_{1},\alpha_{3}\}$, $R=\{\alpha_{3},\alpha_{4}\}$, $\{\alpha_{4},\alpha_{5}\}$, $\{\alpha_{5},\alpha_{6}\}$ or $\{\alpha_{6},\alpha_{7}\}$; or $\beta=\alpha_3$ and $R=\{\alpha_{2},\alpha_{7}\}$, $\{\alpha_{2},\alpha_{4}\}$, $\{\alpha_{4},\alpha_{5}\}$, $\{\alpha_{5},\alpha_{6}\}$ or $\{\alpha_{6},\alpha_{7}\}$; or $\beta=\alpha_4$ and $R=\{\alpha_{1},\alpha_{3}\}$,  $\{\alpha_{5},\alpha_{7}\}$, $\{\alpha_{5},\alpha_{6}\}$ or $\{\alpha_{6},\alpha_{7}\}$; or $\beta=\alpha_5$ and $R=\{\alpha_{1},\alpha_{2}\}$,  $\{\alpha_{1},\alpha_{3}\}$, $\{\alpha_{3},\alpha_{4}\}$, $\{\alpha_{2},\alpha_{4}\}$ or $\{\alpha_{6},\alpha_{7}\}$; or $\beta=\alpha_6$ and $R=\{\alpha_{2},\alpha_{5}\}$.
	\item  $K$ is of type $E_8$. Then $\beta=\alpha_1$ and $R=\{\alpha_{2},\alpha_{3}\}$; or $\beta=\alpha_2$ and $R=\{\alpha_{1},\alpha_{8}\}$, $\{\alpha_{1},\alpha_{3}\}$, $R=\{\alpha_{3},\alpha_{4}\}$, $\{\alpha_{4},\alpha_{5}\}$, $\{\alpha_{5},\alpha_{6}\}$, $\{\alpha_{6},\alpha_{7}\}$ or $\{\alpha_{7},\alpha_{8}\}$; or $\beta=\alpha_3$ and $R=\{\alpha_{2},\alpha_{8}\}$, $\{\alpha_{2},\alpha_{4}\}$, $\{\alpha_{4},\alpha_{5}\}$, $\{\alpha_{5},\alpha_{6}\}$, $\{\alpha_{6},\alpha_{7}\}$ or $\{\alpha_{7},\alpha_{8}\}$; or $\beta=\alpha_4$ and $R=\{\alpha_{1},\alpha_{3}\}$,  $\{\alpha_{5},\alpha_{8}\}$, $\{\alpha_{5},\alpha_{6}\}$, $\{\alpha_{6},\alpha_{7}\}$ or $\{\alpha_{7},\alpha_{8}\}$; or $\beta=\alpha_5$ and $R=\{\alpha_{1},\alpha_{2}\}$,  $\{\alpha_{1},\alpha_{3}\}$, $\{\alpha_{3},\alpha_{4}\}$, $\{\alpha_{2},\alpha_{4}\}$, $\{\alpha_{6},\alpha_{8}\}$, $\{\alpha_{6},\alpha_{7}\}$ or $\{\alpha_{7},\alpha_{8}\}$; or $\beta=\alpha_6$ and $R=\{\alpha_{2},\alpha_{5}\}$ or $\{\alpha_7,\alpha_8\}$.
	\item $K$ is of type $F_4$. Then $\beta=\alpha_1$ and $R=\{\alpha_{3},\alpha_{4}\}$ or $\{\alpha_2,\alpha_{3}\}$; $\beta=\alpha_2$ and $R=\{\alpha_{3},\alpha_{4}\}$; $\beta=\alpha_3$ and $R=\{\alpha_{1},\alpha_{2}\}$; $\beta=\alpha_4$ and $R=\{\alpha_{2},\alpha_{3}\}$ or $\{\alpha_1,\alpha_{3}\}$.
	\end{enumerate}
	\item $R$ is empty or one of the following cases occurs.
	\begin{enumerate}[label=(\alph*)]
	\item $K$ is of type $A_m$ ($m\geq 2$). Then, $\beta=\alpha_1$ and $R$ is $\{\alpha_2\}$ or $\{\alpha_m\}$ (if $m\geq 3$); $\beta=\alpha_k$ with $2\leq k\leq \frac{m}{2}$ and $R$ is a subset of $\{\alpha_1,\alpha_{k+1}\}$,  $\{\alpha_1,\alpha_m\}$, $\alpha_{k-1},\alpha_{k+1}\}$ (if $k\geq 3$) or $\alpha_{k-1},\alpha_m\}$ (if $k\geq 3$); or $\beta=\alpha_\frac{m+1}{2}$ (if $m$ is odd) and $R$ is a subset of $\{\alpha_1,\alpha_m\}$ or $R=\{\alpha_{k-1}\}$, $\{alpha_1,\alpha_{k+1}\}$ or  $\alpha_{k-1},\alpha_{k+1}\}$ (if $m\geq 5$).
	\item $K$ is of type $B_m$ ($m\geq 3$). Then, $m=3$, $\beta=\alpha_1$ and $R$ is $\{\alpha_3\}$; $\beta=\alpha_k$ with $2\leq k\leq m-3$ and $R$ is $\{\alpha_1\}$ or $\{\alpha_{k-1}\}$ (if $k\geq 3$); or $\beta=\alpha_{m-2}$ ($m\geq 4$) and $R$ is a subset of $\{\alpha_1,\alpha_m\}$ or $\{\alpha_{m-3},\alpha_m\}$ (if $m\geq 5$); or $\beta=\alpha_m-1$ and $R$ is a subset of $\{\alpha_1,\alpha_m\}$ or $R$ is $\{\alpha_{m-2}\}$ (if $m\geq 4$) or $\{\alpha_{m-2},\alpha_m\}$ (if $m\geq 5$); or $\beta=\alpha_m$ and $R$ is $\{\alpha_1\}$ or $\{\alpha_{m-1}\}$.
	\item $K$ is of type $C_m$ ($m\geq 2$). Then, $\beta=\alpha_1$ and $R$ is $\{\alpha_2\}$; or $\beta=\alpha_k$ with $2\leq k\leq m-1$ ($m\geq 3$) and $R$ is a subset of $\{\alpha_1,\alpha_{k+1}\}$ or $\{\alpha_{k-1},\alpha_{k+1}\}$ (if $k\geq 3$ and $m\geq 4$); or $\beta=\alpha_m$ and $R=\{\alpha_{1}\}$ or $\{\alpha_{m-1}\}$ (if $m\geq 3$).
	\item $K$ is of type $D_m$ ($m\geq 4$). Then, $\beta=\alpha_k$ with $2\leq k\leq m-4$ ($m\geq 6$) and $R$ is $\{\alpha_1\}$ or $\{\alpha_{k-1}\}$ (if $k\geq 3$ and $m\geq 7$); or $\beta=\alpha_{m-3}$ and $R$ is $\{\alpha_{m-1}\}$, or a subset of $\{\alpha_1,\alpha_{m-1}\}$ (if $m\geq 5$) or $\{\alpha_{m-4},\alpha_{m-1}\}$ (if $m\geq 6$); or $\beta=\alpha_{m-2}$ and $R$ is $\{\alpha_1\}$, $\{\alpha_1,\alpha_{m-1}\}$ or $\{\alpha_1,\alpha_{m-1},\alpha_m\}$, or $R$ is a subset of  $\{\alpha_{m-3},\alpha_{m-1}\}$ (if $m\geq 5$), $R$ is $\{\alpha_{m-3},\alpha_{m-1},\alpha_m\}$ (if $m\geq 5$); or $\beta=\alpha_m$ and $R$ is $\{\alpha_1\}$ or $\{\alpha_{m-1}\}$.
	\item $K$ is of type $E_6$. Then $\beta=\alpha_2$ and $R=\{\alpha_{1}\}$; or $\beta=\alpha_3$ and $R$ is a subset of $\{\alpha_1,\alpha_{2}\}$ or $\{\alpha_1,\alpha_{6}\}$; or $\beta=\alpha_4$ and $R$ is subset of $\{\alpha_{2},\alpha_i,\alpha_j\}$ with $i=1$ or 3 and $j=5$ or 6 modulo symmetries.
	\item $K$ is of type $E_7$. Then $\beta=\alpha_2$ and $R=\{\alpha_{1}\}$ or $\{\alpha_7\}$; or $\beta=\alpha_3$ and $R$ is a subset of $\{\alpha_1,\alpha_{2}\}$ or $\{\alpha_1,\alpha_{7}\}$;
	or $\beta=\alpha_4$ and $R$ is subset of $\{\alpha_{2},\alpha_i,\alpha_j\}$ with $i=1$ or 3 and $j=5$ or 7;
	 or $\beta=\alpha_5$ and $R$ is a subset of $\{\alpha_i,\alpha_j\}$ with $i=1$ or 2 and $j=6$ or 7; 
	  or $\beta=\alpha_6$ and $R=\{\alpha_7\}$.
	\item $K$ is of type $E_8$. Then $\beta=\alpha_2$ and $R=\{\alpha_{1}\}$ or $\{\alpha_8\}$;
	 or $\beta=\alpha_3$ and $R$ is a subset of $\{\alpha_1,\alpha_2\}$ or $\{\alpha_1,\alpha_{8}\}$; 
	 or $\beta=\alpha_4$ and $R$ is subset of $\{\alpha_{2},\alpha_i,\alpha_j\}$ with $i=1$ or 3 and $j=5$ or 8;
	 or $\beta=\alpha_5$ and $R$ is a subset of $\{\alpha_i,\alpha_j\}$ with $i=1$ or 2 and $j=6$ or 8;
	 or $\beta=\alpha_6$ and $R$ is $\alpha_7$ or $\alpha_8$; 
	  or $\beta=\alpha_7$ and $R=\{\alpha_8\}$.
	\item $K$ is of type $F_4$. Then $\beta=\alpha_1$ and $R=\{\alpha_{4}\}$; $\beta=\alpha_2$ and $R$ is a subset of $\{\alpha_{1},\alpha_{3}\}$ or $\{\alpha_1,\alpha_4\}$; $\beta=\alpha_3$ and $R$ is a subset of $\{\alpha_{1},\alpha_{4}\}$ or $\{\alpha_2,\alpha_4\}$.
	\item $K$ is of type $G_2$. Then $\beta=\alpha_1$ and $R=\{\alpha_2\}$; or $\beta=\alpha_2$ and $R=\{\alpha_1\}$
	\end{enumerate}
	\end{enumerate}
	\end{prop}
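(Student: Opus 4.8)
The plan is to carry out a direct case-by-case enumeration, organized by the type of the simple group $K$ and the choice of the simple root $\beta$. The central observation is that the semisimple part of the Levi subgroup $L$ of $P(\varpi_\beta)$ has Dynkin diagram obtained from that of $K$ by deleting the node $\beta$ together with the edges meeting it; its connected components are exactly the diagrams of $L^1,\dots,L^q$, and the relative root lengths of each $L^k$ are inherited from those of $K$. First I would record, for each pair $(K,\beta)$, this decomposition into components, carefully tracking the Bourbaki labelling of the nodes of each $L^k$ relative to the labelling of $K$, and noting which roots are long and which are short.

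With this data in hand, the two defining conditions of Definition~\ref{def:smoothtriplebis} become purely combinatorial constraints on the deleted diagram. In the case $n=1$, condition~(1) requires $R=\{\gamma,\delta\}$ to lie in a single component $L^k$ and to form a smooth triple in the sense of Definition~\ref{def:smoothtriple}; so the second step is, for each component $L^k$, to list the pairs realizing one of the eight configurations of that definition (the endpoints $\{\alpha_1,\alpha_m\}$ or the adjacent pairs $\{\alpha_i,\alpha_{i+1}\}$ in type $A$; the fork $\{\alpha_{m-1},\alpha_m\}$ and the exceptional $B_3$ pair $\{\alpha_1,\alpha_3\}$ in type $B$; adjacent pairs in type $C$; the fork in type $D$; and the unique pairs in $F_4$ and $G_2$), and then to re-express each such pair in the node labelling of $K$. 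This produces the list in part~(1) of the proposition.

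For case~(2), condition~(2) requires that $R$ meet each component $L^k$ in at most one root, and that any root of $R$ lying in $L^k$ be a short extremal simple root of an $L^k$ of type $A$ or $C$. Here the third step is to identify, in each component, the admissible roots: in a type $A_p$ component these are the two endpoints $\alpha_1$ and $\alpha_p$ (all roots being short), while in a type $C_p$ component it is the single short extremal root. Then $R$ ranges over all subsets selecting at most one admissible root per component, which yields part~(2). Throughout, the symmetries of the Dynkin diagram of $K$ (in types $A_m$, $D_m$ and $E_6$) must be used to collapse equivalent choices, as signalled by the ``up to symmetries'' clauses in the statement.

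The main obstacle is not conceptual but the sheer volume and delicacy of the bookkeeping: the relabelling between $K$ and each $L^k$, the correct placement of long versus short roots in the multiply-laced types $B_m$, $C_m$, $F_4$, $G_2$, and the many component configurations that arise when $\beta$ is an interior node of $E_6$, $E_7$, $E_8$. In particular, deciding which deleted-node configurations give a type $B$, $C$, $D$ or exceptional component rather than a type $A$ one is where errors are easiest to make, and it is precisely this distinction that governs whether a given root pair contributes via a smooth triple in case~(1) or via a short extremal root in case~(2).
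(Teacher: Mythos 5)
Your proposal is correct and is essentially the paper's own approach: the paper gives no details at all, saying only that ``the proof, which is a long but not difficult case by case verification, is left to the reader,'' and that verification is precisely what you outline. Your organization --- deleting the node $\beta$ to obtain the components $L^1,\dots,L^q$ of the Levi, listing for part~(1) the pairs in a single component that realize one of the eight smooth triples of Definition~\ref{def:smoothtriple}, and for part~(2) letting $R$ pick at most one admissible (extremal in type $A$, short extremal in type $C$) root per component, all modulo diagram symmetries --- is exactly the intended argument, the only quibble being that calling $(B_m,\alpha_{m-1},\alpha_m)$ a ``fork'' is a harmless misnomer.
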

The proof, which is a long but not difficult case by case verification, is left to the reader.


\providecommand{\bysame}{\leavevmode\hbox to3em{\hrulefill}\thinspace}


\begin{thebibliography}{GPPS18}

\bibitem[Akh95]{Akhiezer}
Dmitri~N. Akhiezer, \emph{Lie group actions in complex analysis}, Aspects of
  Mathematics, E27, Friedr. Vieweg \& Sohn, Braunschweig, 1995.

\bibitem[Bou75]{Bourbaki8}
N.~Bourbaki, \emph{\'{E}l\'ements de math\'ematique}, Hermann, Paris, 1975,
  Fasc. XXXVIII: Groupes et alg{\`e}bres de Lie. Chapitre VII:
  Sous-alg{\`e}bres de Cartan, {\'e}l{\'e}ments r{\'e}guliers. Chapitre VIII:
  Alg{\`e}bres de Lie semi-simples d{\'e}ploy{\'e}es, Actualit{\'e}s
  Scientifiques et Industrielles, No. 1364.

\bibitem[Bri89]{briondiv}
Michel Brion, \emph{Groupe de {P}icard et nombres caract\'eristiques des
  vari\'et\'es sph\'eriques}, Duke Math. J. \textbf{58} (1989), no.~2,
  397--424.

\bibitem[GH15]{Gagliardi-Hofscheier1}
Giuliano Gagliardi and Johannes Hofscheier, \emph{Gorenstein spherical {F}ano
  varieties}, Geom. Dedicata \textbf{178} (2015), 111--133.

\bibitem[GPPS18]{GPPS}
Richard Gonzales, Cl\'elia Pech, Nicolas Perrin, and Alexander Samokhin,
  \emph{Geometry of horospherical varieties of {P}icard rank one}, preprint
  available at arXiv:1803.05063 (2018).

\bibitem[Kim17]{Kim}
Shin-Young Kim, \emph{Geometric structures modeled on smooth projective
  horospherical varieties of {P}icard number one}, Transform. Groups
  \textbf{22} (2017), no.~2, 361--386.

\bibitem[Kle88]{Kleinschmidt}
Peter Kleinschmidt, \emph{A classification of toric varieties with few
  generators}, Aequationes Math. \textbf{35} (1988), no.~2-3, 254--266.

\bibitem[Kno91]{Knop89}
Friedrich Knop, \emph{The {L}una-{V}ust theory of spherical embeddings},
  Proceedings of the {H}yderabad {C}onference on {A}lgebraic {G}roups
  ({H}yderabad, 1989) (Madras), Manoj Prakashan, 1991, pp.~225--249.

\bibitem[Li15]{LiQifeng}
Qifeng Li, \emph{Pseudo-effective and nef cones on spherical varieties}, Math.
  Z. \textbf{280} (2015), no.~3-4, 945--979.

\bibitem[Pas06]{these}
Boris Pasquier, \emph{Vari\'et\'es horosph\'eriques de {F}ano}, Ph.D. thesis,
  Universit\'e Joseph Fourier, Grenoble~1, available at
  \url{http://tel.archives-ouvertes.fr/tel-00111912}, 2006.

\bibitem[Pas08]{Fanohoro}
\bysame, \emph{Vari\'et\'es horosph\'eriques de {F}ano}, Bull. Soc. Math.
  France \textbf{136} (2008), no.~2, 195--225.

\bibitem[Pas09]{2orbits}
\bysame, \emph{On some smooth projective two-orbit varieties with {P}icard
  number 1}, Math. Ann. \textbf{344} (2009), no.~4, 963--987.

\bibitem[Pas15]{MMPhoro}
\bysame, \emph{An approach of the minimal model program for horospherical
  varieties via moment polytopes}, J. Reine Angew. Math. \textbf{708} (2015),
  173--212.

\bibitem[Pas17]{LogMMPhoro}
\bysame, \emph{The log minimal model program for horospherical
  varieties via moment polytopes},  Acta Math. Sin. \textbf{34} (2018), no.~3, 542--562.

\bibitem[PP10]{locallyrigid}
Boris Pasquier and Nicolas Perrin, \emph{Local rigidity of quasi-regular
  varieties}, Math. Z. \textbf{265} (2010), no.~3, 589--600.

\end{thebibliography}
\end{document}